\newtheorem{thm}{Theorem}[section]
\newtheorem{lem}[thm]{Lemma}			
\newtheorem{proposition}[thm]{Proposition}
\newtheorem{claim}[thm]{Claim}
\newtheorem*{qn*}{Question}
\newtheorem*{lem*}{Lemma}
 \newtheorem{thm*}{Theorem}
\newtheorem{dfn}[thm]{Definition}
\newtheorem{rmk}[thm]{Remark}
\newcommand{\ep}{
\epsilon
}
\newcommand{\mc}[1]{
\mathcal{#1}
}
\newcommand{\mb}[1]{
\mathbb{#1}
}
\newcommand{\calD} {\ensuremath {\mathcal{D}}}
\def\epsilon{\varepsilon}
\newcommand{\nest}{\sqsubseteq}
\newcommand{\propnest}{\sqsubsetneq}
\newcommand{\orth}{\bot}
\newcommand{\transverse}{\pitchfork}
\begin{document}

\title[Hyperbolization, length bounds, Heegaard splittings]{Effective hyperbolization and length bounds for Heegaard splittings}

\author[Feller]{Peter Feller}
\address{Department of Mathematics, ETH Z\"{u}rich, Switzerland}
\email{peter.feller@math.ch}

\author[Sisto]{Alessandro Sisto}
	\address{Department of Mathematics, Heriot-Watt University, Edinburgh, UK}
	\email{a.sisto@hw.ac.uk}

\author[Viaggi]{Gabriele Viaggi}
\address{Department of Mathematics, Sapienza University, Rome, Italy}
\email{gabriele.viaggi@uniroma1.it}

\thanks{}
\keywords{}

\begin{abstract}
We consider 3-manifolds given as Heegaard splittings $M=H^-\cup_\Sigma H^+$ with the aim to describe the hyperbolic metric of $M$ under topological conditions on the splitting guaranteeing that the manifold is hyperbolic. In particular, given a suitable `sufficiently incompressible'' curve $\gamma\subset\Sigma$, we show (without appealing to Geometrization) that $M$ is hyperbolic and we compute the length of $\gamma$ in terms of the projection coefficients of the disk sets, up to a uniform multiplicative error.
\end{abstract}

\maketitle
\tableofcontents

\section{Introduction}

The geometry and topology of a closed orientable 3-manifold $M$ is better understood by considering the surfaces it contains. Classical work of Kneser and Milnor \cite{Kneser,Milnor} and Jaco, Shalen, and Johansson \cite{JSJ78,Joh79} shows how to canonically decompose every 3-manifold by cutting it along spheres and tori into elementary building blocks. The breakthroughs of Thurston \cite{Thu82}, and the solution of the Geometrization Conjecture by Perelman, further allow us to equip each such piece with one out of eight locally homogeneous Riemannian structures. Among those, hyperbolic geometry is the most elusive one.

In many setups the topology of $M$ is completely determined by a single surface $\Sigma\subset M$ of genus at least 2 together with some gluing data and it is natural to ask how to read off the geometric decomposition of $M$ from this data.   

A celebrated example is the case of 3-manifolds fibering over the circle $M\to S^1$, also called {\em mapping tori}. In this case the fiber $\Sigma\subset M$ together with the monodromy map $f:\Sigma\to\Sigma$ entirely determine the manifold. Thurston \cite{ThurstonII} shows that $M$ admits a hyperbolic metric if and only if $f$ is a so-called {\em pseudo-Anosov} homeomorphism. Such a map has two naturally associated transverse laminations  $\lambda^-,\lambda^+$ on $\Sigma$ and groundbreaking work of Minsky \cite{M10} and Brock, Canary, and Minsky \cite{BrockCanaryMinsky:ELC2} shows how to convert the purely topological data of the two laminations into a geometric control on the geometry of $M$. In particular, we have the following.
\begin{enumerate}
    \item The set of geodesics of $M$ which are shorter than a Margulis constant $\ep_\Sigma>0$ can be read off in terms of the {subsurface projections} of $\lambda^-,\lambda^+$.
    \item{The lengths of those curves can be computed explicitly up to multiplicative error in terms of the subsurface projections of $\lambda^-,\lambda^+$.}
\end{enumerate}

Motivated by these results, in this paper we aim to provide (partial) answers to the same problems on {hyperbolicity} and {length bounds} for a different family of manifolds, namely {\em Heegaard splittings}, as also suggested in \cite[Sec.~6]{Minsky_ICM_06}. Recall that {\em every} closed oriented connected 3-manifold $M$ contains a closed orientable {\em Heegaard surface} $\Sigma\subset M$ dividing it into two handlebodies $M-\Sigma=H^-\sqcup H^+$. Within this setup it is natural to consider the so-called {\em disk sets} $\mc{D}^-,\mc{D}^+$ associated with the splitting. These are the sets of isotopy classes of essential simple closed curves on $\Sigma$ that bound properly embedded disks in $H^-$ and $H^+$, respectively. Informally our main result is the following.

\par\medskip

\begin{thm*}[Hyperbolization and Length Bound, Informal Version]
Consider a Heegaard splitting $M=H^-\cup_\Sigma H^+$ of genus at least 2 of a closed orientable 3-manifold $M$. Let $\gamma\subset\Sigma$ be an essential simple closed curve that is sufficiently incompressible. Suppose that there is a large subsurface projection between $\mc{D}^-,\mc{D}^+$ on a subsurface of $\Sigma$ with $\gamma$ in its boundary. Then: 
\begin{enumerate}
\item{$M$ has a hyperbolic metric for which $\gamma\subset\Sigma$ is a geodesic.}
\item{There is an explicit formula in the subsurface projections of $\mc{D}^-,\mc{D}^+$ to the subsurfaces of $\Sigma$ that contain $\gamma$ in their boundary that coarsely computes the length of $\gamma\subset M$.}
\end{enumerate}
\end{thm*}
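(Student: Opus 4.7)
The plan is to construct a combinatorial-geometric model $N_{\mathrm{mod}}$ from the subsurface projection data of the pair $(\mc{D}^-,\mc{D}^+)$, prove that $N_{\mathrm{mod}}$ is bilipschitz to a hyperbolic metric on $M$ (without invoking Geometrization), and then read both the hyperbolization and the length of $\gamma$ off the model. The construction adapts the Minsky--Brock--Canary--Minsky philosophy from the surface-group setting. The model has three pieces: two convex-cocompact handlebody models $\overline{H}^\pm$ associated with $\mc{D}^\pm$, glued along a thickened Heegaard surface $\Sigma\times[-1,1]$ whose geometry is dictated by a resolution of a curve-complex hierarchy interpolating between $\mc{D}^-$ and $\mc{D}^+$. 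A Margulis tube is inserted for each subsurface $Y\subset\Sigma$ with projection coefficient $d_Y(\mc{D}^-,\mc{D}^+)$ above a threshold; the hypothesis that some such $Y$ has $\gamma\subset\partial Y$ with $d_Y$ large forces a deep tube along $\gamma$, which is what makes $\gamma$ short and geodesic-like in $N_{\mathrm{mod}}$.

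The next step is to convert the model into a genuine hyperbolic metric on $M$ by approximation rather than by Perelman. I would approximate $\mc{D}^\pm$ by stable/unstable laminations of pseudo-Anosov monodromies (or by quasi-Fuchsian ending data) so as to produce, via Thurston's theorem on fibered manifolds, a sequence of genuinely hyperbolic $3$-manifolds whose geometry along a central slice matches $N_{\mathrm{mod}}$ up to uniform bilipschitz constants. Drilling the short core of the $\gamma$-tube in these approximations and invoking the Brock--Bromberg Drilling Theorem, together with convergence of Kleinian groups, should yield a hyperbolic metric on $M$ in the limit (possibly after a Namazi--Souto-type deformation from a negatively curved metric produced by gluing the handlebody models). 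Once this bilipschitz identification of $M$ with $N_{\mathrm{mod}}$ is in place, $\gamma$ is realized as a geodesic and its length agrees, up to multiplicative error, with its length in $N_{\mathrm{mod}}$; the latter is an explicit function of the coefficients $d_Y(\mc{D}^-,\mc{D}^+)$ over subsurfaces $Y$ with $\gamma\subset\partial Y$, in direct analogy with the Minsky-type short-geodesic length formula for fibered manifolds.

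The main obstacle is the hyperbolization step. Building $N_{\mathrm{mod}}$ and computing the length of $\gamma$ inside it are adaptations of well-developed curve-complex and hierarchy technology, but producing a hyperbolic metric on $M$ without appealing to Perelman requires a careful bootstrap: gluing convex-cocompact handlebody representations to obtain a negatively curved or quasi-isometric model, then promoting this to a hyperbolic structure via deformation theory and limits of Kleinian groups. Secondary difficulties are formulating ``sufficiently incompressible'' in a way that is mild enough to be useful yet strong enough to drive the gluing (it should at minimum force large $\mc{C}(\Sigma)$-distance of $\gamma$ from both $\mc{D}^\pm$ and large projections in the relevant subsurfaces), and ensuring that the multiplicative constants in the length formula are uniform across all splittings satisfying the hypothesis.
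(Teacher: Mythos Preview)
Your proposal takes a genuinely different route from the paper, and the step you yourself flag as the main obstacle---hyperbolization---is where the paper's key idea diverges from yours.

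The paper does \emph{not} build a bilipschitz model for the closed manifold $M$. Instead it drills out $\gamma$ and works with the open manifold $M-\gamma$. The pared acylindrical assumption on $(H^\pm,\gamma)$ makes $\Sigma-\gamma$ incompressible in $M-\gamma$, so $M-\gamma$ is Haken and Thurston's Hyperbolization for Haken manifolds (not Perelman, not the fibered theorem) gives it a finite-volume hyperbolic metric directly. The hyperbolic metric on $M$ is then obtained by the Hodgson--Kerckhoff Universal Dehn Filling Theorem, which simultaneously gives $\ell_M(\gamma)\asymp 1/L(\mu)^2$ where $L(\mu)$ is the normalized length of the meridian on the cusp torus $T$ of $M-\gamma$. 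All the work goes into estimating $\mathrm{Area}(T)$ and $\mathrm{Length}(\mu)$ in terms of the projection data $S_\gamma$ and $d_\gamma$; this is done by analyzing the coverings of $M-\gamma$ corresponding to $\pi_1(H^\pm-\gamma)$ and $\pi_1(\Sigma-\gamma)$, using short disk-surgery curves, the Minsky/BCM model for the surface-group cover, and controlled pleated-surface homotopies.

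By contrast, your plan tries to model $M$ itself. The approximation step you sketch (pseudo-Anosov monodromies, Thurston's fibered theorem, limits of Kleinian groups) is adapted to mapping tori and has no clear analogue for Heegaard splittings; a Heegaard surface is compressible on both sides, so there is no obvious Kleinian surface group to take limits in, and the Namazi--Souto gluing produces negatively curved, not hyperbolic, metrics with no evident route to the precise length formula. Even granting a bilipschitz model $N_{\mathrm{mod}}\to M$, the constants in a model-manifold length formula would depend on the bilipschitz constant, which in the known constructions for Heegaard splittings is not uniform. The drill-and-fill approach sidesteps all of this: hyperbolization of $M-\gamma$ is cheap, and the length of $\gamma$ in $M$ is encoded entirely in the \emph{cusp shape} of $M-\gamma$, which is a much more tractable object than a global model.
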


\par\medskip

In order to give a more precise statement we need to briefly discuss the {\em curve graph} of a finite type surface and the tool of {\em subsurface projections} (as introduced in \cite{MasurMinsky:II}). First, we recall that if $W$ is a connected orientable surface of finite type with negative Euler characteristic (called \emph{non-annular surface} below),
then, apart from some exceptional cases that for the sake of exposition we ignore,
the curve graph $\mc{C}(W)$ is the graph whose vertices are the isotopy classes of essential non-peripheral simple closed curves of $W$  and properly embedded open intervals of $W$ and where $\alpha,\beta$ are connected by an edge of length 1 if they can be realized disjointly. By seminal work of Masur and Minsky \cite{MasurMinsky:I,MasurMinsky:II}, this is an infinite diameter Gromov hyperbolic space with a rich combinatorial and geometric structure reflecting the complex nesting and disjointness relations between subsurfaces of $\Sigma$. Its relevance to the geometry of hyperbolic 3-manifolds has been uncovered by Minsky in a sequence of groundbreaking works \cite{M00,M01} leading to the solution of the Ending Lamination Conjecture by Minsky \cite{M10} and Brock, Canary, and Minsky \cite{BrockCanaryMinsky:ELC2}.  

Secondly, we recall subsurface projections. If $W\subset\Sigma$ is a proper essential non-annular subsurface and $\alpha,\beta\subset\Sigma$ intersect essentially $\partial W$, then the components of $\alpha\cap W$ and $\beta\cap W$ define two subsets $\pi_W(\alpha),\pi_W(\beta)$ of $\mc{C}(W)$ and we set $d_W(\alpha,\beta):={\rm diam}_{\mc{C}(W)}(\pi_W(\alpha)\cup\pi_W(\beta))$. For annular subsurfaces $W\subset\Sigma$ we need a slightly more involved definition for the subsurface projection. We denote by $d_\gamma(\alpha,\beta)$ the coefficient associated with the projections of $\alpha,\beta$ to an annulus whose core curve is isotopic to $\gamma$; see Definition \ref{def:annular coeff}.

We are now able to state the precise version of the main result. For an essential simple closed curve $\gamma\subset\Sigma$ define
\[
S_\gamma(\mc{D}^-,\mc{D}^+):=1+\sum_{W\in\mc{Y}_\gamma}{\{\{d_W(\mc{D}^-,\mc{D}^+)\}\}_K},
\] 
where $\mc{Y}_\gamma$ is the set of all essential non-annular subsurfaces $W\subset \Sigma$ with $\gamma\subset\partial W$, and $\{\{\bullet\}\}_K$ is the function $\{\{x\}\}=x$ if $x>K$ and $0$ otherwise. 

\begin{thm*}[Hyperbolization and Length Bound]
\label{thm:main}
There exist $R,c>0$ depending on $\Sigma$ such that the following holds. Let $M=H^-\cup_\Sigma H^+$ be a Heegaard splitting where $\Sigma$ is a closed orientable surface of genus at least 2. Let $\gamma$ be an essential curve on $\Sigma$ such that $(H^-,\gamma)$ and $(H^+,\gamma)$ are {\rm pared acylindrical} and either
\begin{enumerate}
    \item{$S_\gamma=S_\gamma(\mc{D}^+,\mc{D}^-)\ge R$ or}
    \item{$d_\gamma=d_\gamma(\mc{D}^+,\mc{D}^-)\geq R$.}
\end{enumerate}  
Then $M$ carries a hyperbolic metric for which $\gamma$ is a geodesic
and $\gamma$'s length satisfies
\[
\frac{1}{c}\frac{S_\gamma}{S_\gamma^2+d_\gamma^2}\leq \ell_M(\gamma)\leq c \frac{S_\gamma}{S_\gamma^2+d_\gamma^2}.
\]
\end{thm*}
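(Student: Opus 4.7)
The plan is to realize $M$ as a Dehn filling of a cusped hyperbolic 3-manifold $M_\gamma := M \setminus \gamma$ along a meridian around $\gamma$. First I would produce a geometrically finite hyperbolic structure on $M_\gamma$. The pared acylindrical hypothesis on $(H^\pm,\gamma)$ places us squarely in the setting of Thurston's hyperbolization of pared Haken manifolds with incompressible acylindrical boundary (a result that predates and does not invoke Geometrization), which yields geometrically finite hyperbolic structures on the interiors of $H^\pm$ in which $\gamma$ is realized as a rank-one cusp. Varying the conformal structures on the non-cuspidal parts of $\partial H^\pm$ via a pared simultaneous uniformization and gluing along $\Sigma \setminus \gamma$ produces, through a Klein-Maskit combination argument, a hyperbolic structure on $M_\gamma$. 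The largeness hypotheses on $S_\gamma$ or $d_\gamma$ are exactly what is needed to verify that this pared gluing is \emph{sufficiently incompressible} in the sense of Namazi and Namazi-Souto, ensuring discreteness, geometric finiteness, and the existence of an appropriate Minsky-type model.

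Second, to read off the shape of the torus cusp $T_\gamma$ around $\gamma$ in $M_\gamma$, I would apply Minsky's model manifold theorem together with its extensions by Brock-Canary-Minsky to the Kleinian surface group of $\pi_1(\Sigma \setminus \gamma)$ with end invariants induced by $\mc{D}^\pm$. The cusp $T_\gamma$ appears as a Margulis thin part of the model, and its flat geometry is coarsely determined by the projection data: the horizontal translation of the canonical meridian is coarsely $S_\gamma$ (since the subsurfaces $W \in \mc{Y}_\gamma$ with large projection contribute model blocks that share $\gamma$ on their boundary), while the twisting is coarsely $d_\gamma$ (controlled by the annular projection). In complex coordinates adapted to $T_\gamma$, the meridian slope of the Dehn filling that recovers $M$ from $M_\gamma$ is therefore coarsely the complex number $\mu = S_\gamma + i d_\gamma$, up to multiplicative and additive errors depending only on $\Sigma$.

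Finally, an effective Dehn filling theorem, namely the Hodgson-Kerckhoff cone deformation in its quantitative form (as refined by Brock-Bromberg, Magid, or Futer-Purcell-Schleimer), gives that $M_\gamma$ admits a hyperbolic filling along $\mu$ once $|\mu|$ is sufficiently large, and that the filled core geodesic has complex length coarsely $2\pi^2/\overline{\mu}$. Taking real parts yields
\[
\ell_M(\gamma)\asymp \frac{S_\gamma}{S_\gamma^2+d_\gamma^2},
\]
as asserted. The threshold $R$ is chosen large enough to simultaneously ensure the Namazi-Souto-type combination step and the Hodgson-Kerckhoff filling hypotheses. The main obstacle is the first step: producing the cusped hyperbolic structure on $M_\gamma$ with quantitative control from either individual hypothesis, without invoking Geometrization, and ensuring the Minsky-type model obtained is sufficiently bi-Lipschitz near $T_\gamma$ that the coarse shape of the meridian is genuinely determined by the projection data. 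Lifting Namazi-Souto's \emph{sufficiently incompressible} gluing framework to the pared acylindrical setting, and verifying that alternatives (1) and (2) each suffice for it, is the technical heart of the argument.
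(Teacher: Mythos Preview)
Your overall architecture matches the paper's: hyperbolize $M-\gamma$, read off the cusp shape in terms of $(S_\gamma,d_\gamma)$, then apply Hodgson--Kerckhoff. But two of your three steps contain genuine gaps.

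First, your hyperbolization step is misdirected. Under the pared acylindrical hypothesis, $\Sigma-\gamma$ is incompressible in $M-\gamma$, so $M-\gamma$ is Haken, and the same hypothesis forces it to be irreducible, atoroidal, and not Seifert fibered. Thurston's Hyperbolization for Haken manifolds therefore applies \emph{directly} to $M-\gamma$; no Klein--Maskit combination and no Namazi--Souto type gluing is needed, and in particular no largeness hypothesis on $S_\gamma$ or $d_\gamma$ enters at this stage. The largeness hypotheses are used only later, to make the normalized length of the meridian large enough for the Dehn filling.

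Second, and more seriously, you assert that the Minsky model for the $\pi_1(\Sigma-\gamma)$-cover $Q$ has end invariants ``induced by $\mc{D}^\pm$'', from which the cusp shape would follow. This is the crux, and it is not automatic: the end invariants $\nu^\pm$ of $Q$ are determined by the hyperbolic geometry of $M-\gamma$, not a priori by the disk sets. The paper spends Sections~3--4 building this bridge. One first proves (via pleated surfaces, Thurston's uniform injectivity, and a Loop-Theorem surgery argument) that in any hyperbolic structure on $H^\pm$ with cusps at $\gamma$ there is a bounded-length curve in each component of $\Sigma-\gamma$ that lies in $\pi_Y(\mc{D}^\pm)$. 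Applying this in the handlebody covers $N^\pm\to M-\gamma$ and then invoking the bounded-length curve results of Brock--Bromberg--Canary--Minsky yields $d_W(\nu^-,\nu^+)\gtrsim d_W(\mc{D}^-,\mc{D}^+)$ for the relevant $W$. Without this step, there is no reason the subsurface coefficients that govern the model of $Q$ should be controlled by $S_\gamma$.

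There is a parallel gap in your meridian identification. The model manifold theory describes the geometry of $Q$, but the meridian $\mu$ of the filling $M$ is not intrinsic to $Q$: it depends on how the handlebody pieces are glued. The paper devotes Sections~5--7 to constructing an explicit geometric representative of $\mu$ on $T$: a tailored resolution sequence in the pants graph (confining moves far from the puncture to controlled intervals), a chain of good homotopies between pleated surfaces carrying disks $\delta^\pm\in\mc{D}^\pm$, and a two-dimensional argument relating annular projection to intersection numbers of arcs in a cusp annulus under sequences of bilipschitz-related metrics. Together these produce the decomposition of $\mu$ into two ``vertical'' pieces of length $\asymp S_\gamma$ and a ``horizontal'' piece wrapping $\asymp d_\gamma$ times, and separately give ${\rm Area}(T)\asymp S_\gamma$. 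Simply citing the Minsky model for $Q$ does not deliver any of this.
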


Of the various features of Theorem \ref{thm:main}, a central novelty is the length bound on $\gamma$, which converts the purely topological information given by $S_\gamma,d_\gamma$ into a concrete formula for the length of $\gamma$ in the (unique) hyperbolic metric on $M$. Observe that, coarsely speaking, it is the best possible bound.

The requirement that $(H^-,\gamma),(H^+,\gamma)$ are both pared acylindrical essentially means that $H^\pm-\gamma$ does not contain any non-trivial essential disks and annuli. Such a condition appears naturally in the study of deformations of hyperbolic metrics on $H^\pm$ with cusps. Let us mention that if $d_{\mc{C}(\Sigma)}(\gamma,\mc{D}^-\cup\mc{D}^+)\ge 3$, then the pared acylindrical assumption is automatically satisfied.

Conjecturally, an essential simple closed curve $\gamma\subset\Sigma$ (without additional assumptions) is isotopic to a short geodesic in $M$ if and only if there is a large projection of $\mc{D}^-,\mc{D}^+$ to an essential subsurface $W\subset\Sigma$ containing $\gamma$ and the length can be coarsely computed from the projections; compare with~\cite[Sec.~6]{Minsky_ICM_06}.

Our construction of the hyperbolic metric only employs Thurston's Hyperbolization for Haken manifolds and the Universal Dehn Filling Theorem. Its explicit nature is the reason why we are able to get such an effective control on the length of~$\gamma$. 

We provide some more context for our result around the problem of hyperbolicity of Heegaard splittings. 
Hempel established that, if $d_{\mc{C}}(\mc{D}^-,\mc{D}^+)\ge 3$, then $M$ is irreducible, atoroidal, and non-Seifert fibered~ \cite{He01}, which by the Geometrization Theorem implies that $M$ admits a hyperbolic metric. The above criterion was used by Maher~\cite{Maher:Heegaard} to show that a random Heegaard splitting of fixed genus is hyperbolic answering a conjecture of Dunfield and Thurston \cite{DT06} (see also Problem 24 of \cite{Thu82}). We developed a different more explicit approach to the hyperbolization of random 3-manifolds in~\cite{HV} and~\cite{FSVa} obtaining as a byproduct a control on the geometry of those objects. More recently, Hamenstädt and Jäckel~\cite{HJ} proved that every splitting with large $d_{\mc{C}}(\mc{D}^-,\mc{D}^+)$ admits a hyperbolic metric similar to the one constructed in~\cite{HV}. Their result also bypasses the Geometrization Theorem, but they do not obtain length bounds.

Lastly, Theorem \ref{thm:main} allows us to understand the quantitative effect on the geometry of the manifold $M$ by changing the gluing map $\partial H^-\to\partial H^+$ by a (large) power of a of partial pseudo-Anosovs supported on a proper non-annular subsurface $W\subset\Sigma$ or a Dehn twist around a simple closed curve $\gamma\subset\Sigma$. In fact the first operation increases $d_W(\mc{D}^-,\mc{D}^+)$ (linearly in the power of the partial pseudo-Anosov) while the second one increases $d_\gamma(\mc{D}^-,\mc{D}^+)$ (linearly in the power of the twist).

\subsection*{Acknowledgements}
PF acknowledges financial support of the SNSF Grant 181199. GV acknowledges financial support of the DFG 427903332 and of the DFG 390900948.

\section{About strategy and proofs}
\label{sec:strategy}

We first describe in this section the global strategy with its milestones and the ingredients of the proofs. Along the way, we point to the sections where the various problems we have to address are solved.

\subsection*{Structure}
The proof of Theorem \ref{thm:main} is carried out in this section modulo three other results, namely Theorems \ref{thm:non-annular}, \ref{thm:annular}, and \ref{thm:annular2}. 

The proof of Theorem \ref{thm:non-annular} will take place in Section \ref{sec:upper_bound} building upon the work done in Section \ref{sec:disk_proj}.

The proofs of Theorems \ref{thm:annular} and \ref{thm:annular2} will be discussed in Section \ref{sec:annuli}. They rely on the material of Sections \ref{sec:move_seq} and \ref{sec:good_homot}.

\subsection*{Convention}
Throughout this section, we write $x\lesssim y$ (resp. $x\gtrsim y$) to indicate that there exist constants $a,b>0$, only depending on the genus of $\Sigma$ such that $x\le ay+b$ (resp. $x\ge y/a-b$). We write $x\simeq y$ if both $x\lesssim y$ and $x\gtrsim y$ hold.

\subsection*{On the strategy}
The proof strategy of the main theorem is to first consider the drilled manifold $M-\gamma$, which is hyperbolic as we discuss below, and then to understand its geometry sufficiently well to control the filling from $M-\gamma$ to $M$. The technical tool to perform such an operation is the Universal Dehn Filling Theorem of Hodgson and Kerckhoff \cite{HK:universal}, which we recall. We need two preliminary definitions.

\begin{dfn}[Standard Margulis Neighborhood]
Let $\mb{M}$ be a complete finite volume orientable hyperbolic 3-manifold with cusps. Given any choice of Margulis constant $\ep$, each cusp $\gamma$ of $\mb{M}$ has a {\rm standard $\ep$-Margulis neighborhood} $\mb{T}_{\ep}(\gamma)\subset\mb{M}$. The (open) subset $\mb{T}_\ep(\gamma)$ is a connected component of the $\ep$-thin part 
\[
\mb{M}_\ep^{{\rm thin}}:=\{x\in\mb{M}\left|\,{\rm inj}_{\mb{M}}(x)<\ep\right.\}
\]
and is isometric to a quotient
\[
\mb{T}_\ep(\gamma)=\{(z,t)\in\mb{C}\times(0,\infty)\left|\,t\ge E\right.\}/\tau_\alpha\mb{Z}\oplus\tau_\beta\mb{Z}
\]
where we are working in the upper half space model of the hyperbolic 3-space $\mb{H}^3=\mb{C}\times(0,\infty)$ and for every $c\in \mb{C}$ the transformation $\tau_c\in{\rm Isom}^+(\mb{H}^3)$ is the parabolic isometry $\tau_c(z,t)=(z+c,t)$ fixing $\infty$.

The boundary $T_\gamma:=\partial\mb{T}_{\ep}(\gamma)\subset\mb{M}$ is a 2-dimensional flat torus isometric to
\[
T_\ep(\gamma)=\{(z,t)\in\mb{C}\times(0,\infty)\left|\,t= E\right.\}/\tau_\alpha\mb{Z}\oplus\tau_\beta\mb{Z}.
\]    
\end{dfn}

\begin{dfn}[Normalized Length]
Let $\mb{M}$ be a complete finite volume hyperbolic 3-manifold with a cusp with standard $\ep$-Margulis neighborhood $\mb{T}$ and boundary torus $T$. Let $\mu\subset T$ be a simple closed flat geodesic. The {\em normalized length} of $\mu$, is the quantity defined by 
\[
L(\mu):=\frac{{\rm Length}(\mu)}{\sqrt{{\rm Area}(T)}}.
\]
\end{dfn}

It is easy to check that $L(\mu)$ does not depend on the choice of the Margulis constant $\ep$.

We have the following (see \cite[Theorem 1.1]{HK:universal} and \cite[Corollary 6.13]{FPS22}).

\begin{thm}[Universal Dehn Filling]\label{thm:filling}
Let $\gamma\subset M$ be a knot in a 3-manifold $M$. Suppose that $M-\gamma$ has a complete finite volume hyperbolic metric for which the normalized length $L$ of the meridian of $\gamma$ is at least $L\ge 7.823$. Then $M$ has a complete hyperbolic metric for which $\gamma$ is a geodesic and 
\[
\frac{2\pi}{L^2+16.17}<\ell_M(\gamma)<\frac{2\pi}{L^2-28.78}.
\]
\end{thm}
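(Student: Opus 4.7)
The plan is to use cone-manifold deformation theory in the spirit of Hodgson--Kerckhoff. Starting from the complete finite-volume hyperbolic metric on $M-\gamma$, view the cusp as a degenerate hyperbolic cone-manifold structure with cone angle $0$ along the ``virtual'' singular locus~$\gamma$. The goal is to deform this through a one-parameter family of hyperbolic cone manifolds on the fixed topological pair $(M,\gamma)$, with cone angle~$\theta$ increasing from $0$ to $2\pi$; at $\theta=2\pi$ the metric becomes smooth and $\gamma$ becomes a closed geodesic, giving the desired hyperbolic structure on $M$.

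The first step is local deformability. One uses the Hodgson--Kerckhoff rigidity result that, at any cone angle $\theta\in(0,2\pi]$ at which the cone manifold exists and the embedded tube around the singular locus has radius bounded below, the space of cone structures is locally smoothly parametrized by~$\theta$. The deformation is constructed by solving an elliptic boundary-value problem for a harmonic representative of the infinitesimal variation; the representative is split into an explicit model piece on a neighborhood of the singular tube and a smooth correction on the complement, and the key coercivity estimate for this decomposition is controlled by the tube radius.

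The hard part — and the heart of the theorem — is the global continuation of this local deformation up to $\theta=2\pi$. The two things that could obstruct continuation are the tube radius collapsing to zero or the length $\ell(\theta)$ of the singular locus blowing up. The way to rule both out is to derive differential inequalities for the tube radius and for a combined quantity involving $\ell(\theta)$ and the cross-sectional area, integrated in $\theta$ starting from $\theta=0$. The initial data at $\theta=0$ is precisely encoded by the normalized meridian length $L$: an infinitesimal opening of the cone angle at a cusp is modelled, to leading order, by a small hyperbolic Dehn filling of the flat torus $T$, whose relevant geometry is captured by $L$. A Gr\"onwall-type comparison shows that the numerical assumption $L\geq 7.823$ is exactly what guarantees that the tube radius stays uniformly bounded below on the entire interval $[0,2\pi]$, so the deformation family exists and terminates in a smooth hyperbolic metric on $M$.

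The length bound is then extracted by integrating a Schl\"afli-type formula for hyperbolic cone manifolds (relating $dV/d\theta$ to $\ell(\theta)$), combined with the explicit first-order behaviour $\ell(\theta)\approx 2\pi\theta/L^{2}$ near $\theta=0$ (this is the expansion of the length of the filled core geodesic in terms of the meridian data). Carrying the quantitative error terms along the $\theta$-integration from $0$ to $2\pi$ and optimizing the resulting inequalities yields the explicit bounds $2\pi/(L^{2}+16.17)<\ell_M(\gamma)<2\pi/(L^{2}-28.78)$. In particular, the two different constants in the upper and lower bounds reflect the fact that the inequalities in the tube-radius/length estimates go in opposite directions. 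The main obstacle throughout is the analytic control needed in the continuation step; once those estimates are in hand, the conclusion of the theorem follows by direct integration.
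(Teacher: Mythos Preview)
The paper does not prove this theorem; it is quoted as a black box from the literature, with a pointer to \cite[Theorem 1.1]{HK:universal} for the existence and hyperbolicity statement and to \cite[Corollary 6.13]{FPS22} for the explicit length inequalities. So there is no ``paper's own proof'' to compare against.

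Your sketch is a fair high-level summary of the Hodgson--Kerckhoff cone-deformation method that underlies the cited references: open the cusp to a cone singularity, continue the cone angle from $0$ to $2\pi$, control the tube radius via a differential inequality whose initial condition is governed by the normalized meridian length, and read off length bounds from a Schl\"afli-type identity. One small correction of attribution: the specific numerical constants $16.17$ and $28.78$ in the length bounds are not obtained in the original Hodgson--Kerckhoff paper but are refinements due to Futer--Purcell--Schleimer, who combine the cone-deformation framework with sharper tube-packing estimates. Beyond that, your outline matches the actual strategy in those sources, though of course the analytic estimates you allude to (``Gr\"onwall-type comparison'', ``carrying the quantitative error terms'') are where all the work lies and would need the full machinery of those papers to make rigorous.
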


For us, hyperbolicity of $M-\gamma$ is guaranteed by the topological assumptions on $\gamma$ and is obtained via Thurston's Hyperbolization.

\begin{dfn}[Pared Acylindrical]
Let $H$ be a handlebody with boundary $\Sigma=\partial H$. Let $\gamma\subset\Sigma$ be an essential multicurve. Denote by $A$ a tubular neighborhood of $\gamma$ in $\Sigma$. We say that $(H,\gamma)$ is {\em pared acylindrical} if the following holds:
\begin{enumerate}
\item{$\gamma$ is $\pi_1$-injective in $H$.}
\item{Every $\pi_1$-injective map $(A,\partial A)\rightarrow (H,A)$ is homotopic as a map of pairs into $A$.}
\item{Every $\pi_1$-injective map $(A,\partial A)\rightarrow (H,\Sigma-A)$ is homotopic as a map of pairs into $\Sigma-A$.}
\end{enumerate}
\end{dfn}

The above is a strong topological assumption. We state two consequences, one algebraic and one geometric. 

\begin{lem}\label{lem:graph_of_groups}
Let $M=H^-\cup_\Sigma H^+$ be a Heegaard splitting where $\Sigma$ is a closed orientable surface of genus at least 2. Let $\gamma$ be an essential simple closed curve on $\Sigma$ such that $(H^-,\gamma)$ and $(H^+,\gamma)$ are pared acylindrical. Then $\pi_1(M)$ split as a graph of groups with two vertices connected by: 
\begin{itemize}
    \item{Two edges if $\Sigma-\gamma=\Sigma_1\sqcup\Sigma_2$ has two components.}
    \item{One edge if $\Sigma-\gamma$ is connected.}
\end{itemize}, 
The vertex groups are $\pi_1(H^-),\pi_1(H^+)$, while the edge groups are:
\begin{itemize}
    \item{$\pi_1(\Sigma_1),\pi_1(\Sigma_2)$ in the first case.}
    \item{$\pi_1(\Sigma-\gamma)$ in the second case.}
\end{itemize}
In both cases, the inclusions in the vertex groups are induced by the inclusions of $\Sigma-\gamma$ in $H^-,H^+$ respectively.

In particular, the inclusion $\Sigma-\gamma\to M-\gamma$ is $\pi_1$-injective.
\end{lem}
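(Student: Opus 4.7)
The plan is to establish the graph of groups decomposition by applying the Seifert--van Kampen theorem to a natural open cover of $M-\gamma$ adapted to the subsurface $\Sigma-\gamma$, and to use the pared acylindrical hypothesis to upgrade the resulting pushout to a graph of groups with injective edge maps.

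First I would observe that $\Sigma - \gamma$ separates $M - \gamma$ into two open regions that deformation retract onto $H^-$ and $H^+$ respectively. Thickening these to open sets and applying Seifert--van Kampen yields a pushout expression for $\pi_1(M-\gamma)$: in the non-separating case, when $\Sigma-\gamma$ is connected, this is the amalgamated product $\pi_1(H^-)*_{\pi_1(\Sigma-\gamma)}\pi_1(H^+)$; in the separating case $\Sigma-\gamma=\Sigma_1\sqcup\Sigma_2$, the intersection has two components and the pushout is the fundamental group of the theta graph of groups with vertex groups $\pi_1(H^\pm)$ and edge groups $\pi_1(\Sigma_1),\pi_1(\Sigma_2)$. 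The edge inclusions are the natural ones induced by $\Sigma_i\hookrightarrow\partial H^\pm$, matching the graph of groups described in the statement, with $\pi_1(M)$ recovered from $\pi_1(M-\gamma)$ upon filling the cusp at $\gamma$.

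The main technical step is to verify that the edge maps $\pi_1(\Sigma_i)\to\pi_1(H^\pm)$ are injective. By the loop theorem this is equivalent to incompressibility of $\Sigma-A$ in each handlebody, where $A$ is an annular neighborhood of $\gamma$. Suppose for contradiction that $\Sigma-A$ is compressible in, say, $H^-$: then some essential simple closed curve $\delta\subset\Sigma-A$ bounds a disk in $H^-$. Pick any essential simple closed curve $\alpha\subset\Sigma-A$ that is non-trivial in $\pi_1(H^-)$ (such $\alpha$ exists by condition~(1), e.g., a parallel copy of $\gamma$ pushed slightly off into $\Sigma-A$), arranged to intersect $\delta$ essentially. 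Applying a Dehn twist about $\delta$ to $\alpha$ produces a second simple closed curve in $\Sigma-A$, non-isotopic to $\alpha$ in $\Sigma-A$, but representing the same conjugacy class in $\pi_1(H^-)$ (since $\delta$ is null-homotopic there). These two curves therefore cobound an essential $\pi_1$-injective annulus in $H^-$ with boundary in $\Sigma-A$ that is not homotopic into $\Sigma-A$, contradicting condition~(3) of pared acylindricity. Hence the edge maps are injective.

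Given injectivity of edge maps, the pushout provided by Seifert--van Kampen is a legitimate graph of groups as described, and by standard Bass--Serre theory the vertex and edge groups embed in its fundamental group. In particular $\pi_1(\Sigma-\gamma)\to\pi_1(M-\gamma)$ is injective, establishing the final assertion of the lemma. The principal obstacle of the proof is the incompressibility step: one must convert the annulus-phrased pared acylindricity condition~(3) into the disk-phrased incompressibility of $\Sigma-A$ in the handlebody, which is accomplished via the Dehn twist construction above.
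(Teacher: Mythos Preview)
Your approach is the same as the paper's---the paper's entire proof is the single sentence ``This follows from Seifert--van Kampen's Theorem''---so you have correctly identified the mechanism and, commendably, tried to supply the step the paper leaves implicit, namely that the edge maps $\pi_1(\Sigma-A)\to\pi_1(H^\pm)$ are injective (equivalently, $\Sigma-A$ is incompressible in each handlebody).

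There is, however, a gap in your incompressibility argument. You write: ``Pick any essential simple closed curve $\alpha\subset\Sigma-A$ that is non-trivial in $\pi_1(H^-)$ (such $\alpha$ exists by condition~(1), e.g., a parallel copy of $\gamma$ pushed slightly off into $\Sigma-A$), arranged to intersect $\delta$ essentially.'' But a curve parallel to $\gamma$ is boundary-parallel in $\Sigma-A$ and hence \emph{disjoint} from the essential curve $\delta$; the phrase ``arranged to intersect $\delta$ essentially'' is doing work you have not justified. The issue is real: you need a single curve that is simultaneously essential in $\Sigma-A$, nontrivial in $\pi_1(H^-)$, \emph{and} crosses $\delta$, and nothing you have written produces one.

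The gap is repairable. One clean route: let $\gamma'\subset\Sigma-A$ be parallel to $\gamma$ (in the component $W$ containing $\delta$) and form a band sum $\alpha=\gamma'\,\#_b\,\delta$ along an arc $b\subset W$. Since $\delta$ is null-homotopic in $H^-$, $\alpha$ is freely homotopic to $\gamma'$ in $H^-$, giving a $\pi_1$-injective annulus $(A,\partial A)\to(H^-,\Sigma-A)$ between $\gamma'$ and $\alpha$. If this annulus were homotopic into $\Sigma-A$, then $\gamma'$ and $\alpha$ would be freely homotopic in $W$; but $\gamma'$ is peripheral while $\alpha$ is not (the pair of pants bounded by $\gamma',\delta,\alpha$ shows $\alpha$ is not parallel to $\partial W$ when $\delta$ is essential and the band is chosen so that $\alpha$ is essential). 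This contradicts condition~(3). Alternatively, when $\delta$ is non-separating in $W$, a dual curve $\alpha$ with $i(\alpha,\delta)=1$ has $[\alpha]\cdot[\delta]=1$ in $H_1(\Sigma;\mathbb Z/2)$, and since $[\delta]$ lies in the Lagrangian $\ker\big(H_1(\Sigma;\mathbb Z/2)\to H_1(H^-;\mathbb Z/2)\big)$, $\alpha$ is homologically (hence homotopically) nontrivial in $H^-$, and your Dehn-twist argument then goes through.
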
 

\begin{proof}
This follows from Seifert–van Kampen’s Theorem.
\end{proof}

Lemma \ref{lem:graph_of_groups} will be needed to study some natural coverings of $M-\gamma$.

\begin{thm}
\label{thm:hyperbolicity}
Let $M=H^-\cup_\Sigma H^+$ be a Heegaard splitting where $\Sigma$ is a closed orientable surface of genus at least 2. Let $\gamma$ be an essential simple closed curve on $\Sigma$ such that $(H^-,\gamma)$ and $(H^+,\gamma)$ are pared acylindrical. Then $M-\gamma$ has a complete finite volume hyperbolic metric.
\end{thm}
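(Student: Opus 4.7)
The plan is to apply Thurston's Hyperbolization Theorem for Haken manifolds to the compact manifold $N:=M\setminus\mathrm{int}(\nu(\gamma))$, where $\nu(\gamma)$ is an open tubular neighborhood of $\gamma$ in $M$. The interior of $N$ is homeomorphic to $M-\gamma$ and $\partial N$ is a single torus $T$. Recall that Thurston's theorem guarantees a complete finite-volume hyperbolic metric on $\mathrm{int}(N)$ provided $N$ is a compact orientable irreducible Haken 3-manifold whose boundary is a union of tori, and $N$ is atoroidal and acylindrical (equivalently, anannular, which for toroidal-boundary pieces rules out Seifert fibered structures). Compactness, orientability, and toroidal boundary are immediate, so the task reduces to checking Hakenness, irreducibility, incompressibility of $\partial N$, atoroidality, and acylindricity.

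Hakenness is direct from Lemma \ref{lem:graph_of_groups}: the surface $\Sigma-\gamma$ is $\pi_1$-injective in $M-\gamma$, so after pushing slightly into $\mathrm{int}(N)$ it provides a properly embedded two-sided incompressible surface. For irreducibility, observe that $N$ is split along $\Sigma-\gamma$ into the two pieces $N^\pm:=H^\pm\cap N$, each of which is homeomorphic to a handlebody (with a collar of $\gamma$ removed from its boundary) and hence is irreducible; gluing irreducible pieces along an incompressible surface preserves irreducibility. For incompressibility of $T$: condition (1) of pared acylindricity asserts that $\gamma$ is $\pi_1$-injective in each $H^\pm$, so by Seifert--van Kampen (as in Lemma \ref{lem:graph_of_groups}) its image generates an infinite cyclic subgroup of $\pi_1(N)$, and the meridian of $\nu(\gamma)$ is non-trivial; this rules out a compressing disk for $T$.

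Atoroidality and acylindricity are the heart of the argument, and together are the main obstacle. For atoroidality, suppose an essential torus $T'\subset N$ existed. Then $\pi_1(T')\cong\mathbb{Z}^2$ sits in $\pi_1(M-\gamma)$, which by Lemma \ref{lem:graph_of_groups} is a graph of groups with free vertex groups $\pi_1(H^\pm)$ and free edge groups (fundamental groups of $\Sigma-\gamma$ or of the components $\Sigma_i$, each of which is a surface with boundary). Standard Bass--Serre theory applied to the action of $\mathbb{Z}^2$ on the Bass--Serre tree forces $\mathbb{Z}^2$ to be conjugate into either a vertex group or into the stabilizer of some axis (which must meet an edge stabilizer in an infinite subgroup). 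Since both vertex and edge groups are free, neither possibility is realized by a $\mathbb{Z}^2$ unless the torus is peripheral, so $T'$ is isotopic into $T$, contradicting the essentiality of $T'$. For acylindricity, any essential annulus $A$ in $N$ can be put in general position with respect to the incompressible surface $\Sigma-\gamma$; an innermost/outermost disk argument reduces $A\cap N^\pm$ to a collection of essential annuli in $N^\pm$ with boundary either on $\Sigma-\gamma$ or on the annular portion $T\cap N^\pm$. Pared acylindricity of $(H^\pm,\gamma)$ (conditions (2) and (3)) is designed exactly to rule out such annuli up to isotopy into the boundary, and an inductive argument on the number of intersection components completes the proof.

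Finally, having verified all hypotheses, Thurston's Hyperbolization Theorem for Haken manifolds endows $\mathrm{int}(N)=M-\gamma$ with a complete finite-volume hyperbolic metric, which is what was claimed. The most delicate piece is the use of the graph-of-groups structure together with pared acylindricity to rule out essential tori and annuli; all other steps are standard consequences of the pared-acylindrical hypothesis and the $\pi_1$-injectivity given by Lemma \ref{lem:graph_of_groups}.
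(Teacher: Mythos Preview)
Your approach matches the paper's: both verify the hypotheses of Thurston's Hyperbolization for Haken manifolds (the paper simply defers the verifications to \cite[Lemmas~3.3--3.4]{FSVa}, using incompressibility of $\Sigma-\gamma$ from Lemma~\ref{lem:graph_of_groups}). Your treatments of irreducibility, Hakenness, and the acylindricity sketch are fine.

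However, your atoroidality argument has a real gap. You assert that because vertex and edge groups in the Bass--Serre splitting are free, a $\mathbb{Z}^2$ cannot stabilize an axis unless it is peripheral. Freeness alone does not give this: if $\mathbb{Z}^2$ stabilizes an axis, the kernel of the translation-length homomorphism is an infinite cyclic group fixing the axis pointwise, hence lying in every edge stabilizer along the axis---and infinite cyclic groups sit perfectly well inside free groups. What forces this cyclic group to be peripheral is that it lies in the intersection of \emph{distinct} edge stabilizers at each vertex, and it is exactly conditions~(2) and~(3) of pared acylindricity that control such intersections: an element in two distinct conjugates of $\pi_1(\Sigma-\gamma)$ inside $\pi_1(H^\pm)$ yields an annulus in $H^\pm$ with boundary on $\Sigma-\gamma$, which condition~(3) pushes into the boundary, forcing the element to be conjugate into $\langle\gamma\rangle$. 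So pared acylindricity is the essential input for atoroidality just as it is for acylindricity; freeness of the groups is not the operative fact. A smaller point: you assert the meridian is nontrivial in $\pi_1(N)$ without proof. The cleanest fix is to observe that if $T$ were compressible then, by irreducibility, $N$ would be a solid torus, contradicting that $\pi_1(N)$ contains the rank-$g$ free group $\pi_1(H^-)$.
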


\begin{proof}
To check the hypotheses of Thurston's Hyperbolization see the proofs of \cite[Lemmas 3.3-3.4]{FSVa}, which handle the case where $\gamma$ is non-separating curve, but only use incompressibility of $\Sigma-\gamma$, given in that context by \cite[Lemma 3.2]{FSVa}, and which in our context follows from Lemma \ref{lem:graph_of_groups}.
\end{proof}

Now that we addressed hyperbolicity of $M-\gamma$, we go back to our strategy overview.

We recall that there are essentially two quantities that play a role in the game. First, there is the area of the boundary $T$ of a Margulis neighborhood of the cusp of $M-\gamma$, which is a 2-dimensional flat torus. Second, we have the length of the flat geodesic $\mu\subset T$ on this torus that represents the homotopy class of the canonical meridian of $M$. By the Universal Dehn Filling Theorem \cite{HK:universal}, the length of $\gamma$ in $M$ will be comparable to the ratio ${\rm Area}(T)/{\rm Length}(\mu)^2$.

All our work is devoted to getting some control over these quantities. 

\subsubsection*{Area bound}
We prove that the torus $T$ has a particular shape.
\begin{figure}[h]
\begin{overpic}[scale=1.8]{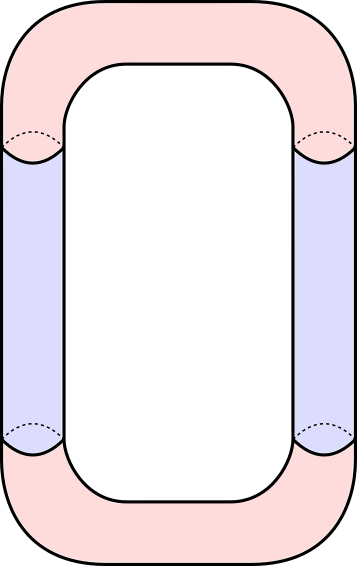}
\end{overpic}
\caption{Decomposition of the flat torus $T$ into a vertical and horizontal part. 
}
\label{fig:torus1}
\end{figure}
More precisely and quantitatively, for some universal constant $B$ the torus $T$ can be decomposed in two vertical and two horizontal annuli, illustrated in Figure~\ref{fig:torus1}, as follows.
\begin{itemize}
    \item{The two vertical annuli are both isometric to a cylinder with base circle of uniformly bounded length contained in the interval $[1/B,B]$ and height in the interval $[S_\gamma/B,BS_\gamma]$.}
    \item{The two horizontal annuli are both isometric to a cylinder with uniformly bounded base and height contained in the interval $[1/B,B]$.}
\end{itemize} 

In particular 
\[
{\rm Area}(T)\simeq S_\gamma
\]
and the length of any $\mu\subset T$ that crosses essentially the four pieces is bounded by 
\[
{\rm Length}(\mu)\gtrsim S_\gamma.
\]

This already produces the (weaker) upper bound
\[
\frac{{\rm Area}(T)}{{\rm Length}(\mu)^2}\lesssim\frac{1}{S_\gamma}
\]
and, hence, the following.

\begin{restatable}{thm}{nonannular}
\label{thm:non-annular}
There exists $c>0$ depending on $\Sigma$ such that the following holds. Let $M=H^-\cup_\Sigma H^+$ be a Heegaard splitting where $\Sigma$ is a closed orientable surface of genus at least 2. Let $\gamma$ be an essential curve on $\Sigma$ such that $(H^-,\gamma)$ and $(H^+,\gamma)$ are pared acylindrical. Then in the hyperbolic metric on $M-\gamma$ the normalized length of the standard meridian $\mu$ determined by $M$ is bounded from below as follows:
\[
L(\mu)^2\ge cS_\gamma.
\]
\end{restatable}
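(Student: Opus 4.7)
The plan is to reduce the statement to a geometric analysis of the cusp torus $T=\partial\mb{T}_\ep(\gamma)$ of $M-\gamma$, which is hyperbolic by Theorem~\ref{thm:hyperbolicity}. Since $L(\mu)^2 = {\rm Length}(\mu)^2/{\rm Area}(T)$, the target lower bound $L(\mu)^2 \gtrsim S_\gamma$ follows from two companion estimates: the area estimate ${\rm Area}(T) \simeq S_\gamma$ and the length estimate ${\rm Length}(\mu) \gtrsim S_\gamma$, which combine to give ${\rm Length}(\mu)^2/{\rm Area}(T)\gtrsim S_\gamma^2/S_\gamma=S_\gamma$.

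Both are consequences of the four-piece decomposition of $T$ outlined just before the theorem statement and depicted in Figure~\ref{fig:torus1}. My proposed route to this decomposition goes through pleated surfaces. Lemma~\ref{lem:graph_of_groups} shows that $\Sigma-\gamma$ is $\pi_1$-injective in $M-\gamma$, and the pared acylindrical hypothesis uniformizes each $H^\pm-\gamma$ by a geometrically finite hyperbolic metric carrying a rank-one cusp at $\gamma$. Realizing $\Sigma-\gamma$ as a pleated surface in each $H^\pm-\gamma$ and pushing into $M-\gamma$ produces two natural curves on $T$ splitting it into two handlebody-side annuli. To further split each such annulus into a horizontal piece of uniformly bounded dimensions and a vertical piece of height $\simeq S_\gamma$, I would build a Minsky-type model for the hyperbolic geometry of each $H^\pm-\gamma$ in a neighborhood of the cusp, one whose geometric depth is controlled up to a bounded multiplicative error by $\sum_W d_W(\gamma,\mc{D}^\mp)$ for $W\in\mc{Y}_\gamma$. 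The key combinatorial input is that under the pared acylindrical assumption this sum is coarsely equivalent to $S_\gamma$.

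Once the decomposition is in place both estimates follow by inspection. The area of $T$ is the sum of the areas of its four sub-annuli, dominated by the two vertical ones, and so ${\rm Area}(T)\simeq S_\gamma$. For the meridian bound, $\mu\subset T$ bounds a disk in $M$ meeting $\Sigma$ transversely in a single arc crossing $\gamma$; consequently $\mu$ crosses essentially the two curves of $(\Sigma-\gamma)\cap T$ separating the $H^-$-side and $H^+$-side annuli, and hence must traverse each of the four pieces essentially, producing length $\gtrsim S_\gamma$ from its vertical runs alone.

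The principal obstacle is the flat torus decomposition itself, which is the geometric heart of the proof and is to be carried out in Section~\ref{sec:upper_bound} on top of the disk projection analysis of Section~\ref{sec:disk_proj}. Converting the combinatorial data of subsurface projections indexed by $\mc{Y}_\gamma$ into quantitative information on the cusp neighborhood of $M-\gamma$ is the delicate step, and it requires carefully controlled geometric models for the drilled handlebodies together with a comparison, via pleated surfaces, between those models and the actual hyperbolic geometry of $M-\gamma$.
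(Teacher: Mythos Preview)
Your outline has the right overall shape but contains two genuine gaps that the paper's argument is designed to avoid.

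First, you propose to ``build a Minsky-type model for the hyperbolic geometry of each $H^\pm-\gamma$'' with depth controlled by $\sum_W d_W(\gamma,\mc{D}^\mp)$. This does not work as stated. The model manifold technology of \cite{M10,BrockCanaryMinsky:ELC2} applies to surface groups, i.e.\ manifolds of the form $W\times\mb{R}$, not to handlebodies with compressible boundary; there is no analogous off-the-shelf model for $H^\pm-\gamma$. Moreover the quantity $d_W(\gamma,\mc{D}^\mp)$ is ill-defined: for $W\in\mc{Y}_\gamma$ the curve $\gamma$ lies in $\partial W$, so $\pi_W(\gamma)$ is empty. The paper sidesteps both issues by working in coverings of $M-\gamma$. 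One passes first to the handlebody covers $N^\pm\to M-\gamma$ corresponding to $\pi_1(H^\pm-\gamma)$, where Proposition~\ref{prop:short_projection} produces curves $\alpha^\pm\in\pi_Y(\mc{D}^\pm)$ of uniformly bounded length in $M-\gamma$. Then one passes to the \emph{surface} cover $Q\to M-\gamma$ corresponding to $\pi_1(Y)$ for a well-chosen component $Y\subset\Sigma-\gamma$; here $Q\simeq Y\times\mb{R}$ is geometrically finite and the Minsky model does apply. The short curves $\alpha^\pm$ now lie in $\mc{C}(Q,L)$, and \cite{BBCM} lets one compare the end invariants $\nu^\pm$ of $Q$ to $\mc{D}^\pm$, yielding $S_{\gamma_1}(\nu^-,\nu^+)\gtrsim S_\gamma$. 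This is how the combinatorics of disk sets is converted to geometry of the cusp.

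Second, your route to $L(\mu)^2\gtrsim S_\gamma$ via ${\rm Area}(T)\simeq S_\gamma$ and ${\rm Length}(\mu)\gtrsim S_\gamma$ requires the full four-piece decomposition, including an \emph{upper} bound on ${\rm Area}(T)$ in terms of $S_\gamma$. The paper does not establish this here (the upper bound on area appears only later, in Section~\ref{sec:annuli}). Instead it proves the much softer estimate ${\rm Area}(T)\lesssim{\rm Length}(\mu)$ directly: the injectivity radius on $T$ is $\ep_1$, so there is a closed geodesic $\beta\subset T$ of length $\lesssim\ep_1$, and ${\rm Area}(T)\le\ell(\mu)\ell(\beta)$. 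Combined with ${\rm Length}(\mu)\gtrsim S_\gamma$ (obtained by showing a lift of $\mu$ to $Q$ must exit the convex core, whose intersection with $T_Q$ has height $\gtrsim S_\gamma$), this gives $L(\mu)^2\gtrsim{\rm Length}(\mu)\gtrsim S_\gamma$ without ever needing the full torus decomposition.
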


Note that in the area estimate $d_\gamma$ does not appear. It only enters the picture in the length upper bound where it plays the role of a spiraling parameter for $\mu$ controlling how many times $\mu$ winds around the base of the annular pieces. 

\subsubsection*{Length bound}
We prove that, in relation to the decomposition of $T$ mentioned above, the canonical meridian $\mu$ of $M$ always admits a representative that looks like the curve in Figure \ref{fig:torus2}. 
\begin{figure}[h]
\begin{overpic}[scale=1.8]{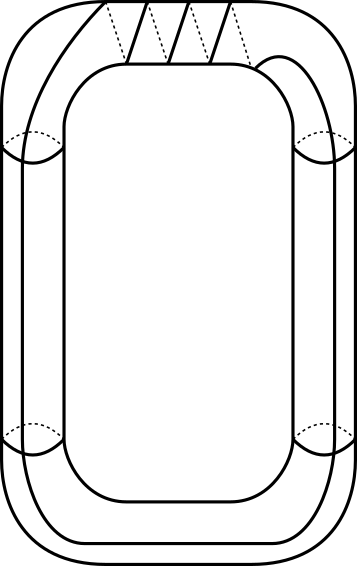}
\end{overpic}
\caption{The model for the meridian $\mu$.}
\label{fig:torus2}
\end{figure}
More precisely, the meridian consists of the following segments.
\begin{itemize}
    \item{It has two (flat) geodesic vertical segments going straight up without spiraling around the base of the vertical annuli of length in the interval $[S_\gamma/B,BS_\gamma]$.}
    \item{It has two (flat) geodesic horizontal segments one of which is spiraling around the base of the top horizontal annulus and has length in the interval $[d_\gamma/B,Bd_\gamma]$ while the other one is going straight without spiraling through the bottom horizontal annulus and has length uniformly bounded in the interval $[1/B,B]$.}
\end{itemize}

In particular
\[
{\rm Length}(\mu)\lesssim S_\gamma+d_\gamma.
\]

Hence, we obtain
\[
\frac{{\rm Area}(T)}{{\rm Length}(\mu)^2}\gtrsim\frac{S_\gamma}{(S_\gamma+d_\gamma)^2}.
\]

\begin{restatable}{thm}{annular}
\label{thm:annular}
There exists $c>0$ depending on $\Sigma$ such that the following holds. Let $M=H^-\cup_\Sigma H^+$ be a Heegaard splitting where $\Sigma$ is a closed orientable surface of genus at least 2. Let $\gamma$ be an essential curve on $\Sigma$ such that $(H^-,\gamma)$ and $(H^+,\gamma)$ are pared acylindrical. Then in the hyperbolic metric on $M-\gamma$ the normalized length of the standard meridian $\mu$ determined by $M$ is bounded from above as follows:
\[
L(\mu)^2\le c\frac{S_\gamma^2+d_\gamma^2}{S_\gamma}.
\]
\end{restatable}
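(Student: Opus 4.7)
The plan is to combine the area bound ${\rm Area}(T) \simeq S_\gamma$ (which is established en route to Theorem \ref{thm:non-annular}) with an explicit upper bound on the length of a carefully chosen representative of the meridian $\mu$ on the flat torus $T = \partial \mb{T}_\epsilon(\gamma)$. Since $L(\mu)^2 = {\rm Length}(\mu)^2/{\rm Area}(T)$ and the length in the normalization is really the infimum over the free homotopy class, it suffices to produce a loop $\mu_0 \subset T$ representing $\mu$ with ${\rm Length}(\mu_0) \lesssim S_\gamma + d_\gamma$: one then obtains
\[
L(\mu)^2 \leq \frac{{\rm Length}(\mu_0)^2}{{\rm Area}(T)} \lesssim \frac{(S_\gamma + d_\gamma)^2}{S_\gamma} \leq \frac{2(S_\gamma^2 + d_\gamma^2)}{S_\gamma}.
\]

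First I would recall the decomposition of $T$ used for the area bound from Section \ref{sec:upper_bound}: two vertical annuli with base $\simeq 1$ and height $\simeq S_\gamma$, and two horizontal annuli whose dimensions are both $\simeq 1$. To build $\mu_0$, I would begin with a compressing disk for $\gamma$, say in $H^+$, whose boundary meets $\gamma$ transversely once, and track the intersection of this disk with the Margulis neighborhood $\mb{T}_\epsilon(\gamma)$. Using the move sequences of Section \ref{sec:move_seq} together with the controlled homotopies of Section \ref{sec:good_homot}, I expect to homotope the resulting loop to a piecewise flat-geodesic curve matching the model of Figure \ref{fig:torus2}: two vertical straight segments (no spiraling, length $\simeq S_\gamma$ each) traversing the vertical annuli, a short straight transversal (length $\simeq 1$) through the bottom horizontal annulus, and one horizontal segment (length $\simeq d_\gamma$) spiraling around the core of the top horizontal annulus.

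The vertical segments get their predicted length almost for free: the vertical annuli already have height $\simeq S_\gamma$ from the area analysis, and a homotopy within the thin cusp is forced to cross them essentially once, straight up. The substantive difficulty is the horizontal segment, specifically the claim that $\mu_0$ must wrap $\simeq d_\gamma$ times around the core of the top horizontal annulus. This is precisely the geometric content of Theorem \ref{thm:annular2}: it converts the combinatorial twisting recorded by the annular coefficient $d_\gamma(\mc{D}^-,\mc{D}^+)$ into the rotation number on $T$ of a compressing-disk boundary around the cusp core. Making this correspondence quantitative, while keeping track of the homotopy through $\mb{T}_\epsilon(\gamma)$ and of where transitions between ``vertical'' and ``horizontal'' behavior occur, is where the bulk of the technical work will lie and is why Theorem \ref{thm:annular2} is singled out.

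Once a representative $\mu_0$ with ${\rm Length}(\mu_0) \lesssim S_\gamma + d_\gamma$ has been produced, the theorem follows from the displayed chain of inequalities via $(a+b)^2 \leq 2(a^2+b^2)$. The actual normalization step is therefore a one-line computation; essentially all the geometric and combinatorial effort is concentrated in constructing the model representative $\mu_0$ and certifying the spiraling count coming from Theorem \ref{thm:annular2}.
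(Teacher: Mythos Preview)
Your high-level plan is exactly the paper's: bound ${\rm Length}(\mu)\lesssim S_\gamma+d_\gamma$, use the lower bound ${\rm Area}(T)\gtrsim S_\gamma$ (Proposition~\ref{prop:aSgamma<Area}), and finish with $(a+b)^2\le 2(a^2+b^2)$. But two points in your sketch are genuinely confused and would derail an actual write-up.

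First, there is no ``compressing disk for $\gamma$ in $H^+$ whose boundary meets $\gamma$ transversely once.'' The curve $\gamma$ is $\pi_1$-injective in $H^+$ by the pared acylindrical hypothesis, so it bounds no disk there; and disks $\delta^\pm\in\mc{D}^\pm$ generally meet $\gamma$ many times. The meridian $\mu$ is not traced by a single disk at all. In the paper one fixes $\delta^\pm\in\mc{D}^\pm$ realizing $d_\gamma$, builds the pleated maps $f^\pm$ of Proposition~\ref{prop:homotopies}, and uses property~(4) there: for any transverse arc $t\subset A$ the concatenation $f^+(\bar t)*\gamma_1*f^-(t)*\gamma_2$ represents $\mu$, with $\ell(\gamma_i)\le LS_\gamma$. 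The short bottom horizontal piece comes from Lemma~\ref{lem:2.5} (the ``2.5 lemma''), which forces some $f^-(a_1^-)$ to have length $\le 2.5$ on $T$; the top horizontal piece is $f^+(a_1^-)$, whose length is controlled by $i(a_1^-,a_1^+)$ via~\eqref{eq:l(b)bound}.

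Second, you invoke Theorem~\ref{thm:annular2} as the source of the spiraling estimate, but that theorem is a \emph{lower} bound on $L(\mu)^2$ and does not help here. What you need (and what the paper uses) is the machinery assembled \emph{in the proof} of Theorem~\ref{thm:annular2}: Lemma~\ref{lem:annularporjecviaanuularintersecs} together with property~(5) of Proposition~\ref{prop:homotopies} give $|d_\gamma - i(a_1^-,a_1^+)|\lesssim S_\gamma$ (equation~\eqref{eq:d-i}), and then the upper bound in~\eqref{eq:l(b)bound} turns this into $\ell(\beta)\lesssim d_\gamma+S_\gamma$. So the dependency is reversed from what you wrote: the proof of Theorem~\ref{thm:annular} is a short corollary of the \emph{setup} for Theorem~\ref{thm:annular2} plus Proposition~\ref{prop:aSgamma<Area}, not of the statement of Theorem~\ref{thm:annular2} itself.
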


\subsubsection*{A closed formula}
So far we have
\[
\frac{S_\gamma}{(S_\gamma+d_\gamma)^2}\lesssim\frac{{\rm Area}(T)}{{\rm Length}(\mu)^2}\lesssim\frac{1}{S_\gamma}.
\]

When $S_\gamma$ is the dominant term, that is $S_\gamma\ge\ep\cdot d_\gamma$ for some (small) fixed $\ep>0$, both sides are coarsely the same and coarsely equal to $S_\gamma/(S_\gamma^2+d_\gamma^2)$.

When $d_\gamma$ is instead the dominant term, that is $S_\gamma\le\ep\cdot d_\gamma$, then we can get an improved upper bound on the length of the canonical meridian via
\[
{\rm Length}(\mu)\gtrsim d_\gamma-S_\gamma\ge(1-\ep)d_\gamma
\]
and, hence, an improved upper bound
\[
\frac{{\rm Area}(T)}{{\rm Length}(\mu)^2}\lesssim\frac{S_\gamma}{d_\gamma^2}.
\]

\begin{restatable}{thm}{annulartwo}
\label{thm:annular2}
There exists $c>0$ depending on $\Sigma$ such that the following holds. Let $M=H^-\cup_\Sigma H^+$ be a Heegaard splitting where $\Sigma$ is a closed orientable surface of genus at least 2. Let $\gamma$ be an essential curve on $\Sigma$ such that $(H^-,\gamma)$ and $(H^+,\gamma)$ are pared acylindrical and $S_\gamma\le d_\gamma/c$. Then in the hyperbolic metric on $M-\gamma$ the normalized length of the standard meridian $\mu$ determined by $M$ is bounded from below as follows:
\[
L(\mu)^2\ge c\frac{d_\gamma^2}{S_\gamma}.
\]
\end{restatable}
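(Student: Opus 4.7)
The plan is to upgrade the upper bound ${\rm Length}(\mu) \lesssim S_\gamma + d_\gamma$ from the proof of Theorem \ref{thm:annular} into a matching lower bound ${\rm Length}(\mu) \gtrsim d_\gamma - S_\gamma$ in the hypothesized regime $S_\gamma \le d_\gamma / c$. Given such a bound, the theorem follows at once: for $c$ sufficiently large we obtain ${\rm Length}(\mu) \gtrsim d_\gamma$, and dividing by the area estimate ${\rm Area}(T) \simeq S_\gamma$ (underlying Theorem \ref{thm:non-annular}) yields
\[
L(\mu)^2 \;=\; \frac{{\rm Length}(\mu)^2}{{\rm Area}(T)} \;\gtrsim\; \frac{d_\gamma^2}{S_\gamma}.
\]

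The lower bound on ${\rm Length}(\mu)$ is obtained by a reverse-triangle-inequality argument in the universal cover of the flat cusp torus $T$. Recall from the geometric model produced in the proof of Theorem \ref{thm:annular} that $\mu$ admits a representative on $T$ consisting of four segments: two vertical geodesic segments of length $\simeq S_\gamma$ contained in the vertical annuli, one horizontal spiraling geodesic segment of length $\simeq d_\gamma$ winding $\simeq d_\gamma$ times around the bounded core of one horizontal annulus, and one transverse geodesic segment of bounded length through the other horizontal annulus. I would lift this representative to the universal cover $\R^2 \to T$ and let $v_\mu \in \R^2$ be the translation vector in the covering lattice corresponding to the homotopy class of $\mu$; then ${\rm Length}(\mu)$, interpreted as the length of the flat geodesic in $[\mu]$, equals $|v_\mu|$. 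Writing $v_\mu = v_{{\rm spiral}} + v_{{\rm rest}}$ as the sum of the displacement of the lift of the spiraling segment and the displacements of the remaining three lifted segments, the model gives $|v_{{\rm spiral}}| \simeq d_\gamma$ and $|v_{{\rm rest}}| \lesssim S_\gamma$, so
\[
|v_\mu| \;\ge\; |v_{{\rm spiral}}| - |v_{{\rm rest}}| \;\gtrsim\; d_\gamma - S_\gamma,
\]
as desired.

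The main delicate point is verifying the two displacement estimates in the universal cover. The lower bound $|v_{{\rm spiral}}| \simeq d_\gamma$ boils down to the fact that the spiraling segment, lifted to the universal cover of the horizontal annulus (a bounded-width flat strip), is a nearly straight geodesic traveling $\simeq d_\gamma$ times the bounded circumference of the core, so its displacement has magnitude $\simeq d_\gamma$ along the strip. The upper bound $|v_{{\rm rest}}| \lesssim S_\gamma$ is just the sum of the length estimates for the three remaining segments. Both rely on precise control over how the combinatorial twist parameter $d_\gamma$ and the $S_\gamma$-scale coefficients translate into geometric data on $T$, which is exactly what the model from the proof of Theorem \ref{thm:annular} supplies. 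Once this is in hand, the remainder of the argument is an elementary computation in Euclidean $\R^2$.
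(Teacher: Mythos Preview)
Your high-level strategy matches the paper's: decompose a representative of $\mu$ on $T$ into four arcs, lower-bound ${\rm Length}(\mu)$ by a reverse triangle inequality isolating the spiraling piece, then divide by an upper bound on ${\rm Area}(T)$. However, two of the ingredients you defer to other results are not actually supplied by them, and one of these is the technical heart of the proof.

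First, the references are circular or misplaced. In the paper Theorem~\ref{thm:annular2} is proved \emph{before} Theorem~\ref{thm:annular}, and the latter explicitly reuses the setup of the former; so you cannot cite ``the model from the proof of Theorem~\ref{thm:annular}''. The four-arc decomposition comes from Proposition~\ref{prop:homotopies}. Likewise, the upper bound ${\rm Area}(T)\lesssim S_\gamma$ does not follow from Theorem~\ref{thm:non-annular}: the proof there only establishes ${\rm Area}(T)\lesssim {\rm Length}(\mu)$ (Claim~\ref{claim:bounded_area}), which is useless for you. The bound ${\rm Area}(T)\lesssim S_\gamma$ is proved \emph{inside} Theorem~\ref{thm:annular2} by exhibiting a second meridian $\mu'$ of length $\lesssim S_\gamma$ and pairing it with the short longitude; constructing $\mu'$ in turn requires finding, on each pleated surface, a \emph{short} horizontal arc on $T$, which is done via Lemma~\ref{lem:2.5}.

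Second, and this is the real gap, the assertion that the spiraling segment winds $\simeq d_\gamma$ times is not something Proposition~\ref{prop:homotopies} hands you; establishing it is the main work. Proposition~\ref{prop:homotopies} produces the arc $f^+(a_1^-)$ on $T$ but says nothing about its winding. The paper first uses Lemma~\ref{lem:2.5} to find a short reference arc $\alpha_1^+$, converting the winding of $f^+(a_1^-)$ into the intersection number $i(a_1^-,a_1^+)$ of essential arcs in $A$. It then invokes Lemma~\ref{lem:annularporjecviaanuularintersecs}, which says that for two $(L,\epsilon_1)$-related metrics this intersection number computes the annular coefficient up to a bounded error $K$. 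Applying it along the chain of $\lfloor LS_\gamma\rfloor$ related metrics from Proposition~\ref{prop:homotopies}(5) yields $|d_\gamma-i(a_1^-,a_1^+)|\le K(LS_\gamma+1)$, so the total error is $O(S_\gamma)$; this is exactly why the hypothesis $S_\gamma\le d_\gamma/c$ is needed to conclude $i(a_1^-,a_1^+)\gtrsim d_\gamma$. Your sentence ``the spiraling segment \dots\ is a nearly straight geodesic traveling $\simeq d_\gamma$ times the bounded circumference'' restates the conclusion without supplying this mechanism.
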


Again, in the same scenario, $(S_\gamma^2+d_\gamma^2)/S_\gamma$ is coarsely the same as $d_\gamma^2/S_\gamma$ when $d_\gamma$ is large. This produces the general closed formula
\[
\frac{{\rm Area}(T)}{{\rm Length}(\mu)^2}\simeq\frac{S_\gamma}{S_\gamma^2+d_\gamma^2}
\]
under the assumptions of Theorem \ref{thm:main}. Combining Theorems \ref{thm:non-annular}, \ref{thm:annular}, \ref{thm:annular2} and a simple study of the asymptotic behavior of the function $r(x,y)=x/(x^2+y^2)$ (Lemma \ref{lem:calculus} below) one gets Theorem \ref{thm:main}.

\begin{lem}\label{lem:calculus}
 For every $c, C>0$ there exists $K>0$ such that the following holds. Suppose that the function $r(x,y)$, defined on some subset of $\mathbb (0,\infty)\times(0,\infty)$ satisfies
 \[
 r(x,y)\leq Cx/y^2 \text{if $x\leq y/c$} \text{ and }r(x,y)\leq C/x.
 \]
 
 Then $r(x,y)\leq K \frac{x}{x^2+y^2}.\qedhere$\qed
\end{lem}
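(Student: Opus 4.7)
The idea is that the function $x/(x^2+y^2)$ interpolates between the two regimes $1/x$ (when $x$ dominates $y$) and $x/y^2$ (when $y$ dominates $x$), so the two given pointwise bounds are exactly what is needed, and the only work is to verify they glue together with a uniform multiplicative constant depending on $c$ and $C$.

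Concretely, I would split into the two cases separated by the threshold $x = y/c$ appearing in the hypothesis. In the first case, when $x \leq y/c$, we have $x^2 \leq y^2/c^2$ and hence
\[
x^2+y^2 \leq \left(1+\tfrac{1}{c^2}\right) y^2,
\]
which gives $\dfrac{x}{x^2+y^2}\ge \dfrac{c^2}{1+c^2}\cdot\dfrac{x}{y^2}$. Combined with the hypothesis $r(x,y)\le Cx/y^2$ in this region, this yields $r(x,y)\leq C\,\dfrac{1+c^2}{c^2}\cdot\dfrac{x}{x^2+y^2}$.

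In the opposite case $x>y/c$, we have $y^2<c^2x^2$ and therefore $x^2+y^2<(1+c^2)x^2$, which gives $\dfrac{x}{x^2+y^2}\ge\dfrac{1}{(1+c^2)x}$. Using the unconditional hypothesis $r(x,y)\le C/x$ in this region, we get $r(x,y)\leq C(1+c^2)\cdot\dfrac{x}{x^2+y^2}$. Taking $K := C\max\!\left(\tfrac{1+c^2}{c^2},\,1+c^2\right)$ then works in both regions and finishes the argument.

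There is no real obstacle here: the lemma is a two-case elementary calculus check, and the only thing to be careful about is that the multiplicative constant produced depends only on $c$ and $C$, as claimed. No delicate estimates or additional ingredients are needed.
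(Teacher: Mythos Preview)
Your proof is correct; the paper in fact omits the proof entirely, calling it elementary, so your two-case split on the threshold $x=y/c$ is precisely the kind of argument intended.
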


The proof of Lemma \ref{lem:calculus} is elementary and we omit it.

All the milestones of the strategy described so far in this section readily combine to a proof of Theorem \ref{thm:main}. 

\begin{proof}[Proof of Theorem \ref{thm:main}]
As both $(H^+,\gamma)$ and $(H^-,\gamma)$ are pared acylindrical, by Theorem \ref{thm:hyperbolicity} $M-\gamma$ has a unique complete finite volume hyperbolic metric. Denote by $T$ the boundary torus of the standard $\ep$-Margulis neighborhood of the cusp and by $\mu\subset T$ the standard meridian determined by $M$.  

Consider $S_\gamma$ and $d_\gamma$. There are four cases depending on whether $S_\gamma\ge R$ or $d_\gamma\ge R$ and $S_\gamma\ge d_\gamma/c$ or $S_\gamma\le d_\gamma/c$ (where $c$ is as in Theorem \ref{thm:annular2}).

{\em Case $S_\gamma\ge R$ and $S_\gamma\ge d_\gamma/c$}. By Theorem \ref{thm:non-annular}, we have $L(\mu)\gtrsim S_\gamma$. Thus the normalized length is as large as we want provided that $S_\gamma$ is large enough. Hence, by Theorem \ref{thm:filling}, $M$ has a hyperbolic metric for which $\gamma$ is a geodesic of length $\ell_M(\gamma)\simeq 1/L(\mu)^2$. By Theorems \ref{thm:non-annular} and \ref{thm:annular} we have $(S_\gamma^2+d_\gamma^2)/S_\gamma\lesssim L(\mu)^2\lesssim S_\gamma$. By Lemma \ref{lem:calculus} we also have $S_\gamma\lesssim (S_\gamma^2+d_\gamma^2)/S_\gamma$. Thus $L(\mu)\simeq (S_\gamma^2+d_\gamma^2)/S_\gamma$.

{\em Case $S_\gamma\ge R$ and $S_\gamma\le d_\gamma/c$}. By Theorem \ref{thm:non-annular}, we have $L(\mu)\gtrsim S_\gamma$. Thus the normalized length is as large as we want provided that $S_\gamma$ is large enough. Hence, by Theorem \ref{thm:filling}, $M$ has a hyperbolic metric for which $\gamma$ is a geodesic of length $\ell_M(\gamma)\simeq 1/L(\mu)^2$. By Theorems \ref{thm:non-annular} and \ref{thm:annular2} we have $(S_\gamma^2+d_\gamma^2)/S_\gamma\lesssim L(\mu)^2\lesssim S_\gamma/d_\gamma^2$. By Lemma \ref{lem:calculus} we also have $S_\gamma/d_\gamma^2\lesssim (S_\gamma^2+d_\gamma^2)/S_\gamma$. Thus $L(\mu)\simeq (S_\gamma^2+d_\gamma^2)/S_\gamma$.

{\em Case $d_\gamma\ge R$ and $S_\gamma\ge d_\gamma/c$}.
Observe that $S_\gamma\ge d_\gamma/c\ge R/c$. By Theorem \ref{thm:non-annular}, we have $L(\mu)\gtrsim S_\gamma$. Thus the normalized length is as large as we want provided that $d_\gamma$ is large enough. Hence, by Theorem \ref{thm:filling}, $M$ has a hyperbolic metric for which $\gamma$ is a geodesic of length $\ell_M(\gamma)\simeq 1/L(\mu)^2$. By Theorems \ref{thm:non-annular} and \ref{thm:annular} we have $(S_\gamma^2+d_\gamma^2)/S_\gamma\lesssim L(\mu)^2\lesssim S_\gamma$. By Lemma \ref{lem:calculus} we also have $S_\gamma\lesssim (S_\gamma^2+d_\gamma^2)/S_\gamma$. Thus $L(\mu)\simeq (S_\gamma^2+d_\gamma^2)/S_\gamma$.

{\em Case $d_\gamma\ge R$ and $S_\gamma\le d_\gamma/c$}.
By Theorem \ref{thm:annular2}, we have $L(\mu)\gtrsim d_\gamma^2/S_\gamma\ge cd_\gamma$. Hence the normalized length is as large as we want provided that $d_\gamma$ is large enough. Hence, by Theorem \ref{thm:filling}, $M$ has a hyperbolic metric for which $\gamma$ is a geodesic of length $\ell_M(\gamma)\simeq 1/L(\mu)^2$. By Theorems \ref{thm:non-annular} and \ref{thm:annular2} we have $(S_\gamma^2+d_\gamma^2)/S_\gamma\lesssim L(\mu)^2\lesssim S_\gamma/d_\gamma^2$. By Lemma \ref{lem:calculus} we also have $S_\gamma/d_\gamma^2\lesssim (S_\gamma^2+d_\gamma^2)/S_\gamma$. Thus $L(\mu)\simeq (S_\gamma^2+d_\gamma^2)/S_\gamma$.
\end{proof}

\subsection*{On the proofs}
We now spend some words on the key ingredients of the proofs. 

In particular, we explain what goes into the description of the torus $T$ and of the meridian $\mu\subset T$ given above.

As is often the case in the study of the relations between curve graphs and hyperbolic 3-manifolds, the problem that we want to solve splits into two parts, namely a non-annular part and an annular one. Both give important contributions to the situation which we briefly describe with the help of some pictures. The annular part is the one that poses the most challenging problems and where many new ideas are needed. For the sake of simplicity, in order to avoid technicalities, what follows is a simplified version of the arguments but it still highlights the main ideas. 

\subsubsection*{Non-annular contribution}
The non-annular subsurfaces of $\Sigma$ having $\gamma$ in their boundary are responsible for the vertical annuli in the above description of the shape of the boundary torus $T$ of the standard Margulis neighborhood of the cusp in $M-\gamma$.

The key input here is to look at the covering $Q\to M-\gamma$ corresponding to $\pi_1(W)$ where $W\subset\Sigma-\gamma$ is the component where most of $S_\gamma$ lies (say at least $1/2$ of $S_\gamma$ comes from subsurfaces contained in this component). It is a geometrically finite hyperbolic 3-manifold diffeomorphic to $W\times\mb{R}$ with rank one cusps corresponding to $\partial W$. The boundaries $T_Q$ of the Margulis neighborhoods of such rank one cusps are annuli of infinite height that cover $T$ under the covering projection $Q\to M-\gamma$ see Figure \ref{fig:cover1}.

\begin{figure}[h]
\begin{overpic}[scale=1.8]{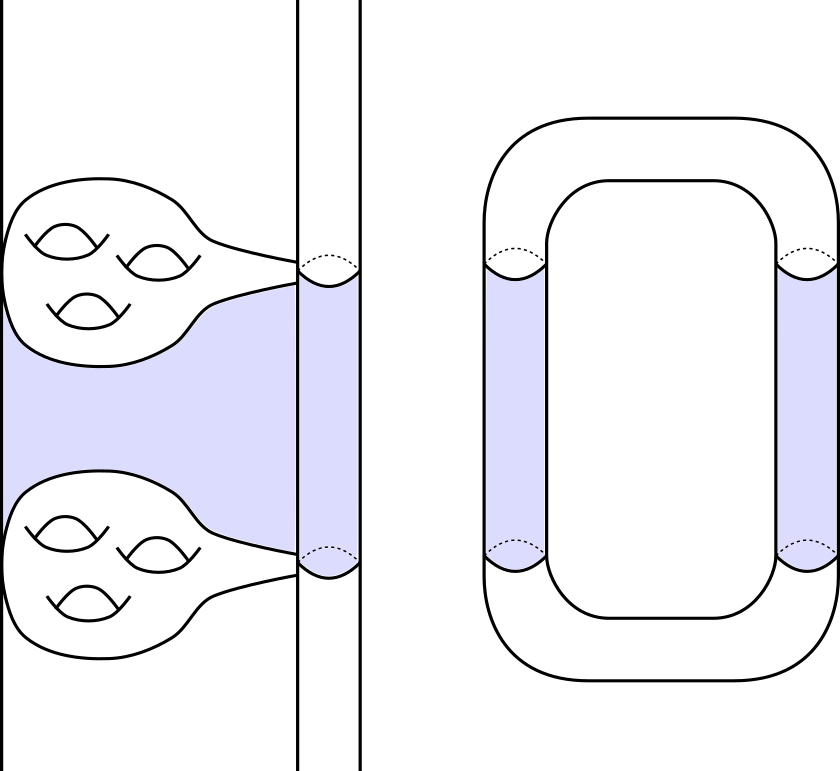}
\end{overpic}
\caption{The covering $Q\to M-\gamma$.}
\label{fig:cover1}
\end{figure}

The convex core $\mc{CC}(Q)$, a canonical convex submanifold of $Q$ isotopic to $W\times[-1,1]\subset Q$ under the diffeomorphism $Q\simeq W\times\mb{R}$, intersects $T_Q$ in a couple of finite annuli $A_Q\subset T_Q$. By the structure theory developed by Minsky \cite{M10} and Brock, Canary, and Minsky \cite{BrockCanaryMinsky:ELC2}, the height of the annuli $A_Q$ is roughly given by the following formula
\[
S_\gamma(\nu^-,\nu^+)=1+\sum_{Y\in\mc{Y}_\gamma\,,\,Y\subset W}{\{\{d_Y(\nu^-,\nu^+)\}\}_K}
\]
where $\nu^-,\nu^+\in\mc{C}(W)$ are the so-called end invariants of $Q$.

We prove two things.
\begin{itemize}
    \item{The restriction of the covering projection $Q\to M-\gamma$ to $T_Q\to T$ is essentially an embedding on $A_Q$, so that $A_Q$ appears isometrically embedded in $T$ as a vertical annulus.}
    \item{For every $Y\in\mc{Y}_\gamma$ and $Y\subset W$ we have $d_Y(\nu^-,\nu^+)\gtrsim d_Y(\mc{D}^-,\mc{D}^+)$.}
\end{itemize}

The second point is one of our new contributions.

In order to prove it we need to look at other coverings of $M-\gamma$, namely $N^-,N^+\to M-\gamma$ corresponding to $\pi_1(H^--\gamma),\pi_1(H^+-\gamma)$. These are geometrically finite handlebodies diffeomorphic to ${\rm int}(H^-),{\rm int}(H^+)$ respectively. Extending ideas from \cite{FSVa} combined with Efficiency of Pleated Surfaces \cite{M00}, we show that for every $Y\in\mc{Y}_\gamma$ with $Y\subset W$ there are disks $\delta^+\in\mc{D}^+,\delta^-\in\mc{D}^-$ whose subsurface projections to $W$ intersect essentially $Y$ and such that the geodesic representatives of such projections in $N^-,N^+$ have uniformly bounded length respectively. The claim will then follow more or less directly from the work \cite{BBCM}. This is discussed in detail in Section \ref{sec:disk_proj}.

The first point and the proof of Theorem \ref{thm:non-annular} is carried out in Section \ref{sec:upper_bound}.

\subsubsection*{Annular contribution}
The annular projections are responsible for the spiraling of the meridian $\mu$ around the waist of the torus $T$. 

The key here is to construct a nice model for the meridian $\mu\subset T$. Schematically, it will consist of four parts, two vertical (blue) and two horizontal (red) as in Figure \ref{fig:sweepout1}. 

\begin{figure}[h]
\begin{overpic}[scale=1.8]{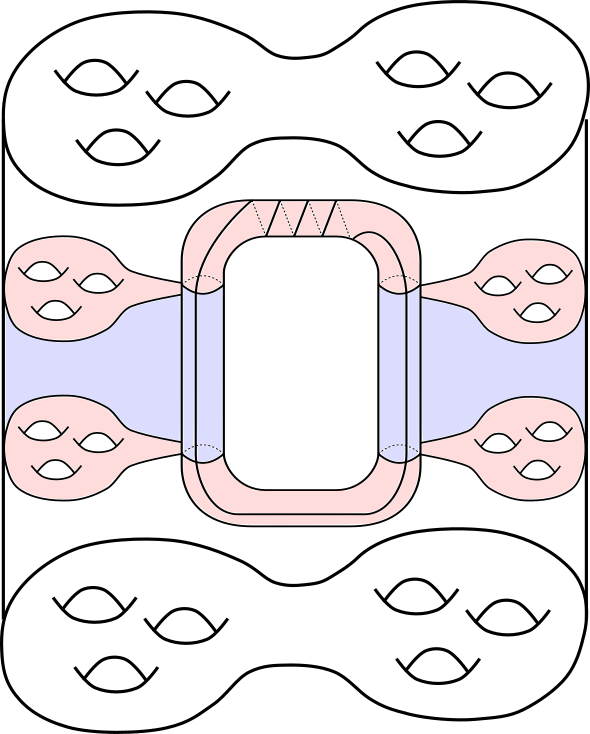}

\end{overpic}
\caption{Construction for the meridian.}
\label{fig:sweepout1}
\end{figure}

In order to construct the model for $\mu$ we start with the red surfaces in the picture. They represent two pleated surfaces properly homotopic to $\Sigma-\gamma\subset M-\gamma$ mapping geodesically the finite laminations of $\Sigma-\gamma$ obtained by intersecting with this surface two disks $\delta^+\in\mc{D}^+$ and $\delta^-\in\mc{D}^-$. 

Using in an essential way the work of Minsky \cite{M01}, but with a key input given by special resolution sequences that we construct, we insert a chain of geometrically controlled homotopies (the blue shaded region) joining the two red surfaces. The tracks of this homotopy on the torus $T$ are the vertical segments of $\mu$. They have length roughly $S_\gamma$.  

The special resolution sequence is constructed in Section \ref{sec:move_seq} and is the crucial combinatorial input to start the good homotopy machine set up in Section \ref{sec:good_homot}.

We now come to the horizontal portions of the meridian.

Recall that the red surfaces are pleated surfaces mapping geodesically the finite leaved laminations coming from the intersections $\delta^-\cap(\Sigma-\gamma)$ and $\delta^+\cap(\Sigma-\gamma)$ for some (arbitrarily) chosen disks $\delta^-\in\mc{D}^-$ and $\delta^+\in\mc{D}^+$. Building on ideas from \cite{FSVDehn}, we use the geometry of the ideal pleated disks filling $\delta^+-\gamma$ and $\delta^--\gamma$ on the two pleated surface to produce two arcs on $T$ (crossing side to side the red horizontal annuli in Figure \ref{fig:sweepout1}) of universally bounded length. Concatenating these arcs with the vertical ones coming from the homotopy described above we obtain a meridian $\mu'$ (different from $\mu$) of length bounded from above roughly by $S_\gamma$ (the length of the vertical pieces). 

We can then complete $\mu'$ to a homology basis for $T$ by adding as a longitude the homotopy class of the intersection of the red pleated surfaces with $T$. It will not be difficult to show that this longitude has always length in some uniform interval $[a,b]$. 

Having such a basis already tames the geometry of $T$ quite a bit. For example, it says that its area is bounded from above by roughly $S_\gamma$. Recall that above we also argued that the presence of the vertical annuli of height $S_\gamma$ and bounded base forces the area of $T$ to be at least $S_\gamma$. So we have that ${\rm Area}(T)\simeq S_\gamma$.

We then consider $\mu$. A model for $\mu$ is constructed by taking $\mu'$ but replacing the top horizontal segment with a parallel copy of the bottom horizontal segment. These two different segments will spiral one with respect to the other as illustrated in Figure \ref{fig:sweepout1}. Morally, as the top horizontal segment of $\mu'$ has universally bounded length, the length of the top horizontal segment of $\mu$ will be computed roughly by how many times the two segments intersect. 

Therefore, the crux of the problem is to show that such intersection is essentially measured by $d_\gamma$. This relies on some 2-dimensional hyperbolic geometry and the fact that the hyperbolic metrics on the red pleated surfaces are related by $k$ consecutive uniformly bilipschitz changes of metrics with $k$ roughly given by $S_\gamma$ (another output of the good homotopy machine proved in Section \ref{sec:good_homot}).

The discussion about the model of the meridian is given in Section \ref{sec:annuli}. There we also prove Theorems \ref{thm:annular} and \ref{thm:annular2}.

\section{Disk projections and moderate length curves}
\label{sec:disk_proj}

The main goal of this section is to prove the following.

\begin{proposition}\label{prop:short_projection}
 Let $H$ be a handlebody of genus at least 2. There exists $L>0$ such that the following holds: Let $\gamma\subset\Sigma=\partial H$ be an essential multicurve and let $Y\subset W:=\Sigma-\gamma$ be a connected component of the complement. Suppose that $(H,\gamma)$ is pared acylindrical. Fix any hyperbolic structure $N$ on $H$ which has rank-1 cusps precisely at $\gamma$. Then there exists $\alpha\in\pi_Y(\calD)$ such that $\ell(\alpha)\leq L$.
\end{proposition}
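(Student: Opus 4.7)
The plan is to extend the pleated surface approach of \cite{FSVa} combined with Minsky's Efficiency of Pleated Surfaces \cite{M00}.

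Since $(H,\gamma)$ is pared acylindrical, $N$ is a geometrically finite hyperbolic handlebody with cusps precisely at $\gamma$. I would begin by producing a pleated surface $f\colon W \to N$ in the homotopy class of the boundary inclusion, sending $\gamma = \partial W$ to the cusps. The restriction $f|_Y$ has area at most $2\pi|\chi(Y)|$, so by the bounded diameter lemma the $\epsilon$-thick part of $f(Y)$ has diameter bounded by a constant $D_0 = D_0(\chi(\Sigma))$. This is the basic geometric object on which all length bounds will live.

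Next I would locate a disk $\delta \in \calD$ whose boundary crosses $Y$ essentially. Pared acylindricality, and specifically condition (3) which forbids essential annuli between $\gamma$ and $\Sigma - \gamma$, rules out the scenario in which every $\delta \in \calD$ is isotopic off $Y$, so there exists $\delta \in \calD$ with $\partial \delta \cap Y \neq \emptyset$. Taking $\delta$ to minimize $|\partial\delta \cap \partial Y|$ and realizing $\partial\delta \cap Y$ as geodesic arcs on $f(Y)$, the thick-part bound $D_0$ controls each such arc's length on the pleated surface. Closing these arcs up with short horizontal arcs in the standard Margulis neighborhoods of the cusps along $\partial Y \subset \gamma$ yields a representative of $\alpha = \pi_Y(\delta)$ with $\ell_f(\alpha) \le L$; since pleated maps are $1$-Lipschitz, this gives $\ell_N(\alpha) \le L$.

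The main obstacle is controlling the closing-up step so that $\alpha$ does not spiral repeatedly around the cusps of $N$ at $\partial Y$, which would make its $N$-geodesic length arbitrarily large. Here Minsky's Efficiency of Pleated Surfaces is the crucial input: it allows one to realize a candidate $\alpha \in \pi_Y(\calD)$ on a pleated surface adapted to $\alpha$ and to compare its pleated-surface length with its $N$-geodesic length, thereby converting the combinatorial existence of a disk projection into a geometric length bound. The novelty beyond \cite{FSVa}, which treated $W$ itself, is working with a proper subsurface $Y \subset W$ whose boundary is already cuspidal; this requires the pleated surface arguments to be compatible with the relative cusp structure at $\partial Y$, so that both the bounded-diameter estimate and the efficiency comparison respect the Margulis neighborhoods around $\partial Y$.
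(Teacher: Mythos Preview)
Your proposal has a genuine gap at the crucial step. You write that ``the thick-part bound $D_0$ controls each such arc's length on the pleated surface,'' but the bounded diameter lemma says only that the $\epsilon$-thick part of $(Y,\sigma)$ has bounded diameter; it says nothing about the length of an individual arc of $\delta\cap Y$. Such an arc can pass many times through Margulis tubes around short closed geodesics of $(Y,\sigma)$ and can therefore be arbitrarily long on the pleated surface, even though the thick part it traverses has bounded diameter. So the length bound you claim does not follow, and the subsequent closing-up step has nothing to work with.

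The paper's proof supplies precisely the missing mechanism, and it is not Efficiency of Pleated Surfaces as a black box but rather the \emph{strategy} of its proof, with Thurston's Uniform Injectivity as the key input. One takes $\delta\in\mathcal D$ with $i(\delta,\gamma)$ \emph{minimal}, builds the ideal pleated disk $\Delta$ in $N$ bounded by the geodesic images of the arcs $\ell_j$ of $\delta\cap W$, and observes that $\mathrm{Area}(\Delta)\le \pi\, i(\delta,\gamma)$. One then shows that the non-cuspidal portions $\beta_j^0$ of the edges are pairwise $\eta$-separated in $\Delta$: if two came close, Uniform Injectivity (together with Proposition~\ref{prop:connected component} to handle disconnected $W$) would force them close on the surface, and a surgery of $\delta$ along a short arc $\zeta$ would produce, via the Loop Theorem, a disk with strictly fewer intersections with $\gamma$, contradicting minimality. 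The area bound on $\Delta$ together with this separation gives $\sum_j \ell(\beta_j^0)\lesssim i(\delta,\gamma)$, and since at least half the $\beta_j^0$ lie in $Y$, some arc in $Y$ has uniformly bounded length; closing it up along the (bounded-length) boundary of the $\epsilon_0$-cuspidal part yields the desired $\alpha$. Your minimality hypothesis on $|\partial\delta\cap\partial Y|$ is the right instinct, but you never use it; the surgery/Uniform Injectivity argument is exactly how minimality gets converted into a length bound.
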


This result might have independent interest. 

As discussed in Section \ref{sec:strategy}, we will employ it to obtain control on the shape of the flat torus $T$ bounding a standard Margulis neighborhood of the cusp of $M-\gamma$. We discuss all the details in the next section. Roughly speaking, there we first apply Proposition \ref{prop:short_projection} to the handlebody coverings $N^-,N^+$ of $M-\gamma$ corresponding to $\pi_1(H^--\gamma),\pi_1(H^+-\gamma)$ to get some moderate length curves on every component of $\Sigma-\gamma$ in such a way that each of them is a surgery of some disks in $\mc{D}^-,\mc{D}^+$. In turn, such moderate length curves are key to controlling the geometry of the $\pi_1(\Sigma-\gamma)$-cover $Q$ of $M-\gamma$ and, hence, the shape of $T$.

As a preparation for the proof, we briefly recall some facts about pleated surfaces. These are a very useful tool introduced by Thurston \cite{Thu86} to study the geometry of hyperbolic 3-manifolds.

\subsection{Laminations and pleated surfaces}

\begin{dfn}[Lamination]
A {\em lamination} of a hyperbolic surface $(W,\sigma)$ of finite area is a closed subset $\lambda\subset W$ which can be written as a union of pairwise disjoint complete simple geodesics, the {\em leaves} of the lamination. A lamination is called {\em maximal} if each component of the complement $W-\lambda$ is an ideal hyperbolic triangle.
\end{dfn}

As the only laminations that we will ever use are finite unions of proper essential complete geodesics we don't need to discuss much of the structural results for laminations. We address the interested reader to \cite[Chaper I.4]{CEG:notes_on_notes} for a comprehensive treatment. We will use the following:

\begin{lem}[{see \cite[Proposition 8.8.4]{ThuNotes}}]
\label{lem:straight in cusp}
    For all sufficiently small $\epsilon_0>0$ the following holds. Let $(W,\sigma)$ be a finite-volume hyperbolic surface and $\lambda$ a lamination on $W$. Then any component of the intersection of $\lambda$ with the $\epsilon_0$-cuspidal part of $W$ is an infinite ray orthogonal to the boundary of the $\epsilon_0$-cuspidal part.
\end{lem}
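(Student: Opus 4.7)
The plan is to lift a cusp neighborhood to the upper half-plane model of $\mathbb{H}^2$ and to exploit the fact that leaves of $\lambda$ lift to a $\pi_1(W)$-invariant family of pairwise disjoint geodesics. Fix a cusp of $W$ and normalize so that its parabolic stabilizer in $\pi_1(W)$ is generated by $z\mapsto z+1$, with the cusp itself placed at $\infty$. Then the $\epsilon_0$-cuspidal part lifts to a horoball of the form $\{\mathrm{Im}(z)>h(\epsilon_0)\}$, with $h(\epsilon_0)\to\infty$ as $\epsilon_0\to 0$. The strategy is to show that, once $h(\epsilon_0)>1/2$, every lift of a leaf meeting this horoball must be a vertical ray ending at $\infty$, and hence projects to an infinite ray into the cusp that crosses $\partial\{\mathrm{Im}(z)>h(\epsilon_0)\}$ orthogonally.

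Each leaf of $\lambda$ is a complete simple geodesic, so in $\mathbb{H}^2$ it lifts either to a vertical line ending at $\infty$ or to a Euclidean semicircle with both endpoints on $\mathbb{R}$. Vertical lifts already give the desired rays, so the key is to exclude semicircular lifts that reach into the horoball. I would do this by the following disjointness argument: if $\Gamma$ is a semicircular lift of radius $r$ centered at $c\in\mathbb{R}$, then the translate $\Gamma+1$ is another semicircle with distinct center $c+1$, and is itself a lift of some leaf of $\lambda$ (possibly the same one as $\Gamma$). Since the preimage of $\lambda$ in $\mathbb{H}^2$ is a disjoint union of its connected components (using that leaves of $\lambda$ are simple and pairwise disjoint in $W$), the two distinct lifts $\Gamma$ and $\Gamma+1$ must be disjoint in $\mathbb{H}^2$.

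An elementary Euclidean check on two circles of equal radius $r$ with centers at distance $1$ apart then yields $r\leq 1/2$, so the maximal height attained by $\Gamma$ is at most $1/2$. Choosing $\epsilon_0$ small enough that $h(\epsilon_0)>1/2$ therefore rules out every semicircular lift meeting the horoball, leaving only the vertical ones; this produces exactly the infinite rays orthogonal to the horocyclic boundary. Since $W$ has only finitely many cusps and each admits an analogous normalization, a single global threshold $\epsilon_0$ suffices. The main obstacle is the simplicity-based disjointness argument for $\Gamma$ and $\Gamma+1$; once that is set up carefully, the geometric conclusion is immediate and the rest of the proof is bookkeeping in the upper half-plane picture.
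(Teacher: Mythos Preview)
Your argument is correct and is essentially the standard proof of this fact. The paper does not supply its own proof of this lemma; it simply cites Thurston's notes \cite[Proposition 8.8.4]{ThuNotes}, where the same upper-half-plane argument appears: normalize the cusp so the parabolic stabilizer is $z\mapsto z+1$, observe that a semicircular lift $\Gamma$ and its translate $\Gamma+1$ are distinct lifts of the lamination and hence disjoint, forcing Euclidean radius at most $1/2$, so only vertical geodesics penetrate a horoball of height greater than $1/2$. Your observation that the threshold is universal (independent of the surface $W$) is also correct, since the relation between $\epsilon_0$ and the horoball height depends only on the normalized translation $z\mapsto z+1$.
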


\begin{dfn}[Pleated Surface]
A {\em pleated surface} consists of the following data: A (possibly disconnected) surface of finite type $W$, containing a lamination $\lambda$, and a proper map $f:W\to N$ in a hyperbolic 3-manifold $N$ such that:
\begin{itemize}
  \item{The pull-back metric $(W,\sigma)$ is a complete finite area hyperbolic metric.}
  \item{$f$ maps each leaf $\ell$ of $\lambda$ to a complete geodesic in $N$ (possibly closed if the leaf $\ell$ is closed).}
\end{itemize}  

We will also say that $f$ {\rm maps geodesically the lamination $\lambda$}.

The restriction of a pleated surface $g:W\to N$ to the geodesic lamination $\lambda\subset W$ that it maps geodesically in $N$ has a {\rm canonical lift} ${\bf p}_g:\lambda\to{\bf P}(N)$ to the projectivization of the unit tangent bundle of $N$. The image ${\bf p}_g(x)$ of a point $x\in\lambda$ lying on a leaf $\ell$ is the tangent line tangent to the geodesic $g(\ell)$ at $g(x)$.
\end{dfn}

Again, we refer to \cite[Chapter I.5]{CEG:notes_on_notes} for an in-depth discussion. 

In this paper we will need a few fundamental structural properties of pleated surface. Discussing them is our next step. Roughly speaking, it is possible to turn topological properties of the maps $f:W\to N$ into quantitative geometric control.

\subsection{Geometry of pleated surfaces}

\begin{dfn}[Type Preserving]
A pleated surface $f:(W,\sigma)\to N$ is {\rm type preserving} if it is $\pi_1$-injective and $f_*:\pi_1(W)\to\pi_1(N)$ takes hyperbolic (resp. parabolic) elements to hyperbolic (resp. parabolic) elements. 
\end{dfn}

Type preserving pleated surfaces relate nicely with the thick-thin decomposition of the hyperbolic surface $(W,\sigma)$ and of the hyperbolic 3-manifold $N$. In fact, we have the following.

\begin{lem}[{see \cite[Lemma 3.1]{M00}}]
\label{lem:thick to thick}
For every $\ep_0>0$ there exists $\ep_1>0$ (only depending on $\ep_0$ and $W$) such that for every type preserving $\pi_1$-injective pleated surface $g:(W,\sigma)\to N$ we have that 
\[g^{-1}(N^{thin}_{\leq \epsilon_1})\subseteq W^{thin}_{\leq \epsilon_0},\]
where $M^{thin}_{\leq \epsilon}$ denotes the $\epsilon$-thin part of the hyperbolic manifold $M$.
\end{lem}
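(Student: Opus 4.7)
The plan is to prove the contrapositive: if $x\in W$ lies in the $\epsilon_0$-thick part of $W$, then for $\epsilon_1$ small enough in terms of $\epsilon_0$ and $W$, the image $g(x)$ cannot lie in $N^{\mathrm{thin}}_{\leq \epsilon_1}$. The key properties of $g$ I would exploit are (i) that with respect to the pulled-back hyperbolic metric $\sigma$, the pleated surface is a local isometry on complementary regions and along the lamination $\lambda$, hence $1$-Lipschitz globally, and (ii) that type-preservation plus $\pi_1$-injectivity means the $g_\ast$-image of a parabolic is parabolic and of a hyperbolic element is hyperbolic, and that a lift $\tilde g\colon \tilde W\to \mathbb{H}^3$ sends parabolic fixed points on $\partial \tilde W$ to parabolic fixed points on $\partial \mathbb{H}^3$.

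First I would set up the geometric picture. Fix $\tilde x\in \tilde W$ above $x$. Since $x$ is in the $\epsilon_0$-thick part, the ball $B_{\tilde W}(\tilde x,\epsilon_0/2)$ embeds isometrically as a hyperbolic disk of radius $\epsilon_0/2$ in $\tilde W=\mathbb{H}^2$, and by the local-isometry property $\tilde g$ restricted to this ball is a local-isometric immersion of a hyperbolic disk of definite area $2\pi(\cosh(\epsilon_0/2)-1)$ into $\mathbb{H}^3$, centered at $\tilde g(\tilde x)$ and contained (by $1$-Lipschitz) in $B_{\mathbb{H}^3}(\tilde g(\tilde x),\epsilon_0/2)$.

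Second I would use the Margulis lemma in $N$: if $g(x)\in N^{\mathrm{thin}}_{\leq\epsilon_1}$ there is a non-trivial $\phi\in\pi_1(N)$ with $d_{\mathbb{H}^3}(\tilde g(\tilde x),\phi\tilde g(\tilde x))\leq\epsilon_1$, lying in the (virtually) abelian stabilizer $K$ of a component of the standard thin part of $N$. The heart of the proof is the claim that such a $\phi$ lies in $g_\ast\pi_1(W)$: the pleated disk of fixed positive area cannot sit inside a component of the $\epsilon_1$-thin part without developing overlaps that project to non-trivial elements of $g_\ast\pi_1(W)\cap K$; taking $\epsilon_1$ small forces the existence of such an overlap element inside the cyclic or $\mathbb{Z}^2$ subgroup $K$. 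Once this is established, write $\phi=g_\ast(\psi)$ with $\psi\in\pi_1(W)$ uniquely determined by $\pi_1$-injectivity, and by type-preservation $\psi$ is parabolic or hyperbolic according as $\phi$ is.

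Third I would convert short translation length in $\mathbb{H}^3$ to short translation length in $\mathbb{H}^2$ at $\tilde x$. This is the delicate step, since $1$-Lipschitz goes the wrong way. I would use the geometry of cusps and tubes: because $\tilde g$ takes the parabolic fixed point of $\psi$ (resp. the axis of $\psi$) to that of $\phi$, and the disk $\tilde g(B_{\tilde W}(\tilde x,\epsilon_0/2))$ has definite area in a component of $\mathbb{H}^3$ shrunk by quotienting by a small cyclic group, a standard horoball/tubular neighborhood comparison then gives a uniform bound $d_{\mathbb{H}^2}(\tilde x,\psi\tilde x)\leq \epsilon_0$. This contradicts $x$ being in the $\epsilon_0$-thick part.

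I expect the main obstacle to be exactly the step of pulling back the short displacement from $\mathbb{H}^3$ to $\mathbb{H}^2$; the inequality from $1$-Lipschitzness is in the wrong direction and one must use the positive intrinsic area of an embedded disk in the thick part together with the type-preserving control on cusp/tube invariants. If a direct estimate becomes combinatorially awkward, a robust fallback is a compactness argument: assume the lemma fails, extract a sequence $g_n\colon W\to N_n$ with base points $x_n$ in the $\epsilon_0$-thick part of $W$ but $g_n(x_n)$ ever deeper in thin parts, pass to geometric limits of pointed pleated disks in $\mathbb{H}^3$ (possible since the intrinsic area and radius are uniform and pleated surfaces form a compact family after normalization), and derive a contradiction from the type-preserving boundary behavior of the limit pleated disk, which would have to be trapped inside a horoball or a degenerate tube in $\mathbb{H}^3$ and therefore have zero area.
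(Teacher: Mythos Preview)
The paper does not prove this lemma; it quotes it from \cite[Lemma~3.1]{M00}. However, the standard argument (which the paper itself uses verbatim at the start of the proof of Proposition~\ref{prop:connected component} in Appendix~\ref{appendixA}) is much simpler than what you propose, and your proposal has genuine gaps.

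The standard proof runs as follows. If $x$ lies in the $\epsilon_0$-thick part of $(W,\sigma)$, then by elementary hyperbolic surface geometry there exist two loops $\alpha,\beta$ based at $x$ of length $\le L=L(\epsilon_0,W)$ generating a non-abelian (free) subgroup of $\pi_1(W,x)$. Since $g$ is $1$-Lipschitz and $\pi_1$-injective, $g(\alpha),g(\beta)$ are loops at $g(x)$ of length $\le L$ generating a non-abelian subgroup of $\pi_1(N)$. Now choose $\epsilon_1$ so small that any point in the $\epsilon_1$-thin part of $N$ lies at distance $>L$ from the boundary of its Margulis component; then every loop at such a point of length $\le L$ is contained in that Margulis component and hence represents an element of its elementary stabilizer. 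This forces the group generated by $g(\alpha),g(\beta)$ to be elementary, a contradiction.

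Your approach, by contrast, tries to show that the short element $\phi\in\pi_1(N)$ witnessing $g(x)\in N^{\mathrm{thin}}_{\le\epsilon_1}$ actually lies in $g_*\pi_1(W)$, and then to pull the short displacement back to $\mathbb H^2$. Both steps are problematic. For the first, the area of the pleated disk gives no obstruction: a Margulis tube in $N$ can have arbitrarily large volume, and in $\mathbb H^3$ a horoball has infinite volume, so there is no reason a $2$-disk of fixed area cannot sit inside either without producing overlaps in $g_*\pi_1(W)$. For the second, you correctly note the inequality from $1$-Lipschitzness points the wrong way, and the ``horoball/tubular comparison'' you invoke is not a valid substitute: knowing that $\tilde g$ takes the fixed point (or axis) of $\psi$ to that of $\phi$ does not bound $d_{\mathbb H^2}(\tilde x,\psi\tilde x)$ in terms of $d_{\mathbb H^3}(\tilde g(\tilde x),\phi\tilde g(\tilde x))$. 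Your compactness fallback has the same flaw: a limit pleated disk of fixed area can perfectly well be contained in a horoball of $\mathbb H^3$, so no contradiction arises.

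The fix is to abandon the attempt to pull back a \emph{single} short element and instead push forward a \emph{pair} of bounded-length loops generating a non-elementary group, exactly as above.
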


A rephrasing of the conclusion of the lemma is that only the $\ep_0$-thin part of $(W,\sigma)$ can be mapped to the $\ep_1$-thin part of $N$.

The next topological condition introduced by Thurston \cite{Thu86} will be converted into geometric control on the distortion of the image of a lamination mapped geodesically by a pleated surface in a hyperbolic 3-manifold.

\begin{dfn}[(Weakly) Doubly Incompressible {\cite{Thu86}}]
A map $f:W\to N$ of a connected orientable finite area hyperbolic surface $W$ in a hyperbolic 3-manifold $N$ that takes cusps to cusps is {\em doubly incompressible} if    
\begin{enumerate}[(a)]
    \item{$f$ is $\pi_1$-injective.}
    \item{Non-trivial homotopy classes of arcs $(I,\partial I)\to(W,{\rm cusps}(W))$ relative to cusps map injectively to homotopy classes of arcs $(I,\partial I)\to(N,{\rm cusps}(N))$.}
    \item{Every $\pi_1$-injective cylinder $c:S^1\times I\to N$ such that $c|_{\partial S^1\times I}=f\circ c_0$ with $c_0:\partial S^1\times I\to W$ satisfies either $c_*\pi_1(S^1\times I)<\pi_1({\rm cusps}(N))$ or $c_0$ extends to a map $c_0:S^1\times I\to W$.}
    \item{$f_*$ maps every maximal abelian subgroup of $\pi_1(W)$ to a maximal abelian subgroup of $\pi_1(N)$.}
\end{enumerate}
The map $f$ is {\em weakly doubly incompressible} if instead of (d) it satisfies the weaker property
\begin{enumerate}[(d')]
    \item{$f_*$ maps every maximal cyclic subgroup of $\pi_1(W)$ to a maximal cyclic subgroup of $\pi_1(N)$.} 
\end{enumerate}
\end{dfn}

Thurston proves the following two structural properties:

\begin{thm}[Lamination Injects, {\cite[Theorem 5.6]{Thu86}}]
\label{thm:weakly embeds}
If $g:(W,\sigma)\to N$ is a weakly doubly incompressible type preserving pleated surface in a hyperbolic 3-manifold $N$ mapping geodesically a lamination $\lambda$ then the canonical lift ${\bf p}_g:\lambda\to{\bf P}(N)$ is an embedding.
\end{thm}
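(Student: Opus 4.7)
Plan: I would proceed by contradiction, producing from a failure of injectivity an essential cylinder in $N$ and then applying condition (c) to eliminate it. Suppose $\mathbf{p}_g(x_1) = \mathbf{p}_g(x_2)$ for distinct $x_1, x_2 \in \lambda$ on leaves $\ell_1, \ell_2$. Since $g$ is an isometry on each leaf onto a complete geodesic of $N$, equality of base points and of (unoriented) tangent lines propagates, forcing $g(\ell_1)$ and $g(\ell_2)$ to coincide as unparametrized complete geodesics of $N$. Passing to universal covers $\tilde W = \mathbb H^2$ and $\tilde N = \mathbb H^3$, there are distinct lifts $\tilde\ell_1, \tilde\ell_2 \subset \tilde\lambda$ and a deck transformation $\alpha \in \pi_1(N)$ (possibly trivial) such that $\alpha\cdot \tilde g(\tilde\ell_1) = \tilde g(\tilde\ell_2)$ as oriented geodesics of $\mathbb H^3$.

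If the common image geodesic in $N$ is an axis of a loxodromic element, so in particular each $\ell_i$ is closed, let $C_i \leq \pi_1(W)$ denote the maximal cyclic subgroup represented by $\ell_i$. By hypothesis (d'), $g_*(C_1)$ and $g_*(C_2)$ are maximal cyclic in $\pi_1(N)$; both lie in the cyclic stabilizer of the common axis, so they coincide. The $\pi_1$-injectivity from (a) then gives $C_1 = C_2$, forcing $\ell_1 = \ell_2$ by simplicity of leaves of a geodesic lamination, a contradiction.

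In the non-axial case I would construct a $\pi_1$-injective essential cylinder $c\colon S^1\times I\to N$ as follows: pick a short arc $\tau\subset W$ transverse to $\lambda$ from $x_1$ to $x_2$, thicken $\tau$ to a rectangle $R$ in $W$ with two long sides running along $\ell_1$ and $\ell_2$, and close $R$ up using the identification $g|_{\ell_2} = \alpha\circ g|_{\ell_1}$ of those sides; $\pi_1$-injectivity of the resulting immersed annulus follows from (a). Condition (c) then provides two alternatives for $c$. Either $c$ is homotopic into a cusp of $N$, which is excluded because the geodesic $g(\ell_i)$ cannot remain in a cusp by Lemma \ref{lem:straight in cusp}. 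Or $c$ extends to an immersed annulus in $W$ with boundary $\ell_1 \cup \ell_2$, which is excluded because distinct leaves of a geodesic lamination cannot be freely homotopic in a hyperbolic surface.

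Main obstacle: constructing the cylinder $c$ when $\ell_1, \ell_2$ are non-closed leaves, since the rectangle $R$ is then an infinite strip and must be truncated to a finite annulus using the discrete translation action of $\alpha$ on the common image geodesic. Ensuring that the truncated object remains $\pi_1$-injective with non-parabolic boundary, and that the degenerate sub-case $\alpha=1$ is ruled out (where the pleated surface would be ``folding by $\pi$'' across $\tilde\delta$), requires careful use of the type-preserving hypothesis and Lemma \ref{lem:straight in cusp} to control the behavior of leaves near cusps; without this, the cylinder one builds could be automatically homotopic into a cusp and carry no contradiction.
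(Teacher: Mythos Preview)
The paper does not prove this theorem; it is quoted as \cite[Theorem 5.6]{Thu86} and used as a black box, so there is no proof in the paper to compare your proposal against.

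That said, your sketch is broadly along the right lines for how Thurston's argument goes, but it has real gaps beyond the one you flag. The case distinction ``axis of a loxodromic'' versus ``non-axial'' is not the right dichotomy: the image geodesic $g(\ell_1)=g(\ell_2)$ need not be closed even when both leaves are non-compact, and conversely a non-closed leaf can map onto a closed geodesic. More seriously, in the $\alpha=1$ sub-case (two distinct lifts $\tilde\ell_1,\tilde\ell_2$ with the \emph{same} image in $\mathbb H^3$) there is no cylinder to build from deck transformations at all; one must instead use condition (b) on arcs into cusps, since the two leaves, being asymptotic in $\mathbb H^2$ (they are disjoint simple geodesics with coinciding images, hence share an ideal endpoint), give an essential proper arc in $W$ relative to cusps whose image in $N$ is null-homotopic rel cusps. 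Your proposal does not invoke (b) anywhere, and the ``folding by $\pi$'' remark does not substitute for it. Finally, in the genuine cylinder case your appeal to (c) is slightly off: condition (c) does not say the cylinder is homotopic into a cusp, only that its core is conjugate into a cusp group, and the contradiction then comes from type-preservation rather than from Lemma~\ref{lem:straight in cusp}.
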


\begin{thm}[Uniform Injectivity, {\cite[Theorem 5.7]{Thu86}}]
\label{uniforminj}
Fix $\ep_0>0$, a Margulis constant and $W$ a finite type connected surface. For every $\ep>0$ there exists $\delta>0$ such that for any type preserving doubly incompressible pleated surface $g:(W,\sigma)\to N$ mapping geodesically a lamination $\lambda$, if $x,y\in\lambda$ lie in the $\ep_0$-thick part of $(W,\sigma)$, then
\[
d_{{\bf P}(N)}({\bf p}_g(x),{\bf p}_g(y))\le\delta\Longrightarrow d_\sigma(x,y)\le\ep.
\]
Here ${\bf p}_g:\lambda\to {\bf P}(N)$ denotes the map induced by $g$ from the lamination $\lambda$ to the projective unit tangent bundle of $N$.
\end{thm}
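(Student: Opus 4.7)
The plan is a proof by contradiction via compactness-and-limit. Suppose the statement fails: there exist $\epsilon>0$ and a sequence of type preserving doubly incompressible pleated surfaces $g_n:(W,\sigma_n)\to N_n$ mapping geodesically laminations $\lambda_n\subset W$, together with points $x_n,y_n\in\lambda_n$ in the $\epsilon_0$-thick part of $(W,\sigma_n)$, such that $d_{{\bf P}(N_n)}({\bf p}_{g_n}(x_n),{\bf p}_{g_n}(y_n))\to 0$ while $d_{\sigma_n}(x_n,y_n)\geq \epsilon$.

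The first step is the extraction of geometric limits. Mumford compactness applied to the pointed surfaces $(W,\sigma_n,x_n)$ places them (after adjusting markings) in a compact part of moduli space, since thickness at $x_n$ prevents them from degenerating; after passing to a subsequence we obtain $(W,\sigma_n,x_n)\to(W,\sigma_\infty,x_\infty)$ geometrically and $y_n\to y_\infty$ with $d_{\sigma_\infty}(x_\infty,y_\infty)\geq\epsilon$. The contrapositive of Lemma \ref{lem:thick to thick} supplies a uniform lower bound on ${\rm inj}_{N_n}(g_n(x_n))$, so the pointed Kleinian manifolds $(N_n,g_n(x_n))$ subconverge geometrically to a pointed $(N_\infty,p_\infty)$, and the $g_n$ converge on compact subsets to a type preserving pleated surface $g_\infty:(W,\sigma_\infty)\to N_\infty$ mapping geodesically a lamination $\lambda_\infty$ that passes through both $x_\infty$ and $y_\infty$. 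Passing the hypothesis to the limit gives ${\bf p}_{g_\infty}(x_\infty)={\bf p}_{g_\infty}(y_\infty)$, so the leaves $\ell_x,\ell_y$ of $\lambda_\infty$ through these two points are sent by $g_\infty$ to complete geodesics of $N_\infty$ sharing a unit tangent line, and hence coinciding setwise.

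The second step is to extract from this coincidence a cylinder contradicting double incompressibility of $g_\infty$. Lifting to universal covers, the identification of a lift of $\ell_x$ with a lift of $\ell_y$, combined with a lifted path $\alpha\subset W$ from $x_\infty$ to $y_\infty$, produces a non-trivial deck transformation of $\widetilde N_\infty$ preserving a geodesic; it is realized by a $\pi_1$-injective cylinder $c:S^1\times I\to N_\infty$ whose boundary circles factor through $g_\infty$ applied to core curves on $W$. Double incompressibility persists in the limit because conditions (a)--(c) are homotopical statements stable under geometric convergence of pointed $\pi_1$-injective data. Condition (c) then forces $c$ to be either peripheral—impossible, since $g_\infty(x_\infty)$ lies in the thick part of $N_\infty$—or to extend as a cylinder-homotopy inside $W$ between $\ell_x$ and $\ell_y$. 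The latter forces $\ell_x=\ell_y$, because distinct leaves of a geodesic lamination are disjoint simple geodesics and two homotopic simple geodesics on a hyperbolic surface coincide; a parameter-matching argument along the common leaf then forces $x_\infty=y_\infty$, contradicting $d_{\sigma_\infty}(x_\infty,y_\infty)\geq\epsilon$.

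The main obstacle is ensuring the persistence of double incompressibility under the geometric limit in a form strong enough to exclude non-peripheral compressing cylinders; this is the delicate technical heart of Thurston's original argument and requires care with cusps. The thickness assumption on the $x_n$ is precisely what prevents the limiting cylinder from being absorbed into a Margulis tube of $N_\infty$, thereby converting the algebraic content of condition (c) into an honest topological obstruction.
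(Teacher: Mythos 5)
The statement is quoted from Thurston (the paper gives no proof of Theorem \ref{uniforminj} itself), but the in-paper template for this kind of argument is the appendix proof of Proposition \ref{prop:connected component}, which uses exactly your skeleton: contradiction, geometric limits of surfaces, 3-manifolds and pleated maps, then incompressibility in the limit. Your skeleton is therefore right, but there are genuine gaps. First, your limit extraction silently assumes $y_n$ converges in the pointed limit based at $x_n$: the negation of the theorem only gives $d_{\sigma_n}(x_n,y_n)\ge\epsilon$, with no upper bound, so the case $d_{\sigma_n}(x_n,y_n)\to\infty$ must be treated separately by taking two pointed limits (based at $x_n$ and at $y_n$) landing in the same limit 3-manifold; that is precisely the regime where the hard work lives (compare Cases (1) and (2) in the proof of Proposition \ref{prop:connected component}), and your proposal never addresses it. Relatedly, Mumford compactness is not the right tool: thickness at a single point does not confine the whole surface to a compact part of moduli space; one takes geometric limits, and parts of the surface (and of the lamination) may escape.

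Second, the cylinder step would fail as written. Leaves of $\lambda$ are in general non-compact (in this paper they are bi-infinite geodesics limiting to the cusps), so there are no ``core curves on $W$'' through which the boundary of your cylinder factors, and ``two homotopic simple geodesics coincide'' is a statement about closed geodesics, not about leaves; to extract an essential annulus one has to close up nearly recurrent segments of the common geodesic in the thick part (and then rule out the peripheral alternative by a quasigeodesic-stability argument, not merely because $g_\infty(x_\infty)$ is thick), or handle separately the case where the geodesic eventually stays in the cuspidal part -- this is exactly the Case (1)/Case (2) dichotomy of the appendix. Third, the persistence of double incompressibility, which you assert for (a)--(c) and flag as delicate, is exactly where the proposal is incomplete: under geometric limits only \emph{weak} double incompressibility survives (Thurston's Lemma 5.10 in \cite{Thu86}; condition (d) degrades to (d')), and the efficient finish in the bounded-distance case is not a hand-built cylinder but Theorem \ref{thm:weakly embeds} (Lamination Injects) applied to the weakly doubly incompressible limit: injectivity of ${\bf p}_{g_\infty}$ forces $x_\infty=y_\infty$, contradicting $d_{\sigma_\infty}(x_\infty,y_\infty)\ge\epsilon$. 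As it stands, your argument reproves neither that lemma nor that theorem, so the core of the proof is missing.
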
 

As we want to deal with disconnected surfaces as well, we also need the following mild enhancement.

\begin{restatable}{proposition}{connectedcomp}
\label{prop:connected component}
Fix $\ep_0>0$, a Margulis constant. There exists $\ep_2>0$ such that the following holds: Let $H$ be a handlebody and $\gamma\subset\Sigma=\partial H$ an essential multicurve such that $(H,\gamma)$ is pared acylindrical. We denote by $W:=\Sigma-\gamma$ the complement of $\gamma$. Let $N$ be a hyperbolic structure on $H$ with rank 1 cusps precisely at $\gamma$. For any pleated surface $g:(W,\sigma)\to N$ in the homotopy class of the inclusion $W\subset H$ mapping geodesically a lamination $\lambda$ of $W$, if $x,y\in\lambda$ lie in the $\ep_0$-thick part of $(W,\sigma)$ and $d_{{\bf P}(N)}({\bf p}_g(x),{\bf p}_g(y))\le\ep_2$, then $x,y$ lie in the same connected component of $W$.
\end{restatable}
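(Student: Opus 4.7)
The plan is to deduce Proposition \ref{prop:connected component} from Thurston's Uniform Injectivity (Theorem \ref{uniforminj}) applied componentwise, together with a separation argument ruling out lamination points with close tangent lifts coming from distinct components of $W=\Sigma-\gamma$.

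First, for each connected component $W_i$ of $W$, I would verify that the restriction $g|_{W_i}:(W_i,\sigma|_{W_i})\to N$ is a type-preserving doubly incompressible pleated surface. Type-preservation is immediate, since $\partial W_i\subset\gamma$ lies in the rank-$1$ cusps of $N$. The four double incompressibility conditions all stem from pared acylindricity of $(H,\gamma)$: $\pi_1$-injectivity of $W_i\hookrightarrow H$ comes from a Seifert--van Kampen decomposition of $\pi_1(H)$ analogous to Lemma \ref{lem:graph_of_groups}; condition (b) on cusp-to-cusp arcs follows from condition (1) of pared acylindricity (any such arc collapse would produce an essential disk disjoint from $\gamma$); condition (c) on essential cylinders is precisely condition (3) of pared acylindricity specialized to $W_i$; and condition (d) reduces to (d') because $\pi_1(H)$ is free, so maximal abelian subgroups coincide with maximal cyclic ones. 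Theorem \ref{uniforminj} applied to each $g|_{W_i}$ with some small $\ep$ (say $\ep<\ep_0/2$) then yields constants $\delta_i>0$, and taking the minimum over the finitely many components handles pairs of lamination points lying inside the same component.

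For the cross-component separation I would argue by contradiction. Suppose no $\ep_2$ works, so that there exist pleated surfaces $g_n:(W,\sigma_n)\to N_n$ and lamination points $x_n\in W_i,y_n\in W_j$ with $i\ne j$ fixed (by passing to a subsequence), both in the $\ep_0$-thick part, with $d_{\mathbf{P}(N_n)}(\mathbf{p}_{g_n}(x_n),\mathbf{p}_{g_n}(y_n))\to 0$. Normalizing by isometries of $\HH^3$, the lifted tangent vectors at suitable lifts $\tilde x_n,\tilde y_n$ converge to a common pair $(p_0,v_0)\in\mathbf{P}(\HH^3)$. Lemma \ref{lem:thick to thick} supplies a uniform lower bound on the injectivity radius of $N_n$ near the lifted basepoints, so the $1$-Lipschitz pleated maps embed uniform hyperbolic balls inside the lifts of $W_i$ and $W_j$; Arzel\`a--Ascoli extracts limit pleated half-disks into $\HH^3$ that share a lamination leaf whose image is a single oriented geodesic $\tilde\gamma\subset\HH^3$ through $p_0$.

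To close the argument, I would propagate the coincidence along the shared leaf: two geodesics in $\HH^3$ agreeing at a point with the same tangent are identical, and by approximation using the recurrence of the lamination in the $\ep_0$-thick part together with componentwise Uniform Injectivity, one produces for large $n$ closed curves $\alpha_i^n\subset W_i$ and $\alpha_j^n\subset W_j$ whose images in $\pi_1(H)=\pi_1(N_n)$ lie in a common maximal cyclic (loxodromic) subgroup. Pared acylindricity, however, equips $\pi_1(H)$ with a graph-of-groups decomposition whose vertex groups are the $\pi_1(W_k)$ and whose edge groups are the cyclic subgroups generated by components of $\gamma$; therefore $\pi_1(W_i)\cap\pi_1(W_j)$ inside $\pi_1(H)$ is contained in edge subgroups, whose images in $\pi_1(N_n)$ are parabolic, contradicting the existence of a common loxodromic element. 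The main obstacle I expect is this last propagation step when the limit leaves are non-closed, requiring a careful quantitative tracking of the closeness of the two leaves in $\HH^3$ over long distances and the use of the componentwise Uniform Injectivity on the $\ep_0$-thick parts of $W_i$ and $W_j$ to produce the required pair of closed loops realizing the same conjugacy class.
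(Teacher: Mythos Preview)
Your overall architecture matches the paper's: argue by contradiction, pass to a geometric limit, obtain two pleated surfaces (one from each component) mapping leaves $\ell_x,\ell_y$ onto the \emph{same} geodesic in the limit manifold, and contradict pared acylindricity. Two points, however, are genuine gaps.

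\textbf{The graph-of-groups step is incorrect.} You assert that pared acylindricity gives a splitting of $\pi_1(H)$ with vertex groups $\pi_1(W_k)$ and cyclic edge groups $\langle\gamma_j\rangle$. No such decomposition exists: $\pi_1(H)$ is free of rank $g$, while an amalgam of the $\pi_1(W_k)$ over cyclic subgroups generically has much larger rank. The splitting you have in mind is of $\pi_1(\Sigma)$, not of $\pi_1(H)$, and it does not descend along the (non-injective) map $\pi_1(\Sigma)\to\pi_1(H)$. The correct way to derive the contradiction is directly from condition (3) of pared acylindricity: a free homotopy in $H$ between essential non-peripheral curves $\alpha_x\subset W_i$ and $\alpha_y\subset W_j$ is an essential cylinder $(A,\partial A)\to(H,\Sigma-\gamma)$, which must then be homotopic rel boundary into $\Sigma-\gamma$; this is impossible when $W_i\ne W_j$. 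This is exactly how the paper closes its Case (1).

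\textbf{You have not treated the cuspidal case.} Your propagation argument relies on ``recurrence of the lamination in the $\ep_0$-thick part'' to manufacture closed loops $\alpha_i^n,\alpha_j^n$ with a common hyperbolic holonomy. But a leaf of $\lambda$ can exit into cusps on both ends and never return to the thick part; in that scenario there is no recurrence and no loxodromic element to find. The paper handles this as a separate Case (2): when the shared geodesic $\ell$ eventually stays in the $\ep_1$-cuspidal part, one works instead with compact subarcs $I_x\subset\ell_x$, $I_y\subset\ell_y$ with endpoints in cuspidal neighborhoods and obtains, back in $N_n$, proper arcs in $W_i$ and $W_j$ that are homotopic in $H-\gamma_n$ rel cusp annuli. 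This contradicts conditions (1)--(2) of pared acylindricity (or condition (3) in the tube subcase). Your outline acknowledges the non-closed-leaf case as ``the main obstacle'' but does not propose the arc argument needed to resolve it.
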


We remark that Proposition \ref{prop:connected component} should be already covered by Thurston's original works \cite[Theorem 5.7]{Thu86} but we include a proof in Appendix \ref{appendixA}. The reason why we include a discussion and a proof is because we wanted to clarify the assumptions on the connectedness of $W$ which we could not easily extract from the statements in the original sources.

\subsection{Short surgeries of disks}

Here we prove Proposition \ref{prop:short_projection} which is the main result of this section.

\subsubsection{Choices of constants} 
As the arguments in the proof will involve several constants, we briefly describe our choices: 
\begin{enumerate}
    \item{We start by fixing $\ep_0>0$, a (small) Margulis constant chosen so that every lamination of a finite area hyperbolic surface can intersect the $\ep_0'$-cuspidal part in a collection of infinite rays orthogonal to the boundary of the cuspidal part (see Lemma \ref{lem:straight in cusp}).}
    \item{By Lemma \ref{lem:thick to thick}, there is a Margulis constant $\ep_1\in(0,\ep_0)$ (only depending on $\ep_0$ and $\Sigma$) such that for every $\pi_1$-injective, type preserving pleated surface $g:(W,\sigma)\to N$ in a hyperbolic 3-manifold $N$ only the $\ep_0$-thin part of $(W,\sigma)$ can be mapped to the $\ep_1$-thin part of $N$. }
    \item{By Theorem \ref{uniforminj} and Proposition \ref{prop:connected component}, there exists $\ep_3>0$ (only depending on $\ep_0$ and $\Sigma$) such that the following holds: Let $(H,\gamma)$ be a pared acylindrical handlebody with boundary $\Sigma=\partial H$. Denote by $W:=\Sigma-\gamma$ the complement of $\gamma$. Let $N$ be a hyperbolic structure on $H$ with rank 1 cusps precisely at $\gamma$. If $g:(W,\sigma)\to N$ is a pleated surface properly homotopic to the inclusion $W\subset H-\gamma$ and mapping geodesically a lamination $\lambda\subset W$ then if the directions ${\bf p}_g(x),{\bf p}_g(x')\in{\bf P}(N)$ of two leaves $\ell,\ell'$ of $\lambda$ at points $x\in\ell,x'\in\ell'$ contained in the $\ep_0$-thick part of $(W,\sigma)$ are $\ep_3$-close in ${\bf P}(N)$, then $x,x'$ lie in the same connected component of $\Sigma-\gamma$ and $d_\sigma(x,x')\le\ep_1/100$.}
    \item{The last constants that we use come from the following observations. 
    
    {\bf Remark}. There exists $\eta_1>0$ (only depending on $\ep_3$ and $\ep_1$) such that if two geodesic lines $\ell,\ell'$ in a hyperbolic 3-manifold $N$ stay $\eta_1$-close on subsegments $s,s'$ of length at least $1$ passing through the $\ep_1$-thick part, then their directions have distance at most $\ep_3$ in ${\bf P}(N)$ along such subsegments $s,s'$.\qed

    {\bf Remark}. There exists $\eta_2>0$ (only depending on $\eta_1$) such that, if two disjoint bi-infinite geodesics $\ell,\ell'$ of $\mb{H}^2$ come closer than $\eta$ at points $x,x'$ then they stay $\eta_1$ close along segments $s,s'$ of length at least 2 centered around $x,x'$.\qed
    }
\end{enumerate}

\begin{proof}[Proof of Proposition \ref{prop:short_projection}]
We follow the strategy of the proof of the Efficiency of Pleated Surfaces (as can be found in \cite{M00} or \cite{ThurstonII}) with some modifications.

\subsubsection{The geometric step}
\label{sec:geometric step}
Let $\delta\in\mc{D}$ be a disk with minimal intersection number with $\gamma$. After putting $\delta$ and $\gamma$ in minimal position, consider the arcs of $\delta\cap W$ up to isotopy (recall that $W=\Sigma-\gamma$), and let $\lambda\subset W$ consist of one representative for each isotopy class of those arcs. Note that if $Y\subset W$ is a connected component, then each arc in $\delta\cap Y$ represents the subsurface projection $\pi_Y(\delta)$.

As in \cite[Proposition 4.2]{FSVa}, there exists some pleated surface $f:(W,\sigma)\to N$ properly homotopic to the inclusion $W:=\Sigma-\gamma\subset H-\gamma$ that maps $\lambda$ geodesically in $N$ (see for example Theorems I.5.3.6 and I.5.3.9 in~\cite{CEG:notes_on_notes}).

Observe that we can represent $\delta-\gamma$ as a (ideal) concatenation
\[
\delta-\gamma\simeq\ell_1\star\dots\star\ell_n
\]
where $n=i(\delta,\gamma)$ and each $\ell_j$ is a leaf of $\lambda$ (possibly with repetitions, but two consecutive $\ell_j,\ell_{j+1}$, indices modulo $n$, are distinct and on opposite sides of $\gamma$). The geodesic lines $f(\ell_1),\cdots,f(\ell_n)$ form the boundary of an ideal polygon in $N$ that can be filled with an ideal pleated disk $\Delta$. Note that the area of $\Delta$ is $\pi (i(\delta,\gamma)-2)\le \pi i(\delta,\gamma)$. 

Consider the portions $\beta_j^0$ of the edges $\ell_j$ that are contained in the $\ep_0$-non-cuspidal parts $W_0$ of $(W,\sigma)$, that is $\beta_j^0:=\ell_j\cap W_0$. By our choice of constants (1) every $\beta_j^0$ is a non-empty compact connected subsegment of $\ell_j$ as $\ell_j$ intersects the cuspidal parts in infinite rays going straight into the cusps.

We now fix an auxiliary (small) $\eta>0$. We consider the family $\{\beta_j^0\}_{j\in J}$ of those segments $\beta_j^0$ that have distance at least $\eta$ from all other $\beta_i^0$ in the metric of the pleated disk $\Delta$. In particular, the $\eta/2$-normal neighborhoods in $\Delta$ of such segments are embedded and pairwise disjoint. 

Recall that the area of the $\eta$-normal (half-)neighborhood of a segment of length $s$ in $\mb{H}^2$ is $s\cdot\sinh(\eta)$. Therefore, we have
\[
\text{\rm Area}(\Delta)\ge\sum_{j\in J}{\ell(\beta_j^0)\sinh(\eta/2)}.
\]

We show the following. 

\begin{claim}
\label{claim:length2}
    If $\eta<\eta_2$ then $J=\{1,\cdots,n\}$.
\end{claim}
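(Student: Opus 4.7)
The plan is to argue by contradiction. Suppose there exist distinct indices $i,j\in\{1,\dots,n\}$ and points $x\in\beta_j^0,\,x'\in\beta_i^0$ with $d_\Delta(x,x')<\eta<\eta_2$; I want to derive a contradiction, which will force $J=\{1,\dots,n\}$.

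\textbf{Step 1 (closeness to parallel tracking).} The edges $\ell_j,\ell_i$ are distinct boundary edges of the ideal polygon $\Delta$, so they lift to disjoint bi-infinite geodesics in the universal cover $\mathbb H^2$ of $\Delta$. By Remark~2 in the choice of constants, they then stay $\eta_1$-close along subsegments $s\subset\ell_j$ and $s'\subset\ell_i$ of length at least $2$ centered at $x,x'$.

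\textbf{Step 2 (transfer to $N$).} The pleating map $\Delta\to N$ is a local isometry in the interior and restricts to an isometric embedding on each edge; in particular it is $1$-Lipschitz. Hence the images $f(s)\subset f(\ell_j),\,f(s')\subset f(\ell_i)$ are subsegments of length at least $2$ in $N$ that remain $\eta_1$-close, and pass through $f(x),f(x')$.

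\textbf{Step 3 (closeness of directions).} Because $x,x'$ lie in $W_0$, the $\epsilon_0$-non-cuspidal part of $(W,\sigma)$, and by the interplay of Lemma \ref{lem:straight in cusp} and the choice of $\epsilon_0,\epsilon_1$ in the list of constants, a shortening of $s,s'$ (if needed, but still of length $\ge 1$ around $x,x'$) lies in the $\epsilon_0$-thick part of $(W,\sigma)$ and maps into the $\epsilon_1$-thick part of $N$ via Lemma \ref{lem:thick to thick}. Applying Remark~1 to the images, I conclude that the tangent directions $\mathbf p_f(x),\mathbf p_f(x')\in\mathbf P(N)$ are within $\epsilon_3$ of each other.

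\textbf{Step 4 (apply uniform injectivity).} By constant (3) in the list, which combines Theorem \ref{uniforminj} with Proposition \ref{prop:connected component}, the points $x,x'$ must lie in the same connected component of $W=\Sigma-\gamma$ and satisfy $d_\sigma(x,x')\le\epsilon_1/100$.

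\textbf{Step 5 (the contradiction, main obstacle).} I expect the remaining step to split into two cases. If $\ell_j,\ell_i$ are non-adjacent edges of $\Delta$, their lifts to $\mathbb H^2$ are non-asymptotic geodesics, so the $\eta_1$-tracking of $s,s'$ on segments of length $2$ combined with $d_\sigma(x,x')\le\epsilon_1/100$ propagates into a strong parallelism of two distinct leaves of the lamination $\lambda$ in $(W,\sigma)$; this violates the structure of $\lambda$, whose leaves are chosen as one representative per isotopy class of arcs in $\delta\cap W$. If instead $\ell_j,\ell_i$ share an ideal vertex of $\Delta$ (so they enter the same cusp of $W$), the key point is that the $\epsilon_0$-non-cuspidal cut-off at the horocycle of length $\epsilon_0$ keeps the $\beta$-portions uniformly separated in $\Delta$ by a constant that I can build into the choice of $\eta_2$; picking $\eta_2$ smaller than this separation rules out the adjacent case directly. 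The delicate point, and the main technical obstacle I anticipate, is the calibration of constants: ensuring simultaneously that $\eta_2<\eta_1$ is small enough to trigger Remark~1 along the full length-$2$ subsegments sitting in the thick part, and that it is small enough to dominate the cuspidal separation in the adjacent-edge case, while only depending on $\Sigma$ (through $\epsilon_0,\epsilon_1,\epsilon_3$) and not on the specific polygon $\Delta$.
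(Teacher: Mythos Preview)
Your Steps 1--4 track the paper's setup for its Case (1) reasonably well, but Step 5 contains a genuine gap: the contradiction you propose is not actually a contradiction, and you have missed the key idea.

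First, a smaller issue in Step 3: lying in $W_0$ (the $\epsilon_0$-non-cuspidal part) does \emph{not} put $x,x'$ in the $\epsilon_0$-thick part. The non-cuspidal part still contains Margulis tubes around short closed geodesics, and Lemma~\ref{lem:straight in cusp} says nothing about those. So you cannot simply assert that $s,s'$ pass through the thick part; the paper explicitly splits into a thick case and a tube case (its Cases (1) and (2)).

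The real gap is Step 5. There is nothing contradictory about two distinct leaves of $\lambda$ being $\epsilon_1/100$-close in $(W,\sigma)$: $\lambda$ is a finite collection of bi-infinite geodesics going into cusps, and two of them can perfectly well fellow-travel. Your adjacent/non-adjacent dichotomy and the ``structure of $\lambda$'' argument do not lead anywhere. What you are missing is that the paper's contradiction comes from the \emph{minimality} of $\delta$, chosen at the outset with minimal $i(\delta,\gamma)$. Once you know $x_j,x_i$ are close in $W$ (or, in the tube case, that they lie in the same Margulis tube), you join them by a short arc $\zeta$ in $W$ whose image in $N$ is homotopic rel endpoints to the short arc $\xi$ on $\Delta$. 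You then surger $\delta$ along $\zeta$ into two closed curves $\kappa_1,\kappa_2$, each null-homotopic in $N$ (because $\zeta\sim\xi$ and $\xi$ cuts off a subdisk of $\Delta$) and each meeting $\gamma$ fewer than $i(\delta,\gamma)$ times. At least one $\kappa_l$ is essential in $\Sigma$, and the Loop Theorem promotes it to a genuine disk $\delta_1\in\mathcal D$ with $i(\delta_1,\gamma)<i(\delta,\gamma)$, contradicting minimality. In the tube case the paper uses pared acylindricity to show the two tubes coincide, enabling the same surgery. This surgery-plus-minimality mechanism is the heart of the argument and is entirely absent from your proposal.
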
 

Here $\eta_2$ is as in our choice of constants (4).

\begin{proof}[Proof of the claim]    
Suppose that there are segments $\beta_j^0,\beta_i^0$, which are $\eta$-close in $\Delta$. By definition of the set $\{\beta_j^0\}_{j\in J}$ and the fact that $j\not\in J$, there are two cases: 
\begin{enumerate}
\item{The segments $\beta_j^0,\beta_i^0$ have points $x_j\in\beta_j^0,x_i\in\beta_i^0$ with $d_\Delta(x_j,x_i)<\eta$ and $x_j$ is in the $\ep_0$-thick part of $(W,\sigma)$.}
\item{The segments $\beta_j^0,\beta_i^0$ have points $x_j\in\beta_j^0,x_i\in\beta_i^0$ with $d_\Delta(x_j,x_i)<\eta$ and $x_j,x_i$ are in the $\ep_0$-Margulis neighborhoods of closed geodesics $\nu_j,\nu_i$ of $(W,\sigma)$.}
\end{enumerate} 

{\bf Case (1)}. Notice that, by our choice of constants (2), only the $\ep_0$-thin part of $(W,\sigma)$ can be mapped to the $\ep_1$-thin part of $N$. Hence, the image of $x_j$ under $f$ lies in the $\ep_1$-thick part of $N$. Let $\xi$ be an arc on $\Delta$ of length at most $\eta$ joining $x_j$ to $x_i$. By our choice of constants (4), if the two geodesics $\ell_j$ and $\ell_i$ stay $\eta_2$-close in $\Delta$ at points $x_j,x_i$ then they stay $\eta_1$ close along subsegments of length at least 2 centered around the points $x_j,x_i$ and the same is true for the lines $f(\ell_j),f(\ell_i)$. By the same choice of constants (4), as $f(\ell_j),f(\ell_i)$ stay $\eta_1$-close along segments of length at least 1 passing through the $\ep_1$-thick part of $N$, their directions in ${\bf P}(N)$ are $\ep_3$-close to each other along the same segments. 

By Proposition \ref{prop:connected component} and the choice of constants (3), this implies that $\ell_j,\ell_i$ lie in the same connected component of $W$.

By Theorem \ref{uniforminj} and our choice of constants (3) applied to the restriction of the pleated surface $f:(W,\sigma)\rightarrow N$ to the common component of $\beta_j^0,\beta_i^0$, the two points $x_j\in\beta_j^0,x_i\in\beta_i^0$ are $\ep_1/100$-close on the surface $(W,\sigma)$. As $\xi$ and the image $f(\zeta)$ of the shortest segment connecting $x_i$ to $x_j$ have both length smaller than $\ep_1$, they are homotopic relative to the endpoints via a homotopy supported in an embedded metric ball of radius $\ep_1$ (recall that the injectivity radius at $f(x_i),f(x_j)$ is at least $\ep_1$).

We use $\zeta$ to surger $\delta$ and obtain two (not necessarily simple) closed curves $\kappa_1=\delta'\zeta$ and $\kappa_2=\zeta^{-1}\delta''$ with $\delta=\delta'\delta''$. Both curves are homotopically trivial in $N$ (they bound disks made of a sub-disk of the pleated disk and a disk bounded by $\xi$ and $\zeta$). Notice that, as $\xi$ separates the vertices of the polygon $\delta$ in two non-empty sets and $\zeta$ is disjoint from $\gamma$, we have that the geometric intersections $i(\kappa_1,\gamma)$ and $i(\kappa_2,\gamma)$ are both strictly smaller than $n=i(\delta,\gamma)$. 

We now show that this leads to a contradiction. Let us consider $\kappa_1$. If it is null-homotopic in $\Sigma$, then we can use it to homotope $\delta$ to $\kappa_2$, a curve that intersects $\gamma$ less than $n=i(\delta,\gamma)$ times. Since this is not possible, $\kappa_1$ is essential in $\Sigma$. By the Loop Theorem, there exists a cycle in $\kappa_1$ (thought as a 4-valent graph in $\Sigma$ whose vertices are the possible transverse self intersections along $\zeta$) that represents an essential disk bounding curve $\delta_1\in\mc{D}$. Such a disk $\delta_1$, however, would have $i(\delta_1,\gamma)\le i(\kappa_1,\gamma)<i(\delta,\gamma)$ which contradicts the minimality of $\delta$. 

This finishes the discussion of Case (1).

{\bf Case (2)}. The strategy is the same as in the previous case. We find an arc $\zeta$ on $W=\Sigma-\gamma$ joining the two segments $\beta_j^0$ and $\beta_i^0$ and use it to surger $\delta$ to produce curves $\kappa_1$ and $\kappa_2$ that are essential in $\Sigma$, null-homotopic in $N$, and intersect $\gamma$ in less than $i(\delta,\gamma)$ points.

As a first step, observe that since $x_j,x_i$ are contained in the $\ep_0$-Margulis neighborhoods of $\nu_j,\nu_i$ in $(W,\sigma)$, we have ${\rm inj}_{x_j}W,{\rm inj}_{x_i}W<\ep_0$. As $f$ is 1-Lipschitz, the same is true in $N$, that is ${\rm inj}_{f(x_j)}N,{\rm inj}_{f(x_i)}N<\ep_0$. As $d_\Delta(x_j,x_i)<\eta$ and $\Delta$ maps 1-Lipschitz into $N$, we have $d_N(f(x_j),f(x_i))<\eta$. Thus, $f(x_j),f(x_i)$ are both contained in the same $\ep_0$-Margulis region $\mb{T}$ of $N$ (recall that distinct Margulis regions have distance at least $1/\ep_0$ and we can assume that $\eta<1/\ep_0$).

Denote by $W_j,W_i$ the connected components of $W$ containing $\beta_j^0,\beta_i^0$. We now exploit the fact that $(H,\gamma)$ is pared acylindrical to show that: 

{\bf Subclaim}. We have $W_j=W_i$ and $\nu_j=\nu_i$.

\begin{proof}[Proof of the subclaim]
By $\pi_1$-injectivity of $W_j,W_i$, the curves $f(\nu_j),f(\nu_i)$ are homotopic to powers of the core curve of the Margulis tube $\mb{T}$. Such power must be $\pm1$, in fact, since $(H,\gamma)$ is pared acylindrical, maximal cyclic subgroups of $\pi_1(W)$ are mapped to maximal cyclic subgroups of $\pi_1(H)$. Up to perhaps changing $\nu_j$ with $\nu_j^{-1}$ we can assume that $\nu_j$ and $\nu_i$ are homotopic. Such homotopy provides us a proper cylinder $(A,\partial A)\to (H,W_j\cup W_i)$. Since, again, $(H,\gamma)$ is pared acylindrical, this cylinder is properly homotopic into the boundary (relative to $\partial A$). Hence, we must have $\nu_j=\nu_i$ and $W_j=W_i$.    
\end{proof}   

Let $\xi$ be an arc, provided by the assumption of Case (2), of length $\le\eta$ on $\Delta$ that joins the point $x_j\in\beta_j^0$ to $x_i\in\beta_i^0$. By the above discussion, both $x_j,x_i$ are contained in the same Margulis neighborhood $A_j$ of the geodesic $\nu_j=\nu_i$. Since $\pi_1(A_j)\rightarrow\pi_1(\mb{T})$ is surjective, we can find an arc $\zeta$ in $A_j$, joining $x_j$ to $x_i$, which is homotopic relative to the endpoints to $\xi$. 

The proof can now proceed as in the previous case: We use $\zeta$ to surger $\delta$ and produce two closed curves $\kappa_1$ and $\kappa_2$ with $\delta=\kappa_1\kappa_2$ and both intersecting $\gamma$ in less than $i(\delta,\gamma)$ points. The curve $\kappa_1$ is essential in $\Sigma$, otherwise we could homotope $\delta$ to $\kappa_2$ and reduce its intersections with $\gamma$. The curve $\kappa_1$ is, instead, null-homotopic in $N$ because $\zeta$ is homotopic to $\xi$. Hence, thinking of $\kappa_1$ as a 4-valent graph in $\Sigma$, by the Loop Theorem we can promote it to a disk bounding curve $\delta_1\subset\Sigma$ represented by a simple cycle on the graph. Such simple cycle has $i(\delta_1,\gamma)\le i(\kappa_1,\gamma)$.   

This finishes the proof of Case (2).
\end{proof}

\subsubsection{The algebraic step}
In this step we choose $\eta=\eta_2/2$.

Using Claim \ref{claim:length2}, we get 

\begin{align*}
\sum_{j\le i(\delta,\gamma)}{\ell(\beta_j^0)} &=\sum_{j\in J}{\ell(\beta_j^0)}\\
&\le\frac{\text{\rm Area}(\Delta)}{\sinh(\eta_2/4)}\\
&=\frac{\pi(i(\delta,\gamma)-2)}{\sinh(\eta_2/4)}.
\end{align*}
Notice that, by construction, no two consecutive $\beta_j^0$ can lie outside the subsurface $Y$, so at least half of them are contained in $Y$. Thus, at least one of them has length at most $4+2\pi/\eta$, otherwise the total length would be larger than $i(\delta,\gamma)(2+\pi/\sinh(\eta_2/2))$. 

Let $\beta:=\beta_j^0$ be one of the segments, properly embedded in $Y_0$, of uniformly bounded length. By classical hyperbolic geometry, the length of a component of $\partial Y_0$ is equal to the area of the cusp it bounds. In particular, the length of each component of $\partial Y_0$ is bounded by the area of the surface $Y$ which is $-2\pi\chi(Y)$. 

{\bf Case (1)}. If $\beta$ has both endpoints on the same connected component $\xi$ of $\partial Y_0$, then consider a very small normal neighborhood $U$ of $\beta\cup\xi$. If the neighborhood is small enough, then the length of the boundary $\partial U$ is bounded by $2\ell(\beta)+\ell(\xi)+1$. Each component of $\partial U$ is an essential simple closed curve representing $\pi_Y(\delta)$.   

{\bf Case (2)}. If $\beta$ has its endpoints on different connected components $\xi_1,\xi_2$ of $\partial Y_0$, then consider a very small normal neighborhood $U$ of $\beta\cup\xi_1\cup\xi_2$. If the neighborhood is small enough, then the length of the boundary $\partial U$ is bounded by $2\ell(\beta)+\ell(\xi)+\ell(\xi_2)+1$. The boundary $\partial U$ is an essential simple closed curve representing $\pi_Y(\delta)$.

This concludes the proof of Proposition \ref{prop:short_projection}.
\end{proof}

{\bf Remark}. As it will play a role in the next section we make the following observation: Notations as in the proof of the proposition. Let us look at the two cases of the algebraic step more closely. 

{\em Case (1)}. Let $W\subset Y$ be an essential non-annular subsurface with $\xi\subset\partial W$. Then we can choose $\alpha$, the surgery of the disk $\delta$, so that it intersects essentially $W$. 

{\em Case (2)}. Let $W\subset Y$ be an essential non-annular subsurface with $\xi_1\subset\partial W$ or $\xi_2\subset\partial W$. Then we can choose $\alpha$, the surgery of the disk $\delta$, so that it intersects essentially $W$.

In fact, in both cases, $\alpha$ can be chosen to be a component of $\partial U$ where $U$ is either a small normal neighborhood of $\beta\cup\xi$ or $\beta\cup\xi_1\cup\xi_2$. Note that $U$ is always a pair of pants which either is contained in $W$ or its boundary intersects essentially the boundary of $W$.

\section{The contribution of non-annular subsurfaces}
\label{sec:upper_bound}

In this section we prove Theorem \ref{thm:non-annular} which we restate for convenience.

\nonannular*

The idea of the proof is that the boundary $T$ of a standard neighborhood of the cusp of $M-\gamma$ contains an embedded vertical annulus with base parallel to the components of $(\Sigma-\gamma)\cap T$ and height at least $S_\gamma$ (see Figure \ref{fig:torus1}). Every simple closed torus1c)urve $\mu\subset T$ crossing essentially $(\Sigma-\gamma)\cap T$, having to cross this annulus from side to side, has length at least $S_\gamma$.

In order to prove this we first study the coverings $N^-,N^+\to M-\gamma$ corresponding to $\pi_1(H^--\gamma),\pi_1(H^+-\gamma)$. There, using Proposition \ref{prop:short_projection}, for every component $Y\subset\Sigma-\gamma$ we find two subsurface projections $\pi_Y(\delta^-),\pi_Y(\delta^+)$ of disks $\delta^-\in\mc{D}^-,\delta^+\in\mc{D}^+$ whose geodesic representatives in $N^-,N^+$ have uniformly bounded length. Then, we use this information to control the geometry of the coverings $Q\to M-\gamma$ corresponding to $\pi_1(Y)$ and deduce from it the desired claim on $T$. 

\subsection{The proof of Theorem \ref{thm:non-annular}}
\label{subsec:proof non annular}
The proof of Theorem \ref{thm:non-annular} occupies the whole Section \ref{subsec:proof non annular}.

\begin{proof}[Proof of Theorem \ref{thm:non-annular}]
We proceed by small steps addressing a problem at time.

\subsubsection{The choice of the component $Y\subset\Sigma-\gamma$}
Consider $A\subset\Sigma$ a tubular neighborhood of $\gamma$ and denote by $\gamma_1\sqcup\gamma_2=\partial A$ its boundary. Observe that 
\[
S_{\gamma_1}+S_{\gamma_2}\ge S_\gamma
\]
where
\[
S_{\gamma_j}:=\sum_{W\subset\Sigma-A\;,\;\gamma_j\subset\partial W}{\{\{d_W(\mc{D}^-,\mc{D}^+)\}\}_K}.
\]
We can assume that $S_{\gamma_1}\ge S_\gamma(\mc{D}^-,\mc{D}^+)/2$. We denote by $Y\subset\Sigma-\gamma$ be the connected component containing the curve $\gamma_1$ and choose on it the end corresponding to $\gamma_1$ (note that we have a choice only when $\gamma$ is non-separating).

As stated above the proof of the theorem relies on the analysis of several coverings of $M-\gamma$. For convenience of the reader, we draw the tower of coverings involved in the argument
\[
\xymatrix{
 &Q\ar[dr]^{\pi_1(Y)}\ar[dl]_{\pi_1(Y)} &\\
N^-\ar[dr]_{\pi_1(H^--\gamma)} & &N^+\ar[dl]^{\pi_1(H^+-\gamma)}\\
 &M-\gamma. &
}
\]

We start by analyzing the handlebody coverings $N^-,N^+\to M-\gamma$ corresponding to the subgroups $\pi_1(H^--\gamma),\pi_1(H^+-\gamma)$.

\subsubsection{The handlebody coverings}
By Lemma \ref{lem:graph_of_groups}, the fundamental group of $H^+-\gamma$ inject in $\pi_1(M-\gamma)$. We first work in the covering $N^+\to M-\gamma$ corresponding to $\pi_1(H^+-\gamma)$ to find in $Y$ a short surgery of a disk in $\mc{D}^+$. The same arguments will apply to the covering $N^-\to M-\gamma$ corresponding to $\pi_1(H^--\gamma)$ and will produce in $Y$ a short surgery of a disk in $\mc{D}^-$.

We first describe the topology and geometry of $N^+$ and check that it satisfies the assumptions of Proposition \ref{prop:short_projection}. In order to do so, we need a couple of observations, namely Claim \ref{claim:parabolics} (to determine the cusps) and Claim \ref{claim:bonahon condition} (to determine the diffeomorphism type). Then, using Proposition \ref{prop:short_projection}, we construct on $Y$ a curve of uniformly bounded length representing $\pi_Y(\mc{D}^+)$ (see Claim \ref{claim:short surgeries}).

First, we identify the cusps of $N^+$.

With an abuse of notation, we will often conflate a curve with the conjugacy class that it represents in the fundamental group of the ambient space, or also a representative within the conjugacy class.

\begin{claim}
\label{claim:parabolics}
All the parabolic elements of $\pi_1(H^+-\gamma)$ are conjugate into $\langle\gamma\rangle$. 
\end{claim}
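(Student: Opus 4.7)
The plan is to combine the action of $\pi_1(M-\gamma)$ on its Bass--Serre tree with an analysis of how the cusp subgroup $\pi_1(T)=\langle\gamma,\mu\rangle$ sits inside it. First, by the same Seifert--van Kampen argument that establishes Lemma \ref{lem:graph_of_groups}, applied now to $M-\gamma=(H^+-\gamma)\cup_{\Sigma-\gamma}(H^--\gamma)$, there is a graph-of-groups decomposition of $\pi_1(M-\gamma)$ with vertex groups $\pi_1(H^\pm-\gamma)\cong\pi_1(H^\pm)$ (free, of rank equal to the genus of $\Sigma$) and edge groups given by the $\pi_1$'s of the components of $\Sigma-\gamma$. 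Let $\mathcal{T}$ denote the associated Bass--Serre tree; then $\pi_1(H^+-\gamma)$ stabilizes a vertex $v_+$ of $\mathcal{T}$.

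Next I analyze how $\pi_1(T)$ acts on $\mathcal{T}$. Since the two boundary curves $c_1,c_2$ of $\Sigma-\gamma$ are both freely homotopic to $\gamma$ in $M-\gamma$, the element $\gamma$ lies in an edge stabilizer and so acts elliptically on $\mathcal{T}$. The meridian $\mu$, by contrast, is represented by a loop crossing $\Sigma-\gamma$ transversally exactly twice (once going from $H^+$ into $H^-$ and once back), so it acts hyperbolically on $\mathcal{T}$ with translation length $2$. Since $[\gamma,\mu]=1$ and $\mu$ is hyperbolic, the element $\gamma^a\mu^b$ has translation length $2|b|$, and hence fixes a vertex of $\mathcal{T}$ if and only if $b=0$.

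Now let $g\in\pi_1(H^+-\gamma)$ be parabolic in $\pi_1(M-\gamma)$, so that $g=xhx^{-1}$ for some $x\in\pi_1(M-\gamma)$ and some nontrivial $h=\gamma^a\mu^b\in\pi_1(T)$. Since $g$ fixes $v_+$, its conjugate $h$ fixes $x^{-1}v_+$; by the previous step this forces $b=0$, so $g$ is conjugate in $\pi_1(M-\gamma)$ to $\gamma^a\in\pi_1(H^+-\gamma)$. To promote this to conjugacy inside $\pi_1(H^+-\gamma)$, I will write $x$ in amalgamated-product normal form and reduce $x\gamma^ax^{-1}$ successively; since $\gamma^a$ lies in the edge group $\pi_1(\Sigma-\gamma)$ and $g\in\pi_1(H^+-\gamma)$ has trivial alternating length, each ``$B$-syllable'' of $x$ must conjugate the current iterate back into $\pi_1(\Sigma-\gamma)$ for the reduction to terminate at length $0$. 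The upshot is that $g=c\gamma^{a'}c^{-1}$ for some $c\in\pi_1(\Sigma-\gamma)\subset\pi_1(H^+-\gamma)$, which yields the claim.

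The step that I expect to demand the most care is verifying that $\mu$ acts with translation length exactly $2$ rather than $0$, i.e., that neither ``half'' of the meridian loop lies in $\pi_1(\Sigma-\gamma)$; this is where the pared-acylindricity hypothesis enters, since it is precisely what guarantees that the edge-group inclusions are proper and hence that the graph-of-groups decomposition of $\pi_1(M-\gamma)$ is genuinely nondegenerate.
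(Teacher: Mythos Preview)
Your approach and the paper's are close cousins: both use the Bass--Serre tree of the splitting $M-\gamma=(H^+-\gamma)\cup_{\Sigma-\gamma}(H^--\gamma)$, and your observation that $\mu$ acts hyperbolically while $\gamma$ fixes its axis pointwise is a clean way to reach the intermediate conclusion that any parabolic $g\in\pi_1(H^+-\gamma)$ is conjugate \emph{in $\pi_1(M-\gamma)$} to a power of $\gamma$. (Incidentally, hyperbolicity of $\mu$ follows already from the fact that the vertex groups are free while $\pi_1(T)\cong\mathbb Z^2$; pared acylindricity is not the relevant hypothesis there.)

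The gap is in your descent step. The normal-form reduction you describe does show that each successive iterate $h_i=u_{n+1-i}h_{i-1}u_{n+1-i}^{-1}$ must lie in the edge group $C=\pi_1(\Sigma-\gamma)$, and hence that $g$ is conjugate \emph{in $\pi_1(H^+)$} to some element $h\in C$. But it does \emph{not} show that $h$ (or any intermediate $h_i$) is a power of a boundary curve of $\Sigma-\gamma$; your conclusion ``$g=c\gamma^{a'}c^{-1}$ with $c\in\pi_1(\Sigma-\gamma)$'' does not follow. What you actually need at each step is: if $h_{i-1}\in C$ is peripheral (conjugate in $C$ to a boundary power) and $u\,h_{i-1}\,u^{-1}\in C$ for some $u\in\pi_1(H^\pm)$, then $u\,h_{i-1}\,u^{-1}$ is again peripheral in $C$. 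Topologically this says that an annulus in $H^\pm$ with one boundary parallel to $\gamma$ and the other on $\Sigma-\gamma$ must have its second boundary also parallel to $\gamma$---and that is exactly the content of the pared-acylindrical conditions. This is the step the paper handles (in the guise of intersecting two edge stabilizers at the vertex $v^+$), and it is where pared acylindricity genuinely enters; without it your argument stops at ``$g$ is $\pi_1(H^+)$-conjugate into $\pi_1(\Sigma-\gamma)$'', which is strictly weaker than the claim.
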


\begin{proof}[Proof of the claim]
Consider the action of $\pi_1(M-\gamma)$ on the Bass-Serre tree $\mathcal T$ of the splitting corresponding to the decomposition of $M-\gamma$ given by the two handlebodies. We have that the fundamental group $\pi_1(T)$ stabilizes a line on $\mathcal T$, acting without inversions on it. As a consequence, given an element $g$ of $\pi_1(H^+-\gamma)$, if $g$ is parabolic then it fixes a line point-wise. Therefore it suffices to show that any element $g$ of $\pi_1(H^+-\gamma)$ not conjugate into $\gamma$ cannot fix two distinct edges of $\mathcal T$ emanating from the vertex of $\mathcal T$ corresponding to $\pi_1(H^+-\gamma)$. To do so, it suffices to show that the intersection of the stabilizers of the two edges intersects along a conjugate of $\langle\gamma\rangle$. These stabilizers are distinct conjugates in $\pi_1(H^+-\gamma)$ of the fundamental group of a connected component of $\Sigma-\gamma$. Translating to topology, any element in the intersection of these conjugates yields a homotopy in $H^+-\gamma$ between curves on $\Sigma-\gamma$. If the homotopy is deformable into the cusp, then the element we started with is conjugate into $\langle \gamma\rangle$, so this case is fine. But on the other hand if the homotopy is not deformable into the cusp then by pared acylindricity it is deformable into $\Sigma-\gamma$, and this contradicts that we started with distinct conjugates.
\end{proof}

Next, we determine the topology of $N^+$. To this purpose, we need an additional piece of information about the algebraic properties of $\pi_1(H^+-\gamma)$. The following is a direct consequence of ``algebraic separability'' implying ``geometric separability'' in \cite{stallings:handlebody}, see \cite[Theorem 3.2]{stallings:handlebody}.

\begin{claim}
\label{claim:bonahon condition}
Let $\gamma\subset\partial H^+$ be a simple closed curve such that $\partial H-\gamma$ is $\pi_1$-injective. Consider a free product decomposition $\pi_1(H^+-\gamma)=A*B$. Then (any representative in $\pi_1(H^+-\gamma)$ of) $\gamma$ is not conjugate into $A$ or $B$.  \end{claim}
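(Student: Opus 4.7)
The plan is to derive a contradiction by promoting the algebraic splitting with $\gamma$ in one of the free factors to a geometric disk in $H^+$ via Stallings's Theorem~3.2 in \cite{stallings:handlebody} (``algebraic separability implies geometric separability''), and then using the $\pi_1$-injectivity of $\partial H^+-\gamma$ to obstruct such a disk. We may assume both $A$ and $B$ are non-trivial, as otherwise the conclusion is vacuous.

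As a first step I would observe that the inclusion $H^+-\gamma\hookrightarrow H^+$ induces an isomorphism on $\pi_1$: general position allows one to homotope any loop and any null-homotopy of a loop in $H^+$ off the one-dimensional set $\gamma$. Hence the given decomposition $\pi_1(H^+-\gamma)=A*B$ is also a free product decomposition of $\pi_1(H^+)$, with the conjugacy class of $\gamma$ still assigned to the same factor. Supposing for contradiction that $\gamma$ is conjugate into $A$, Stallings's Theorem~3.2 then furnishes a properly embedded disk $D\subset H^+$ with $\partial D\subset\partial H^+$ disjoint from $\gamma$ realizing the decomposition; since $B$ is non-trivial, $D$ is essential, so $\partial D$ is non-trivial in $\pi_1(\Sigma)$.

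To conclude, the natural surjection $\pi_1(\Sigma-\gamma)\twoheadrightarrow\pi_1(\Sigma)$ shows that $[\partial D]$ is already non-trivial in $\pi_1(\Sigma-\gamma)$, and the $\pi_1$-injectivity hypothesis propagates this to $\pi_1(H^+-\gamma)$. On the other hand, $D\cap\partial H^+=\partial D$ is disjoint from $\gamma$, so $D$ is contained in $H^+-\gamma$ and hence $[\partial D]=1$ in $\pi_1(H^+-\gamma)$, a contradiction. The entire argument is a bookkeeping deduction from Stallings's theorem, which does all the heavy lifting; no substantive obstacle is anticipated.
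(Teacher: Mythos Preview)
Your proposal is correct and follows exactly the route the paper indicates: the paper's ``proof'' is simply a citation of Stallings's Theorem~3.2 (algebraic separability implies geometric separability), and you have supplied the straightforward deduction from that theorem to the claim. One small correction: the map $\pi_1(\Sigma-\gamma)\to\pi_1(\Sigma)$ induced by inclusion is not a surjection (it is in fact injective, by the structure of the amalgam or HNN extension), but you do not need surjectivity---you only need that a homomorphism takes the identity to the identity, so nontriviality of $[\partial D]$ in $\pi_1(\Sigma)$ forces nontriviality in $\pi_1(\Sigma-\gamma)$.
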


Note that $H^+-\gamma$ lifts homeomorphically into $N^+$ and its inclusion in $N^+$ is a homotopy equivalence. As a consequence of Claim \ref{claim:bonahon condition}, $\pi_1(N^+)$ satisfies Bonahon's Condition (*) as introduced in \cite{Bo86}, in particular, we can apply Bonahon's Tameness Theorem \cite{Bo86} which says that the complement of $H^+-\gamma$ in $N^+$ is a product $(\Sigma-\gamma)\times(0,\infty)$ so that $N^+$ is diffeomorphic to ${\rm int}(H^+-\gamma)$. By Claims \ref{claim:bonahon condition}, \ref{claim:parabolics}, and their consequences, $N^+$ is a hyperbolic structure on $(H,\gamma)$ that satisfies the assumptions of Proposition \ref{prop:short_projection}. Applying it we obtain the following:

\begin{claim}
\label{claim:short surgeries}
There is an essential simple closed curve $\alpha^+\subset Y$ (resp. $\alpha^-\subset Y$) such that $\alpha^+\in\pi_Y(\mc{D}^+)$ (resp. $\alpha^-\in\pi_Y(\mc{D}^-)$) and $\ell_{N^+}(\alpha^+)\le L$ (resp. $\ell_{N^-}(\alpha^-)\le L$) for some uniform constant $L$ only depending on $\Sigma$. In particular $\ell_{M-\gamma}(\alpha^+)\le L$ (resp. $\ell_{M-\gamma}(\alpha^-)\le L$).
\end{claim}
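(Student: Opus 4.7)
The plan is to prove Claim \ref{claim:short surgeries} as a direct application of Proposition \ref{prop:short_projection} to the cover $N^+$ (and symmetrically to $N^-$). To invoke Proposition \ref{prop:short_projection} with $H = H^+$ and the given multicurve $\gamma \subset \Sigma = \partial H^+$, I need to check two inputs: that $(H^+, \gamma)$ is pared acylindrical, and that $N^+$ is a hyperbolic structure on $H^+$ whose cusps are rank-1 and precisely at $\gamma$. The pared acylindricity is part of the standing assumption of Theorem \ref{thm:non-annular}. The statement about $N^+$ is exactly what was assembled immediately before Claim \ref{claim:short surgeries}: Claim \ref{claim:parabolics} identifies the parabolic conjugacy classes of $\pi_1(H^+ - \gamma)$ as the conjugates of $\langle \gamma \rangle$, while Claim \ref{claim:bonahon condition} verifies Bonahon's Condition $(*)$, so Bonahon's Tameness Theorem gives that $N^+$ is diffeomorphic to $\mathrm{int}(H^+ - \gamma)$. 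Since $\gamma$ is a union of simple closed curves, each cusp is of rank one.

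Once the hypotheses are in place, Proposition \ref{prop:short_projection} applied to $N^+$ directly yields an essential simple closed curve $\alpha^+ \subset Y$ representing an element of $\pi_Y(\mc{D}^+)$ (where $\mc{D}^+$ denotes the disk set of $H^+$) with $\ell_{N^+}(\alpha^+) \le L$, for some $L = L(\Sigma)$. Applying the same proposition to $N^-$ with $H = H^-$ produces $\alpha^- \subset Y$ in $\pi_Y(\mc{D}^-)$ with $\ell_{N^-}(\alpha^-) \le L$.

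For the final ``in particular'' clause, I would argue that $\ell_{M - \gamma}(\alpha^\pm) \le \ell_{N^\pm}(\alpha^\pm)$ because $N^\pm \to M - \gamma$ is a Riemannian covering, hence a local isometry: the closed geodesic representative of $\alpha^\pm$ in $N^\pm$ projects to a closed loop in $M - \gamma$ in the free homotopy class of $\alpha^\pm$ and of the same length, so the geodesic representative in $M - \gamma$ is no longer. Combining gives $\ell_{M - \gamma}(\alpha^\pm) \le L$.

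I do not anticipate any serious obstacle here, as the technical content of the claim is already contained in Proposition \ref{prop:short_projection} together with Claims \ref{claim:parabolics} and \ref{claim:bonahon condition}; the only work is to package these together and to observe the elementary fact that a covering of hyperbolic manifolds does not increase geodesic lengths. The one point that requires a bit of care is to ensure that the constant $L$ output by Proposition \ref{prop:short_projection} depends only on $\Sigma$ (equivalently on the genus of $H^\pm$, which is that of $\Sigma$) and not on further data of $N^\pm$; this is built into the statement of Proposition \ref{prop:short_projection}.
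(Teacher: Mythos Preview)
Your proposal is correct and matches the paper's approach essentially verbatim: the paper simply says that by Claims \ref{claim:parabolics} and \ref{claim:bonahon condition} the cover $N^+$ satisfies the hypotheses of Proposition \ref{prop:short_projection}, and the claim is obtained by applying that proposition (with the ``in particular'' clause following since coverings are local isometries). There is nothing further to add.
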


\subsubsection{The surface covering}
We now move to the surface covering $Q\to M-\gamma$ corresponding to $\pi_1(Y)$ where we combine the information we obtained in the handlebody coverings and the model manifold technology by Minsky \cite{M10} and Brock, Canary, and Minsky \cite{BrockCanaryMinsky:ELC2} to understand the shape of the intersection of the cusps of $Q$ corresponding to $\partial Y$ with the convex core of $Q$ (see the discussion below). This intersection is a finite annulus mapping to $T$ under the covering projection. The goal is to relate the size of the annulus to the quantity $S_\gamma(\mc{D}^-,\mc{D}^+)$. In the next step we will push this estimate from $Q$ to $M-\gamma$ using covering theory.

Let $p:Q\to M-\gamma$ be the $\pi_1(Y)$-covering of $M-\gamma$. Recall that $\pi_1(Y)$ injects in $\pi_1(M-\gamma)$ and that peripheral elements of $\pi_1(Y)$ represent parabolic element of $\pi_1(M-\gamma)$ and every parabolic element of $\pi_1(Y)$. Again, by Bonahon's Tameness \cite{Bo86}, this implies that $Q$ is diffeomorphic to $Y\times\mb{R}$.

By Canary-Thurston Covering Theorem \cite{C96}, we have two possibilities: 
\begin{itemize}
    \item{Either $Q$ is a {\em geometrically finite} hyperbolic 3-manifold (see below for an explanation of the terminology) or}
    \item{There exists a finite covering of $M-\gamma$ that fibers over the circle and $Y$ lifts to a fiber of the fibration.}
\end{itemize}  

The second case cannot occur as the fundamental group of $Y$ would be normal in $\pi_1(M-\gamma)$ and we can rule this out by looking at the structure of the fundamental group $\pi_1(M-\gamma)$ as described in Lemma \ref{lem:graph_of_groups}.

Thus, we must be in the first case, that is, $Q$ is geometrically finite. This means that there exists a closed convex subset $C\subset Q$ (containing all the geodesics that connect two points in it) whose inclusion $C\subset Q$ is a homotopy equivalence and such that the 1-neighborhood of $C$ in $Q$ has finite volume. The smallest such subset is the {\em convex core} of $Q$ and we denote it by $\mc{CC}(Q)$. Topologically, there exists a homeomorphsim from $Q$ to $Y\times\mb{R}$ under which $\mc{CC}(Q)$ is identified to $Y\times[-1,1]$. Thurston shows that $\partial\mc{CC}(Q)=\partial^+\mc{CC}(Q)\sqcup\partial^-\mc{CC}(Q)$ has the structure of an embedded convex pleated surface, a fact that will be useful later on. 

By groundbreaking work of Minsky \cite{M10} and Brock, Canary, and Minsky \cite{BrockCanaryMinsky:ELC2}, in order to understand the internal geometry of $\mc{CC}(Q)$ (and the structure of the cusps) it is crucial to analyze the set 
\[
\mc{C}(Q,L):=\{\alpha\in\mc{C}(Y)\left|\,\ell_Q(\alpha)\le L\right.\}
\]
of those simple closed curves $\alpha\in\mc{C}(Y)$ whose geodesic representative in $Q$ have length bounded by a fixed constant $L$.

We already know some curves that belong to $\mc{C}(Q,L)$: 
\begin{itemize}
    \item{By classical results of Bers, there exist pants decompositions $\nu^+,\nu^-$ of $Y$ whose geodesic representatives with respect to the hyperbolic metrics on the two components $\partial^+\mc{CC}(Q),\partial^-\mc{CC}(Q)$ have length bounded by some uniform constant $L_0$ only depending on $\Sigma$.}
    \item{By Claim \ref{claim:short surgeries}, we also have on $Y$ two simple closed curves $\alpha^+,\alpha^-$ representing $\pi_Y(\mc{D}^+),\pi_Y(\mc{D}^-)$ whose geodesic representative in $M-\gamma$ have length $\ell_{M-\gamma}(\alpha^+),\ell_{M-\gamma}(\alpha^-)\le L$ where $L$ is a uniform constant only depending on $\Sigma$. The same is true in the $\pi_1(Y)$-covering $Q$ of $M-\gamma$.}
\end{itemize} 

We use the works \cite{M10} and \cite{BrockCanaryMinsky:ELC2} to get the following. Let $\mb{T}(\gamma)$ be the $\ep_1$-Margulis neighborhood of the cusp of $M-\gamma$ where $\ep_1$ is the constant of Lemma \ref{lem:thick to thick}. Recall that we chose $Y$ to be the connected component of $\Sigma-\gamma$ containing the curve $\gamma_1$ which is a boundary component of a tubular neighborhood $A$ of $\gamma$ in $\Sigma$ such that $S_{\gamma_1}\ge S_\gamma(\mc{D}^-,\mc{D}^+)/2$. Denote by $\mb{T}_Q$ the cusp lift of $\mb{T}(\gamma)$ corresponding to $\gamma_1$ and let $T_Q=\partial\mb{T}_Q$ be its boundary. We have the following:

\begin{claim}
\label{claim:thick_core}
    There exists a constant $c>0$ only depending on $\Sigma$ such that the distance $d$ on $T_Q$ between the two components of $\partial\mc{CC}(Q)\cap T_Q$ is at least $S_\gamma/c$.
\end{claim}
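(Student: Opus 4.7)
The plan is to apply the model manifold machinery of Minsky \cite{M10} and Brock--Canary--Minsky \cite{BrockCanaryMinsky:ELC2} to the geometrically finite surface group $Q\simeq Y\times\mb{R}$, and to interpret $d$ as (coarsely) the combinatorial ``height'' of the cusp region at $\gamma_1$ inside $\mc{CC}(Q)$. First, take $\nu^-,\nu^+\in\mc{C}(Y)$ to be Bers pants decompositions of $\partial^-\mc{CC}(Q)$ and $\partial^+\mc{CC}(Q)$: these have uniformly bounded length and serve as end invariants of $Q$. The model manifold theorem, applied in the geometrically finite setting, provides a uniformly bilipschitz identification (with constants depending only on $\Sigma$) of $\mc{CC}(Q)$ outside its deep cusps with a combinatorial model built from $\nu^\pm$; under this identification $d$ is coarsely equal to
\[
1+\sum_{W\subset Y,\ \gamma_1\subset\partial W,\ W\text{ non-annular}}\{\{d_W(\nu^-,\nu^+)\}\}_K.
\]
It therefore suffices to bound this sum below by a uniform multiple of $S_\gamma$.

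Next, I would compare $\nu^\pm$ with the short surgeries from Claim \ref{claim:short surgeries}: the curves $\alpha^\pm\in\pi_Y(\mc{D}^\pm)$ have length at most $L$ in $M-\gamma$, hence also in $Q$, since $Q\to M-\gamma$ is a local isometry. Invoking the remark following Proposition \ref{prop:short_projection}, for each non-annular $W\subset Y$ with $\gamma_1\subset\partial W$ we can even arrange the surgery representative $\alpha_W^\pm\in\pi_Y(\mc{D}^\pm)$ to intersect $W$ essentially; since $\alpha_W^\pm$ bounds a small regular neighborhood of arcs together with some disk $\delta_W^\pm\in\mc{D}^\pm$, we obtain $d_W(\alpha_W^\pm,\mc{D}^\pm)\leq C_0$. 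By the bounded projection theorem for Kleinian surface groups \cite{BBCM}, and because $\alpha_W^\pm$ and $\nu^\pm$ are all uniformly short in $Q$, we also have $d_W(\alpha_W^\pm,\nu^\pm)\leq C_1$. The triangle inequality then yields
\[
d_W(\nu^-,\nu^+)\geq d_W(\mc{D}^-,\mc{D}^+)-C'.
\]
Taking the threshold $K$ sufficiently large gives $\{\{d_W(\nu^-,\nu^+)\}\}_K\geq\tfrac{1}{2}\{\{d_W(\mc{D}^-,\mc{D}^+)\}\}_K$ for every such $W$. Summing and using $S_{\gamma_1}(\mc{D}^-,\mc{D}^+)\geq S_\gamma/2$ from the initial choice of $Y$, the displayed sum above is $\gtrsim S_\gamma$, and I conclude $d\geq S_\gamma/c$ as required.

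The main obstacle is the second paragraph, specifically the application of \cite{BBCM} in combination with Claim \ref{claim:short surgeries} to transfer projection estimates from short curves to the end invariants $\nu^\pm$. One must verify that the bounded projection results give uniform constants depending only on the topology of $\Sigma$, and that choosing the short surgeries $\alpha_W^\pm$ separately for each subsurface $W$ is legitimate---which it is, since each triangle-inequality bound is used in isolation. The fact that $Q$ is geometrically finite (rather than doubly degenerate) and that the short curves come from disks in the handlebody covers $N^\pm$ of $M-\gamma$ (rather than directly from $Q$) are essential inputs that feed into this step.
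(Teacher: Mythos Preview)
Your proposal is correct and follows essentially the same approach as the paper: model manifold technology to identify $d$ coarsely with $S_{\gamma_1}(\nu^-,\nu^+)$, then \cite{BBCM} together with Claim~\ref{claim:short surgeries} and the remark after Proposition~\ref{prop:short_projection} to compare with $S_{\gamma_1}(\mc{D}^-,\mc{D}^+)$.

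One imprecision worth noting: what \cite[Theorem~1.2]{BBCM} actually gives is that $\pi_W(\mc{C}(Q,L))$ is Hausdorff-close to the \emph{hull} ${\rm hull}(\pi_W(\nu^-),\pi_W(\nu^+))$, not that $d_W(\alpha_W^\pm,\nu^\pm)\le C_1$ for the respective signs. A short curve could project near the middle of a geodesic from $\pi_W(\nu^-)$ to $\pi_W(\nu^+)$ rather than near either endpoint. This does not affect your conclusion, since closeness to the hull already yields $d_W(\nu^-,\nu^+)\ge d_W(\alpha_W^-,\alpha_W^+)-2C_1$, which is exactly what you need for the triangle inequality step; the paper phrases it this way.
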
 

\begin{proof}
Recall that $\nu^+,\nu^-$ are pants decompositions of uniformly bounded length $\ell(\nu^+),\ell(\nu^-)\le L_0$ (where $L_0$ only depends on $\Sigma$), on $\partial^+\mc{CC}(Q),\partial^-\mc{CC}(Q)$. By work Minsky \cite{M10} and Brock, Canary, and Minsky \cite{BrockCanaryMinsky:ELC2}, it is possible to associate to $\nu^+,\nu^-$ a {\em model manifold} $\mb{Q}(\nu^+,\nu^-)$ (which is a manifold homeomorphic to $Y\times[-1,1]$ endowed with a path metric) and a $K$-bilipschitz homeomorphism $\mb{Q}(\nu^+,\nu^-)\to\mc{CC}(Q)$ (where $K$ only depends on $\Sigma$), called the \emph{model map}. 

The model manifold is made of simple building blocks glued together. We do not need to discuss all of them as we are only interested in the part of the model that is mapped to $T_Q\cap\mc{CC}(Q)$. 

The pre-image of $T_Q$ in $\mb{Q}(\nu^+,\nu^-)$ is a tower of flat annuli all isometric to a fixed standard one. Their number is coarsely equal to the number of 4-edges $e$ in the hierarchy associated with $\nu^+,\nu^-$ whose domain $D(e)\subset Y$ having $\gamma_1$ as a boundary component (see \cite{M10} for the terminology, here we will only need the next formula). It is coarsely bounded from below by 
\[
S_{\gamma_1}(\nu^+,\nu^-)=\sum_{W\subset\Sigma-A\;,\;\gamma_1\subset\partial W}{\{\{d_W(\nu^+,\nu^-)\}\}_K}
\]
(see \cite[Section 9]{M10}, in particular \cite[Equation 9.26]{M10}). As the model map is $K$-bilipschitz with $K$ only depending on $\Sigma$, we deduce that $d$ is coarsely bounded from below by $S_{\gamma_1}(\nu^+,\nu^-)$.

Hence, the claim boils down to the comparison between $S_{\gamma_1}(\nu^+,\nu^-)$ and the term $S_\gamma(\mc{D}^+,\mc{D}^-)$. The relation between the two passes through the structure of the set $\mc{C}(Q,L)$. By \cite[Theorem 1.2]{BBCM}, for every essential $W\subset Y$ with $\gamma_1\subset W$ we have, in $\mathcal C(W)$,
\[
d_{\rm Haus}(\pi_W(\mc{C}(Q,L),{\rm hull}(\pi_W(\nu^+),\pi_W(\nu^-)))\le D_1,
\]
where the hull is the union of all geodesics connecting the given sets.
Recall that $\alpha^-,\alpha^+\in\mc{C}(Q,L)$ and $\alpha^-\in\pi_Y(\mc{D}^-),\alpha^+\in\pi_Y(\mc{D}^+)$. Furthermore, for every essential subsurface $W\subset Y$ with $\gamma\subset\partial W$, we can also choose $\alpha^-,\alpha^+$ to intersect essentially $W$ (see the remark at the end of Proposition \ref{prop:short_projection}). Thus, by property of the subsurface projection on nested surfaces (such as $W\subset Y$) we get $\pi_W(\alpha^-)\in\pi_W(\mc{D}^-)$ and $\pi_W(\alpha^+)\in\pi_W(\mc{D}^+)$.  

In particular, we have
\begin{align*}
d_W(\nu^-,\nu^+) &\ge d_W(\alpha^-,\alpha^+)-2D_1 &\text{by \cite{BBCM}}\\
 &\ge d_W(\mc{D}^-,\mc{D}^+)-2D_2-2D_1 &\text{because ${\rm diam}(\pi_W(\mc{D}^\pm))\le D_2$}.
\end{align*}

In conclusion, $S_{\gamma_1}(\nu^+,\nu^-)$ is coarsely uniformly bounded from below by the quantity $S_{\gamma_1}(\mc{D}^+,\mc{D}^-)$. This finishes the proof of the claim.
\end{proof}

\subsubsection{The length of the meridian}
The next step in the proof of the theorem is to obtain a lower bound on the length of the canonical meridian $\mu\subset T$ associated with $M$. The idea here is that a large portion of $T_Q\cap\mc{CC}(Q)$ embeds in $T$ under the covering projection. In order to prove a statement along these lines we need to identify and locate all the lifts of the cusp $\mb{T}(\gamma)$ of $M-\gamma$ to $Q$. 

To this purpose, we prove the following claim.

\begin{claim}
\label{claim:conjugate}
    Let $\alpha\in\pi_1(Y)$ be a non-peripheral element. Consider $\kappa\in\pi_1(T)-\pi_1(Y)$. Then $\kappa\alpha\kappa^{-1}\not\in\pi_1(Y)$.
\end{claim}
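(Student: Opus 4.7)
The plan is to apply Bass-Serre theory to the graph of groups decomposition from Lemma~\ref{lem:graph_of_groups}, viewed as a splitting of $\pi_1(M-\gamma)$. Let $\mathcal{T}$ be the associated Bass-Serre tree; then $\pi_1(Y)$ is an edge stabilizer fixing some edge $e_0\in\mathcal{T}$, and $\pi_1(T)\cap\pi_1(Y)=\langle\lambda\rangle$ with $\lambda=\gamma_1$. The element $\lambda$ acts elliptically on $\mathcal{T}$. Since the abelian group $\pi_1(T)\cong\Z^2$ cannot be contained in any of the free vertex or edge stabilizers, $\pi_1(T)$ admits no global fixed point on $\mathcal{T}$; combined with $\lambda$ being elliptic, a standard Bass-Serre argument then forces $\mu$ to act loxodromically on $\mathcal{T}$. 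In particular $\mu^{-b}e_0\neq e_0$ for every $b\neq 0$.

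Assume for contradiction $\kappa\alpha\kappa^{-1}\in\pi_1(Y)$. Writing $\kappa=\lambda^a\mu^b$ in $\pi_1(T)$, the fact $\kappa\notin\pi_1(Y)$ forces $b\neq 0$. Using $\lambda\in\pi_1(Y)$, the hypothesis reduces to $\alpha\in\pi_1(Y)\cap\mu^{-b}\pi_1(Y)\mu^b=\mathrm{Stab}(e_0)\cap\mathrm{Stab}(\mu^{-b}e_0)$, so $\alpha$ pointwise fixes the non-trivial geodesic segment $[e_0,\mu^{-b}e_0]\subset\mathcal{T}$. Choose a vertex $v$ on this segment meeting two distinct path-edges $e,e'$; then $\alpha$ lies in the intersection of two distinct edge stabilizers at $v$. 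By the proof of Claim~\ref{claim:parabolics}, which uses pared acylindricity of $(H^\pm,\gamma)$, such an intersection in a vertex group is conjugate into $\langle\gamma\rangle$. Hence $\alpha=h\gamma^n h^{-1}$ in $\pi_1(M-\gamma)$ for some $h$ and $n\neq0$.

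To convert this algebraic conclusion into the statement that $\alpha$ is peripheral in $\pi_1(Y)$ we invoke pared acylindricity once more. The conjugation produces a $\pi_1$-injective cylinder in $H^\pm$ connecting a loop on $Y$ representing $\alpha$ to the curve $\gamma^n$. Sliding its $\gamma$-end across the tubular neighborhood $A$ of $\gamma$ onto $\partial A$ yields a $\pi_1$-injective cylinder $(A,\partial A)\to(H^\pm,\Sigma-A)$, and condition~(3) of pared acylindricity then provides a homotopy rel boundary of this cylinder into $\Sigma-A$. Thus $\alpha$ is freely homotopic in $\Sigma-A$, hence in the component $Y$, to a boundary component of $Y$: that is, $\alpha$ is peripheral in $\pi_1(Y)$, contradicting our assumption.

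The main obstacle is this final step, because the cylinder produced by the Bass-Serre argument is mixed, with one boundary on $\Sigma-A$ and the other on $A$; neither cylinder condition in the definition of pared acylindricity applies verbatim. The key trick is to slide the $\gamma$-side onto $\partial A\subset\overline{\Sigma-A}$ using the product structure of $A$, reducing to a cylinder to which condition~(3) applies.
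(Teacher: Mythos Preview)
Your proof is correct and follows essentially the same Bass--Serre approach as the paper (which cites \cite{FSVa} for the non-separating case and gives the tree argument for the separating case). The paper streamlines one step---observing $\kappa e_0\neq e_0$ directly from $\kappa\notin\mathrm{Stab}(e_0)=\pi_1(Y)$, rather than via loxodromicity of $\mu$---and leaves implicit the final passage from ``conjugate into $\langle\gamma\rangle$'' to ``peripheral in $\pi_1(Y)$'' that you spell out; note also that you should take $v$ to be an endpoint of $e_0$, so that $\mathrm{Stab}(v)=\pi_1(H^\pm)$ and your cylinder genuinely lives in $H^\pm$.
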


\begin{proof}
The case that $\gamma$ is non-separating is \cite[Lemma 4.10]{FSVa}.
If not, we use the description of the fundamental group of $\pi_1(M-\gamma)$ from Lemma \ref{lem:graph_of_groups}, and in particular we use the action of $\pi_1(M-\gamma)$ on the Bass-Serre $\mathcal T$ tree of the splitting described in the lemma. We have that $\pi_1(Y)$ is the stabilizer of an edge $e$ of $T$. In particular, for $\kappa\in \pi_1(M-\gamma)-\pi_1(Y)$, we have that $\kappa\alpha\kappa^{-1}$ fixes $\kappa e$, which is distinct from $e$, and if it lied in $\pi_1(Y)$ then it would fix both $e$ and $\kappa e$. Therefore, it would fix a geodesic in $\mathcal T$ containing $e$ and $\kappa e$, and therefore it would fix an edge $e$ and another edge $e'$ that shares an endpoint $v$ with $e$. Therefore, we have to show that only peripheral elements of $\pi_1(Y)$ can be contained in either a distinct conjugate of $\pi_1(Y)$ in $\pi_1(H^\pm)$ or $\pi_1(Y')$, where $Y'$ is the other component of $\Sigma-\gamma$. This can be translated into a statement about homotopies in $H^\pm$ between loops in $\Sigma-\gamma$, and the same argument as in Claim \ref{claim:parabolics} allows us to conclude.
\end{proof}

We now construct a loop deep inside the convex core of $Q$. We use it to control the relative position of different lifts of $\mb{T}(\gamma)$.

\begin{claim}
\label{claim:short_loop}
    There exists a closed loop $\alpha\subset Q$ with the following properties
\begin{itemize}
\item{It is based at ${\hat x}\in T_Q$ with $d_{T_Q}({\hat x},\partial\mc{CC}(Q)\cap T)\ge d/2$ where $d$ is the horospherical distance between the two components of $\partial\mc{CC}(Q)\cap T_Q$.}
\item{It has length bounded by $\ell(\alpha)<L$ where $L$ is a uniform constant only depending on $\Sigma$.}
\end{itemize}
\end{claim}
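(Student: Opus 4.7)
The plan is to use the model manifold description of $\mathcal{CC}(Q)$ due to Minsky~\cite{M10} and Brock--Canary--Minsky~\cite{BrockCanaryMinsky:ELC2}, together with its $K$-bilipschitz model map $\Phi : \mathbb{Q}(\nu^+, \nu^-) \to \mathcal{CC}(Q)$ (with $K$ depending only on $\Sigma$), which was already invoked in the proof of Claim~\ref{claim:thick_core}. In the model, the pre-image $\Phi^{-1}(T_Q \cap \mathcal{CC}(Q))$ is a tower of uniformly-sized flat annuli stacked along the longitude direction of the $\gamma_1$-cusp, whose total length in that direction is comparable to $S_{\gamma_1}(\nu^+, \nu^-) \simeq d$---the same input that drives Claim~\ref{claim:thick_core}.

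The first step is to pick an annulus $A_0$ at the midpoint of this tower and a basepoint $\hat{x}' \in A_0$. By the combinatorial structure of the model near a rank-1 cusp described in~\cite[Section 9]{M10}, adjacent to the tube block containing $A_0$ there is a pants block whose associated marking contains a pants curve $\beta$ forming a boundary of a pair of pants that contains the $\gamma_1$-cusp of the relevant subsurface $W \subset Y$. Such a $\beta$ is non-peripheral in $Y$ and admits in the model a geodesic realization of uniformly bounded length passing within uniformly bounded distance of $A_0$. Choosing a short arc $\zeta$ from $\hat{x}'$ to this realization, the concatenation $\alpha' := \zeta \cdot \beta \cdot \zeta^{-1}$ is a based loop of length bounded in terms of $\Sigma$ alone.

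Pushing forward via $\Phi$ then yields the desired basepoint $\hat{x} := \Phi(\hat{x}') \in T_Q$ with $d_{T_Q}(\hat{x}, \partial\mathcal{CC}(Q) \cap T_Q) \gtrsim d$, together with a loop $\alpha := \Phi \circ \alpha'$ of length at most a uniform constant $L$ depending only on $\Sigma$. The specific numerical factor $1/2$ appearing in the statement can be arranged a posteriori by enlarging the constant $c$ of Claim~\ref{claim:thick_core} by a bounded factor that absorbs $K$, which preserves the dependence of $c$ only on $\Sigma$. Non-peripherality of $\alpha$ is the feature that will make the loop useful in the next step, when combined with Claim~\ref{claim:conjugate} to constrain the lifts of $\mathbb{T}(\gamma)$ to $Q$.

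The main technical obstacle is correctly identifying the block at the middle of the cusp tower of the model and bounding the distance between the geodesic realization of $\beta$ and the chosen annulus $A_0$; both rely on a careful reading of the block decomposition of $\mathbb{Q}(\nu^+, \nu^-)$ near a rank-1 cusp from~\cite[Section 9]{M10}, and deserve some bookkeeping in the geometrically finite setting here. Once these combinatorial facts are in hand, the conclusion is immediate from the $K$-bilipschitz property of $\Phi$.
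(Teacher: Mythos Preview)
Your approach is correct in outline but takes a substantially heavier route than the paper. The paper's proof is essentially one line: it invokes Canary's Filling Theorem \cite{C96}, which guarantees that at \emph{any} point $\hat{x}\in Q$ there is a non-trivial geodesic loop based at $\hat{x}$, not homotopic into a cusp, of length bounded by a constant $L_g$ depending only on the topological type. One then simply takes $\hat{x}$ to be the midpoint on $T_Q$ of the shortest horospherical segment joining the two components of $\partial\mathcal{CC}(Q)\cap T_Q$, which gives $d_{T_Q}(\hat{x},\partial\mathcal{CC}(Q)\cap T_Q)=d/2$ on the nose, and lets $\alpha$ be the loop supplied by the theorem.

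By contrast, you go back into the model manifold, locate a block near the middle of the cusp tower, extract a non-peripheral pants curve from it, and transport everything through the $K$-bilipschitz model map. This works, and it has the virtue of reusing machinery already set up for Claim~\ref{claim:thick_core}, but it costs you two things. First, you only obtain $d_{T_Q}(\hat{x},\partial\mathcal{CC}(Q)\cap T_Q)\geq d/C$ for some $C=C(\Sigma)$ rather than the clean $d/2$; your proposed fix of retroactively adjusting the constant in Claim~\ref{claim:thick_core} is awkward, and it would be cleaner either to pick $\hat{x}$ directly on $T_Q$ as the midpoint and then pull \emph{that} point back to the model to locate a nearby block, or simply to note that $d/C$ suffices for the downstream argument. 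Second, the ``bookkeeping'' you flag near the rank-1 cusp in the geometrically finite setting is genuine overhead that the paper's argument avoids entirely. The Filling Theorem is a black box that delivers exactly the short non-peripheral loop you need at any prescribed basepoint, with no model-manifold navigation required.
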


\begin{proof}
By the Filling Theorem \cite{C96}, we have that for every ${\hat x}\in Q$ the length of the shortest non-trivial geodesic loop based at ${\hat x}$ and not homotopic into a cusp is bounded by some uniform constant $L_g$. So, it is enough to choose ${\hat x}\in T_Q\cap\mc{CC}(Q)$ as the midpoint of the shortest horospherical segment joining the two components of $\partial\mc{CC}(Q)\cap T_Q$ and $\alpha$ as the shortest non-trivial geodesic loop based at ${\hat x}$.
\end{proof}

Choose $\alpha$ and ${\hat x}\in T_Q\cap\mc{CC}(Q)$ as provided by the previous claim.

Let $\mu\subset T$ be any simple closed geodesic passing through $x=p({\hat x})$ and intersecting essentially $(\Sigma-\gamma)\cap T$. For example, the geodesic representative on $T$ of the meridian of $\gamma$ in $M$ can be chosen to have this property.

We lift $\mu p(\alpha)$ to an arc $\tau$ on $Q$ based at ${\hat x}$. 

\begin{claim}
\label{claim:not_lift}
    The arc $\tau$ is not closed and has its endpoint different from ${\hat x}$ on a component $\mc{O}$ of the preimage of the cusp under the covering map $p:Q\to M-\gamma$ which is different from the cuspidal components of $Q$.
\end{claim}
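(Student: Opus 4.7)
The claim comprises two assertions, which I handle separately.

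\textit{Not closed.} By standard covering theory, the lift to $Q$ of a loop $\eta$ in $M-\gamma$ based at $x$ starting at $\hat x$ is closed iff $[\eta]\in p_*\pi_1(Q,\hat x)=\pi_1(Y)$. For $\eta=\mu\cdot p(\alpha)$ we have $[\eta]=[\mu][\alpha]$, so since $[\alpha]\in\pi_1(Y)$ the question is whether $[\mu]\in\pi_1(Y)$. Now $[\mu]\in\pi_1(T)$ and $\pi_1(T)\cap\pi_1(Y)=\langle\gamma_1\rangle$ (using primitivity of peripheral elements in $\pi_1(Y)$ and maximality of $\pi_1(T)$ as an abelian subgroup of $\pi_1(M-\gamma)$). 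Since $\mu$ crosses $(\Sigma-\gamma)\cap T$ essentially, $[\mu]$ has non-zero meridional component, hence $[\mu]\notin\langle\gamma_1\rangle$ and $\tau$ is not closed.

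\textit{Endpoint on a non-cuspidal component.} Components of $p^{-1}(\mb{T}(\gamma))$ correspond to double cosets $\pi_1(Y)\backslash\pi_1(M-\gamma)/\pi_1(T)$; the endpoint of $\tau$ lies on the component $\mc{O}$ associated to $[\mu][\alpha]$, which is cuspidal in $Q$ precisely when its stabilizer $\pi_1(Y)\cap[\mu][\alpha]\pi_1(T)([\mu][\alpha])^{-1}$ is non-trivial. Assume for contradiction that a non-trivial element $h=[\mu][\alpha]k[\alpha]^{-1}[\mu]^{-1}$ lies in this stabilizer, with $k\in\pi_1(T)\setminus\{1\}$. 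I will derive a contradiction by working in the Bass-Serre tree $\mc{T}$ of the splitting of $\pi_1(M-\gamma)$ from Lemma \ref{lem:graph_of_groups}. Being conjugate to $k\in\pi_1(T)$, the element $h$ is parabolic and (by the argument of Claim \ref{claim:parabolics}) fixes pointwise the line $L':=[\mu][\alpha]\cdot L$, where $L$ is the $\pi_1(T)$-invariant line in $\mc{T}$. Combined with $h\in\pi_1(Y)=\mathrm{Stab}(e_0)$, this forces $h$ to fix two distinct edges meeting at a common vertex $v_0$ (a short verification using that $[\alpha]$ fixes $e_0$ and $[\mu]$ preserves $L$ set-wise shows $e_0\in L'$). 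Pared acylindricity, as deployed in Claim \ref{claim:parabolics}, then places $h$ in a conjugate $g\langle\gamma\rangle g^{-1}$ with $g$ in the vertex group $\pi_1(H^{\pm}-\gamma)$ of $v_0$; so $h=g\gamma^m g^{-1}$ with $m\neq 0$. Matching the unique parabolic fixed point of $h$ via its two forms $h=g\gamma^m g^{-1}$ and $h=[\mu][\alpha]k[\alpha]^{-1}[\mu]^{-1}$ gives $g=[\mu][\alpha]t_0$ for some $t_0\in\pi_1(T)$, i.e.\ $[\mu][\alpha]t_0\in\pi_1(H^{\pm}-\gamma)$. Since $\pi_1(T)\cap\pi_1(H^{\pm}-\gamma)=\langle\gamma_1\rangle$ (from Claim \ref{claim:parabolics} combined with abelianness of $\pi_1(T)$), one can arrange $t_0\in\langle\gamma_1\rangle$, so that $[\mu][\alpha]\in\pi_1(H^{\pm}-\gamma)$. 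Combining with $[\alpha]\in\pi_1(Y)\subset\pi_1(H^{\pm}-\gamma)$ yields $[\mu]\in\pi_1(T)\cap\pi_1(H^{\pm}-\gamma)=\langle\gamma_1\rangle$, contradicting the meridional component of $[\mu]$.

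\textit{Main obstacle.} The delicate step is the Bass-Serre/pared-acylindricity analysis: one must convert the algebraic hypothesis that the stabilizer of $\mc{O}$ is non-trivial into the tree-theoretic statement that $h$ fixes two distinct edges at a common vertex, correctly identify the vertex group, and then match parabolic fixed points tightly enough to pin down $g$ modulo $\pi_1(T)$. The key enabling calculation is $\pi_1(T)\cap\pi_1(H^{\pm}-\gamma)=\langle\gamma_1\rangle$, which propagates the meridional-direction information all the way to a contradiction with the essential-crossing hypothesis on $\mu$.
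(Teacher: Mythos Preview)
Your Part 1 argument (that $\tau$ is not closed) is correct and more explicit than what the paper writes.

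Your Part 2 argument has two gaps.

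First, the sketched verification that $e_0\in L'=[\mu][\alpha]L$ does not work: from ``$[\alpha]$ fixes $e_0$ and $[\mu]$ preserves $L$ set-wise'' you would need $[\alpha]L=L$, but $[\alpha]$ only fixes the single edge $e_0\in L$ and generically moves the rest of $L$ off itself (indeed, $[\alpha]$ fixing $\mu^{-1}e_0$ is equivalent to $[\mu][\alpha][\mu]^{-1}\in\pi_1(Y)$, which is precisely what Claim~\ref{claim:conjugate} forbids). This gap is not fatal: since $h$ fixes both $e_0$ and the line $L'$, its fixed subtree contains the geodesic from $e_0$ to $L'$, so $h$ still fixes two distinct edges at the endpoint of $e_0$ closest to $L'$.

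Second, and more seriously, the step ``one can arrange $t_0\in\langle\gamma_1\rangle$'' is unjustified. From $h=g\gamma^m g^{-1}$ with $g\in\pi_1(H^{\pm}-\gamma)$, the element $g$ is determined only up to right multiplication by $\mathrm{Cent}_{\pi_1(H^{\pm})}(\gamma_1^m)=\pi_1(H^{\pm})\cap\pi_1(T)=\langle\gamma_1\rangle$. Hence $t_0=g^{-1}[\mu][\alpha]$ changes only by left multiplication by $\langle\gamma_1\rangle$; its $\mu$-exponent is an invariant of the situation, and there is no reason it should vanish. You therefore cannot conclude $[\mu][\alpha]\in\pi_1(H^{\pm}-\gamma)$, and the final chain leading to $[\mu]\in\langle\gamma_1\rangle$ collapses.

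By contrast, the paper's proof is a two-line appeal to Claim~\ref{claim:conjugate}: assuming the endpoint lies on the cuspidal boundary, it observes that the lift of $\mu p(\alpha)\mu^{-1}$ then also ends there and invokes $\mu\alpha\mu^{-1}\notin\pi_1(Y)$. Your Bass--Serre/acylindricity framework is a reasonable route and can in fact be completed (one must analyze the $\mu$-exponent of $t_0$ via translation distances on the tree, obtaining $|b|=1$, and then rule out each sign separately using that $[\alpha]$ fixes the \emph{edge} $e_0$ rather than merely a vertex), but as written the argument is incomplete.
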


\begin{proof}
If $\tau$ has the terminal point on $T$, then so does the lift of $\mu p(\alpha)\mu^{-1}$.
Hence, the element $\mu\alpha\mu^{-1}$ belongs to $\pi_1(Y)$. This contradicts Claim \ref{claim:conjugate}.
\end{proof}

\begin{claim}
\label{claim:horoball_disjoint}
    Provided that the Margulis constant $\ep_1>0$ satisfies the assumptions of Lemma \ref{lem:thick to thick}, then the horoball $\mc{O}$ is disjoint from $\mc{CC}(Q)$.
\end{claim}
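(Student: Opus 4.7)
The plan is to argue by contradiction, assuming $\mc{O}\cap\mc{CC}(Q)\neq\emptyset$. First I would reduce to the case that $\mc{O}$ meets the pleated boundary $\partial\mc{CC}(Q)$. Since $Q$ is geometrically finite, every end of $Q$ outside $\mc{CC}(Q)$ is either a cusp end (a horoball around a parabolic fixed point of $\pi_1(Y)$) or a geometrically finite end bounded by a component of $\partial\mc{CC}(Q)$. The horoball $\mc{O}$ is a component of $p^{-1}(\mb{T}(\gamma))$ that is \emph{not} a cusp of $Q$, and distinct components of $p^{-1}(\mb{T}(\gamma))$ are disjoint by precise invariance of Margulis neighborhoods in $M-\gamma$, so $\mc{O}$ is disjoint from every cuspidal horoball of $Q$. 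Hence $\mc{O}$ must escape $\mc{CC}(Q)$ through a geometrically finite end, and therefore $\mc{O}\cap\partial\mc{CC}(Q)\neq\emptyset$.

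Next, let $g:Y'\to Q$ parameterize $\partial\mc{CC}(Q)$ as a pleated surface (with $Y'$ a copy, or two copies, of $Y$). Pick $y\in Y'$ with $g(y)\in\mc{O}$, and consider the composed pleated surface $\sigma:=p\circ g:Y'\to M-\gamma$. This is $\pi_1$-injective and type-preserving: peripheral curves of $Y'$ correspond to $\gamma_1$ and $\gamma_2$, and by pared acylindricity together with Claim \ref{claim:parabolics}, these inject into $\pi_1(M-\gamma)$ as parabolics representing the unique cusp of $M-\gamma$. Since $\sigma(y)=p(g(y))\in\mb{T}(\gamma)$ lies in the $\ep_1$-thin part of $M-\gamma$, Lemma \ref{lem:thick to thick} applied to $\sigma$ places $y$ in the $\ep_0$-thin part of $Y'$.

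The thin part of $Y'$ splits into cusp neighborhoods of the boundary components $\gamma_i$ and Margulis tubes of short closed geodesics, so there are two cases. In the first, $y$ lies in a cusp neighborhood of $Y'$ around some $\gamma_i$; applying type-preservation to the \emph{other} factor $g:Y'\to Q$ forces $g(y)$ into the cusp of $Q$ around $\gamma_i$, a component of $p^{-1}(\mb{T}(\gamma))$ distinct from $\mc{O}$, contradicting $g(y)\in\mc{O}$. In the second, $y$ lies in a Margulis tube of a short closed geodesic $\alpha$ of $Y'$ with $\ell_{Y'}(\alpha)<\ep_0$; then $\alpha$ remains hyperbolic in $\pi_1(M-\gamma)$, while a generator $g_0$ of the parabolic stabilizer of $\mc{O}$ has fixed points at infinity disjoint from those of $\alpha$, and both elements move a suitable lift of $g(y)$ in $\mb{H}^3$ by a uniformly small amount. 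This contradicts the Margulis lemma in dimension three provided $\ep_0$ (and hence $\ep_1$) is chosen below the three-dimensional Margulis constant, since $\langle\alpha,g_0\rangle$ cannot be virtually abelian as their fixed point sets are disjoint.

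I expect the main obstacle to be the Margulis-tube case: one must translate the statement ``$y$ is close to the axis of $\alpha$ in $\tilde Y'$'' into a concrete bound on the displacement of a specific lift of $g(y)$ by $\alpha$ in $\mb{H}^3$, and then verify discreteness of ${\rm Isom}^+(\mb{H}^3)$ forces a non-elementary subgroup. The cusp case is the geometrically transparent one: the two factorizations $g$ and $\sigma=p\circ g$ of the same map impose type-preservation constraints that cannot be simultaneously satisfied, which captures the core idea that a pleated surface boundary of $\mc{CC}(Q)$ can reach a genuine cusp of $Q$ but not an accidental-parabolic horoball.
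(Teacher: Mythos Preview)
Your approach is correct and shares the paper's core ingredients: reducing to $\partial\mc{CC}(Q)\cap\mc{O}\neq\emptyset$, composing the pleated boundary with the covering $p$, applying Lemma~\ref{lem:thick to thick}, and invoking the Margulis lemma. The paper unifies your two cases by observing that the short loop $\eta$ through $y$ on the pleated surface (horocycle or tube core alike) maps via $p\circ g$ to a short essential loop in $M-\gamma$, hence into some $\ep_0$-Margulis region of $M-\gamma$; since $p(g(y))\in\mb{T}_{\ep_1}(\gamma)$ already lies in the cusp, that region must be the cusp, so $\eta$ is parabolic and hence peripheral in $Y$---this dispatches your Case~2 in one stroke and reduces everything to your Case~1.

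Two small points on your write-up. In Case~1, what places $g(y)$ in a cusp of $Q$ is the 1-Lipschitz property of the pleated map $g$ (a short loop at $y$ representing $\gamma_i$ stays short), not type preservation; and that cusp is at scale $\ep_0$, while $\mc{O}$ is defined at scale $\ep_1$, so saying it is ``a component of $p^{-1}(\mb{T}(\gamma))$ distinct from $\mc{O}$'' is not quite right. The paper closes this gap by observing that the components of $p^{-1}(\mb{T}_{\ep_0}(\gamma))$ and $p^{-1}(\mb{T}_{\ep_1}(\gamma))$ are nested and in bijection, so $\mc{O}$ sits inside a non-cuspidal $\ep_0$-component, disjoint from the $\ep_0$-cusp containing $g(y)$. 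Finally, your anticipated obstacle in Case~2 is not a real one: the 1-Lipschitz property of $p\circ g$ immediately bounds the displacement of the relevant lift by $\alpha$, and the incompatibility of a hyperbolic and a parabolic in a discrete elementary subgroup of ${\rm Isom}^+(\mb{H}^3)$ is precisely the disjointness of distinct Margulis regions in $M-\gamma$ that the paper invokes.
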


\begin{proof}
This follows from the same argument of \cite[Lemma 4.11]{FSVa} which we briefly recall. Consider the component $\partial^+\mc{CC}(Q)$ of $\partial\mc{CC}(Q)$. As we remarked above, it is a pleated surface. The restriction of the covering projection to it $p:\partial^+\mc{CC}(Q)\to M-\gamma$ provides a pleated surface in $M-\gamma$ homotopic to the inclusion $Y\to M-\gamma$ (in particular, the pleated surface is type preserving). 

Suppose that $\partial^+\mc{CC}(Q)$ intersects $\mc{O}$ and choose ${\hat z}$ in the intersection. As $p({\hat z})\in p(\mc{O})=\mb{T}(\gamma)$ and since for a (type preserving) pleated surface $p:\partial^+\mc{CC}(Q)\to M-\gamma$ only the $\ep_0$-thin part can be mapped to the $\ep_1$-thin part (by Lemma \ref{lem:thick to thick}), we must have that ${\hat z}$ is in the $\ep_0$-thin part of $\partial^+\mc{CC}(Q)$. Let $\eta$ be a geodesic loop in the component of the $\ep_0$-thin part of $\partial^+\mc{CC}(Q)$ containing ${\hat z}$ of length $\ell(\eta)<2\ep_0$ (it represents the core of the Margulis region). As $p$ induces an injection at the level of the fundamental group, the curve $p(\eta)$ is essential in $M-\gamma$ and belongs to a $\ep_0$-Margulis region $\mb{T}(p(\eta))$ of $M-\gamma$. Since $p(\eta)$ intersects $\mb{T}(\gamma)$ and distinct Margulis regions are disjoint, we conclude that $\mb{T}(p(\eta))=\mb{T}(\gamma)$. As the only $\eta\subset Y$ which are parabolic are $\gamma_1,\gamma_2$, we deduce that $\eta$ is homotopic to $\gamma_1,\gamma_2$. So ${\hat z}$ is contained in the $\ep_0$-cuspidal part of $Q$. 

In order to conclude, we observe that the connected components of the pre-images in $Q$ of the $\ep_1$- and $\ep_0$-cuspidal regions $\mb{T}_{\ep_1}(\gamma),\mb{T}_{\ep_0}(\gamma)$ of $M-\gamma$ are nested and in bijective correspondence. In particular, as ${\hat z}$ does not belong to the $\ep_1$-cuspidal part, it also does not belong to the $\ep_0$-cuspidal part. This contradicts what we proved above. 

The same argument applies to $\partial^-\mc{CC}(Q)$. Therefore we have that $\partial\mc{CC}(Q)\cap\mc{O}=\emptyset$. As $\mc{O}$ is isometric to a horoball, it cannot be contained in $\mc{CC}(Q)$ (for example, it has infinite volume). The only remaining possibility is $\mc{CC}(Q)\cap\mc{O}=\emptyset$.
\end{proof}

We can now deduce the following estimate on the length of $\mu$. By the above discussion, $\mc{O}$ lies outside $\mc{CC}(Q)$. Hence the arc $\tau$ must intersect $\partial\mc{CC}(Q)$. We conclude that $\ell(\tau)$ is bounded from below by the distance on $T_Q$ between ${\hat x}$ and the two components of $\partial\mc{CC}(Q)\cap T_Q$ which, by our choice of ${\hat x}$, is bounded from below by half of the horospherical distance between the two components. By Claim \ref{claim:thick_core}, the latter is bounded from below by $S_\gamma(\mc{D}^+,\mc{D}^-)/c$, so that $\ell(\tau)\geq S_\gamma(\mc{D}^+,\mc{D}^-)/(2c)$. Lastly, observe that $\ell(\tau)=\ell(\alpha)+\ell(\mu)$ and that $\ell(\alpha)$ is uniformly bounded by Claim \ref{claim:short_loop}. Therefore, if $S_\gamma(\mc{D}^+,\mc{D}^-)$ is sufficiently large then we have
\[
{\rm Length}(\mu)\geq S_\gamma/3c.
\]

On the other hand, if $S_\gamma$ is bounded, then the claim just follows from the fact that ${\rm Length}(\mu)\ge 2\ep_1$ as ${\rm inj}(T)\ge\ep_1$ by the definition of standard $\ep_1$-Margulis neighborhood.

\subsubsection{The normalized length of the meridian}
In the last step of the proof we consider normalized length rather than length. Hence we need to bring into the picture the area of $T$ as we recall that the normalized length is defined to be
\[
L=\frac{{\rm Length}(\mu)}{\sqrt{{\rm Area}(T)}}.
\]

We show that 
\[
L^2\ge{\rm Length}(\mu)/\ep_1.
\]

This boils down to the following observation.

\begin{claim}
\label{claim:bounded_area}
We have ${\rm Area}(T)\le{\rm arcsinh}(2\ep_1)\cdot{\rm Length}(\mu)$.
\end{claim}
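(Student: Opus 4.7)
The plan is to establish the equivalent estimate $w\le{\rm arcsinh}(2\ep_1)$, where $w$ denotes the width of the Euclidean cylinder obtained by cutting the flat torus $T$ along $\mu$ (equivalently, the horospherical distance between two adjacent lifts of $\mu$ in the universal-cover horosphere), so that ${\rm Area}(T)={\rm Length}(\mu)\cdot w$. My strategy is first to bound $w$ by the systole of $T$, and then to bound the systole by a function of $\ep_1$.

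First, I would use the definition of the standard $\ep_1$-Margulis neighborhood: at every point of $T$ the injectivity radius of $M-\gamma$ equals $\ep_1$, so every non-trivial loop based at a point of $T$ has hyperbolic length at least $2\ep_1$. A standard upper-half-space computation then converts this into a universal upper bound comparable to ${\rm arcsinh}(2\ep_1)$ on the horospherical length of the shortest closed geodesic $\sigma$ of $T$. Next, I would observe that $w\le{\rm Length}(\sigma)$ whenever $\sigma$ is not parallel to $\mu$: in that case $\sigma$ crosses $\mu$, and each crossing is a straight segment of the Euclidean cylinder from one boundary circle to the other, hence of length at least $w$. To guarantee $\sigma$ and $\mu$ are not parallel, I would invoke the earlier length estimate ${\rm Length}(\mu)\ge S_\gamma/(3c)$, valid in the large-$S_\gamma$ regime, which forces ${\rm Length}(\mu)>{\rm Length}(\sigma)$ once $S_\gamma$ exceeds a definite threshold; this is the only regime in which the area estimate is actually needed, since in the bounded-$S_\gamma$ case the theorem's conclusion follows directly from the lower bound ${\rm Length}(\mu)\ge 2\ep_1$ already recorded in the text.

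The main obstacle, as I see it, is pinning down the exact constant in the systole bound: the straightforward upper-half-space computation produces horospherical systole $2\sinh(\ep_1)$, which for small $\ep_1$ is comparable to but not literally equal to ${\rm arcsinh}(2\ep_1)$. One therefore either carries out a sharper computation in the upper-half-space model, or simply observes that for the only use of the claim --- the inequality $L^2\ge{\rm Length}(\mu)/\ep_1$, which is itself only needed up to a multiplicative constant --- matching the constant up to a universal factor is sufficient.
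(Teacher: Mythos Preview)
Your proposal is correct and follows essentially the same route as the paper: find a short flat closed geodesic on $T$ from the injectivity-radius condition, observe it is not parallel to $\mu$ once $S_\gamma$ is large, and conclude the area bound by elementary Euclidean geometry. Your observation about the constant is on point --- the paper's stated relation $\sinh(\ell(\beta))=\ell(\alpha)$ gives $\text{arcsinh}(2\ep_1)$, whereas the standard upper-half-space computation gives $2\sinh(\ep_1)$; as you note, the discrepancy is irrelevant for the application, which only needs the inequality up to a uniform factor.
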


\begin{proof}
Recall that ${\rm inj}_x(M-\gamma)=\ep_1$ for every $x\in T$ and that the injectivity radius at $x$ is equal to the length of the shortest geodesic loop based at $x$. 

By the structure of the Margulis neighborhood of cusps, we can assume that the shortest geodesic loop $\alpha$ based at $x\in T$ is contained in $\mb{T}(\gamma)$ and that it is homotopic rel endpoints to a flat simple closed geodesic $\beta\subset T$ passing through $x$. By basic hyperbolic geometry, the length of $\alpha,\beta$ are related by $\sinh(\ell(\beta))=\ell(\alpha)$. Hence $\ell(\beta)={\rm arcsinh}(2\ep_1)$. 

Let $\mu$ intersect $\beta$ essentially (as both $\mu$ and $\beta$ are simple closed geodesics on a flat torus, this case happens as soon as $\ell(\mu)>{\rm arcsinh}(2\ep_1)$ which we can ensure assuming $S_\gamma$ is large enough). By Euclidean geometry, the area of the flat torus $T$ is bounded by ${\rm Area}(T)\le\ell(\mu)\ell(\beta)$.
\end{proof}

This concludes the proof of the theorem.
\end{proof}

\subsection{Lower bound on the area of $T$}
\label{subsec:area bound}

We can also give a lower bound on the area of $T=\partial \mb{T}(\gamma)$ using some of the arguments above and the structure of the covering $p:Q\to M-\gamma$. We have the following.

\begin{proposition}\label{prop:aSgamma<Area}
There exists $a=a(\Sigma)>0$ 
such that the following holds. For any fixed Margulis constant $\epsilon>0$, let $\mb{T}(\gamma)$, denote the cusp neighborhood corresponding to $\gamma$ for the Margulis constant $\epsilon$, and let $T=\partial \mb{T}(\gamma)$. Then
\[
{\rm Area}(T)\geq a\cdot S_\gamma \epsilon^2.
\]
\end{proposition}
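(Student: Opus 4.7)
The strategy extends the argument behind Theorem~\ref{thm:non-annular}, refining the length lower bound on $\mu$ to a lower bound on the perpendicular component of the deck translation of $\mu$ on the cusp cylinder $T_Q$, which directly controls the area of $T$. Keeping the notation of the proof of Theorem~\ref{thm:non-annular}, I would work in the cover $Q\to M-\gamma$ corresponding to $\pi_1(Y)$ with cusp cylinder $T_Q$. The covering $p\colon T_Q\to T$ is a $\mathbb{Z}$-cover whose deck generator $\mu$ acts on the flat cylinder $T_Q$ by a Euclidean translation with perpendicular component $b$, and $\mathrm{Area}(T)=w\cdot|b|$, where $w=\ell_T(\gamma)\geq 2\sinh(\epsilon)$ is the circumference of $T_Q$. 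By the appropriate $\epsilon$-scaling of Claim~\ref{claim:thick_core}, the annulus $T_Q\cap\mathcal{CC}(Q)$ has height $D\gtrsim S_\gamma\sinh(\epsilon)$. The plan is to show $|b|\gtrsim D$, which combined with the above yields $\mathrm{Area}(T)\gtrsim\sinh(\epsilon)^2\cdot S_\gamma\gtrsim\epsilon^2 S_\gamma$ for $\epsilon$ bounded above by the universal Margulis constant.

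The core step is to prove $|b|\geq D/2-\sinh(L)$, where $L$ is the length bound from Claim~\ref{claim:short_loop}. Assume for contradiction that $|b|<D/2-\sinh(L)$. Reuse the short loop $\alpha$ based at the midpoint $\hat{x}\in T_Q\cap\mathcal{CC}(Q)$ and the arc $\tau$ lifting $\mu\cdot p(\alpha)$ from $\hat{x}$ from the proof of Theorem~\ref{thm:non-annular}: its endpoint $\hat{y}$ lies in a horoball $\mathcal{O}$ disjoint from $\mathcal{CC}(Q)$ by Claim~\ref{claim:horoball_disjoint}. Since $|b|<D/2$, the first portion of $\tau$ that lifts $\mu$ stays in $T_Q$ and lands at $\mu\hat{x}\in T_Q\cap\mathcal{CC}(Q)$. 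The second portion, of length at most $L$, must then exit $\mathcal{CC}(Q)$. But near the cusp, the pleated surfaces $\partial^{\pm}\mathcal{CC}(Q)$ are half-cylinders whose universal-cover lifts bend outward (because $\mathcal{CC}(Q)$ is convex), so $d_Q(\mu\hat{x},\partial\mathcal{CC}(Q))$ is bounded below by the hyperbolic distance from $\mu\hat{x}$ to the totally geodesic vertical half-plane through the boundary horocycle $\partial\mathcal{CC}(Q)\cap T_Q$. Via the standard upper-half-space formula, this distance equals $\mathrm{arcsinh}(D/2-|b|)>\mathrm{arcsinh}(\sinh(L))=L$, contradicting that $\tau$ exits within length $L$ of $\mu\hat{x}$.

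Thus $|b|\geq D/2-\sinh(L)\geq D/4$ whenever $D\geq 4\sinh(L)$, so $\mathrm{Area}(T)=w\cdot|b|\gtrsim\sinh(\epsilon)\cdot\sinh(\epsilon) S_\gamma\gtrsim\epsilon^2 S_\gamma$. In the remaining regime $D<4\sinh(L)$, $S_\gamma$ is bounded by a constant depending only on $\Sigma$, and the bound follows from the trivial systole lower bound $\mathrm{Area}(T)\gtrsim\sinh(\epsilon)^2\gtrsim\epsilon^2$ after shrinking~$a$. The main obstacle is justifying the distance inequality $d_Q(\mu\hat{x},\partial\mathcal{CC}(Q))\geq\mathrm{arcsinh}(D/2-|b|)$: one must argue carefully that the local pleating of $\partial\mathcal{CC}(Q)$ inside the cusp horoball can only push the boundary surface farther from the interior of $\mathcal{CC}(Q)$, which should follow from the outward-bending property of convex-core boundaries together with the explicit upper-half-space description of a rank-$1$ cusp and of totally geodesic half-planes meeting the boundary horosphere.
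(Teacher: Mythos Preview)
Your overall strategy is the paper's: pass to the surface cover $Q$, use Claim~\ref{claim:thick_core} to get that $\mathcal{CC}(Q)\cap T_Q$ has height $d\gtrsim S_\gamma$, and then feed the short loop of Claim~\ref{claim:short_loop} into Claims~\ref{claim:not_lift}--\ref{claim:horoball_disjoint} to reach a horoball $\mathcal O$ disjoint from $\mathcal{CC}(Q)$. The paper packages the conclusion as ``a sub-annulus $A\subset T_Q$ of height a fixed fraction of $d$ embeds in $T$ under $p$,'' while you phrase it as ``the perpendicular component $|b|$ of the $\mu$-translation on $T_Q$ is $\gtrsim d$''; these are equivalent (an annulus of height $h$ embeds iff $|b|>h$). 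Your explicit $\epsilon$-tracking is fine and corresponds to the paper's reduction to a single Margulis constant followed by horospherical scaling.

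The obstacle you flag is the crux, and your proposed justification does not work as written. Convexity of $\mathcal{CC}(Q)$ only yields a supporting half-space at the \emph{unknown} nearest point $p\in\partial\mathcal{CC}(Q)$; there is no reason this half-space is the vertical plane through the boundary horocycle. Concretely, if a face of $\partial\mathcal{CC}(Q)$ just below $T_Q$ lies on a hemisphere of Euclidean radius $R$ with $R/E_1$ close to $1$ (where $E_1$ is the Euclidean cusp height), then one computes $\sinh\bigl(d_Q(\mu\hat{x},\partial\mathcal{CC}(Q))\bigr)=\delta\cdot(a+s)/(2R)$ with $s=\sqrt{R^2-E_1^2}$, which can be an arbitrarily small multiple of the horospherical depth $\delta$; so the inequality $d_Q\geq\mathrm{arcsinh}(\delta)$ fails in general. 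What actually rules out this scenario is Lemma~\ref{lem:thick to thick}: it forces $\partial\mathcal{CC}(Q)\cap\mathbb{T}_{\epsilon_1}$ into the $\epsilon_0$-cuspidal part of the pleated boundary, hence into ideal-triangle tips each lying in a \emph{vertical} half-plane through the cusp point. This makes the lift of $\mathcal{CC}(Q)\cap\{t\ge E_1\}$ a genuine vertical cylinder $P\times[E_1,\infty)$ over a convex region $P$, after which the distance from $\mu\hat{x}$ to $\partial\mathcal{CC}(Q)\cap\{t\ge E_1\}$ is exactly $\mathrm{arcsinh}(\delta)$; one then needs a short further argument (any hemispherical supporting plane below must leave all of $P\times\{E_1\}$ on its exterior) to control the portion of $\partial\mathcal{CC}(Q)$ below the horosphere. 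The paper's text is also terse at precisely this step.
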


\begin{proof}
It suffices to prove the proposition for the Margulis constant given by Claim \ref{claim:horoball_disjoint}.

If $S_\gamma$ is bounded, then the inequality holds since ${\rm Area}(T)$ is bounded from below in terms of the Margulis constant, so we can assume that $S_\gamma$ is large enough that we have the following. Firstly, we can use the claims in the proof of Theorem \ref{thm:annular}. Secondly, using Claim \ref{claim:thick_core}, we have that the horospherical distance $d\geq S_\gamma/c$ between the two components of $\partial\mc{CC}(Q)\cap T_Q$ is large enough that we can find an annulus $A\subset\mc{CC}(Q)\cap T_Q$ with the following properties.
\begin{itemize}
    \item{Its core curve is in the homotopy class of $\gamma$.}
    \item{The boundary of $A$ has length uniformly bounded by a constant $b$ only depending on $\Sigma$ (to arrange this, we arrange that each boundary component of $A$ is the boundary of a cuspidal neighborhood of a pleated surface of given genus, hence fixed area), and $b\leq d/10$.}
    \item{The distance between the boundary components of $A$ lies in $[d/100,d/10]$.}
    \item{The horospherical distance between $A$ and $\partial\mc{CC}(Q)\cap T_Q$ is at least $d/10$.}
\end{itemize}  

We show that the restriction of the covering projection $p:Q\to M-\gamma$ is an embedding on $A$. It is enough to show that $p:A\to T$ is injective. Suppose by contradiction that it is not injective. Let $x,y\in A$ be distinct points such that $p(x)=p(y)$. Denote by $\tau$ a shortest flat geodesic segment between them. By our assumptions, $\ell(\tau)\le\sqrt{b^2+d^2/10^2}$ (the length of the diagonal in a rectangle of sides of length $b$ and $d/10$). Let $\alpha$ be a shortest non-trivial geodesic loop based at $x$ and not homotopic into a cusp. As explained in Claim \ref{claim:short_loop}, $\alpha$ has length bounded by a uniform constant $\ell(\alpha)\le b$ only depending on $\Sigma$. We lift $p(\alpha)$ to $Q$ with basepoint $y$. The lift has its terminal point different from $y$ on the boundary of some preimage $\mc{O}$ of the cusp of $M-\gamma$. By the same argument as in Claim \ref{claim:not_lift} we have that $\mc{O}$ is different from $\mb{T}(\gamma)$ and, hence, by Claim \ref{claim:horoball_disjoint}, it lies outside $\mc{CC}(Q)$. Thus the distance between $x$ and $\partial\mc{CC}(Q)\cap T$ is at most $\ell(\tau)+b\le\sqrt{b^2+d^2/10^2}$. However, by our choices, it is also larger than $d/4$. Thus we obtained a contradiction.
\end{proof}

\section{Move sequences in the pants graph}
\label{sec:move_seq}

Our next goal is to show Proposition \ref{augmenting minsky} below which roughly says that any two pants decompositions on a punctured surface can be efficiently connected by a path of moves where all the moves that need to happen far from a puncture occur in certain specified subpaths. This is a key input in Section \ref{sec:good_homot} where we turn the combinatorially controlled sequence of pants decompositions into a geometrically controlled sequence of homotopies between pleated surfaces using the good homotopy machinery of Minsky \cite{M01}. As explained in Section \ref{sec:strategy} and in Figure \ref{fig:sweepout1}, this is an important step for constructing a model for the standard meridian $\mu\subset T$ associated with $M$ on the boundary of a standard Margulis neighborhood of the cusp of $M-\gamma$.

\begin{proposition}
\label{augmenting minsky}
For any finite-type surface $\Sigma$ (with at least one puncture) the exists $C_0$ such that for all $C\geq C_0$ there exists a constant $K\geq 1$ with the following property. Fix a puncture $p$ of $\Sigma$.
 Let $P$ and $Q$ be pants decompositions of $\Sigma$. Then there exist subsurfaces $Y_1,\dots, Y_k$ not containing $p$, an elementary move sequence $P=P_0\to\ldots\to P_n=Q$, and disjoint intervals $J_j\subseteq \{0,\dots,k\}$ satisfying the following properties. Denote
 $$S_p:=\sum_{Y\in\mathcal Y_p} \{\{d_{Y}(P,Q)\}\}_C,$$
 where $\mathcal Y_p$ is the set of all essential non-annular subsurfaces $Y$ containing $p$.
 
\begin{enumerate}
\item For $i\in J_j$, the elementary move $P_i\to P_{i+1}$ is supported in $Y_j$.
\item{$|\{0,\dots,n\}-\bigcup J_j|\leq K S_p+K$.}
\item $k\leq K S_p+K$.
\end{enumerate}
\end{proposition}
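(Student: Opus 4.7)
The approach is to invoke the tight hierarchy machinery of Masur and Minsky~\cite{MasurMinsky:II} to produce an efficient elementary move sequence from $P$ to $Q$, and then to read off the intervals $J_j$ from the tree of domains of the hierarchy, according to whether each domain contains the puncture $p$.

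First, I would extend $P$ and $Q$ to markings $\mu_P, \mu_Q$ and build a tight hierarchy $H$ between them. The standard resolution of $H$ produces an elementary move sequence $P = P_0 \to P_1 \to \cdots \to P_n = Q$ in which each move $P_i \to P_{i+1}$ comes equipped with a single \emph{domain} $D_i$, namely an essential subsurface appearing as a geodesic domain of $H$. The collection of these domains is organized in a tree rooted at $\Sigma$: the children of the geodesic in $Y$ are the component subsurfaces of $Y \setminus v$ (over vertices $v$ of the geodesic) that host subordinate geodesics. The key combinatorial property of the resolution that I use is that, for any domain $Y$ in $H$, all moves whose domain lies in the subtree rooted at $Y$ occur consecutively in time.

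I would then set the $Y_j$ to be the \emph{maximal} non-$p$-containing subsurfaces in the domain tree: those $Y_j \notin \mc Y_p$ whose parent in the tree lies in $\mc Y_p$. The interval $J_j$ is defined as the block of consecutive indices $i$ with $D_i$ in the subtree below $Y_j$; this immediately gives property (1), since moves in that subtree are supported in subsurfaces of $Y_j$ and hence in $Y_j$, and it makes the $J_j$ pairwise disjoint. The moves outside $\bigcup_j J_j$ are precisely those with domain in $\mc Y_p$, and the standard distance formula applied to the resolution of the tight hierarchy bounds their number by $\lesssim \sum_{Y \in \mc Y_p} \{\{d_Y(P,Q)\}\}_C + 1 \lesssim S_p + 1$, giving (2). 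For (3), each $Y_j$ is a non-$p$-containing component of $X \setminus v$ for some vertex $v$ of the geodesic in some $X \in \mc Y_p$ appearing in $H$, so a topological bound on the number of such components per vertex, combined with the estimate that the total number of vertices of geodesics supported on $\mc Y_p$-domains is $\lesssim S_p + 1$, yields $k \lesssim S_p + 1$.

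The main obstacle, and where I would have to be careful, is the thresholding in $\{\{\cdot\}\}_C$: I need $C_0$ to be large enough (in particular, exceeding the Bounded Geodesic Image constant of Masur-Minsky) so that subsurfaces $Y \in \mc Y_p$ with $d_Y(P,Q) \le C$ host only boundedly many hierarchy vertices and produce only boundedly many non-$p$-containing children. With such a choice of $C_0$, the counts above are genuinely controlled by the thresholded sum $S_p$ rather than by unthresholded projection sums, and the conclusion follows with a constant $K$ depending only on $C$ and the topology of $\Sigma$.
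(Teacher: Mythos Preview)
Your approach via tight hierarchies and their resolutions is genuinely different from the paper's, and in fact the authors explicitly remark at the start of their proof that ``there might be a proof of the proposition by carefully constructing a resolution of a hierarchy \dots\ but we could not find it.'' Instead, they work purely with the HHS axioms: they take the $Y_j$ to be the $\sqsubseteq$-maximal non-annular subsurfaces not containing $p$ with $d_{Y_j}(P,Q)\ge C$, order them via the Behrstock partial order, and then use the realization theorem to \emph{construct} intermediate pants decompositions $p_i,q_i$ (each containing $\partial Y_i$) with prescribed subsurface projections. The intervals $J_i$ are the segments from $p_i$ to $q_i$ of a geodesic in the pants graph restricted to $Y_i$, and the bulk of the work (Claims~\ref{claim:monotonic} and~\ref{claim:no_proj}) is a case analysis showing that the projections of the $p_i,q_i$ to any $\mathcal C(Y)$ behave monotonically along a geodesic from $\pi_Y(P)$ to $\pi_Y(Q)$ when $Y\in\mathcal Y_p$, and coincide otherwise. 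The distance formula then bounds the moves outside $\bigcup J_j$.

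The specific gap in your sketch is the assertion that ``all moves whose domain lies in the subtree rooted at $Y$ occur consecutively in time.'' In a standard Masur--Minsky resolution this is not automatic: while the slice is at the vertex of the parent geodesic supporting $Y_j$, moves can also occur in domains disjoint from $Y_j$ (e.g.\ other complementary components at the same vertex, or domains orthogonal to the parent), so the indices $i$ with $D_i$ in the subtree of $Y_j$ form a subset of an interval rather than an interval. One can try to reorder using commutativity of moves in disjoint subsurfaces, but arranging the resulting $J_j$ to be genuinely disjoint intervals, while keeping the count of moves outside them controlled by $S_p$, is exactly the technical difficulty the authors allude to and bypass. If you want to pursue the hierarchy route, you would need to specify a resolution (or a reordering procedure) that achieves consecutiveness and then verify disjointness of the $J_j$ directly; as written, this step is asserted rather than proved.
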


No other result from this section is used elsewhere.

\subsection{The hierarchical structure of the pants graph}

There might be a proof of the proposition by carefully constructing a resolution of a hierarchy (see \cite[Proposition 5.4]{MasurMinsky:II}), but we could not find it. Instead, the proof we give only uses the hierarchical structure of pants graphs as emerged from \cite{MasurMinsky:II} and axiomatized in \cite{HHS1}, and in particular the facts recalled below. All subsurfaces below are essential, non-annular, and regarded up to isotopy. We denote $U\nest V$ if $U$ is contained in $V$ (up to isotopy) and $U\orth V$ if they are disjoint (again up to isotopy). When they are not $\nest$- nor $\orth$-comparable, we say that they overlap and denote this by $U\transverse V$. In other words, we write $U\transverse V$ if the subsurfaces intersect and neither is contained in the other.

The crucial facts we will use are the following, where the constant $E\geq 1$ depends only on the ambient surface.

\begin{itemize}
 \item (Bounded Geodesic Image, \cite[Theorem 3.1]{MasurMinsky:II}) Let $Z\propnest Y$ and let $P,Q$ be multicurves such that $d_Z(P,Q)\geq E$. Then every geodesic in $\mathcal C(Y)$ from $\pi_Y(P)$ to $\pi_Y(Q)$ passes within distance $1$ of $\pi_Y(\partial Z)=\partial Z$.
 \item (Behrstock inequality, \cite[Theorem 4.3]{Beh06}) Let $Z\transverse Y$ and let $P$ be a multicurve such that $\pi_Z(P)$ and $\pi_Y(P)$ are well-defined. If $d_Z(P,\partial Y)>E$ then $d_Y(P,\partial Z)\leq E$.
 \item (Realization theorem, \cite[Theorem 4.3]{BKMM:qi_rigid}, \cite[Theorem 3.1]{HHS2}) For all $A\geq 0$ there exists $B\geq 0$ such that the following holds. Let $Y$ be a subsurface and suppose that for each $U$ either nested into or disjoint from $Y$ we have a fixed subset $b_U\subseteq \mathcal C(U)$ of diameter at most $B$. Suppose that for all $U,V$ either nested into or disjoint from $Y$:
 \begin{enumerate}[(a)]
     \item (Consistency 1) If $U\transverse V$ then $\min\{d_U(b_U,\partial V),d_V(b_V,\partial U)\}\leq A$,
     \item (Consistency 2) Suppose $U\propnest V$. If $d_V(\partial U,b_V)>A$ then $d_U(\pi_U(b_V),b_U)\leq A$.
 \end{enumerate}
 
 Then there exists a pants decomposition $P$ containing $\partial Y$ and such that $d_U(P,b_U)\leq B$ for all $U$ that satisfy $U\nest Y$ or $U\orth Y$.
 \item (Distance formula, \cite[Theorem 6.12]{MasurMinsky:II}) There exists $L_0>0$ such that for all $L\geq L_0$ there exists $K\geq 1$ such that following holds. Let $P$, $Q$ be pants decompositions. Then the minimal number of elementary moves to go from $P$ to $Q$ (that is, their distance in the pants graph) is
 $$\approx_{K,K} \sum_Y d_Y(P,Q),$$
 where the sum is taken over all non-annular subsurfaces.
\end{itemize}

We will also use two similar lemmas (``passing-up lemmas'') that follow from the axiomatic setup, and which we state below in the context we need. They both roughly say, in our context, that given two pants decompositions $P_1,P_2$ and sufficiently many subsurfaces $Y_i$ of some subsurface $Z$ such that $d_{Y_i}(P_1,P_2)$ are far, then there is a subsurface $Y\nest Z$ in which a lot of the $Y_i$ are nested and $d_Z(P_1,P_2)$ is large. The lemmas could be combined, but we prefer to keep them separate for clarity.

\begin{lem}
\label{lem:passing-up}
    Given a finite-type surface $\Sigma$ and constants $C,R\geq 0$ there exists $K\geq 0$ such that the following hold.
    \begin{enumerate}
        \item (\cite[Lemma 2.5]{HHS2}) Let $P, Q$ be pants decompositions and let $\mathcal A$ be a collection of non-annular subsurfaces such that $d_Y(P,Q)\geq C$ for all $Y\in\mathcal A$. If $|\mathcal A|\geq K$, then there exists $Y\in\mathcal A$ a non-annular subsurface $W$ with $Y\propnest W$ such that $d_W(P,Q)\geq C$.
        \item (\cite[Lemma 1.6]{HHS:quasiflats}) Let $P, Q$ be pants decompositions, $Z$ a subsurface, and let $\mathcal A$ be a collection of non-annular subsurfaces such that all $Y\in\mathcal A$ satisfy $Y\nest Z$ and $d_Z(P,Q)\geq C$. If $|\mathcal A|\geq K$ then there exist $Y_1,Y_2\in\mathcal A$ and a non-annular subsurface $W$ with $Y_1,Y_2\propnest W\propnest Z$ and $d_W(\partial Y_1,\partial Y_2)\geq R$.
    \end{enumerate}
\end{lem}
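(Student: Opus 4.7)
The plan is to realize the move sequence as a resolution of a Masur--Minsky hierarchy between $P$ and $Q$, classify each elementary move by whether its supporting $4$-domain contains the puncture $p$, and group the non-$p$ moves into blocks supported in the (locally constant) complement of the pair of pants of $P_i$ containing $p$. The $Y_j$'s will be connected components of $\Sigma-\pi$ for various pants $\pi$ containing $p$.

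\textbf{Execution.} Fix $C_0$ larger than the hierarchy threshold $M_1$ and the distance-formula constant $L_0$. For $C\geq C_0$, form a hierarchy $H$ between $P$ and $Q$ and extract a resolution $P=P_0\to\cdots\to P_n=Q$ with each $P_i\to P_{i+1}$ supported in a $4$-domain $W_i$ (itself a hierarchy domain). Call move $i$ an \emph{exception} if $p\in W_i$ and a \emph{block} move otherwise. The key topological observation is that a block move leaves the pair of pants $\pi(P_i)$ of $P_i$ containing $p$ unchanged: since $p\notin W_i$, neither of the two pairs of pants of $P_i$ forming $W_i$ equals $\pi(P_i)$, hence $W_i$ is disjoint from $\pi(P_i)$ and the move affects no curve of $\partial\pi(P_i)$. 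Conversely, an exception move always alters $\pi(P_i)$. Consequently, along a maximal run of consecutive block moves $\pi(P_i)=\pi$ is constant, and using commutativity of moves in disjoint $4$-domains one can reorder the run and split it into at most three sub-runs, each supported in a single component of $\Sigma-\pi$. Take these sub-runs as the $J_j$'s with $Y_j$ the corresponding component of $\Sigma-\pi$; then $p\notin Y_j$, condition (1) holds, and the moves outside $\bigcup J_j$ are exactly the exception moves.

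\textbf{Counting.} Since $\pi(P_i)$ changes only at exception moves, the number of maximal runs is at most $(\#\,\mathrm{exceptions})+1$, giving $k\leq 3((\#\,\mathrm{exceptions})+1)$. Every exception move is a step of some $4$-dimensional hierarchy geodesic $g_W$ with $p\in W$, so the resolution gives
\[
\#\,\mathrm{exceptions}\;\leq\;\sum_{W\ \text{4-dim.\ hier.\ dom.},\, p\in W} d_W(P,Q)+O(1)\;\leq\;\sum_{Y\in\mathcal Y_p,\, d_Y(P,Q)\geq M_1} d_Y(P,Q)+O(1).
\]
Splitting according to whether $d_Y>C$ or $M_1\leq d_Y\leq C$ gives $S_p+(\text{medium contribution})$, where each medium term is bounded by $C$.

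\textbf{Main obstacle.} The crux is bounding the cardinality of the medium collection $\mathcal M=\{Y\in\mathcal Y_p:M_1\leq d_Y(P,Q)\leq C\}$ by $K'S_p+K'$. I would iterate Lemma~\ref{lem:passing-up}(1) on $\mathcal M\cup\mathcal L$ (where $\mathcal L=\{Y\in\mathcal Y_p:d_Y>C\}$, with $|\mathcal L|\leq S_p/C$), organized by nesting and the Behrstock-induced ordering of overlapping subsurfaces. Whenever too many ``siblings'' appear at a common level, passing-up produces a strictly larger $p$-containing subsurface still in the collection; combined with the bound on nesting depth by the complexity of $\Sigma$, this controls $|\mathcal M|$ by $K'S_p+K'$, yielding $\#\,\mathrm{exceptions}\leq K_1 S_p + K_1$ and hence both conclusions (2) and (3) upon absorbing constants into $K$.
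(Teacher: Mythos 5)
Your proposal does not address the statement it is supposed to prove. The statement is Lemma \ref{lem:passing-up}, the two ``passing-up'' lemmas (given sufficiently many subsurfaces with large projection distance between $P$ and $Q$, respectively nested in a fixed $Z$, one finds a strictly larger subsurface $W$ with $d_W(P,Q)\geq C$, respectively with $d_W(\partial Y_1,\partial Y_2)\geq R$). What you have written is instead an outline of a proof of Proposition \ref{augmenting minsky} (the move-sequence proposition): you construct a hierarchy resolution, sort elementary moves by whether their $4$-domain contains the puncture $p$, define the $Y_j$ and intervals $J_j$, and try to bound the number of exceptional moves by $S_p$. Worse, in your ``Main obstacle'' paragraph you explicitly invoke Lemma \ref{lem:passing-up}(1) as a tool, so even reinterpreted charitably your argument is circular as a proof of that lemma: you assume the very statement under discussion.

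In the paper, Lemma \ref{lem:passing-up} is not proved at all; both parts are quoted verbatim from the hierarchically hyperbolic spaces literature (\cite[Lemma 2.5]{HHS2} and \cite[Lemma 1.6]{HHS:quasiflats}), where they follow from the axiomatic ``large links''/counting arguments in that framework (or, in the surface case, from Masur--Minsky-style arguments with the Bounded Geodesic Image theorem). A blind proof of the lemma would have to reproduce such a counting argument: nothing of that kind appears in your text. As a side remark, even as an attempt at Proposition \ref{augmenting minsky} your route via a hierarchy resolution is exactly the one the authors say they could not carry out (``There might be a proof of the proposition by carefully constructing a resolution of a hierarchy \dots but we could not find it''), and your sketch leaves the key step (the reordering/commutation of block moves and the bound on the medium collection $\mathcal M$) unjustified; but the primary issue is simply that you proved the wrong statement.
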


\subsection{Proof of the proposition}

Since the proof is quite technical, we first give an outline to highlight the main idea. The natural candidates for the subsurfaces $Y_i$ are just the maximal subsurfaces not containing the puncture with the property that $P$ and $Q$ project far onto $\mathcal C(Y_i)$. We now think of moving from $P$ to $Q$ with a sequence of elementary moves, and how the projections to the various curve graphs change along the way.

A well-known consequence of the Behrstock inequality says that if $Y_i\transverse Y_j$, there is one subsurface where we need to change the projection first before we can change the projection onto the other one. This results in a partial ordering of the subsurfaces, which we extend to a total order arbitrarily, and we change the indices of the $Y_i$ accordingly. Now, we would like to find the candidate pants decompositions $p_i,q_i$ at the beginning and end of the interval $J_i$. These will contain $\partial Y_i$, and we would like to change all the projections onto any $\mathcal C(Y)$ for $Y\nest Y_i$ that still need to change at that point of the path as we move from $p_i$ to $q_i$. In particular, we would like $\pi_Y(q_i)$ to be (close to) $\pi_Y(Q)$ for all such $Y$. By the realization theorem, there is a pants decomposition of $Y_i$ with these coarse ``coordinates'' in all such $\mathcal C(Y)$. Inductively, it would make sense to set the coordinates of $p_i$ for $Y\orth Y_i$ or $Y\nest Y_i$, as well as the coordinates of $q_i$ for $Y\orth Y_i$, coarsely equal to those of $\pi_Y(q_{i-1})$ (this is to go ``as quickly as possible'' between the pants decomposition $q_{i-1}$ and a pants decomposition $p_i$ containing $\partial Y_i$). This unfortunately does not work, because a given $Y$ can be disjoint from arbitrarily many $Y_i$, and the coordinates in the corresponding $\mathcal C(Y)$ could ``drift'' uncontrollably as we repeat the induction step. Instead, we reverse-engineer what the projections would morally be if we did this procedure and no drifting occurs, and still use the realization theorem to show that pants decompositions with the required projections do exist.

What is left to do is to show that the sum over $i$ of the number of moves needed to go from $q_{i-1}$ to $p_i$ is controlled by $S_p$, using the distance formula. To do so, we check that for any $\mathcal C(Y)$ with $Y\in\mathcal Y_p$ the projections of the intermediate points occur aligned along a geodesic from $\pi_Y(P)$ to $\pi_Y(Q)$, while there is no contribution for $Y\notin\mathcal Y_p$, meaning that $q_{i-1}$ and $p_i$ project close onto any such $Y$.

\begin{proof}[Proof of Proposition~\ref{augmenting minsky}]
In this proof we do not explicitly keep track of constants. Except in one part of the proof that we specify below, when we say that two subsets of a curve graph coarsely coincide we mean that they lie within distance $100B$ of each other, where $B$ is the constant from the realization theorem for any subsurface of $\Sigma$ and for $A=100E$, with $E$ the constant from the hierarchical properties of the pants graph; we can assume $B\geq E$. We denote this by the symbol $\approx$. Sometimes we will deduce from $A_1\approx A_2\approx A_3$ that $A_1\approx A_3$, which is formally not correct, but in those contexts the Hausdorff distances between $A_i$ and $A_{i+1}$ are in fact at most $50B$. When we say that two subsets are far we mean that they lie at least $10^6B$ away from each other. Throughout the proof we assume $C\geq 10^9B$ (so that $C\geq 10^9E$ as well).

Let $\mathcal A$ be the set of all $\nest$-maximal non-annular subsurfaces $Y$ of $\Sigma$ that do not contain $p$ and with the property that $d_{Y}(P,Q)\geq C$.

\par\medskip

\begin{claim}
    \label{claim:passing_up}
    There exists $K=K(\Sigma,C)$ such that $|\mathcal A|\leq K S_p+K$.
\end{claim}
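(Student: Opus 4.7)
My plan is to bound $|\mathcal{A}|$ in terms of the cardinality of $\mathcal{B}:=\{W\in\mathcal{Y}_p : d_W(P,Q)\geq C\}$, using that $S_p\geq C\cdot|\mathcal{B}|$. I will partition $\mathcal{A}=\mathcal{A}^{\mathrm{top}}\sqcup\mathcal{A}^{\mathrm{sub}}$, where $\mathcal{A}^{\mathrm{top}}$ consists of those $Y$ for which no $W\in\mathcal{B}$ strictly contains $Y$. A preliminary observation, which I use throughout, is that for $Y\in\mathcal{A}$ any non-annular $W\supsetneq Y$ with $d_W(P,Q)\geq C$ must contain $p$ and hence lie in $\mathcal{B}$: if not, then $W$ (or any $\nest$-maximal extension of $W$ among non-$p$-containing subsurfaces with $d\geq C$) would itself be an element of $\mathcal{A}$ strictly containing $Y$, contradicting the $\nest$-maximality of $Y$ in $\mathcal{A}$.

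To bound $|\mathcal{A}^{\mathrm{top}}|$, I apply Lemma \ref{lem:passing-up}(1) to $\mathcal{A}^{\mathrm{top}}$. If this set has size at least the constant $K_0$ returned by that lemma for threshold $C$, we would obtain some $Y\in\mathcal{A}^{\mathrm{top}}$ and a non-annular $W\supsetneq Y$ with $d_W(P,Q)\geq C$. By the preliminary observation $W\in\mathcal{B}$, contradicting $Y\in\mathcal{A}^{\mathrm{top}}$. Hence $|\mathcal{A}^{\mathrm{top}}|<K_0$.

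For $\mathcal{A}^{\mathrm{sub}}$ I will define a map $\phi\colon\mathcal{A}^{\mathrm{sub}}\to\mathcal{B}$ by letting $\phi(Y)$ be a $\nest$-minimal element of $\{W\in\mathcal{B}:Y\propnest W\}$; such a minimal element exists because the set is non-empty by definition of $\mathcal{A}^{\mathrm{sub}}$ and the $\nest$-relation on non-annular subsurfaces is well-founded (complexity strictly decreases along chains). I claim each fiber $\phi^{-1}(W)$ has size less than the constant $K_1$ from Lemma \ref{lem:passing-up}(2) with parameters $C$ and $R:=C+3$. Indeed, if $|\phi^{-1}(W)|\geq K_1$, applying that lemma with $Z=W$ (noting $d_W(P,Q)\geq C$ since $W\in\mathcal{B}$) yields $Y_1,Y_2\in\phi^{-1}(W)$ and a non-annular $W'\propnest W$ with $Y_1,Y_2\propnest W'$ and $d_{W'}(\partial Y_1,\partial Y_2)\geq R$. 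Bounded Geodesic Image applied to each $Y_i\propnest W'$ (using $d_{Y_i}(P,Q)\geq C\geq E$) then gives $d_{W'}(P,Q)\geq R-2\geq C$. By the preliminary observation $W'\in\mathcal{B}$, but $Y_1\propnest W'\propnest W$ then violates the $\nest$-minimality defining $\phi(Y_1)=W$.

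Combining the two estimates gives $|\mathcal{A}|<K_0+K_1|\mathcal{B}|\leq K_0+(K_1/C)S_p$, which yields the claim with $K:=\max(K_0,K_1/C)+1$. The main delicate point is coordinating the $\nest$-maximality used to define $\mathcal{A}$ with the passing-up produced by the two lemmas: the whole argument hinges on being able to show that every passing-up step forced by a counting hypothesis lands in $\mathcal{B}$ rather than producing a new element of $\mathcal{A}$, which is precisely what the preliminary observation guarantees (and where the choice $R\geq C+2$ is needed).
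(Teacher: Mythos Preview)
Your proof is correct and follows the same overall structure as the paper's: partition $\mathcal A$ according to whether each element sits below some larger subsurface with projection $\geq C$, bound the ``top'' piece by Lemma~\ref{lem:passing-up}(1), assign every remaining $Y$ to a $\nest$-minimal $W\in\mathcal B$ above it, and control the fibers of this assignment via Lemma~\ref{lem:passing-up}(2) together with the Bounded Geodesic Image theorem. Your ``preliminary observation'' (that any proper non-annular overspace of $Y\in\mathcal A$ with projection $\geq C$ must contain $p$) is exactly what the paper encodes in the parenthetical ``which necessarily contains the puncture''.

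The one genuine difference is in how the fibers are bounded. The paper first uses BGI to localize $\pi_Z(\partial Y)$ near the geodesic in $\mathcal C(Z)$ from $\pi_Z(P)$ to $\pi_Z(Q)$, then applies passing-up in each ball along that geodesic to get a fiber bound of order $d_Z(P,Q)$; summing over $Z$ then reproduces $S_p$ term by term. You instead obtain a \emph{uniform constant} bound $K_1$ on every fiber, by showing that a large fiber would produce (via Lemma~\ref{lem:passing-up}(2) and BGI) an intermediate $W'\in\mathcal B$ strictly between $Y_1$ and $\phi(Y_1)$, contradicting minimality; you then convert $|\mathcal B|$ to $S_p$ via the trivial inequality $S_p\geq C|\mathcal B|$. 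Both routes are valid, and yours is arguably a little cleaner since it bypasses the ball-covering step.
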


\begin{proof}
Consider first the subset of $\mathcal A$ consisting of all $Y\in\mathcal A$ that are not properly nested into some $Z$ with $d_Z(P,Q)\geq C$ (which necessarily contains the puncture). The number of those is bounded by passing up, Lemma \ref{lem:passing-up}-(1), say by some constant $K$.

For the others, fix a $\nest$-minimal non-annular subsurface $Z=Z(Y)$ that contains $p$ with $d_Z(P,Q)\geq C$. Consider some non-annular subsurface $Z$, and all $Y$ with $Z=Z(Y)$ and $\pi_Z(\partial Y)$ contained in a fixed ball in $\mathcal C(Z)$. The cardinality of the set of all such $Y$ is bounded in terms of the radius of the ball by passing-up, Lemma \ref{lem:passing-up}. This allows us to conclude that there are at most $K\{\{d_{Y}(P,Q)\}\}_C$ many $Y$ with $Z(Y)=Z$, for some constant $K$ bigger or equal than the earlier $K$, since $\pi_Z(\partial Y)$ is contained in a uniform neighborhood of a geodesic from $\pi_Z(P)$ to $\pi_Z(Q)$ by the Bounded Geodesic Image if $d_Z(P,Q)\geq C$.

All in all, we have $|\mathcal A|\leq K S_p+K$ as desired.
\end{proof}

Let us order the elements of $\mathcal A$ as $Y_1,\dots, Y_k$ in a way that if $i<j$ and $Y_i\transverse Y_j$ then
\begin{itemize}
    \item $\pi_{Y_i}(Q)$ coarsely coincides with $\pi_{Y_i}(\partial Y_j)$, and
    \item $\pi_{Y_j}(P)$ coarsely coincides with $\pi_{Y_j}(\partial Y_i)$.
\end{itemize}
This is possible since it is well-known (see e.g. \cite[Proposition 2.8]{HHS2}) that the Behrstock inequality implies the relation $\preceq$ defined by $Y\preceq Z$ if $Y\transverse Z$ and $\pi_{Z}(P)$ coarsely coincides with $\pi_{Z}(\partial Y)$ is a partial order. Moreover, $Y\preceq Z$ is equivalent to $Y\transverse Z$ and $\pi_Y(Q)$ coarsely coincides with $\pi_Y(\partial Z)$. The partial order $\preceq$ can then be embedded into a total order. 

{\bf Intermediate pants decompositions.} The key to this proof is the following construction of various intermediate pants decompositions (obtained ``projecting'' to the $Y_i$).

For notational purposes, we set $P=q_0$ and $p_{k+1}=Q$ and $\partial Y_0=Y_0=P$.

The intermediate pants decompositions are the ones given by the following claims.

\begin{claim}
\label{claim:p}
    For $0\leq i\leq k+1$, there exists a pants decomposition $p_i$ containing $\partial Y_i$ and such that
\begin{itemize}
    \item whenever $Y\orth Y_i$ or $Y\nest Y_i$, let $j<i$ be maximal such that $Y_j$ either cuts $Y$ or $Y\nest Y_j$. In the former case $\pi_Y(p_i)\approx \pi_Y(\partial Y_j)$, while in the latter $\pi_Y(p_i)\approx \pi_Y(Q)$.
\end{itemize}
\end{claim}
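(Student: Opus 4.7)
The plan is to construct $p_i$ as the output of the Realization Theorem applied to a family of uniformly bounded target sets $\{b_U\}$, one for each non-annular subsurface $U$ with $U\nest Y_i$ or $U\orth Y_i$. The sets are reverse-engineered from the specification in the claim: for each such $U$, let $j(U)\in\{0,1,\dots,i-1\}$ be maximal such that either $Y_{j(U)}\transverse U$ or $U\nest Y_{j(U)}$ (using the convention $Y_0=P$ so that some $j(U)$ always exists, with fallback $b_U=\pi_U(P)$). Set
\[
b_U := \begin{cases} \pi_U(\partial Y_{j(U)}) & \text{if } Y_{j(U)}\transverse U,\\ \pi_U(Q) & \text{if } U\nest Y_{j(U)}.\end{cases}
\]
Each $b_U$ has uniformly bounded diameter, since it is a subsurface projection of either a fixed pants decomposition or a boundary multicurve.

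The heart of the proof is verifying Consistency~1 and Consistency~2. For Consistency~1 with $U\transverse V$, when $j(U)=j(V)=j$ and both coordinates come from a common multicurve $M\in\{P,Q,\partial Y_j\}$, the Behrstock inequality applied to $M$ gives the bound immediately (noting that $\partial Y_j$ projects to both $U$ and $V$ whenever $Y_j$ cuts both). When $j(U)\neq j(V)$, say $j(U)<j(V)$, the maximality of $j(U)$ for $U$ forces $Y_{j(V)}$ to be either disjoint from $U$ or to properly contain $U$; in the first sub-case Behrstock applied to $\partial Y_{j(V)}$ is what we need, while the second contradicts maximality. For the cross-cases where $b_U$ and $b_V$ come from different $Y_j$'s the chosen total order on the $Y_j$'s (extending the Behrstock partial order with $Y_a\preceq Y_b$ iff $\pi_{Y_a}(Q)\approx\pi_{Y_a}(\partial Y_b)$) guarantees that $\pi_{Y_{j(V)}}(P)\approx\pi_{Y_{j(V)}}(\partial Y_{j(U)})$, which translates into the required bound.

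For Consistency~2 with $U\propnest V$: if $b_V=\pi_V(Q)$ then $V\nest Y_{j(V)}$, so $U\nest Y_{j(V)}$ as well and hence $j(U)\geq j(V)$ by maximality; Bounded Geodesic Image applied inside $Y_{j(V)}$ together with the distance hypothesis $d_V(\partial U, b_V)>A$ places $\partial U$ near a geodesic from $\pi_V(P)$ to $\pi_V(Q)$ and shows that $\pi_U(Q)\approx b_U$. If instead $b_V=\pi_V(\partial Y_{j(V)})$, a parallel argument using BGI applied to the relationship $U\propnest V$ and the position of $\partial Y_{j(V)}$ relative to this geodesic closes the case. Once both consistency conditions hold, the Realization Theorem produces a pants decomposition $p_i$ containing $\partial Y_i$ with $d_U(p_i,b_U)\leq B$ for every $U$ in the family, which is exactly the conclusion sought.

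The main obstacle is the case analysis required for consistency; the chosen ordering of the $Y_j$'s (which extends the Behrstock partial order) is the essential input, since without it a ``latest interaction'' rule for defining coordinates would fail to be consistent. The remaining bookkeeping -- tracking which of $P$, $Q$, $\partial Y_j$ governs each $b_U$ and invoking the right hierarchical axiom -- is tedious but routine.
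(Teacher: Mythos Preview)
Your approach is the paper's: define the target coordinates $b_U$ exactly as you do, verify Consistency~1 and~2, and invoke the Realization Theorem. The definition of $b_U$ and the overall architecture match precisely.

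However, your sketch of the consistency verification misattributes tools in a couple of places. When $j(U)<j(V)$ and you correctly deduce $Y_{j(V)}\orth U$, the reason $\pi_V(\partial Y_{j(V)})\approx\pi_V(\partial U)$ is simply that disjoint multicurves project close; this is not the Behrstock inequality. More substantively, your treatment of Consistency~2 via Bounded Geodesic Image does not work as stated: BGI controls how geodesics in $\mathcal C(V)$ behave relative to $\partial U$ when $d_U$ of two markings is large, but you have no such hypothesis on $d_U$, and the conclusion ``$\pi_U(Q)\approx b_U$'' is not what BGI delivers. The paper instead handles Consistency~2 by transitivity of projection when $b_U,b_V$ both come from $Q$, and by \emph{case-exclusion via maximality} otherwise. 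For instance, if $b_V=\pi_V(Q)$ (so $V\nest Y_{j(V)}$) while $Y_{j(U)}$ cuts $U$, then since $U\propnest V$ the subsurface $Y_{j(U)}$ also cuts $V$, forcing $j(V)\geq j(U)$; combined with your observation $j(U)\geq j(V)$ this gives $j(U)=j(V)$ and hence $U\nest Y_{j(U)}$, a contradiction. Similar maximality arguments dispose of the remaining mixed cases. Finally, the total order on the $Y_j$ is not used in this claim at all; the paper's proof relies only on maximality of $j(U),j(V)$, Behrstock, transitivity, and disjoint-curves-project-close. The ordering enters only in the later monotonicity Claim~\ref{claim:monotonic}.
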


\begin{claim}
\label{claim:q}
    For $0\leq i\leq k+1$, there exists a pants decomposition $q_i$ containing $\partial Y_i$ and such that
\begin{itemize}
    \item whenever $Y\orth Y_i$, let $j<i$ be maximal such that $Y_j$ cuts $Y$. Then $\pi_Y(q_i)\approx \pi_Y(\partial Y_j)$,
    \item whenever $Y\nest Y_i$, we have $\pi_Y(q_i)\approx \pi_Y(Q)$.
\end{itemize}
\end{claim}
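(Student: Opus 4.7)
The plan is to apply the Realization Theorem to $Y = Y_i$ with the prescribed projections as target coordinates $b_U$. For each subsurface $U\nest Y_i$ set $b_U := \pi_U(Q)$, and for each $U\orth Y_i$ set $b_U := \pi_U(\partial Y_{j(U)})$, where $j(U) < i$ is the maximal index with $Y_{j(U)} \transverse U$ (using the convention $\partial Y_0 = P$ if no positive such index exists). The output of the Realization Theorem will then be a pants decomposition containing $\partial Y_i$ whose projection onto each relevant curve graph coarsely agrees with $b_U$; this is the desired $q_i$. Everything reduces to checking the two consistency conditions for this input.

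Both ``mixed'' cases in which one of $U, V$ is nested in $Y_i$ and the other is disjoint from $Y_i$ are vacuously excluded in both consistencies: $U\nest V\orth Y_i$ forces $U\orth Y_i$, and $U\transverse V$ requires $U$ and $V$ to intersect. The subcase $U, V \nest Y_i$ of either consistency is automatic since $b_U$ and $b_V$ both come from the honest multicurve $Q$. For $U,V\orth Y_i$ with $U\propnest V$ (Consistency 2), Bounded Geodesic Image applied to a geodesic in $\mathcal{C}(V)$ between $\pi_V(P)$ (or $\pi_V(Q)$) and $b_V = \pi_V(\partial Y_{j(V)})$, combined with the maximality of $j(U)$ and the ordering on $\mathcal A$, forces $\pi_U(b_V)$ to be coarsely equal to $b_U$ whenever $d_V(\partial U, b_V)$ is large.

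For the key subcase of Consistency 1, assume $U, V \orth Y_i$ and $U \transverse V$, and set $j_U = j(U) \leq j_V = j(V)$. Maximality of $j_U$ prevents $Y_{j_V}$ from transversely intersecting $U$, so one of $Y_{j_V}\orth U$, $Y_{j_V}\nest U$, or $U\nest Y_{j_V}$ holds. In each subcase one applies the Behrstock inequality to the transverse pair $U, V$ with a multicurve chosen from $P$, $Q$, or $\partial Y_{j_V}$, and exploits the ordering property $\pi_{Y_{j_V}}(\partial Y_{j_U}) \approx \pi_{Y_{j_V}}(P)$ (valid when $Y_{j_U}\transverse Y_{j_V}$) to conclude that either $d_U(b_U,\partial V)$ or $d_V(b_V,\partial U)$ is bounded by a constant depending only on $E$. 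The main obstacle is keeping track of constants through this case analysis: each individual Behrstock step is short, but several have to be combined and the accumulated additive error must fit inside the $100E$ window fixed at the beginning of the proof, so that the Realization Theorem applies with the pre-chosen constant $B$.
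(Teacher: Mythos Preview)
Your overall strategy matches the paper's: apply the Realization Theorem to the prescribed $b_U$, with the $U,V\nest Y_i$ case automatic (both coordinates come from $Q$) and the mixed cases vacuous; the paper's Remark between Claims~\ref{claim:p} and~\ref{claim:q} defers the remaining $U,V\orth Y_i$ case to the ``both cut'' subcase already handled in the proof of Claim~\ref{claim:p}.

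The gap is in your definition of $j(U)$. In the paper, ``$Y_j$ cuts $U$'' means $\pi_U(\partial Y_j)\neq\emptyset$, i.e., $Y_j\transverse U$ \emph{or} $Y_j\propnest U$, not $\transverse$ alone. With the correct definition the verification is a one-liner: if $j(U)<j(V)$ then by maximality $Y_{j(V)}$ does not cut $U$, so $\partial Y_{j(V)}$ and $\partial U$ are disjoint multicurves, whence $b_V=\pi_V(\partial Y_{j(V)})\approx\pi_V(\partial U)$; this dispatches both consistencies at once (for Consistency~2 the hypothesis $d_V(\partial U,b_V)>A$ simply fails). The case $j(U)=j(V)$ is a single Behrstock step (Consistency~1) or transitivity $\pi_U\circ\pi_V=\pi_U$ (Consistency~2), and $j(U)>j(V)$ is impossible for $U\propnest V$ since anything cutting $U$ also cuts $V$. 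No Bounded Geodesic Image, no appeal to the ordering on $\mathcal A$, and no constant accumulation beyond a single $E$. With your $\transverse$-only definition that last implication breaks (take $Y_{j(U)}\propnest V$ with $Y_{j(U)}\transverse U$, giving $j(U)>j(V)$), the subcase $Y_{j(V)}\propnest U$ is not excluded by maximality, and your BGI-plus-ordering sketch is not detailed enough to confirm these cases go through; moreover the resulting $q_i$ need not satisfy the claim as stated, which is phrased in terms of ``cuts'' rather than $\transverse$.
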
 

\begin{rmk}
    We will only spell out the proof of Claim \ref{claim:p}. In the proof, we will check the consistency conditions, and in order to prove Claim \ref{claim:q} the same argument applies to show consistency whenever $U,V\orth Y_i$, while consistency for $U,V\nest Y_i$ is clear.
\end{rmk}

\begin{proof}[Proof of Claim \ref{claim:p}.]
We will check that the realization theorem applies to yield pants decompositions of both $Y_i$ and its complement, which can then be combined with $\partial Y_i$ to a pants decomposition of $\Sigma$. To be explicit, for all $U$ either nested or disjoint from $Y_i$ we choose $b_U\coloneqq\pi_U(\partial Y_j)$ if $Y_j$ cuts $U$ and $b_U\coloneqq\pi_U(Q)$ if $U\nest Y_j$ and check consistency 1 and consistency 2.
This is the part of the proof where being close means something more restrictive, meaning being within distance $100E$. 

Let $U\transverse V$ with each either disjoint or nested into $Y_i$. Let $j<i$ and $l<i$ be maximal such that $Y_j$ cuts or contains $U$ and $Y_l$ cuts or contains $V$. If they both contain $U$/$V$, then consistency for $Q$ (i.e.~the fact consistency holds when $b_U$ is chosen to be $\pi_U(Q)$ for all $U$ by the Behrstock inequality (for consistency 1) and transitivity of projection (for consistency 2)) implies consistency 1 for $U,V$. Suppose that they both cut.
If $j=l$, then consistency 1,
i.e.~\[\min\{d_U(\pi_U(\partial Y_j),\partial V),d_V(\pi_V(\partial Y_l),\partial V)\}\leq A,\]
is satisfied by the Behrstock inequality, hence we consider the case $j\neq l$.
If we have $j<l$, then $Y_l$ does not cut $U$ by maximality of $j$; in particular, $Y_l$ and $U$ do not overlap, and thus $\pi_V(\partial Y_l)\approx \pi_V(\partial U)$ as desired for consistency.
If $j>l$, then $\pi_U(\partial Y_j)\approx \pi_U(\partial V)$, again implying consistency 1.
Finally, assume, up to swapping $U$ and $V$, $U\nest Y_j$ and that $Y_l$ cuts $V$. We have to show that either $\pi_U(Q)\approx \pi_U(\partial V)$ or $\pi_V(\partial Y_l)\approx \pi_V(\partial U)$. Note that $V$ cuts $Y_j$ since it cuts $U$ and $U\nest Y_j$. By maximality of $l$ we then have $l\geq j$.
If $\pi_V(\partial Y_l)\approx\pi_V(\partial U)$ we are done.
If not, then in particular $\partial U$ and $\partial Y_l$ are not disjoint, hence $Y_l\transverse U$. Since $Y_l$ cuts $U$, we have $l\leq j$ by maximality of $j$. Hence, we have $j=l$. But then $\pi_V(\partial Y_l)=\pi_V(\partial Y_j)\approx \pi_V(\partial U)$ since $U\nest Y_j$, which means this case cannot occur.

Let $U\propnest V$, with each either disjoint or nested into $Y_i$.
If they both contain $U$/$V$, then consistency for $Q$ yields consistency 2 for $U,V$.
Let $j<i$ and $l<i$ be maximal such that $Y_j$ cuts or contains $U$ and $Y_l$ cuts or contains $V$.
Suppose that they both cut.
If $j=l$, then consistency 2 holds, since
$\pi_U(b_V)\coloneqq
\pi_U(\pi_V(\partial Y_j))
=\pi_U(\partial Y_j)\eqqcolon
b_U$;
thus, we consider the case $j\neq l$.
If we have $j<l$, then $Y_l$ does not cut $U$ by maximality of $j$, hence $\pi_V(\partial Y_l)\approx \pi_V(\partial U)$, and consistency 2 holds.
On the other hand, we cannot have $j>l$ since any subsurface that cuts $U$ also cuts $V$.
Suppose now $U\nest Y_j$ and that $Y_l$ cuts $V$.
Then either $V\nest Y_j$ or $Y_j$ cuts $V$,  hence $j\leq l$.
If $\pi_V(\partial Y_l)\approx \pi_V(\partial U)$, then we are done, and if not $Y_l$ must cut $U$, showing $l\leq j$.
Therefore, we have $j=l$.
But then $\pi_V(\partial Y_l)=\pi_V(\partial Y_j)\approx \pi_V(\partial U)$ since $U\nest Y_j$. So actually this case could not occur.
Finally, suppose that $Y_j$ cuts $U$ and that $V\nest Y_l$. Then $U\nest Y_l$, so that $l\leq j$. But since $Y_j$ cuts $U$, it also cuts $V$, so that $j\leq l$. But then $j=l$, which would imply $U\nest Y_j$, so this case cannot occur.
\end{proof}

 The sequence of elementary moves that we will show to have the required properties goes through $P=q_0,p_1,q_1,p_2,\dots, p_{k+1}=Q$, and it is a shortest sequence between $q_i$ and $p_{i+1}$ for all $i$, and any sequence between $p_i$ and $q_i$ supported in $Y_i$. The intervals $J_j$ are the intervals corresponding to sequences between $p_j$ and $q_j$.

Property (1) holds by construction, while (3) holds by Claim \ref{claim:passing_up}.

{\bf Controlling distance formula terms.} We now need two claims to control the projections of the intermediate pants decompositions to all possible subsurfaces, which will then allow us to do suitable ``accounting'' with the distance formula.

\par\medskip

\begin{claim}
\label{claim:monotonic}
    There exists a constant $M$ depending only on the surface such that the following holds. Fix a subsurface $Y\in\mathcal Y_p$ and let $\alpha$ be a geodesic from $\pi_Y(P)$ to $\pi_Y(Q)$ in $\mathcal C(Y)$. Then the projections $\pi_Y(q_0),\pi_Y(p_1),\pi_Y(q_1), \dots$ lie within distance $M$ of points occurring monotonically along $\alpha$.
\end{claim}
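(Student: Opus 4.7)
My plan is to analyze the sequence $\pi_Y(q_0), \pi_Y(p_1), \pi_Y(q_1), \dots, \pi_Y(p_{k+1})$ by first identifying each term and then establishing monotonicity via the Behrstock ordering on $Y_1,\dots,Y_k$. Since $Y \in \mathcal{Y}_p$ contains the puncture $p$ while no $Y_i$ does, the relation of $Y_i$ to $Y$ is one of $Y\orth Y_i$, $Y\transverse Y_i$, or $Y_i \propnest Y$. I call $i$ \emph{active} if $Y_i \transverse Y$ or $Y_i \propnest Y$ (equivalently, $\pi_Y(\partial Y_i)$ is defined), and \emph{inactive} otherwise. For active $i$, since $\partial Y_i \subset p_i, q_i$ and subsurface projections of a pants decomposition have uniformly bounded diameter, $\pi_Y(p_i) \approx \pi_Y(q_i) \approx \pi_Y(\partial Y_i)$. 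For inactive $i$, Claims \ref{claim:p} and \ref{claim:q} give $\pi_Y(p_i) \approx \pi_Y(q_i) \approx \pi_Y(\partial Y_j)$ where $j<i$ is the largest active index (or $\pi_Y(P)$ if none), interpreting ``$Y_j$ cuts $Y$'' in those claims to include the case $Y_j \propnest Y$.

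Next I would verify that each active $\pi_Y(\partial Y_i)$ lies close to $\alpha$. If $Y_i \propnest Y$, then $d_{Y_i}(P,Q) \geq C \geq E$ and Bounded Geodesic Image forces $\alpha$ to pass within distance $1$ of $\partial Y_i$. If $Y_i \transverse Y$, then since $d_{Y_i}(P,Q) \geq 2E+1$ at least one of $d_{Y_i}(P,\partial Y), d_{Y_i}(Q,\partial Y)$ exceeds $E$, and by Behrstock $\pi_Y(\partial Y_i)$ lies within $E$ of the corresponding endpoint of $\alpha$ (if both stay $\leq E$, then $\alpha$ itself is short and everything lies in a bounded neighborhood of $\pi_Y(\partial Y_i)$).

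The heart of the proof is showing monotonicity: for active indices $i_1 < i_2$, $\pi_Y(\partial Y_{i_1})$ occurs before $\pi_Y(\partial Y_{i_2})$ along $\alpha$ up to bounded error. Since elements of $\mathcal{A}$ are $\nest$-maximal, $Y_{i_1}$ and $Y_{i_2}$ are either $\orth$ (in which case $\partial Y_{i_1}, \partial Y_{i_2}$ are disjoint multicurves, so $d_{\mathcal{C}(Y)}(\pi_Y(\partial Y_{i_1}),\pi_Y(\partial Y_{i_2})) \leq 1$ and the ordering is immaterial), or $\transverse$. In the transverse case, the linear extension of the Behrstock order gives $\pi_{Y_{i_1}}(Q) \approx \pi_{Y_{i_1}}(\partial Y_{i_2})$, hence $d_{Y_{i_1}}(P, \partial Y_{i_2}) \geq C - O(1) > E$. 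Bounded Geodesic Image then forces any $\mathcal{C}(Y)$-geodesic $\beta$ from $\pi_Y(P)$ to $\pi_Y(\partial Y_{i_2})$ to pass within $1$ of $\partial Y_{i_1}$. By $\delta$-hyperbolicity of $\mathcal{C}(Y)$, $\beta$ fellow-travels with an initial subsegment of $\alpha$ terminating near $\pi_Y(\partial Y_{i_2})$, so $\pi_Y(\partial Y_{i_1})$ is close to this initial subsegment and hence occurs before $\pi_Y(\partial Y_{i_2})$ along $\alpha$.

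The main obstacle is the mixed case in which active indices include both $Y_i \propnest Y$ (placing $\pi_Y(\partial Y_i)$ in the middle of $\alpha$) and $Y_i \transverse Y$ (possibly placing $\pi_Y(\partial Y_i)$ near an endpoint). The Bounded Geodesic Image plus hyperbolicity argument above still applies uniformly, but care is needed to verify via $\nest$-maximality of $\mathcal{A}$ and the Behrstock inequality that no backward jumps occur between consecutive active indices, and that the inactive indices (whose projections are coerced by Claims \ref{claim:p}, \ref{claim:q} to track the most recent active projection) do not disrupt the monotonicity. Once these are in place, the full sequence $\pi_Y(q_0), \pi_Y(p_1), \pi_Y(q_1),\dots$ tracks the monotone sequence of active $\pi_Y(\partial Y_{i_l})$ on $\alpha$, up to a uniform error $M$.
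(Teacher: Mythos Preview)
Your approach is essentially the same as the paper's: both reduce to showing that the sequence of ``active'' projections $\pi_Y(\partial Y_i)$ (for $i$ with $Y_i\transverse Y$ or $Y_i\propnest Y$) occurs monotonically along $\alpha$, after observing that $\pi_Y(p_i)\approx\pi_Y(q_i)$ for all $i$ and that inactive indices inherit their projection from the most recent active one via Claims~\ref{claim:p} and~\ref{claim:q}.

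However, your monotonicity argument has a genuine gap in the cases involving $Y_{i_1}\transverse Y$. Your BGI step---``any $\mathcal{C}(Y)$-geodesic $\beta$ from $\pi_Y(P)$ to $\pi_Y(\partial Y_{i_2})$ passes near $\partial Y_{i_1}$''---requires $Y_{i_1}\propnest Y$; Bounded Geodesic Image says nothing when $Y_{i_1}\transverse Y$. You acknowledge this as an obstacle but do not resolve it, and the assertion that the BGI-plus-hyperbolicity argument ``still applies uniformly'' is not correct as stated. What is actually needed, and what the paper does, is a direct Behrstock argument: when $Y_{i_1}\transverse Y$, the projection $\pi_Y(\partial Y_{i_1})$ is forced near one of the endpoints $\pi_Y(P)$ or $\pi_Y(Q)$, and one must rule out the wrong endpoint by combining the ordering $i_1<i_2$ (so $\pi_{Y_{i_1}}(\partial Y_{i_2})\approx\pi_{Y_{i_1}}(Q)$ when $Y_{i_1}\transverse Y_{i_2}$) with the Behrstock inequality applied in $\mathcal{C}(Y_{i_1})$ or $\mathcal{C}(Y_{i_2})$, not in $\mathcal{C}(Y)$. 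The two mixed cases ($Y_{i_1}\propnest Y$ with $Y_{i_2}\transverse Y$, and vice versa) each require a short separate argument pairing BGI for the nested factor with Behrstock for the transverse one. The paper carries out precisely this four-case analysis; once you fill in these cases along the lines just described, your proof is complete and matches the paper's.
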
  

\begin{proof}
For later purposes, we note that below we consider several cases depending on the relation between $Y$ and certain $Y_l$, and all arguments for the various cases are valid regardless of $Y$ containing the puncture or not.

Note that $\pi_Y(q_0)=\pi_Y(P)$ and $\pi_Y(p_{k+1})=\pi_Y(Q)$, so we can consider $p_i$ and $q_j$ with $i\in \{1,\dots,k\}$.

Let us consider $x=\pi_Y(p_i)$ and $y=\pi_Y(q_i)$, aiming to show that $x,y$ lie close to points on $\alpha$, with the point close to $x$ occurring closer to $\pi_Y(P)$ than the one close to $y$. There are two cases (note that $Y\nest Y_i$ cannot occur since $Y$ contains the puncture but $Y_i$ does not), and in all cases we can see that the required conclusion holds.
\begin{itemize}
    \item if $Y\transverse Y_i$ or $Y_i\propnest Y$, then both $x$ and $y$ coarsely coincide with $\pi_Y(\partial Y_i)$, which lies close to $\alpha$. To see the latter, if $Y_i\propnest Y$ apply the Bounded Geodesic Image Theorem, and if $Y\transverse Y_i$ this follows from the Behrstock inequality (as $\pi_{Y_i}(\partial Y)$ is far from at least one of $\pi_{Y_i}(P)$ and $\pi_{Y_i}(Q)$). Therefore, $\pi_Y(\partial Y_i)$ is close to one of $\pi_{Y}(P)$ or $\pi_{Y}(Q)$).
    \item If $Y\orth Y_i$ then $x$ and $y$ coarsely coincide by construction.
\end{itemize}

Note that we already argued in particular that each $\pi_Y(p_i),\pi_Y(q_i)$ occurs close to $\alpha$.

Let now $x=\pi_Y(r_i)$ and $y=\pi_Y(r_j)$ for some $i<j$ and $r\in\{p,q\}$. With the same aim as above, the possible cases are the following.
\begin{itemize}
    \item $Y_i,Y_j$ are both properly nested into $Y$. If $Y_i\transverse Y_j$ then by the ordering of the $Y_l$ we have $\pi_{Y_j}(\partial Y_i)\approx \pi_{Y_j}(P)$. In particular $\pi_{Y_j}(\partial Y_i)$ is far from $\pi_{Y_j}(Q)$, so that by the the Bounded Geodesic Image, $\pi_Y(\partial Y_j)\approx y$ needs to occur close to a geodesic from $\pi_Y(\partial Y_i)\approx x$ to $\pi_Y(Q)$, as required. If $Y_i\orth Y_j$ then $\pi_Y(\partial Y_i)\approx \pi_Y(\partial Y_j)$ (projections of disjoint curves are close) so that $x\approx \pi_Y(\partial Y_i)$ and $y\approx \pi_Y(\partial Y_j)$ coarsely coincide.

    \item $Y_i,Y_j$ both overlap $Y$. In this case $x\approx \pi_Y(\partial Y_i)$ and $y\approx \pi_Y(\partial Y_j)$ and each of $\pi_Y(\partial Y_i),\pi_Y(\partial Y_j)$ coarsely coincides with either $\pi_Y(P)$ or $\pi_Y(Q)$ by the Behrstock inequality (indeed, if this was not the case for, say, $i$, then we would have $\pi_{Y_i}(P)\approx \pi_{Y_i}(\partial Y)\approx \pi_{Y_i}(Q)$ contradicting that there is a large projection onto $\mathcal C (Y_i)$).  The only case to rule out is that one where $\pi_Y(\partial Y_i)\approx \pi_Y(Q)$ and $\pi_Y(\partial Y_j)\approx \pi_Y(P)$. But this would say that $\pi_Y(\partial Y_j)\approx \pi_Y(P)$ are both far from $\pi_Y(\partial Y_i)$, which in particular implies $Y_i\transverse Y_j$. Moreover, the Behrstock inequality implies $\pi_{Y_i}(\partial Y_j)\approx \pi_{Y_i}(\partial Y)\approx \pi_{Y_i}(P)$, which is not the case by the ordering of the $Y_l$.
    \item $Y_i\propnest Y$ and $Y_j\transverse Y$. In this case, once again by the Bounded Geodesic Image, $x\approx \pi_Y(\partial Y_i)$ occurs close to $\alpha$, and $y\approx \pi_Y(\partial Y_j)$ either coarsely coincides with $\pi_Y(P)$ or $\pi_Y(Q)$, so we have to rule out that $\pi_Y(\partial Y_i)$ is far from $\pi_Y(P)$ and $\pi_Y(\partial Y_j)\approx \pi_Y(P)$. In this case $\pi_Y(\partial Y_i)$ is far from  $\pi_Y(\partial Y_j)$ and in particular $Y_i\transverse Y_j$. Moreover, the Bounded Geodesic Image would yield $\pi_{Y_i}(\partial Y_j)\approx \pi_{Y_i}(P)$, which contradicts the ordering of the $Y_l$.
    \item The case where $Y_j\propnest Y$ and $Y_i\transverse Y$ is symmetric to the previous one.

\item $Y\orth Y_i$ or $Y\orth Y_j$. In this case $x\approx \pi_Y(\partial Y_{i'})\approx \pi_Y(p_{i'})$ for some $i'\leq i$ with $Y_{i'}$ that cuts $Y$, and similarly for $j$, yielding $j'$. Also, $i'\leq j'$, so this reduces to one of the previous cases.
    
\end{itemize}
\end{proof}

\begin{claim}
\label{claim:no_proj}
    There exists a constant $M$ depending only on the surface such that the following holds. Fix a subsurface $Y\notin\mathcal Y_p$. Then for all $i$ we have $d_Y(q_i,p_{i+1})\leq M$.
\end{claim}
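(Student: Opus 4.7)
The plan is to prove Claim~\ref{claim:no_proj} by a case analysis on the topological relation between $Y$ and the pair $(Y_i,Y_{i+1})$, directly comparing $\pi_Y(q_i)$ with $\pi_Y(p_{i+1})$. The four main ingredients are: (i) the explicit prescriptions of coarse coordinates for $q_i$ and $p_{i+1}$ given by Claims~\ref{claim:q} and~\ref{claim:p} when $Y$ is nested or disjoint from $Y_i$ and $Y_{i+1}$; (ii) the fact that the pants decomposition $q_i$ contains $\partial Y_i$ and $p_{i+1}$ contains $\partial Y_{i+1}$, together with the standard observation that the subsurface projection of a pants decomposition has uniformly bounded diameter, so $\pi_W(q_i)\approx \pi_W(\partial Y_i)$ whenever $W\transverse Y_i$ (and similarly for $p_{i+1}$); (iii) the chosen ordering of $Y_1,\ldots,Y_k$, which by the Behrstock inequality forces $\pi_{Y_l}(\partial Y_j)\approx \pi_{Y_l}(Q)$ for $l<j$ with $Y_l\transverse Y_j$, and symmetrically $\pi_{Y_j}(\partial Y_l)\approx \pi_{Y_j}(P)$; and (iv) the Bounded Geodesic Image theorem to propagate coincidences across nested subsurfaces.

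The ``diagonal'' cases, where $Y$ is disjoint from or nested into both $Y_i$ and $Y_{i+1}$, follow by directly matching the prescriptions. The only potential discrepancy is that Claim~\ref{claim:p} additionally allows the index $j=i$, but since $Y\orth Y_i$ forbids $Y_i$ from cutting $Y$, while $Y\nest Y_i$ triggers the ``latter'' case of Claim~\ref{claim:p} with $j=i$ (reflexively) giving $\pi_Y(Q)$, which matches the corresponding Claim~\ref{claim:q} prescription, one verifies in every subcase that the two prescriptions coincide up to a universal constant. The ``overlap'' cases, where $Y\transverse Y_i$ or $Y\transverse Y_{i+1}$, rely on (ii) to identify $\pi_Y(q_i)$ with $\pi_Y(\partial Y_i)$ and $\pi_Y(p_{i+1})$ with $\pi_Y(\partial Y_{i+1})$, after which the ordering in (iii) forces both projections to coarsely coincide (both being close to $\pi_Y(P)$ or both to $\pi_Y(Q)$).

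The most delicate subcase, which I expect to be the main obstacle, is when $Y\propnest Y_{i+1}$ and $Y\orth Y_i$, while $Y$ is moreover nested into some $Y_{j^*}$ with $l<j^*<i$: then the $p_{i+1}$-prescription reads $\pi_Y(Q)$ via the ``latter'' case applied at $j=j^*$, whereas the $q_i$-prescription reads $\pi_Y(\partial Y_l)$ for the maximal $Y_l$ cutting $Y$ with $l<i$. To resolve this, I would note that $Y_l$ overlaps both $Y_{j^*}$ and $Y_{i+1}$ by maximality of $\mathcal A$ (no two elements of $\mathcal A$ are nested), use the ordering to conclude $\pi_{Y_l}(\partial Y_{i+1})\approx \pi_{Y_l}(Q)$ and $\pi_{Y_{j^*}}(\partial Y_l)\approx \pi_{Y_{j^*}}(P)$, and then invoke the Bounded Geodesic Image theorem for $Y\propnest Y_{j^*}$ combined with the partial-realization consistency inherited by $q_i$ on the coordinates of the chain $Y\propnest Y_{j^*}$ to force $\pi_Y(\partial Y_l)\approx \pi_Y(Q)$; the fact that $Y\notin\mathcal Y_p$, shared by $Y_{j^*}$ and $Y_{i+1}$, ensures the relevant nesting configurations do not contain the puncture and so the consistency constants are uniform. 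All estimates absorb into a single constant $M$ depending only on $\Sigma$, completing the proof.
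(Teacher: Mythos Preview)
Your overall architecture is right—a case analysis on the relation of $Y$ to $Y_i$ and $Y_{i+1}$, using the coordinate prescriptions of Claims~\ref{claim:p} and~\ref{claim:q}, plus $\partial Y_i\subset q_i$ and $\partial Y_{i+1}\subset p_{i+1}$—and this is exactly what the paper does. But there is a genuine gap in your treatment of the ``overlap'' case, and the missing ingredient is the $\nest$-maximality of the elements of $\mathcal A$.

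Concretely, suppose $Y\transverse Y_i$ and $Y\transverse Y_{i+1}$. You assert that the ordering (iii) forces $\pi_Y(\partial Y_i)$ and $\pi_Y(\partial Y_{i+1})$ to coarsely coincide. This is not true: the ordering constrains projections \emph{onto} $\mathcal C(Y_i)$ and $\mathcal C(Y_{i+1})$, not onto a third curve graph $\mathcal C(Y)$. Behrstock only tells you that each of $\pi_Y(\partial Y_i),\pi_Y(\partial Y_{i+1})$ is close to one of $\pi_Y(P),\pi_Y(Q)$; nothing you wrote rules out $\pi_Y(\partial Y_i)\approx\pi_Y(P)$ while $\pi_Y(\partial Y_{i+1})\approx\pi_Y(Q)$ with these far apart. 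The paper handles this as follows: if $d_Y(P,Q)$ is small we are done, and otherwise—since $Y\notin\mathcal Y_p$—the subsurface $Y$ must be nested in some $Y_j\in\mathcal A$ (by $\nest$-maximality of the $Y_l$). Then $Y_j\transverse Y_i,Y_{i+1}$, and the ordering combined with Behrstock applied to the triples $(Y_j,Y_i)$ and $(Y_j,Y_{i+1})$ yields a contradiction with the assumed positions of $\pi_Y(\partial Y_i),\pi_Y(\partial Y_{i+1})$. You never invoke this maximality argument; without it the overlap case does not close. The same omission bites you elsewhere: you do not treat $Y_i\propnest Y$ or $Y_{i+1}\propnest Y$ at all, and there again the key point (in the paper) is that maximality of $Y_i$ (resp.\ $Y_{i+1}$) in $\mathcal A$ forces $d_Y(P,Q)<C$, so $\pi_Y(P)\approx\pi_Y(Q)$.

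Regarding your ``delicate subcase'' ($Y\propnest Y_{i+1}$, $Y\orth Y_i$, with some $Y_{j^*}\supsetneq Y$ for $l<j^*<i$): your proposed resolution is not correct as stated. Running Behrstock from $\pi_{Y_l}(\partial Y)\approx\pi_{Y_l}(\partial Y_{j^*})\approx\pi_{Y_l}(Q)$ gives $\pi_Y(\partial Y_l)\approx\pi_Y(P)$, not $\pi_Y(Q)$, so your chain of implications goes the wrong way. (The paper sidesteps this subcase by the sentence ``both $\pi_Y(p_{i+1})$ and $\pi_Y(q_i)$ come from the same $Y_j$ for $j<i$''; reading the proof of Claims~\ref{claim:p}--\ref{claim:q} and the surrounding Remark, the intended construction has $q_i$ agree with $p_i$ outside $Y_i$, so that for $Y\orth Y_i$ the $q_i$-coordinate also uses ``cuts or contains'' and the indices genuinely match.)
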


\begin{proof}
We analyse the cases where $Y\nest Y_i$ or $Y\nest Y_{i+1}$.

Suppose first $Y\nest Y_i$, so that $\pi_Y(q_i)\approx \pi_Y(Q)$. If $Y$ is nested into or orthogonal to $Y_{i+1}$ then the maximal index $j<i+1$ with $Y_j$ cutting or intersecting $Y_{i+1}$ is $j=1$, so by construction $\pi_Y(p_{i+1})\approx \pi_Y(Q)$. If not, then $Y_{i+1}$ cuts $Y$ and $\pi_Y(p_{i+1})\approx \pi_Y(\partial Y_{i+1})$. Since $Y_{i+1}$ cannot be nested into $Y$ as the latter is nested into $Y_i$, we have $Y_{i+1}\transverse Y_i$. By the ordering, $\pi_{Y_{i+1}}(P)\approx \pi_{Y_{i+1}}(\partial Y_i)$, and since $Y\nest Y_i$ we have $\pi_{Y_{i+1}}(\partial Y_i)\approx \pi_{Y_{i+1}}(\partial Y)$. In particular, $\pi_{Y_{i+1}}(\partial Y)$ is far from $\pi_{Y_{i+1}}(Q)$, so that by the Behrstock inequality we have $\pi_Y(Q)\approx \pi_Y(\partial Y_{i+1})$, as required.

Suppose now $Y\nest Y_{i+1}$, and we can assume that $Y$ is not nested into $Y_i$ as we dealt with this case above. If $Y_i$ cuts $Y$, then $\pi_Y(p_{i+1})\approx\pi_Y(\partial Y_i)$ by construction and $\pi_Y(\partial Y_i)\approx \pi_Y(q_i)$, since the pants decomposition $q_i$ contains $\partial Y_i$, so we are done. If $Y\orth Y_{i}$, then both $\pi_Y(p_{i+1})$ and $\pi_Y(q_i)$ come from the same $Y_j$ for $j<i$, so we are also done.

Up to now, we dealt with all cases where either $Y\nest Y_{i+1}$ or $Y\nest Y_i$, which we can rule out from now on. Hence, as noted at the beginning of the proof of Claim \ref{claim:monotonic}, the arguments there apply, in particular the case distinction for $i<j$, with $j=i+1$. We note that if $Y_i\propnest Y$ or $Y_{i+1}\propnest Y$ then $\pi_Y(P)$ and $\pi_Y(Q)$ coarsely coincide by maximality of the $Y_l$. With this observation, an inspection of the cases shows that the arguments yield the required conclusion except for the case $Y_i,Y_j\transverse Y$ which requires more care. In this case, the argument does not rule out $\pi_Y(P)\approx \pi_Y(q_i)\approx \pi_Y(\partial Y_i)$ and $\pi_Y(p_{i+1})\approx \pi_Y(Q)\approx \pi_Y(\partial Y_{i+1})$. If $\pi_Y(P)\approx \pi_Y(Q)$, we are done, and we now do rule out that $\pi_Y(P)$ is far from $\pi_Y(Q)$. In this case, $Y$ needs to be nested in some $Y_j$, which can neither coincide or be orthogonal to either $Y_{i+1}$ or $Y_i$, so we have $Y_j\transverse Y_i,Y_{i+1}$. If $j<i$ then $\pi_{Y_i}(\partial Y)\approx \pi_{Y_i}(\partial Y_j)\approx \pi_{Y_i}(P)$, so that $\pi_{Y_i}(\partial Y)$ is far from $\pi_{Y_i}(Q)$ and by the Behrstock inequality $\pi_Y(\partial Y_i)\approx \pi_Y(Q)$, a contradiction with $\pi_Y(P)\approx \pi_Y(\partial Y_i)$. If $j>i+1$ we get a similar contradiction, concluding this case.
\end{proof}

Let $C_0\geq 1$ be a threshold of the distance formula, fix $C\geq C_0$ and consider $L$ larger than $2M+C$, for $M$ as in Claims \ref{claim:monotonic} and \ref{claim:no_proj}. By the distance formula (in the pants graph), the number of elementary moves needed to go between $q_i$ and $p_{i+1}$ is $\sum_{Y} \{\{d_Y(q_i,p_{i+1})\}\}_L$ up to bounded multiplicative and additive error.

Therefore the total number of elementary moves outside the intervals $J_j$ is $\sum_{Y}\sum_i \{\{d_Y(q_i,p_{i+1})\}\}_L$ up to bounded multiplicative error and additive error a bounded multiple of $k$, whence of $S_p$.

For $Y\in\mathcal Y_p$, by Claim \ref{claim:monotonic} we have that $\sum_i \{\{d_Y(q_i,p_{i+1})\}\}_L$ is bounded by $(2M+1)\{\{d_Y(P,Q)\}\}_{C}$. For $Y\notin \mathcal Y_p$, we have $\sum_i \{\{d_Y(q_i,p_{i+1})\}\}_L=0$ by Claim \ref{claim:no_proj}. Therefore $\sum_{Y}\sum_i \{\{d_Y(q_i,p_{i+1})\}\}_L$ is bounded by $(2M+1)\sum_{Y\in\mathcal Y_p} \{\{d_Y(P,Q)\}\}_{C}$, and the proof is complete.
\end{proof}

\section{Good homotopies and a model for the meridian}
\label{sec:good_homot}

The goal of this section is to make Figure \ref{fig:sweepout1} more accurate. More specifically, we will describe the geometry of the cusp of $M-\gamma$, as well as its interaction with two pleated surfaces that `carry' disks from either side of the Heegaard splitting, and describe meridians of the filling from $M-\gamma$ to $M$. We make this precise in the following proposition.

\begin{proposition}
\label{prop:homotopies}
Fix Margulis constants $\ep_0$ and $\ep_1$ as in Subsection \ref{subsec:margulis}. For every integer $g\geq 2$, there exists a constant $L=L(g,\epsilon_0,\epsilon_1)$ with the following property.

Let $M=H^-\cup_\Sigma H^+$ be a Heegaard splitting of genus~$g$. Let $\gamma\subset\Sigma$ be a simple closed curve such that $(H^-,\gamma),(H^+,\gamma)$ are both pared acylindrical. Fix a finite-volume hyperbolic metric on $M-\gamma$ such that $T=\partial\mb{T}_{\ep_1}(\gamma)$ intersects $\Sigma$ in two simple closed curves and fix two $\delta^\pm\in\calD^\pm$.
Then there exist an annulus $A\subset\Sigma$ and two immersions $f^\pm:\Sigma \to M-\gamma$ with core curve $\gamma$ such that the following holds:
 \begin{enumerate}
  \item $f^\pm|_{\Sigma-{\rm int}(A)}=g^\pm|_{\Sigma-{\rm int}(A)}$ where $g^\pm$ is a pleated surface that geodesically maps the subsurface projection of $\delta^\pm\in\calD^\pm$.
  \item $f^\pm\circ\delta^\pm$ is nullhomotopic in $M-\gamma$.
  \item $A-\gamma$ is contained in the $\ep_0$-cuspidal parts of $(\Sigma-\gamma,\sigma^\pm)$ and contains the $\ep_1$-cuspidal parts of $(\Sigma-\gamma,\sigma^\pm)$. The image $f^\pm(A)$ is contained in $T$. At the level of fundamental groups $f^\pm$ maps a generator of $\pi_1(A)$ to an element of $\pi_1(T)$ corresponding to a component of $\Sigma\cap T$. The boundary $\partial A$ consists of two horocyclic loops both of length 
  at most $2\sinh(\ep_0/2)$.
 \item For every essential arc $t$ on $A$ there are two paths $\gamma_1,\gamma_2$ on $T$ of length at most $LS_\gamma$ such that the concatenation $f^+(\overline{t}),\gamma_1,f^-(t),\gamma_2$ is (well defined and) homotopic to the meridian of $\gamma$ in $M$ (here $\overline{t}$ denotes $t$ with the inverse parametrization). 
  \item The metrics $\sigma^\pm$ on $\Sigma-\gamma$ obtained pulling back the metric of $M-\gamma$ via $g^\pm$ are {\rm $(L,\epsilon_1,LS_\gamma)$-related} outside $A$ (as in Definition \ref{dfn:related_metrics}).
 \end{enumerate}
\end{proposition}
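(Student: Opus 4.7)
My plan is to first realize $g^\pm$ as type-preserving pleated surfaces $(\Sigma-\gamma,\sigma^\pm)\to M-\gamma$ that geodesically map the arcs of $\delta^\pm\cap(\Sigma-\gamma)$, as in Section \ref{sec:disk_proj}; type-preservation follows from pared acylindricity of $(H^\pm,\gamma)$. For the annulus $A$, I use Lemma \ref{lem:thick to thick} to choose a topological annular neighborhood of $\gamma$ in $\Sigma$ whose interior falls strictly between the $\epsilon_1$- and $\epsilon_0$-cuspidal parts of both $\sigma^\pm$, with $\partial A$ consisting of horocyclic curves in both metrics (so $g^\pm(\partial A)\subset T$). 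I then define $f^\pm|_A : A \to T$ by composing $g^\pm$ on the $\epsilon_1$-cuspidal sub-annulus with the radial retraction $\mb{T}_{\epsilon_1}(\gamma)\to T$, and interpolating in the intermediate annulus so that $f^\pm|_{\partial A}=g^\pm|_{\partial A}$. The crucial design choice is that the generator of $\pi_1(A)$ is sent to the homotopy class on $T$ of the component of $\Sigma\cap T$ lying on the $H^\pm$-side; this makes $f^\pm$ homotopic in $M-\gamma$ to the inclusion of $\Sigma$ pushed slightly into $H^\pm-\gamma$. Property (2) then follows because $\delta^\pm$ bounds a disk in $H^\pm$ whose interior lies in $H^\pm-\gamma$, so the pushed-off disk provides a null-homotopy of $f^\pm\circ\delta^\pm$ in $M-\gamma$.

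For property (4), the key point is that $f^+|_A$ and $f^-|_A$ send the core of $A$ to parallel longitudes of $T$ lying on opposite sides of $\Sigma\cap T$. Any essential arc $t$ in $A$ therefore yields arcs $f^+(t)$ and $f^-(t)$ on $T$ which, together with short adjustment paths $\gamma_1, \gamma_2$, close up to a loop on $T$; this loop bounds a singular disk in $M$ formed by gluing two quadrilateral pushoffs of $t$ (into $H^+$ via $f^+$, into $H^-$ via $f^-$) along $\gamma_1$ and $\gamma_2$, and this disk crosses $\gamma$ exactly once, so the loop represents the meridian class in $\pi_1(T)$. The adjustment paths $\gamma_i$ simply bridge $f^+(p_i)$ and $f^-(p_i)$ on $T$; their length is bounded by the horospherical distance on $T$ between these points, which is controlled by $LS_\gamma$ thanks to property (5).

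The main obstacle is property (5). My plan is to invoke Minsky's good homotopy machine \cite{M01}: complete the arcs $\delta^\pm\cap(\Sigma-\gamma)$ to pants decompositions $P^\pm$ of $\Sigma-\gamma$ and construct a sequence of pleated surfaces $g^-=h_0, h_1, \dots, h_N = g^+$ realizing consecutive pants decompositions along an elementary move sequence from $P^-$ to $P^+$, with each step uniformly bilipschitz on the complement of the support of the $i$-th move. Applying Proposition \ref{augmenting minsky} to the puncture(s) of $\Sigma-\gamma$ coming from $\gamma$, the move sequence can be chosen so that there are $\lesssim S_\gamma$ elementary moves outside certain intervals $J_j$; within each $J_j$ all moves are supported in a non-annular subsurface $Y_j\subset\Sigma-\gamma$ disjoint from $\gamma$, and so can be absorbed into a single uniformly bilipschitz change of metric outside $A$. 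This produces a bilipschitz chain of length $\lesssim S_\gamma$ with uniform constants on $\Sigma-A$, which is exactly the $(L,\epsilon_1,LS_\gamma)$-related condition of Definition \ref{dfn:related_metrics}. The technical subtlety lies in matching this definition against the output of the machinery, in particular controlling the $\epsilon_1$-thick-part behavior of the bilipschitz maps near $\partial A$ and ensuring compatibility with the map $f^\pm$ previously fixed on $A$.
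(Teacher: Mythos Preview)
Your overall strategy matches the paper's: build pleated surfaces $g^\pm$ realizing the arcs of $\delta^\pm\cap(\Sigma-\gamma)$, choose $A$ so that $g^\pm(\partial A)\subset T$, push $A$ into $H^\pm$ to define $f^\pm$, and then interpolate $g^-$ and $g^+$ via Minsky's good-homotopy machine along a move sequence controlled by Proposition~\ref{augmenting minsky}. There are, however, two genuine gaps in how you start and how you extract the bounds.

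\textbf{Starting the good-homotopy machine.} You write ``complete the arcs $\delta^\pm\cap(\Sigma-\gamma)$ to pants decompositions $P^\pm$''. Arcs cannot be completed to a pants decomposition; pants curves are simple closed curves, and the $\delta^\pm$-arcs are proper geodesic lines in $\Sigma-\gamma$. The paper instead extends $\lambda^\pm$ to an ideal triangulation $\tau^\pm$ and uses a finiteness argument (finitely many ideal triangulations up to the mapping class group) to produce pants decompositions $P^\pm$ with $i(P^\pm,\tau^\pm)\le B$. More importantly, you never explain why $g^\pm\in\text{\bf good}(P^\pm,C)$, which is the entry hypothesis for Lemmas~\ref{homotopy bound} and~\ref{halfway surface}. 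This is not automatic: a pleated surface realizing an ideal triangulation need not realize a given short pants decomposition. The paper supplies this via the Efficiency of Pleated Surfaces (Theorem~\ref{efficiency}), which bounds $\ell_{\sigma^\pm}(P^\pm)-\ell_Q(P^\pm)$ by a constant times the alternation number, which is in turn bounded by $i(P^\pm,\tau^\pm)\le B$. Without this step, the good-homotopy machine does not start.

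\textbf{Source of the $LS_\gamma$ bound on $\gamma_1,\gamma_2$.} You write that the adjustment paths have length ``controlled by $LS_\gamma$ thanks to property~(5)''. This conflates two distinct outputs of the construction. The $(L,\epsilon_1,LS_\gamma)$-relatedness of $\sigma^\pm$ concerns the \emph{intrinsic} metrics and records the bilipschitz chain on $\Sigma-\gamma$; it says nothing about distances on $T\subset M-\gamma$. The bound on $\ell(\gamma_i)$ comes instead from the \emph{track lengths} of the good homotopy $H_0$ (item~(4) of Definition~\ref{dfn:good homotopies}), summed over the $O(S_\gamma)$ homotopy steps that move the picture near the cusp. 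In the paper these two features are produced simultaneously by the same sequence of good homotopies, but they are independent conclusions; your proposal does not account for the track-length half.

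Two smaller points. First, ``$\partial A$ horocyclic so $g^\pm(\partial A)\subset T$'' has the implication backwards: the paper takes $\partial A$ to be the innermost essential components of $(g^\pm)^{-1}(T)$ (then shows these are horocyclic). An arbitrary horocycle in $(\Sigma-\gamma,\sigma^\pm)$ maps into the cuspidal region of $M-\gamma$, not onto $T$. Second, your meridian argument in property~(4) is heuristic; the paper checks the homotopy class carefully by inserting and cancelling the tracks of the auxiliary homotopies $h^\pm$ and identifying the result with $\iota^\pm(t)$, which is where the ``intersects $\gamma$ once'' statement is actually established.
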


Crucially, we produce a geometrically controlled representative for the standard meridian of $\gamma$ in $M-\gamma$. It consists of two parts. 
\begin{itemize}
    \item{A vertical part which is the track of a sequence of controlled homotopies (as in Definition \ref{dfn:good homotopies}) connecting two pleated surfaces $g^-:(\Sigma-\gamma,\sigma^-)\to M-\gamma$ and $g^+:(\Sigma-\gamma,\sigma^+)\to M-\gamma$ mapping geodesically $\delta^-\cap\Sigma-\gamma$ and $\delta^+\cap\Sigma-\gamma$. Its length is controlled by $S_\gamma(\mc{D}^-,\mc{D}^+)$.}
    \item{A horizontal part whose length we will discuss in the next section. In order to control it we will use the fact that the metrics $\sigma^-,\sigma^+$ on $\Sigma-\gamma$ are related by a sequence of controlled changes (as in Definition \ref{dfn:related_metrics}).}
\end{itemize}

In Section~\ref{sec:annuli} we will use this description to prove Theorems~\ref{thm:annular} and~\ref{thm:annular2}.

\subsection{Choice of Margulis constants}
\label{subsec:margulis}

In this section we fix a Margulis constant $\ep_0$ for dimensions 2 and 3 which satisfies the conclusion of Lemma \ref{lem:straight in cusp}. We also fix $\ep_1$ as in Lemma \ref{lem:thick to thick}.

We now describe the notions of good maps, good homotopies, and related metrics that together with Proposition \ref{augmenting minsky} form the technical heart of the proof of Proposition \ref{prop:homotopies}. After that we start the proof of the proposition.

\subsection{Good maps and good homotopies}

Our approach uses tools developed by Minsky in \cite{M01}, namely good maps and good homotopies (whose definition we recall below). Most importantly, they allow us to translate sequences of elementary moves between pants decompositions into sequences of controlled homotopies between pleated surfaces.  

Let $Q$ be a hyperbolic 3-manifold diffeomorphic to $W\times\mb{R}$ where $W$ is a surface of finite type. Assume that the peripheral curves of $W$ are all parabolics.

\begin{dfn}[Good Maps {\cite[Section 4]{M01}}]
\label{dfn:good maps}
Let $P\subset W$ be a multicurve and $C>0$ a constant. A pleated map $f:(W,\sigma)\to Q$ in the homotopy class of the inclusion $W\times\{0\}\subset Q$ is {\rm $(P,C)$-good} if
\[
\ell_\sigma(\alpha)\le\ell_Q(\alpha)+C
\]
for every component $\alpha\subset P$. The set of good maps is denoted by $\text{{\bf good}}(P,C)$. 
\end{dfn}

As for good homotopies, we first need to recall the following.

\begin{dfn}[Standard Collar]
\label{dfn:standard collar}
If $\alpha\subset W$ is a simple closed geodesic for a complete hyperbolic metric $(W,\sigma)$, then the normal neighborhood of $\alpha$ of radius ${\rm arcsinh}(1/\sinh(\ell_\sigma(\alpha)/2))$ is the {\rm standard collar} of $\alpha$. We denote it by 
\[
{\bf collar}(\alpha,\sigma).
\]
\end{dfn} 

By standard hyperbolic geometry, we have the following properties.
\begin{itemize}
    \item{${\bf collar}(\alpha,\sigma)$ is always an embedded tubular neighborhood of $\alpha$ disjoint from the $\ep$-cuspidal part of $(W,\sigma)$.}
    \item{If $\alpha,\beta\subset W$ are disjoint simple closed geodesics, then ${\bf collar}(\alpha,\sigma)$ and ${\bf collar}(\beta,\sigma)$ are also disjoint.}
\end{itemize}

If $P\subset W$ is a multicurve, we denote by ${\bf collar}(P,\sigma)$ the (disjoint) union of the standard neighborhoods of the geodesic representatives of the components $\alpha\subset P$.

For the next definition we fix a Margulis constant $\ep>0$ (later we will use $\ep=\ep_1$ as chosen in Subsection~\ref{subsec:margulis}).

\begin{dfn}[Good Homotopies {\cite[Section 4]{M01}}]
\label{dfn:good homotopies}
Let $f,g:W\to Q$ be pleated maps. Let $P\subset W$ be a multicurve. We say that $f$ and $g$ admit a $K$-good homotopy with respect to $P$ if there is a homotopy $H:W\times[0,1]\to Q$ with the following properties:
\begin{enumerate}
\item{$H_0,H_1\sim f,g$ up to domain isotopy.}
\item{If $\sigma_j$ denotes the metric induced by $H_j$ for $j=0,1$ then
\[
\text{{\bf collar}}(P,\sigma_0)=\text{{\bf collar}}(P,\sigma_1)
\]
and 
\[
(W,\sigma_1)^{{\rm cusp}}_{\ep}=(W,\sigma_2)^{{\rm cusp}}_{\ep}
\]
where the two terms denote the $\ep$-cuspidal parts.}
\item{The metrics $\sigma_0,\sigma_1$ are locally $K$-bilipschitz outside $\text{{\bf collar}}(P)$ union the $\ep$-cuspidal parts.}
\item{Let $P_0\subset P$ denote the subset of components $\alpha\subset P$ such that $\ell_Q(\alpha)<\ep$. If $p\not\in\text{{\bf collar}}(P_0)$, then the track $H(p\times[0,1])$ has length bounded by $K$.}
\item{For each $\alpha\subset P_0$ the image $H(\text{{\bf collar}}(\alpha)\times[0,1])$ is contained in the $K$-neighbourhood of the Margulis tube $\mb{T}_{\ep}(\alpha)$.}
\end{enumerate}  
\end{dfn}

Minsky proves in \cite{M01} two fundamental properties of good maps. First, the Homotopy Bound Lemma says that good maps for the same pants decomposition are connected by a good homotopy.

\begin{lem}[Homotopy Bound Lemma, see Lemma 4.1 in \cite{M01}]
\label{homotopy bound}
For every $C>0$ and $\ep>0$ there exists $K>0$ only depending on $C$ and $W$ such that for every pants decomposition $P\subset W$ we have the following: If $f,g\in\text{{\bf good}}(P,C)$, then there exists a $K$-good homotopy (with Margulis parameter $\ep$) with respect to $P$ between $f$ and $g$.
\end{lem}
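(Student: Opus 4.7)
The plan is to cut $W$ along $P$ into pairs of pants and construct the homotopy piecewise on three kinds of regions: (i) the ``thick'' pair-of-pants pieces obtained by removing standard collars of $P$ and cuspidal parts, (ii) the standard collars of curves in $P$, and (iii) the $\ep$-cuspidal parts of $W$. The $(P,C)$-goodness condition ties these three pieces together by providing boundary-length control along $P$, and each piece admits an explicit model homotopy.

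\textbf{Normalization and thick pieces.} The $(P,C)$-goodness condition gives $\ell_{\sigma_f}(\alpha), \ell_{\sigma_g}(\alpha) \leq \ell_Q(\alpha)+C$ for every $\alpha\subset P$, and in particular $|\ell_{\sigma_f}(\alpha)-\ell_{\sigma_g}(\alpha)|\leq C$. First I would perform a domain isotopy of $W$, with bilipschitz constants depending only on $C$ and the topology of $W$, so that $\text{{\bf collar}}(P,\sigma_f)=\text{{\bf collar}}(P,\sigma_g)$ and the $\ep$-cuspidal parts of the two metrics coincide as subsets of $W$; this uses that standard collar widths depend continuously on the curve length and that the $\ep$-cuspidal structure is determined by topology. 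On each pair-of-pants complement $Y_0$ of the collars and cusps inside a component $Y$ of $W\setminus P$, both $\sigma_f|_{Y_0}$ and $\sigma_g|_{Y_0}$ are hyperbolic metrics on a compact surface with diameter bounded above and injectivity radius bounded below in terms of $\ep$, $C$, and $W$; this forces them to be $K$-bilipschitz. Because the restrictions $f|_{Y_0}$ and $g|_{Y_0}$ are pleated maps lying in the homotopy class of the inclusion, lifting to universal covers produces images of uniformly bounded diameter in $\mb{H}^3$ (by the 1-Lipschitz property), and the straight-line homotopy in $\mb{H}^3$ descends to a homotopy on $Y_0$ with tracks of length at most $K$. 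This supplies the bilipschitz comparison of item (3) and the track-length bound of item (4) of Definition \ref{dfn:good homotopies}.

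\textbf{Tubes, cusps, and the main obstacle.} For each $\alpha\subset P$ with $\ell_Q(\alpha)<\ep$, Lemma \ref{lem:thick to thick} guarantees that both $f$ and $g$ send $\text{{\bf collar}}(\alpha)$ into the Margulis tube $\mb{T}_\ep(\alpha)\subset Q$. On this collar I would build the homotopy via an explicit model: both annular images are homotopic inside $\mb{T}_\ep(\alpha)$ to the core geodesic representative of $\alpha$, and one concatenates these two homotopies to obtain one whose image lies in a $K$-neighborhood of $\mb{T}_\ep(\alpha)$ (with no track-length constraint there, as required by item (5)). On the $\ep$-cuspidal parts, which already agree after the normalization and are mapped into the corresponding cusp of $Q$, an explicit straight-line homotopy inside the cusp provides the gluing. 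The pieces match along the boundary horocyclic loops of the cuspidal parts and along the boundary circles of the standard collars, both of uniformly bounded length, so the full homotopy satisfies all five conditions of Definition \ref{dfn:good homotopies}. The main technical obstacle I foresee is the situation where $\ell_Q(\alpha)$ is very small and the two pleated maps wrap $\text{{\bf collar}}(\alpha)$ around the core geodesic by possibly different amounts: keeping the concatenated homotopy inside a $K$-neighborhood of $\mb{T}_\ep(\alpha)$ while gluing to the thick homotopy on $\partial\text{{\bf collar}}(\alpha)$ requires controlling the difference in wrapping by a bounded amount, and this reduces back to the $(P,C)$-goodness, which pins down the two induced metrics on the boundary of $\text{{\bf collar}}(\alpha)$ up to controlled error.
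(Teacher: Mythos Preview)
The paper does not prove this lemma; it is quoted from Minsky \cite{M01} (Lemma~4.1 there) without proof. Your outline is in the spirit of Minsky's argument (decompose into thick pairs of pants, collars of $P$, and cusps; handle each separately), but there is a genuine gap in your track-length bound on the thick pieces.

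You assert that because $f|_{Y_0}$ and $g|_{Y_0}$ have images of bounded diameter, the straight-line homotopy between equivariant lifts has tracks of length at most $K$. Bounded diameter of each image separately does not give this: you need the two lifts $\tilde f(Y_0)$ and $\tilde g(Y_0)$ to be \emph{close to each other} in $\mb{H}^3$, and nothing you wrote forces that. The $(P,C)$-goodness enters precisely here, in a way you did not use: from $\ell_Q(\alpha)\le\ell_{\sigma_f}(\alpha)\le\ell_Q(\alpha)+C$ one gets that $f(\alpha)$ is a loop of length within $C$ of the geodesic length, hence lies in a bounded neighborhood of the geodesic representative $\alpha^*$ (or of its Margulis tube when $\alpha\in P_0$), and likewise for $g$. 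This anchors both $f(Y_0)$ and $g(Y_0)$ to the same configuration $\{\alpha^*:\alpha\subset\partial Y\}$ in $Q$, which---together with the non-elementarity of $\pi_1(Y)$---pins the two bounded-diameter images to a common bounded region. Without this anchoring, two homotopic $1$-Lipschitz images of a bounded-diameter piece in a hyperbolic $3$-manifold can be arbitrarily far apart (think of parallel copies in a product end). Incidentally, the ``main obstacle'' you flag (different amounts of wrapping around a short $\alpha^*$) is not really an obstacle: item~(5) of Definition~\ref{dfn:good homotopies} only demands containment in a $K$-neighborhood of the tube, with no track-length constraint there, so any homotopy through the tube suffices; the real missing step is the anchoring of the thick pieces.
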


Second, the Halfway Surface Lemma guarantees that if two pants decompositions are related by an elementary move then good maps for the two are connected by the concatenation of two good homotopies.

\begin{lem}[Halfway Surface, see Lemma 4.2 in \cite{M01}]
\label{halfway surface}
There exists $C>0$ only depending on $W$ such that if $P_0\to P_1$ is an elementary move of pants decompositions then
\[
\text{{\bf good}}(P_0,C)\cap\text{{\bf good}}(P_1,C)\neq\emptyset.
\]
\end{lem}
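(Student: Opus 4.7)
The plan is to construct a pleated surface that realizes a carefully chosen lamination containing the common curves of $P_0$ and $P_1$. An elementary move $P_0 \to P_1$ means that $P_0$ and $P_1$ differ in exactly one curve, and that this change happens inside a complexity-one subsurface $Y\subset W$ (a four-holed sphere or once-punctured torus). Write $P_0=\Gamma\cup\{\alpha_0\}$ and $P_1=\Gamma\cup\{\alpha_1\}$, where $\Gamma=P_0\cap P_1$ is a multicurve whose complement in $W$ is a disjoint union of pairs of pants together with the subsurface $Y$, and where $\alpha_0,\alpha_1\subset Y$ are simple closed curves with $i(\alpha_0,\alpha_1)\in\{1,2\}$. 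For any pleated map $f:(W,\sigma)\to Q$ whose pleating locus contains $\Gamma$, every $\beta\in\Gamma$ satisfies $\ell_\sigma(\beta)=\ell_Q(\beta)$ automatically. Therefore the lemma reduces to producing a single pleated surface $f$ with pleating locus containing $\Gamma$ and such that $\ell_\sigma(\alpha_i)\leq \ell_Q(\alpha_i)+C$ for both $i=0,1$, with $C$ depending only on $W$.

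\textbf{Choosing the extension to a maximal lamination.} First I would complete $\Gamma$ to a maximal lamination by adding leaves in $Y$, then realize this lamination by a pleated surface. The choice of extension in $Y$ uses the Farey-like combinatorics of $\mathcal{C}(Y)$: since $\alpha_0$ and $\alpha_1$ are Farey-adjacent, there is an ideal arc or simple closed geodesic $\beta\subset Y$ with $i(\beta,\alpha_j)$ bounded (say, equal to $1$ or $2$) for both $j=0,1$. Realizing $\Gamma\cup\{\beta\}$ geodesically (after extending to a maximal lamination by adding leaves in the cusps and pants, as in \cite[I.5]{CEG:notes_on_notes}) produces the desired pleated surface $f:(W,\sigma)\to Q$ with $\ell_\sigma(\beta)=\ell_Q(\beta)$ along the single ``internal'' leaf in $Y$.

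\textbf{The length estimate.} The bound $\ell_\sigma(\alpha_i)\leq \ell_Q(\alpha_i)+C$ will then be established by a purely two-dimensional comparison inside $(Y,\sigma|_Y)$, combined with the lower bound $\ell_\sigma(\alpha_i)\ge\ell_Q(\alpha_i)$ coming from the fact that $f$ is $1$-Lipschitz. Specifically, in any complete hyperbolic structure on $Y$ with given boundary lengths, a closed geodesic $\alpha$ with $i(\alpha,\beta)\leq 2$ satisfies a universal bound of the form
\[
\ell_\sigma(\alpha)\leq F\bigl(\ell_\sigma(\beta),\ell_\sigma(\partial Y)\bigr),
\]
by decomposing $Y$ along $\beta$ into (ideal) pairs of pants and using the hyperbolic cosine law. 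Since $\ell_\sigma(\partial Y)$ equals $\ell_Q(\partial Y)$, which is bounded in terms of the geometry of $Q$ via $\Gamma$, and since $\ell_\sigma(\beta)=\ell_Q(\beta)$, this provides an upper bound on $\ell_\sigma(\alpha_i)$ in terms of $Q$-geometric data. Matching this to $\ell_Q(\alpha_i)+C$ uses the opposite inequality $\ell_Q(\alpha_i)\leq \ell_\sigma(\alpha_i)$ together with a choice of $\beta$ that has $Q$-length bounded in terms of $\ell_Q(\alpha_0),\ell_Q(\alpha_1)$ and the boundary lengths.

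\textbf{The main obstacle.} The delicate point is uniform control when $\alpha_0$ or $\alpha_1$ is extremely short in $Q$ (i.e. lies deep in a Margulis tube): a single choice of $\beta$ may fail to realize both curves with bounded length excess simultaneously. To handle this, I would allow $\beta$ to vary: when $\ell_Q(\alpha_i)<\epsilon_0$, replace the pleating in $Y$ by one spiraling around $\alpha_i$ (equivalently, degenerating $\beta$ to a lamination with a leaf asymptotic to $\alpha_i$), which forces $\ell_\sigma(\alpha_i)=\ell_Q(\alpha_i)$ and leaves only $\alpha_{1-i}$ to control via the two-dimensional estimate. If both curves are short this reduction applies to either side. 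Implementing this rigorously is the technical crux; the outcome is a uniform $C=C(W)$ for which the described pleated surface lies in $\text{\bf good}(P_0,C)\cap\text{\bf good}(P_1,C)$.
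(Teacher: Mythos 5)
The paper does not prove this lemma at all: it is quoted verbatim from Minsky, and the citation \cite[Lemma 4.2]{M01} is the ``proof''. So your proposal has to be judged on its own, and as written it has a genuine gap in the central length estimate. The bound you need is additive with a constant depending only on $W$: $\ell_\sigma(\alpha_i)\le \ell_Q(\alpha_i)+C$. Your two-dimensional comparison inside $(Y,\sigma|_Y)$ only yields $\ell_\sigma(\alpha_i)\le F\bigl(\ell_\sigma(\beta),\ell_\sigma(\partial Y)\bigr)=F\bigl(\ell_Q(\beta),\ell_Q(\partial Y)\bigr)$, and the quantities $\ell_Q(\beta)$ and $\ell_Q(\partial Y)$ are not bounded, nor are they controlled from above by $\ell_Q(\alpha_i)$ (the pants curves bounding $Y$ can be enormously long in $Q$ while $\alpha_i$ stays of moderate length). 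The inequality $\ell_Q(\alpha_i)\le\ell_\sigma(\alpha_i)$ that you invoke to ``match'' the two bounds goes in the wrong direction for this purpose, and choosing $\beta$ with $Q$-length bounded in terms of $\ell_Q(\alpha_0),\ell_Q(\alpha_1),\ell_Q(\partial Y)$ still leaves you with an estimate of the form $\ell_\sigma(\alpha_i)\le F'(\ell_Q(\alpha_0),\ell_Q(\alpha_1),\ell_Q(\partial Y))$, which is not of the form $\ell_Q(\alpha_i)+C$. In short, nothing in your argument rules out that the $\sigma$-geodesic representative of $\alpha_i$ on the pleated surface is much longer than its $Q$-geodesic; controlling precisely this discrepancy is the whole content of the lemma.

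The missing ingredient is Thurston's Efficiency of Pleated Surfaces (stated in this paper as Theorem \ref{efficiency}): for a pleated surface realizing a maximal finite-leaved lamination $\lambda$, one has $\ell_\sigma(\alpha)\le\ell_Q(\alpha)+K\,a(\alpha,\lambda)$, and the alternation number $a(\alpha_i,\lambda)$ is bounded purely topologically when the move is supported in a complexity-one subsurface. This is how one gets a uniform additive error, and it is essentially Minsky's route in \cite[Lemma 4.2]{M01}; your reduction to a pleated surface whose pleating locus contains $\Gamma$ plus a suitable lamination in $Y$ is compatible with it, but the 2D cosine-law comparison cannot replace it. Note also that even with efficiency one must deal with the hypothesis that closed leaves of $\lambda$ are not too short (the curves of $\Gamma\cup\{\beta\}$ may be very short in $Q$); you flag the short-curve problem for $\alpha_0,\alpha_1$ and propose spinning, but you do not address it for the leaves of the pleating lamination, and in any case you defer the ``technical crux'' rather than carrying it out. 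As it stands the proposal is a plausible outline with the key analytic step missing.
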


Lemma \ref{homotopy bound} and Lemma \ref{halfway surface}, allow us to keep some control over how the induced hyperbolic metrics change along certain families of pleated surfaces associated with sequences of elementary moves of pants decompositions. 

Having this goal in mind, we introduce a useful definition (see Definition \ref{dfn:related_metrics} below), but first, for convenience of the reader, we recall our setup. Consider $M=H^-\cup_\Sigma H^+$ and a curve $\gamma\subset\Sigma$ satisfying the pared acylindrical assumptions. Endow $M-\gamma$ with a hyperbolic metric of finite volume and let $Q\to M-\gamma$ be the (possibly disconnected if $\gamma$ is separating) covering corresponding to $\pi_1(\Sigma-\gamma)$. It is a hyperbolic 3-manifold diffeomorphic to $(\Sigma-\gamma)\times\mb{R}$ to which we can apply the results of \cite{M01} analyzing the geometry of sequences of pleated surfaces $(\Sigma-\gamma,\sigma)\to Q$. Again, let $\ep>0$ be a Margulis constant (which will be chosen to be $\ep=\ep_1$ as in Subsection~\ref{subsec:margulis} later on).

\begin{dfn}[Related Metrics]
\label{dfn:related_metrics}
Let $A$ be a regular neighborhood of the curve $\gamma$ on the surface $\Sigma$. Two finite-volume hyperbolic metrics $(\Sigma-\gamma, \sigma)$ and $(\Sigma-\gamma,\sigma')$ are $(L,\epsilon)$-\emph{related outside $A$} if $\partial A$ lies outside the $\epsilon$-cuspidal part of both $\sigma$ and $\sigma'$, and moreover either
    \begin{enumerate}[(i)]
        \item{There exists a pants decomposition $P$ of $\Sigma-\gamma$ such that 
        \begin{itemize}
            \item{${\bf collar}(P,\sigma)={\bf collar}(P,\sigma')$ and $(\Sigma-\gamma,\sigma)^{{\rm cusp}}_{\ep}=(\Sigma-\gamma,\sigma')^{{\rm cusp}}_{\ep}$ and}
            \item{$\sigma$ and $\sigma'$ are $L$-bilipschitz outside ${\bf collar}(P,\sigma)$ union the $\ep$-cuspidal parts.}
        \end{itemize} 
        }
        \item{There exists a subsurface $Y\subset\Sigma-\gamma$ not containing $\gamma$ that has totally geodesic boundary for both $\sigma$ and $\sigma'$ and the identity $(Y,\sigma)\to(Y,\sigma')$ is an isometry.}
    \end{enumerate}  
We say that two metrics as above are $(L,\epsilon, N)$-\emph{related outside $A$} if there exists a sequence of metrics $\sigma=\sigma_1,\dots, \sigma_N=\sigma'$ where consecutive pairs are $(L,\epsilon)$-\emph{related outside $A$}.
\end{dfn}

\subsection{The proof of Proposition \ref{prop:homotopies}}
We prove the main result of this section. We enumerate by {\bf Property (1)-(5)} the various properties of the proposition that we want to obtain.
Let $\ep_0,\ep_1$ be the Margulis constants chosen in Subsection~\ref{subsec:margulis}.

\begin{proof}[Proof of Proposition \ref{prop:homotopies}]
Let $\lambda^+,\lambda^-$ be the (finite leaved) laminations of $\Sigma-\gamma$ obtained by putting $\delta^+,\delta^-$ in minimal position with respect to $\gamma$, intersecting them with $\Sigma-\gamma$, and collapsing all collections of parallel arcs to a single one. 

Complete $\lambda^+,\lambda^-$ to ideal triangulations $\tau^+,\tau^-$ of $\Sigma-\gamma$ and let $g^\pm:(\Sigma-\gamma,\sigma^\pm)\to M-\gamma$ be pleated surfaces homotopic to the inclusion $\iota:\Sigma-\gamma\to M-\gamma$ mapping geodesically those maximal laminations. We will set $f^+,f^-:=g^+,g^-|_{\Sigma-{\rm int}(A)}$ for a suitable annulus $A$ which we will choose later on. Observe that with this choice {\bf Property (1)} is satisfied by construction.

\subsubsection{The choice of the annulus}
Consider $(g^\pm)^{-1}(T=\partial\mb{T}_{\ep_1}(\gamma))$. 

Recall that, by our choices of constants (see Subsection \ref{subsec:margulis}) and Lemma \ref{lem:thick to thick}, only the $\ep_0$-thin part of $(\Sigma-\gamma,\sigma^\pm)$ can be mapped to the $\ep_1$-thin part of $M-\gamma$, so $(g^{\pm})^{-1}(T)$ is contained in the $\ep_0$-thin part. Moreover, as $g^\pm$ is 1-Lipschitz and $\pi_1$-injective, the injectivity radius of $(\Sigma-\gamma,\sigma^\pm)$ at points on $(g^{\pm})^{-1}(T)$ is bounded from below by the injectivity radius of $M-\gamma$ at points on $T$ which in turn is bounded from below by $\ep_1$. 

As $g^\pm$ is transverse to $T$ on (the closure of) each ideal triangle of $(\Sigma-\gamma)-\lambda^\pm$, the pre-image $(g^\pm)^{-1}(T)=\alpha_1\cup\cdots\cup\alpha_r$ consists of a collection of simple closed curves. As $g^\pm$ is type preserving, if $\alpha_j$ is essential, then it must be homotopic to $\gamma$ in $\Sigma-\gamma$. In particular, all the essential $\alpha_j$'s are parallel to $\gamma$. We choose $A^\pm$ to be the annulus bounded by the innermost $\alpha_j$'s (the ones that are closer to $\gamma$). Up to pre-composition with a domain isotopy we might as well assume that $A^+=A^-=:A$. 

By our choices
\begin{itemize}
    \item{$A\cup\gamma$ is a tubular neighborhood of $\gamma$ on $\Sigma$,}
    \item{$A$ is contained in the $\ep_0$-cuspidal parts of $(\Sigma-\gamma,\sigma^\pm)$,}
    \item{$A$ contains the $\ep_1$-cuspidal parts of $(\Sigma-\gamma,\sigma^\pm)$.}
\end{itemize} 

In particular, it satisfies the first requirements of {\bf Property (3)}.

Lastly, let us discuss the structure of $\partial A$. Recall that by our choice of Margulis constant (see Subsection \ref{subsec:margulis}) and Lemma \ref{lem:straight in cusp}, every lamination of $\Sigma-\gamma$ can intersect the $\ep_0$-cuspidal part only in infinite rays orthogonal to the boundary of the standard cusp neighborhood and going straight towards the cusp.

Also recall that, since $g^\pm$ is 1-Lipschitz, the image of the $\ep_0$-cuspidal part of the surface, denoted here by $U^\pm$, is contained in the $\ep_0$-Margulis neighborhood of the cusp of $M-\gamma$, denoted here by $V_0^\pm$. By construction, $A$ is contained in the $\ep_0$-cuspidal part of the surface. Therefore, in order to describe it it is enough to study the restriction 
\[
g^\pm:U^\pm\to V_0^\pm.
\]

\begin{claim}
The boundary of $A$ consists of two horocyclic loops orthogonal to $\lambda^\pm$.
\end{claim}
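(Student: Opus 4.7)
The plan is to work in the upper half-space model of $\mathbb{H}^3$ placing the cusp of $M-\gamma$ at $\infty$. There the horospherical torus $T$ lifts to horizontal horospheres $\{t=t_1\}$. I would separately analyze each ideal triangle $\Delta$ of $\tau^\pm$ that meets the $\epsilon_0$-cuspidal part $U^\pm$ of $(\Sigma-\gamma, \sigma^\pm)$, exploiting the fact that (by our choice of $\epsilon_0$ and Lemma \ref{lem:straight in cusp}) the leaves of the lamination $\lambda^\pm\subset\tau^\pm$ enter the cusp as infinite rays orthogonal to the boundary of $U^\pm$.

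First, since $g^\pm$ is type preserving, the two sides of any ideal triangle $\Delta$ containing a cusp point of $\Sigma-\gamma$ are mapped to bi-infinite geodesics in $M-\gamma$ that exit through the cusp corresponding to $\gamma$. Lifting, these two geodesics become vertical rays in the upper half-space model, and the restriction $g^\pm|_\Delta$ is an isometry from $\Delta$ onto an ideal triangle in $\mathbb{H}^3$ with one vertex at $\infty$, that is, the region of a vertical Euclidean half-plane above a finite geodesic. Intersecting such an ideal triangle with the horosphere $\{t=t_1\}$ yields a horocyclic arc meeting the two vertical sides orthogonally. Pulling back by the isometry $g^\pm|_\Delta$, the set $(g^\pm|_\Delta)^{-1}(T)$ is a horocyclic arc in $(\Delta,\sigma^\pm)$ orthogonal to the two sides of $\Delta$ going into the cusp.

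Second, I would concatenate these horocyclic arcs across the cusp. Two adjacent ideal triangles $\Delta_1,\Delta_2$ meet along an edge $e$ which, being a leaf of $\tau^\pm$ entering the cusp, is a vertical ray in both $(\Sigma-\gamma,\sigma^\pm)$ and (after lifting) in the upper half-space model. The horocyclic arcs on $\Delta_1,\Delta_2$ hit $e$ at the same height (namely the point where $e$ meets the horosphere $\{t=t_1\}$) and orthogonally from both sides, so they concatenate into a $C^1$ curve which is a union of horocyclic arcs at the same horocyclic height around the puncture. Since horocyclic height in a cusp is intrinsic to the metric, and the leaves of $\tau^\pm$ meet every horocycle orthogonally, the concatenation is a smooth horocycle around $\gamma$ in $(\Sigma-\gamma,\sigma^\pm)$. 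In particular, the essential loops of $(g^\pm)^{-1}(T)$ parallel to $\gamma$ are horocyclic loops transverse to $\lambda^\pm$.

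Third, regarding the simultaneous statement for both $\sigma^+$ and $\sigma^-$: the horocyclic foliations of the $\epsilon_0$-cuspidal parts of $(\Sigma-\gamma,\sigma^\pm)$ are canonically parametrized by horocycle length. Since we are free to precompose $g^+$ (say) by an isotopy supported in a neighborhood of the cusp, I can choose this isotopy so that it sends horocycles to horocycles of the same length, thereby identifying the two horocyclic foliations. After this modification the innermost essential components of $(g^+)^{-1}(T)$ and $(g^-)^{-1}(T)$ coincide as horocycles of both metrics, and are orthogonal to both $\lambda^+$ and $\lambda^-$. The length bound $\ell(\partial A)\leq 2\sinh(\epsilon_0/2)$ is then immediate since $\partial A$ is contained in the $\epsilon_0$-cuspidal part, where every horocycle has length at most that of the boundary horocycle of the $\epsilon_0$-thin part, computed from the definition of the Margulis constant in the upper half-plane model. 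The main technical point is the horocycle-matching isotopy; all other steps are local calculations in the upper half-space model, made possible by the piecewise-isometric nature of $g^\pm$.
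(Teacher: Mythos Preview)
Your first two paragraphs are correct and match the paper's argument: both lift to the upper half-space model and observe that on each region between consecutive leaves the pleated map is an isometric embedding taking vertical lines to vertical lines, so the preimage of the horosphere is a horocyclic arc meeting the leaves orthogonally, and these arcs concatenate into a horocyclic loop.

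Your third paragraph slightly overreaches: the horocycle-length-matching isotopy identifies the two horocyclic foliations but does not force the particular loops $\partial A^+$ and $\partial A^-$ to coincide, since they may sit at different depths in the cusp. This is harmless, though, because the paper handles the ``simultaneous $\pm$'' issue more simply: any domain isotopy carrying $A^+$ to $A^-$ works, since being a $\sigma^\pm$-horocycle orthogonal to $\lambda^\pm$ is an intrinsic property of the pulled-back metric and lamination and is therefore preserved under reparametrization.
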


\begin{proof}
Let us work in the upper half-space models of $\mb{H}^2$ and $\mb{H}^3$. The universal cover of the $\ep_0$-cuspidal part $U^\pm$ of the surface is the horoball $\{z\in\mb{C}\left|\,{\rm Im}(z)\ge E\right.\}$ where $\ep_0=2\sinh(1/2E)$. The universal cover of the $\ep_0$-Margulis neighborhoods of the cusp of the 3-manifold is $\{(z,t)\in\mb{C}\times(0,\infty)\left|\,t\ge E_0\right.\}$. The lift of the (smaller) $\ep_1$-Margulis neighborhood $V_1^\pm$ to $\mb{H}^3$ is $\{(z,t)\in\mb{C}\times(0,\infty)\left|\,t\ge E_1\right.\}$ (its boundary is the lift of torus $T$ to the universal cover). 

We lift the restriction of $g^\pm:U^\pm\to V_0^\pm$ to a $\pi_1(A)$-equivariant map 
\[
g^\pm:\{z\in\mb{C}\left|\,{\rm Im}(z)\ge E\right.\}\to \{(z,t)\in\mb{C}\times(0,\infty)\left|\,t\ge E_0\right.\}.
\]
(For simplicity, with a slight abuse of notation, we use the same notation for the map and the lift.)

As described above, the lifts of $\lambda^\pm$ to the universal cover is a $\pi_1(A)$-invariant collection of vertical rays. On each region bounded by two consecutive rays, the map $g^\pm$ is an isometric embedding mapping vertical lines to vertical lines. Thus, the pre-image $(g^\pm)^{-1}(\mb{C}\times\{E_1\})$ (which covers the pre-image of the torus $T$ under $g^\pm:U^\pm\to V_0^\pm$) is a concatenation of a horospherical segments centered at the point at infinity. The claim follows.  
\end{proof}

Note that the length of $\partial A$ is smaller than the length of the boundary of $U^\pm$ which is $E=2\sinh(\ep_0/2)$, checking the last part of {\bf Property (3)}.

\subsubsection{The map on the annulus}
Next, we take care of the definition of $f^+,f^-$ on the annulus $A$.

In order to do so, we push ${\rm int}(A)$ a little bit inside $H^-,H^+$ keeping the boundary $\partial A\subset\Sigma-\gamma$ fixed. We denote by $\iota^+,\iota^-:A\to H^+,H^-$ such inclusion (they agree with $\iota$ on $\partial A$). By the homotopy extension property, we can extend the homotopy between $\iota^+,\iota^-|_{\partial A}$ and $g^+,g^-|_{\partial A}$ to a homotopy of the whole annulus $A$, that is, we find a map 
\[
h^+,h^-:A\times[0,1]\to M-\gamma
\]
that restricts to $\iota^+,\iota^-$ on $A\times\{0\}$ and to a map $g_A^+,g_A^-:A\to M-\gamma$ on $A\times\{1\}$ that satisfies $g_A^+,g_A^-|_{(\partial A)\times\{1\}}=g^+,g^-|_{\partial A}$. 

Now recall that $g^+(\partial A),g^-(\partial A)\subset T=\partial\mb{T}(\gamma)$ and that every finite volume hyperbolic 3-manifold is acylindrical. In our case, this just means that every map of pairs $(A,\partial A)\to (M-\gamma,T)$ is homotopic relative to $\partial A$ to a map in $T$. We use this fact to homotope $g_A^+,g_A^-$ to a map $f_A^+,f_A^-:A\to T$. 

Finally, we define $f^+,f^-$ to be equal to the pleated surface map on $\Sigma-{\rm int}(A)$ and to the map $f_A^+,f_A^-$ on $A$.

By construction, $f^+,f^-$ are homotopic in $M-\gamma$ to the inclusion of $\Sigma$ modified on $A$ by pushing the annulus a little bit inside the two handlebodies. We immediately deduce that for every $\delta_0^-\in\mc{D}^-,\delta_0^+\in\mc{D}^+$ the curves $f^-(\delta_0^-),f^+(\delta_0^+)$ are null-homotopic in $M-\gamma$. This says that {\bf Property (2)} is automatically satisfied.

Again by construction, $f^+(\partial A),f^-(\partial A)$ are homotopic in $M-\gamma$ to $(\Sigma-\gamma)\cap T$. As $M-\gamma$ is acylindrical, $f^+(\partial A),f^-(\partial A)$ are also homotopic to $(\Sigma-\gamma)\cap T$ in $T$. Thus, also {\bf Property (3)} holds.

\subsubsection{Controlled homotopies}
Now we begin to address {\bf Properties (4)} and {\bf (5)}. 

We lift the pleated surfaces $g^-,g^+$ to pleated surfaces in the (possibly disconnected) covering $Q\to M-\gamma$ corresponding to $\pi_1(\Sigma-\gamma)$ (the components of $Q$ correspond to the components of $\Sigma-\gamma$). Using Lemmas \ref{homotopy bound} (with parameter $\ep=\ep_1$) and \ref{halfway surface}, we will connect $g^-,g^+$ in $Q$ by a sequence of good homotopies and then project the sequence down to $M-\gamma$ to connect $g^+,g^-$ in $M-\gamma$. In order to apply Lemmas \ref{homotopy bound} and \ref{halfway surface}, we will associate with $g^-,g^+$ two pants decompositions $P^-,P^+$ with respect to which the pleated surfaces are good maps with uniform constants (in the sense of Definition \ref{dfn:good maps}). Towards this goal, we begin by observing the following.

\begin{claim}
    There exists a constant $B>0$, only depending on $\Sigma-\gamma$, and pants decompositions $P^+,P^-$ of $\Sigma-\gamma$ such that $i(P^+,\lambda^+),i(P^-,\lambda^-)\le B$.
\end{claim}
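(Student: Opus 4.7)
The plan is to argue purely topologically. Since each $\lambda^\pm$ is obtained by collapsing parallel arcs of $\delta^\pm\cap(\Sigma-\gamma)$ to single representatives, it is a collection of pairwise non-isotopic disjoint essential arcs in $\Sigma-\gamma$ with endpoints at the cusps. The cardinality of any such collection is bounded by a constant depending only on $\chi(\Sigma-\gamma)$, and hence the completions $\tau^\pm$ are ideal triangulations with a fixed number $M=M(\Sigma-\gamma)$ of arcs.

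The second and only non-elementary input is the classical fact that $\mathrm{MCG}(\Sigma-\gamma)$ acts on the set of isotopy classes of ideal triangulations of $\Sigma-\gamma$ with only finitely many orbits (a consequence of the cocompactness of the $\mathrm{MCG}$ action on the arc complex, going back to work of Harer and Hatcher). I would pick orbit representatives $\tau_1,\dots,\tau_k$ and for each $i$ fix an arbitrary pants decomposition $P_i$ of $\Sigma-\gamma$. Setting
\[
B := \max_{1\le i\le k} i(P_i,\tau_i),
\]
this is a topological constant depending only on $\Sigma-\gamma$.

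Given the triangulations $\tau^\pm$ produced in the proof, I would choose $\phi^\pm\in\mathrm{MCG}(\Sigma-\gamma)$ with $\phi^\pm(\tau^\pm)=\tau_{i(\pm)}$ and define $P^\pm := (\phi^\pm)^{-1}(P_{i(\pm)})$. The $\mathrm{MCG}$-invariance of geometric intersection number then yields
\[
i(P^\pm,\lambda^\pm) \le i(P^\pm,\tau^\pm) = i(P_{i(\pm)},\tau_{i(\pm)}) \le B,
\]
since $\lambda^\pm\subset\tau^\pm$. This completes the proof. I do not foresee any real obstacle: the only ingredient beyond elementary topology is the finiteness of $\mathrm{MCG}$-orbits on ideal triangulations, and even that could be avoided by a direct construction of $P^\pm$ from a regular neighborhood of $\tau^\pm$ in a compactification of $\Sigma-\gamma$, at the cost of slightly more combinatorial bookkeeping.
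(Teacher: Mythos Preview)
Your proposal is correct and is essentially the same argument as the paper's: both use that there are only finitely many $\mathrm{MCG}(\Sigma-\gamma)$-orbits of ideal triangulations, pick a pants decomposition for each orbit representative, and transport back via the mapping class group. You add the explicit observation that $\lambda^\pm\subset\tau^\pm$ implies $i(P^\pm,\lambda^\pm)\le i(P^\pm,\tau^\pm)$, which the paper leaves implicit, but the substance is identical.
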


\begin{proof}
Recall that there are only finitely many ideal triangulations of $\Sigma-\gamma$ up to the action of the mapping class group. The claim follows by choosing for each of those finitely many triangulations a pants decomposition. 
\end{proof}

Consider $P^-,P^+$ as given by the claim. 

The first observation that allows us to connect $g^-,g^+:\Sigma-\gamma\to Q$ by a controlled sequence of good homotopies is that $g^-\in{\bf good}(P^-,L)$ and $g^+\in{\bf good}(P^+,L)$ for some uniform constant $L$ only depending of $\Sigma$. This is a consequence of the following theorem (see Theorem 3.5 in \cite{M00} and Theorem 3.3 in \cite{ThurstonII}).

\begin{thm}[Efficiency of Pleated Surfaces]
\label{efficiency}
For every $\ep>0$ there exists $K>0$ only depending on $\ep$ and $W$ such that the following holds: Let $f:(W,\sigma)\to N$ be a doubly incompressible pleated surface that maps geodesically a maximal lamination $\lambda$ consisting of finitely many arcs and simple closed curves. Suppose that each closed leaf of $\lambda$ has length at least $\ep$. Then, for every multicurve $\gamma$ on $W$ we have 
\[
\ell_N(\gamma)\le\ell_\sigma(\gamma)\le\ell_N(\gamma)+Ka(\gamma,\lambda)
\]
where $a(\gamma,\lambda)$ is the {\rm alternation number} between $\gamma$ and $\lambda$.
\end{thm}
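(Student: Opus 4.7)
The lower bound $\ell_N(\gamma)\le\ell_\sigma(\gamma)$ is immediate: since $f:(W,\sigma)\to N$ is a pleated surface it is $1$-Lipschitz, so the image in $N$ of the $\sigma$-geodesic representative of $\gamma$ is a loop freely homotopic to $\gamma$ of length at most $\ell_\sigma(\gamma)$, and this bounds $\ell_N(\gamma)$ from above.

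For the upper bound, the plan is to compare $(W,\sigma)$ with the pull-back metric of a second pleated surface that realizes $\gamma$ geodesically. First, complete $\gamma$ to a maximal lamination $\lambda'$ of $W$ (adding for instance a triangulation spun around $\gamma$), and use Thurston's realization theorem (see \cite[Theorems I.5.3.6, I.5.3.9]{CEG:notes_on_notes}) to produce a pleated surface $g:(W,\sigma')\to N$, homotopic to $f$, that maps $\lambda'$ geodesically; for this surface $\ell_{\sigma'}(\gamma)=\ell_N(\gamma)$ on the nose. It then suffices to bound $\ell_\sigma(\gamma)-\ell_{\sigma'}(\gamma)$ by $Ka(\gamma,\lambda)$.

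The core of the argument is the following picture: the $\sigma$-geodesic $\gamma^*$ decomposes into arcs that live in the closed ideal triangles of $W\setminus\lambda$, and on each such triangle $f$ is an isometric immersion. Hence the concatenation $f(\gamma^*)$ is a broken geodesic in $N$ whose total length equals $\ell_\sigma(\gamma)$. The defect between this length and $\ell_N(\gamma)$ is localized at the leaves of $\lambda$ crossed by $\gamma^*$. The alternation number $a(\gamma,\lambda)$ counts exactly the combinatorially non-trivial such crossings: at a non-alternation, consecutive arcs glue into a locally geodesic trajectory in $N$ (the bending being intrinsic to $\sigma$), while at each alternation the broken geodesic in $N$ can be straightened, shortening the loop by at most some universal amount. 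Uniform injectivity (Theorem \ref{uniforminj}) applied to $f$, combined with the assumption that every closed leaf of $\lambda$ has length at least $\epsilon$ (which via Lemma \ref{lem:thick to thick} keeps the relevant portion of $f(\lambda)$ in a definite thick part of $N$), yields a uniform constant $K=K(W,\epsilon)$ bounding the straightening cost at each alternation. Summing over the $a(\gamma,\lambda)$ alternations gives the desired bound.

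The main obstacle is controlling exactly this per-alternation cost. Without uniform injectivity one could fear that $\gamma^*$ fellow-travels $\lambda$ for long stretches, so that many crossings of $\gamma^*$ with $\lambda$ accumulate large length in $\sigma$ while contributing almost nothing to $\ell_N(\gamma)$; it is precisely Theorem \ref{uniforminj} that forbids this, forcing near-parallel arcs of $f(\gamma^*)$ in $N$ to correspond to near-parallel arcs in $(W,\sigma)$, so that the true accounting is captured by the combinatorial quantity $a(\gamma,\lambda)$ rather than the geometric intersection number $i(\gamma,\lambda)$.
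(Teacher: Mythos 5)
First, a point of reference: the paper does not prove this statement at all — it is quoted verbatim from the literature (Thurston's Efficiency of Pleated Surfaces, Theorem 3.3 of \cite{ThurstonII}, see also Theorem 3.5 of \cite{M00}) and then applied. So your proposal has to be judged against those proofs, and as written it has a genuine gap at exactly the point where the real work lies. Your lower bound is fine ($f$ is $1$-Lipschitz), and your picture of $f(\gamma^*)$ as a broken geodesic of total length $\ell_\sigma(\gamma)$ is correct. But the corners of that broken geodesic occur at \emph{every} transverse intersection of $\gamma^*$ with $\lambda$, i.e.\ at $i(\gamma,\lambda)$ points, not at $a(\gamma,\lambda)$ points. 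Your proposed dichotomy — "at a non-alternation, consecutive arcs glue into a locally geodesic trajectory in $N$ (the bending being intrinsic to $\sigma$)" — is not correct: the bending of a pleated surface along a leaf is extrinsic (the pulled-back metric $\sigma$ is hyperbolic across the leaves; the dihedral angle lives in $N$), and it is generically nonzero at every crossing regardless of whether that crossing is an alternation. So nothing in your argument reduces the count from $i(\gamma,\lambda)$ to $a(\gamma,\lambda)$, which is the whole point of the theorem.

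The second, equally serious, issue is the claim that "at each alternation the broken geodesic in $N$ can be straightened, shortening the loop by at most some universal amount." For a single corner of a piecewise geodesic there is no universal bound on the shortening: if the exterior angle is close to $\pi$ the path nearly backtracks and straightening can save an arbitrarily large amount. Ruling out this near-backtracking — i.e.\ showing that when $\gamma^*$ runs nearly parallel to $\lambda$ for a long stretch its image in $N$ still makes definite progress, and quantifying the loss per alternation rather than per intersection — is precisely the content of Thurston's proof, and it is there that Uniform Injectivity (Theorem \ref{uniforminj}), the thick–thin control of Lemma \ref{lem:thick to thick}, and the hypothesis $\ell(\text{closed leaves})\ge\ep$ are actually deployed, in a careful construction (efficient/taut position of $\gamma$ relative to $\lambda$, analysis inside Margulis tubes, etc.). Your last paragraph names these tools but only as a gesture; no argument is given that converts the injectivity statement (nearby directions in ${\bf P}(N)$ force nearby points in $(W,\sigma)$) into the per-alternation length estimate. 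Finally, the auxiliary pleated surface $g:(W,\sigma')\to N$ realizing $\gamma$ is never used afterwards (and would anyway require $\gamma$ to be realizable), so that reduction buys nothing: comparing $\ell_\sigma(\gamma)$ with $\ell_{\sigma'}(\gamma)$ is just the original problem restated. In short, the skeleton is recognizable but the decisive step is asserted rather than proved; you should either reproduce Thurston's argument or, as the paper does, cite \cite{ThurstonII} and \cite{M00}.
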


We do not need the definition of the alternation number, as we will only need that in our case it is bounded above by the geometric intersection number, that is $a(\gamma,\lambda)\le i(\gamma,\lambda)$.

We apply Theorem \ref{efficiency} to the doubly incompressible pleated surfaces \[g^\pm:(\Sigma-\gamma,\sigma^\pm)\to Q,\] which map the ideal triangulations $\lambda^\pm$ geodesically. Note that the laminations $\lambda^\pm$ do not have closed leaves; hence, the last assumption of the theorem is trivially satisfied (and we can freely choose the parameter $\ep$). 

By construction $P^-,P^+$ intersect $\lambda^-,\lambda^+$ in at most $B$ points. In particular, the alternation numbers are bounded by $a(P^-,\lambda^-),a(P^+,\lambda^+)\le B$. By Theorem \ref{efficiency}, we conclude that $\ell_{\sigma^-}(P^-)\le\ell_Q(P^-)+KB$ and $\ell_{\sigma^+}(P^+)\le \ell_Q(P^+)+KB$. That is, we can conclude that $g^-\in{\bf good}(P^-,C)$ and $g^+\in{\bf good}(P^+,C)$ where $C=KB$.

We now use Proposition \ref{augmenting minsky} to join $P^-$ to $P^+$ via a sequence of elementary moves on pants decompositions and then turn this sequence of moves into a sequence of geometrically controlled homotopies (as in Definition \ref{dfn:good homotopies}). 

More precisely, Proposition \ref{augmenting minsky} gives us the following.
\begin{itemize}
    \item{Non-annular subsurfaces $Y_1,\dots, Y_k$ that do not contain $\gamma$.}
    \item{An elementary move sequence $P^-=P_0\to\ldots\to P_n=P^+$.}
    \item{Intervals $J_j=[t_i^-,t_i^+]$, with $J_i\le J_{i+1}$, satisfying the following properties. Denote
 $$S=\sum_{Y\in\mathcal Y_\gamma} \{\{d_{Y}(P^-,P^+)\}\}_C,$$
 where $\mathcal Y_\gamma$ is the set of all essential non-annular subsurfaces $Y$ containing $\gamma$. Then, for some constant $K$ depending on $\Sigma$, we have
 
\begin{enumerate}
\item For $i\in J_j$, the elementary move $P_i\to P_{i+1}$ is supported in $Y_j$.
\item{$|\{0,\dots,n\}-\bigcup J_j|\leq K S$.}
\item $k\leq K S$. 
\end{enumerate}
}
\end{itemize}

For every pants decomposition $P_t$ appearing in the sequence, we find a pleated surface $g_t:(\Sigma-\gamma,\sigma_t)\to Q$ mapping geodesically a maximal lamination obtained by adding to $P_t$ finitely many leaves spiraling around the components of $P_t$ so that the complement consists of (ideal) triangles (and with $g_0=g^-$ and $g_n=g^+$). As the elementary moves $P_t\to P_{t+1}$ for $t$ in $J_i$ occur on the subsurface $Y_i$, we can assume that the maximal extensions of $P_t$ and the maps $g_t$ agree outside $Y_i$ for every $t\in J_i$. 

As $g_t\in{\bf good}(P_t,0)$ and $P_t,P_{t+1}$ differ by an elementary move, by Lemma \ref{halfway surface}, we can find a halfway surface $g_{t+1/2}\in{\bf good}(P_t,C_1)\cap {\bf good}(P_{t+1},C_1)$ (where $C_1$ is a uniform constant only depending on $\Sigma-\gamma$). This implies (via Lemma 8.2 of \cite{M01}) the following.
\begin{itemize}
    \item{For every $t\in\bigcup{[t_j^+,t_{j+1}^-]}$, we have that $\sigma_t,\sigma_{t+1}$ are related by property (i) of Definition \ref{dfn:related_metrics} with constant $L^2$ (where $L$ only depends on $\Sigma-\gamma$).} 
    \item{For every $J_j=[t_j^-,t_j^+]$ the metrics $\sigma_{t_j^-},\sigma_{t_j^+}$ are related by property (ii) of Definition \ref{dfn:related_metrics}.}
    \item{The total number of moves is $N=|[0,n]-\bigcup{J_j}|+k$ which, by Property (3) of Proposition \ref{augmenting minsky} has cardinality bounded by $N\le 2KS$.}
\end{itemize}

Recalling that by our choices $A$ contains the $\ep_1$-cuspidal part of both $\sigma^{\pm}$, we just checked {\bf Property (5)}, that is, $\sigma^-=\sigma_0$ and $\sigma^+=\sigma_n$ are $(L^2,\ep_1,N)$-related according to Definition \ref{dfn:related_metrics}. 

We interpolate the pleated surfaces by concatenating consecutive homotopies. We subdivide the construction in two steps.

{\bf Homotopy, first step}. First consider the interval $[t_i^+,t_{i+1}^-]$. For every $t\in[t_i^+,t_{i+1}^-)$ we find a $K^2$-good homotopy between $g_t$ and $g_{t+1}$ by concatenating two $K$-good homotopies provided by Lemma \ref{homotopy bound} between $g_t$ and $g_{t+1/2}$ and between $g_{t+1/2}$ and $g_{t+1}$. 

{\bf Homotopy, second step}. Then consider $J_i=[t_i^-,t_i^+]$. As $g_t,g_{t+1}$ agree outside $Y_i$ for every $t\in J_i$, we can construct a homotopy between them which is constant outside $Y_i$ as we now explain. Consider a homotopy $H:Y_i\times[0,1]\to Q$ between the restrictions $g_t,g_{t+1}|_{Y_i}$. As $g_t,g_{t+1}$ agree on $\partial Y_i$, the restriction of $H$ to $\partial Y_i\times[0,1]$ induces a map $\partial Y_i\times(S^1=[0,1]/0\sim1)\to Q$. As $Q$ is atoroidal and $\partial Y_i$ is not homotopic into the boundary of the Margulis tube, we can homotope the restriction of $H$ to $\partial Y_i\times[0,1]$ to be constant on each $\{\star\}\times[0,1]$. By the homotopy extension property, we can extend this homotopy to the whole $Y_i\times[0,1]$. This provides us a homotopy between the restrictions of $g_t,g_{t+1}$ to $Y_i$ relative to $\partial Y_i$.

Composing the various homotopies that we obtained in the previous steps, we found a geometrically controlled map $H_0:(\Sigma-\gamma)\times[-1,1]\to Q$ joining $g^-$ to $g^+$. It has the following properties.
\begin{itemize}
    \item{$H_0(\bullet,-1)$ and $H_0(\bullet,1)$ coincide with $g^-$ and $g^+$ respectively.}
    \item{The tracks $H_0(\{\star\}\times[-1,1])$ have length bounded by $K|[0,n]-\bigcup{J_i}|\le KLS$ for every $\star\in\partial{\bf collar}(\gamma)$ and are contained in the $K$-neighborhood of the corresponding standard $\ep_1$-neighborhood of the cusp of $Q$.}
\end{itemize}

As $p:Q\to M-\gamma$ is locally isometric, the same is true for the composition $pH_0:(\Sigma-\gamma)\times[-1,1]\to M-\gamma$ which is a homotopy joining $g^-$ to $g^+$.

\subsubsection{The meridian}
Lastly, let us keep track of the meridian of $\gamma$ and check {\bf Property (4)}. Recall that:
\begin{itemize}
    \item{$h^+,h^-:[0,1]\to M-\gamma$ are homotopies between $\iota^+,\iota^-:A\to M-\gamma$ and $g^+,g^-|_A:A\to M-\gamma$.}
    \item{$pH_0:(\Sigma-\gamma)\times[-1,1]\to M-\gamma$ is the geometrically controlled homotopy between $g^+,g^-$ constructed in the previous step.}
\end{itemize}

We show that:

\begin{claim}
For every arc $t\subset A$ joining the points $\star_1,\star_2$ on the two boundary components of $A$, the concatenation of 
\[
f_A^-(t),pH_0(\{\star_1\}\times[-1,1])),f_A^+(\overline{t}),pH_0(\overline{\{\star_2\}\times[-1,1]}))
\]
represents the meridian of $\gamma$ in $M$ (recall that if $s$ is an arc we denote by $\overline{s}$ the arc with the inverse parametrization).
\end{claim}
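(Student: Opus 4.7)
\emph{Plan.} The goal is to exhibit the loop as the boundary of a singular disk in the Dehn-filled manifold $M$ whose algebraic intersection number with $\gamma$ equals $\pm 1$; this characterizes the meridian of $\gamma$.

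\emph{First, build a disk in $M$.} We construct a homotopy $\Theta\colon\Sigma\times[-1,1]\to M$ from $f^-$ to $f^+$ which agrees with $pH_0$ outside $\mathrm{int}(A)\times[-1,1]$. On $A\times[-1,1]$ we split $[-1,1]$ into three sub-intervals: on $A\times[-1,-1/2]$ run the reverse of the homotopies producing $f_A^-$, ending at the push-in $\iota^-|_A$; on $A\times[1/2,1]$ run the analogous homotopies ending at $\iota^+|_A$; and on the central slab $A\times[-1/2,1/2]$ interpolate between $\iota^-|_A$ and $\iota^+|_A$ through the original inclusion $\iota|_A\colon A\hookrightarrow\Sigma\hookrightarrow M$ at the midpoint. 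Since $\iota^\pm$ fix $\partial A$, on $\partial A\times[-1,1]$ this construction produces a path from $g^-|_{\partial A}$ to $g^+|_{\partial A}$ in $M-\gamma$ that agrees with $pH_0|_{\partial A\times[-1,1]}$ up to homotopy rel endpoints; a collar adjustment supported in a thin neighborhood of $\partial A$, hence disjoint from the core $\gamma\subset A$, fills in the resulting null-homotopic discrepancy.

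\emph{Next, identify the boundary of the disk.} Restrict $\Theta$ to the rectangle $t\times[-1,1]$. Its four boundary arcs are sent by $\Theta$ to $f_A^-(t)$ (at $s=-1$), to $pH_0(\{\star_2\}\times[-1,1])$ (at the $\star_2$-edge), to $f_A^+(\overline{t})$ (at $s=1$), and to $\overline{pH_0(\{\star_1\}\times[-1,1])}$ (at the $\star_1$-edge). Up to cyclic reordering, the concatenation of these arcs is exactly the loop in the statement, so $\Theta|_{t\times[-1,1]}$ is a singular disk in $M$ bounding this loop.

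\emph{Finally, count intersections with $\gamma$.} By construction $pH_0$, the $h^\pm$, and the acylindricity homotopies into $T$ all take values in $M-\gamma$, so no intersections with $\gamma$ arise off the central slab. The central slab $t\times[-1/2,1/2]$ sweeps the image of $t$ through $\iota|_A\colon A\hookrightarrow\Sigma\hookrightarrow M$, and the unique transverse intersection with $\gamma$ is $(t\cap\gamma)\times\{0\}$, since the essential arc $t$ crosses the core $\gamma$ of the annulus $A$ exactly once. The collar modification stays near $\partial A$ and therefore away from $\gamma$, contributing no further intersections. Hence the algebraic intersection number is $\pm 1$, and the loop represents the meridian of $\gamma$ in $M$. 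The most delicate step is the collar adjustment aligning our prescribed boundary homotopy on $\partial A\times[-1,1]$ with $pH_0|_{\partial A\times[-1,1]}$: both paths connect the same pair of maps $g^\pm|_{\partial A}$, so their difference can be killed by a $2$-dimensional disk in $M$ which, being confined to a collar of $\partial A$ disjoint from $\gamma$, does not introduce extra $\gamma$-crossings.
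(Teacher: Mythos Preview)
Your overall strategy---build a singular disk in $M$ bounded by the loop and count its algebraic intersection with $\gamma$---is sound and is a more geometric alternative to the paper's approach. The paper instead rewrites the loop by inserting and cancelling the tracks $h^\pm(\{\star_j\}\times[0,1])$, reducing it to $\iota^-(t)*\overline{\iota^+(t)}$, which is visibly the meridian (the small push-off loop around $\gamma$). Both arguments hinge on the same step, namely comparing the $A$-side homotopy with $pH_0$ along $\partial A$, but the paper makes this step explicit whereas you do not.

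The genuine gap is in your collar adjustment. You assert that on $\partial A\times[-1,1]$ the path produced by your $A$-side construction ``agrees with $pH_0|_{\partial A\times[-1,1]}$ up to homotopy rel endpoints,'' and later that ``their difference can be killed by a $2$-dimensional disk in $M$ which, being confined to a collar of $\partial A$ disjoint from $\gamma$, does not introduce extra $\gamma$-crossings.'' Neither of these is justified: that two paths share endpoints does not make their concatenation null-homotopic in $M-\gamma$ (or even in $M$), and there is no reason a capping disk, if one exists, should stay in a collar of $\partial A$. If the discrepancy loop were merely null-homotopic in $M$ but not in $M-\gamma$, the capping disk would cross $\gamma$ and spoil your intersection count; if it were not null-homotopic in $M$ at all, your disk $\Theta|_{t\times[-1,1]}$ would not exist.

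This is exactly the point the paper addresses carefully. It observes that the homotopies $h^\pm$ on $\partial A$ are restrictions of homotopies defined on all of $\Sigma-\mathrm{int}(A)$, so both $pH_0$ and the combined homotopy $h_1$ (built from $h^-$ and $h^+$) are homotopies between $g^-$ and $g^+$ on $\Sigma-\mathrm{int}(A)$ inside $M-\gamma$. Lifting to the cover $Q\cong(\Sigma-\gamma)\times\mathbb{R}$ corresponding to $\pi_1(\Sigma-\gamma)$, both lift to homotopies between the same maps in an aspherical space, hence are homotopic rel boundary; projecting down gives the required null-homotopy of the discrepancy in $M-\gamma$. Once you supply this argument, your disk construction goes through and the intersection count is correct.
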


\begin{proof}
We rewrite the loop
\[
f_A^-(t)*pH_0(\{\star_1\}\times[-1,1])*f_A^+(\overline{t})*pH_0(\overline{\{\star_2\}\times[-1,1]}))
\]
by adding and removing four arcs (see Figure \ref{fig:meridian1}, the arcs are in green)
\begin{align*}
&(h^-(\{\star_1\}\times[0,1])*f_A^-(t)*h^-(\{\star_2\}\times[0,1])^{-1})\\
&*(h^-(\{\star_2\}\times[0,1])*pH_0(\{\star_2\}\times[-1,1])*h^+(\{\star_2\}\times[0,1])^{-1})\\
&*(h^+(\{\star_2\}\times[0,1])*f_A^+(t)*h^+(\{\star_1\}\times[0,1])^{-1})\\
&*(h^+(\{\star_1\}\times[0,1])*pH_0(\{\star_2\}\times[-1,1])*h^-(\{\star_1\}\times[0,1])^{-1}).    
\end{align*}

\begin{figure}[h]
\begin{overpic}[scale=1.8]{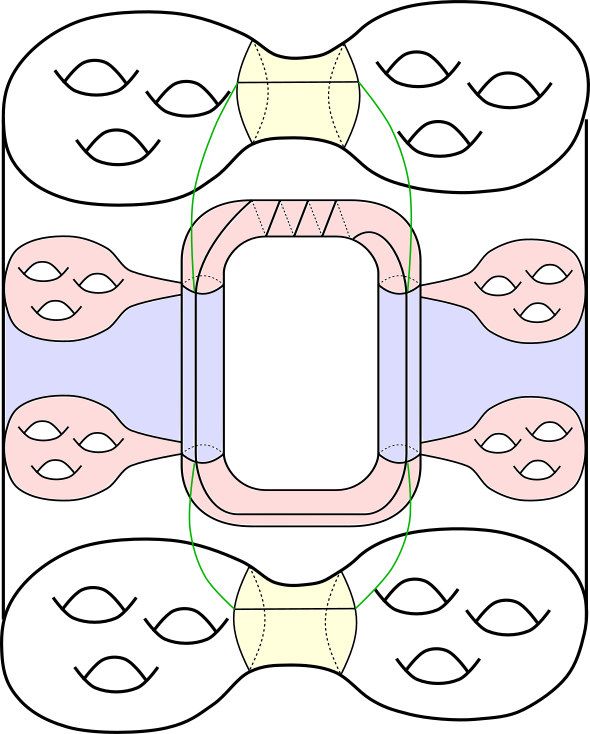}
    
\end{overpic}
\caption{Homotopy class of the meridian.}
\label{fig:meridian1}
\end{figure}

Observe that 
\[
h^-(\{\star_1\}\times[0,1])*f_A^-(t)*h^-(\overline{\{\star_2\}\times[0,1]})
\]
and
\[
h^+(\{\star_1\}\times[0,1])*f_A^+(t)*h^+(\overline{\{\star_2\}\times[0,1]})
\]
are homotopic relative to the endpoints to $\iota^-(t)$ and $\iota^+(t)$.

Next, we consider the concatenations
\[
h^-(\{\star_1\}\times[0,1])*pH_0(\{\star_1\}\times[-1,1])*h^+(\overline{\{\star_1\}\times[0,1]})
\]
and
\[
h^-(\{\star_2\}\times[0,1])*pH_0(\{\star_2\}\times[-1,1])*h^+(\overline{\{\star_2\}\times[0,1]}).
\]
we show that they are both null-homotopic.

Recall that $h^-,h^+:\partial A\times[0,1]\to M-\gamma$ coincide with the restrictions to $(\Sigma-{\rm int}(A))\times[0,1]$ of two homotopies $(\Sigma-{\rm int}(A))\times[0,1]\to M-\gamma$ between the inclusion of $\Sigma-{\rm int}(A)$ and the restrictions $f^-,f^+:\Sigma-{\rm int}(A)\to M-\gamma$. We combine the two homotopies to obtain a third homotopy $h_1:(\Sigma-{\rm int}(A))\times[-1,1]\to M-\gamma$ connecting $f^-,f^+$ and such that $h_1(\{\star_j\}\times[-1,1])=h^-(\{\star_j\}\times[0,1])*h^+(\{\star_j\}\times[0,1])$.

By covering theory, the homotopy $h_1$ lifts to a homotopy $H_1:(\Sigma-{\rm int}(A))\times[-1,1]\to Q$ between the restrictions $g^-,g^+:\Sigma-{\rm int}(A)\to Q$, that is $h_1=pH_1$. 

As the homotopies $H_0,H_1:(\Sigma-{\rm int}(A))\times[-1,1]\to Q$ are homotopic relative to the boundary, we have that the arcs $H_0(\{\star_j\}\times[-1,1])$ and $H_1(\{\star_j\}\times[-1,1])$ are homotopic relative to the boundary. By projecting the homotopy to $M-\gamma$, we conclude that the concatenation of the two arcs (in the correct order) is null-homotopic relative to the endpoints as claimed.
\end{proof}

This finishes the proof of Proposition \ref{prop:homotopies}.
\end{proof}

\section{The contribution of annular subsurfaces}
\label{sec:annuli}

In this section we prove Theorems \ref{thm:annular} and \ref{thm:annular2}, which we recall.

\annular*

\annulartwo*

We refer to Section \ref{sec:strategy} and to Figures \ref{fig:torus2} and \ref{fig:sweepout1} for a friendly outline and useful pictures. We now briefly review the main steps of the arguments.

In order to estimate the normalized length of the standard meridian $\mu\subset T$ associated with $M$ (again $T$ denotes the boundary of the standard Margulis neighborhood of the cusp of $M-\gamma$) we make use the geometric model provided by Proposition \ref{prop:homotopies}. Recall that it consists of two vertical flat segments and two horizontal ones (see Figures \ref{fig:torus2} and \ref{fig:sweepout1}) where the vertical ones have length roughly $S_\gamma$ and the horizontal ones are two parallel copies of a transverse arc $t\subset A$ suitably mapped to $T$ ($A$ is a tubular neighborhood of $\gamma$ in $\Sigma$, see Proposition \ref{prop:homotopies} property (4)).

The key is to find two different horizontal arcs of uniformly bounded length. Concatenating them with the vertical segments gives us another reference meridian with a stronger geometric control. The length bound on the standard meridian will follow from a comparison with the reference one. Additionally, the reference meridian will also give us an area bound of the form ${\rm Area}(T)\lesssim S_\gamma$. 

The first idea is that since the reference meridian and the standard meridian will only differ on the top horizontal segment (say it is $t$ in the standard and $t'$ in the reference) and that in the reference meridian $t'$ has uniformly bounded length, the length of $t$ is controlled essentially by the intersection $i(t,t')$ (minimal number of intersections between arcs in the homotopy classes with fixed endpoints of $t,t'$). 

The second idea is that such intersection is controlled by the annular projection $d_\gamma$ and the difference between the hyperbolic metric on the bottom and top pleated surfaces (as described in Proposition \ref{prop:homotopies}).

In the rest of the section, we first recall how to compute annular projections and then compare annular distances and intersection numbers of arcs in the special setup arising from the above sketch and then we carry out the proof of Theorems \ref{thm:annular} and \ref{thm:annular2}.

\subsection{Annular projections}
We will make crucial use of curve graphs of annuli, which are defined differently than the curve graphs of non-annular surfaces. We briefly recall the relevant definitions and refer the reader to~\cite[Section 2.4]{MasurMinsky:II} for details. 

\begin{dfn}[Annular Distances]
The curve graph $\mc{C}(\gamma)$ of the annulus with core curve $\gamma\subset\Sigma$ has vertices, isotopy classes of properly embedded essential arcs in the natural compactification $\overline{U}$ (homeomorphic to an annulus $S^1\times[-1,1]$) of the $\pi_1(\gamma)$-cover $U\to\Sigma$ of the surface. Two isotopy classes relative to the endpoints of essential arcs of $U$ are connected by an edge if they have representatives that can be realized disjointly. If $\tau,\tau'$ are proper essential arcs of $\overline{U}$ then their distance can be computed as
\[
d_{\mc{C}(\gamma)}(\tau,\tau')=i(\tau,\tau')+1
\]
where $i(\tau,\tau')$ is the geometric intersection number between $\tau,\tau'$ (minimal number of intersections between arcs in the homotopy classes with fixed endpoints of $\tau,\tau'$).
\end{dfn}

\begin{dfn}[Annular Projections]
\label{def:annular coeff}
Let $\gamma\subset\Sigma$ be an essential simple closed curve. Denote again by $U\to\Sigma$ the $\pi_1(\gamma)$-cover of $\Sigma$ and by $\overline{U}$ its natural compactification. Let $\alpha,\beta\subset\Sigma$ be essential simple closed curves intersecting essentially $\gamma$. Consider the pre-images $\tilde{\alpha},\tilde{\beta}\subset U$ of $\alpha,\beta$ under the covering projection. As both curves intersect $\gamma$ essentially, every connected component of $\tilde{\alpha},\tilde{\beta}$ is a proper essential arc of $U$ and can be extended to a properly embedded essential arc in $\overline{U}$ by adding two endpoints in a unique way. We set
\[
d_\gamma(\alpha,\beta):={\rm diam}(\tilde{\alpha}\cup\tilde{\beta}).
\]
\end{dfn}

\subsection{Annular distance vs intersection in annuli}

Suppose that we have an annulus $A\subset\Sigma$ with core curve $\gamma$ on a surface and two curves $\alpha,\beta\subset\Sigma$ that, for simplicity, each intersects the annulus in an essential arc $\alpha\cap A=a,\beta\cap A=b$.

In general, the geometric intersection $i(a,b)$ of the arcs $a,b\subset A$ is not related at all to their annular distance (intuitively, the ``spiraling'' could happen in an isotopic copy of the annulus). 

However, the following lemma says that, under suitable geometric constraints on the curves $\alpha,\beta$ and the annulus $A$, then the annular distance is indeed comparable to the number of intersections $i(a,b)$. More specifically, the lemma is designed specifically to deal with related metrics in the sense of Definition \ref{dfn:related_metrics}.

\begin{lem}\label{lem:annularporjecviaanuularintersecs}
For every $\ep>0,B\geq 1$ there exists $K>0$ such that the following holds. Let $\Sigma$ be a surface containing an annulus $A$ with core curve $\gamma$, and fix finite-volume hyperbolic metrics $\sigma,\sigma'$ on $\Sigma-\gamma$. Denote $Y=\Sigma-int(A)$.

Let $\alpha$ and $\beta$ be simple closed curves on $\Sigma$ respectively homotopic to concatenations 
\[
\alpha=\alpha_1*\tau_1*\dots\,{\rm and}\,\beta=\beta_1*\delta_1*\dots
\]
with the following properties.
\begin{enumerate}
 \item The $\alpha_i$ and $\beta_i$ are essential arcs in $A$,
 \item The $\tau_i$ are subgeodesics of geodesic lines of $\sigma$ that limit to the cusp in both directions,
 \item The $\delta_i$ are subgeodesics of geodesic lines of $\sigma'$ that limit to the cusp in both directions.
 \item{The hyperbolic metric $\sigma'$ is $(B,\ep)$-related outside $A$ to $\sigma$ (as in Definition \ref{dfn:related_metrics})}.
\end{enumerate}

Then 
\[
|d_\gamma(\alpha,\beta) - i(\alpha_i,\beta_j)|\leq K,
\]
for any $i,j$.
\end{lem}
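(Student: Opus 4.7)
The plan is to pass to the annular cover of $\Sigma$ along $\gamma$ and compare both sides of the inequality in terms of winding numbers of essential arcs in that annulus.

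First, I would set up coordinates: let $p\colon U\to\Sigma$ be the $\pi_1(\gamma)$-cover, with core $\gamma_U$, and $\overline{U}\cong S^1\times[-1,1]$ the natural compactification; the preimage $A_U\subset U$ of $A$ containing $\gamma_U$ maps homeomorphically to $A$. A preliminary observation is that, since $\alpha$ is simple, the arcs $\alpha_1,\alpha_2,\dots$ are pairwise disjoint essential arcs in the annulus $A$, hence all freely isotopic to each other, so they share a common winding number $w_\alpha\in\mathbb{Z}$; similarly all $\beta_j$'s have a common winding $w_\beta$. Consequently $i(\alpha_i,\beta_j)=|w_\alpha-w_\beta|+O(1)$ is independent of the choice of $i,j$ up to a universal additive constant, which is what makes the lemma reasonable in the first place.

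Next, I would lift the concatenations and analyze windings in $\overline{U}$. Each component $\tilde\alpha^{(r)}$ of $p^{-1}(\alpha)$ is a bi-infinite arc in $U$ crossing $\gamma_U$ exactly once, corresponding to some crossing point $p_i=\alpha_i\cap\gamma$, and its restriction to $A_U$ is (up to isotopy rel $\partial A_U$) the canonical lift of $\alpha_i$; in particular it contributes winding $w_\alpha$ to the total winding $W(\tilde\alpha^{(r)})$ of $\tilde\alpha^{(r)}$ in $\overline{U}$. The heart of the argument is the claim that each of the two ``tails'' of $\tilde\alpha^{(r)}$, i.e.\ the portions outside $A_U$, contributes only a bounded amount to the winding, uniformly in $r$. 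The reason is that these tails are lifts of concatenations of $\tau_j$'s (and of pieces of other $\alpha_m$'s that traverse non-main sheets of $p^{-1}(A)$), and each $\tau_j$ is the restriction to $Y$ of a cusp-to-cusp geodesic of $\sigma$; near the cusp such a geodesic is vertical in the horocyclic coordinates, so it meets $\partial A$ orthogonally, and its lift stays in the $\epsilon$-cuspidal sheet of $p^{-1}(A)$ it enters without any angular drift toward the end of $\overline{U}$. The same analysis applies to $\tilde\beta^{(s)}$ via the $\delta_j$'s in $\sigma'$, and the hypothesis that $\sigma,\sigma'$ are $(B,\epsilon)$-related outside $A$ ensures that the two cusp structures coincide on $\partial A$ up to a $B$-bilipschitz change, so the bound on the winding of the tails is uniform and the same constant $K_0=K_0(B,\epsilon,\Sigma)$ works for both metrics.

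To finish, I would invoke the formula $d_{\mc{C}(\gamma)}(\tau,\tau')=i(\tau,\tau')+1$ for distinct essential arcs in $\overline{U}$, combined with the elementary identity that the geometric intersection of two such arcs equals the absolute difference of their winding numbers. Putting everything together gives
\[
d_\gamma(\alpha,\beta)=\max_{r,s}d_{\mc{C}(\gamma)}(\tilde\alpha^{(r)},\tilde\beta^{(s)})=|w_\alpha-w_\beta|+O(K_0)=i(\alpha_i,\beta_j)+O(K_0),
\]
which is the required estimate.

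The hardest part will be the uniform winding bound of Step 2: quantifying exactly how lifts of cusp-to-cusp $\sigma$- and $\sigma'$-geodesics behave in the annular cover near the cusp. This needs that such geodesics approach the cusp orthogonally to horocycles and that horocyclic geometry is preserved (up to $B$-bilipschitz, and up to a loss inside the $\epsilon$-cuspidal region which is contained in $A$ by hypothesis) by the relatedness condition. A secondary subtlety is giving a well-defined ``winding'' to the full lift $\tilde\alpha^{(r)}$ in $\overline{U}$, for which we use the hyperbolic compactification coming from a background hyperbolic structure on $\Sigma$ rather than a purely topological one.
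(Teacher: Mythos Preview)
Your strategy matches the paper's: pass to the annular cover, split each essential lift into the central arc $\tilde\alpha_i\subset\tilde A_0$ plus two tails, and argue the tails contribute only $O(1)$. The winding-number language is a valid repackaging of the intersection-number computation the paper carries out.

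The gap is in your justification of the bounded tail contribution. Each tail is an \emph{infinite} concatenation $\tilde\tau_{j_1}*\tilde\alpha_{m_1}*\tilde\tau_{j_2}*\cdots$ threading through successively deeper non-main sheets; the observation that each $\tau_j$ meets the horocycle orthogonally says nothing about the \emph{cumulative} angular displacement of such a concatenation, and ``its lift stays in the sheet it enters without any angular drift'' is simply false. More to the point, even granting that the $\sigma$-tails of $\tilde\alpha$ carry some fixed winding offset, what you actually need is that the $\sigma'$-tails of $\tilde\beta$ carry the \emph{same} offset up to $O_B(1)$; your remark that relatedness makes ``cusp structures coincide on $\partial A$'' does not address this, since the comparison must propagate all the way out to $\partial\overline U$, not just hold at $\partial A$. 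The paper fills the gap in two moves. First, a topological reduction: once each tail has traversed its initial single segment $\tilde\tau'_1$ (resp.\ $\tilde\delta'_1$), a lift of one $\tau_j$ (resp.\ $\delta_j$) ending on the first non-main sheet encountered, the remainders lie in topological disks and hence contribute at most~$1$ to the mutual intersection. Second, the actual bound on $i(\tilde\tau'_1,\tilde\delta'_1)$: lift further to the universal cover $\mathbb H^2$ of $(\Sigma-\gamma,\sigma)$. The $\langle\gamma\rangle$-translates of $\tilde\tau'_1$ are $\sigma$-geodesic arcs disjoint from the horoball $H_0$ at the relevant parabolic fixed point, with feet on $\partial H_0$ spaced at least $\epsilon$ apart, while --- and this is where relatedness is really used --- $\tilde\delta'_1$ becomes a $B$-bilipschitz path for $\sigma$ (after straightening inside the convex pieces from the relatedness definition) also disjoint from $H_0$. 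The elementary lemma that a $B$-bilipschitz path avoiding a horoball has closest-point projection of bounded diameter onto it then bounds how many translates of $\tilde\tau'_1$ the path $\tilde\delta'_1$ can cross. That horoball-projection bound is the precise mechanism behind your ``no angular drift'' heuristic.
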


\begin{figure}[h]
\begin{overpic}[scale=1.8]{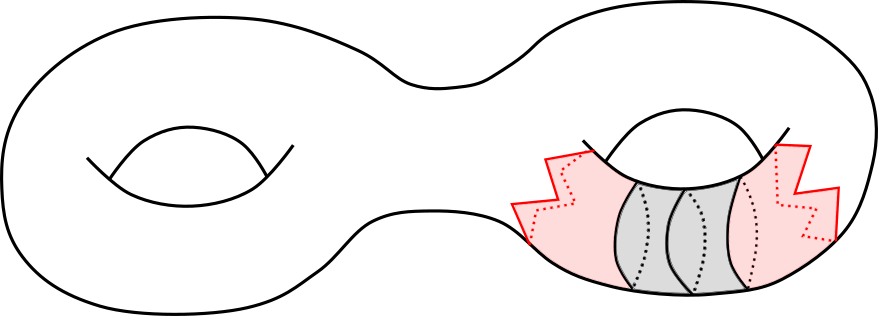}
    
\end{overpic}
\caption{The annulus $A$, containing the $\ep$-cuspidal part of the surface.}
\end{figure}

Before going into the proof let us remark that for our purposes it will be enough to consider the case where $A-\gamma$ {coincides} with the $\ep_1$-cuspidal parts of $(\Sigma-\gamma,\sigma)$ and $(\Sigma-\gamma,\sigma')$. This stronger assumption simplifies some of the steps of the proof, but we decided to keep the weak formulation for potential future applications.

\begin{proof}[Proof of Lemma \ref{lem:annularporjecviaanuularintersecs}]
Note that if two metrics on $\Sigma-\gamma$ are $(B,\ep)$-related outside $A$ then there exists a possibly disconnected subsurface $Z$ of $\Sigma$ whose components are convex for both metrics and such that the complements are $B$-bilipschitz equivalent, and isometric on $\partial Z$. We will only use these properties, together with the condition that $\partial A$ lies outside the $\ep$-cuspidal neighborhoods in both metrics, so we cover both cases of $B$-relatedness simultaneously. We fix $Z$, and note that all relevant arcs are in minimal position with respect to $Z$ by convexity.

In this proof we will need intersection numbers for proper arcs in manifolds with boundary. These are computed minimizing over the proper homotopy classes of the arcs, with any endpoint on the boundary fixed throughout the homotopy, and only counting intersections in the interior of the arcs. We still denote these intersection numbers by $i(\cdot,\cdot)$.

Consider the $\pi_1(A)$-cover $\tilde\Sigma$ of $\Sigma$, which contains a homeomorphic lift $\tilde A_0$ of $A$, itself containing a homeomorphic lift $\tilde\gamma$ of $\gamma$. The annular distance between $\alpha$ and $\beta$ is, up to a constant, the intersection number of any two lifts $\tilde\alpha$ and $\tilde \beta$ that intersect $\tilde\gamma$ essentially. Moreover, $\tilde \Sigma$ contains other lifts of $A$, all homeomorphic to closed infinite strips, as well as covers of the connected components of $Z$ as in the first paragraph of the proof (see Figure \ref{fig:annular cover}). We denote all lifts of both types by $\tilde A_k$.

\begin{figure}[h]
\begin{overpic}[scale=1.8]{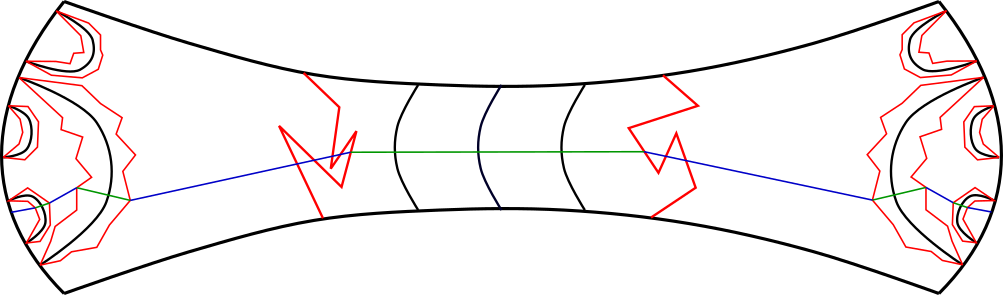}
    
\end{overpic}
\caption{The annular cover relevant to the proof of Lemma \ref{lem:annularporjecviaanuularintersecs}, with the homeomorphic lifts of the $\ep$-cuspidal part contained in the lift of $A$ in the middle. Other lifts of $A$ are also sketched, as is an essential lift of either $\alpha$ or $\beta$.}
\label{fig:annular cover}
\end{figure}

All lifts $\tilde \alpha$ and $\tilde \beta$ split into 3 parts. One part, a lift of some $\alpha_i$ or $\beta_j$, is contained in $\tilde A_0$, and the other two are rays. We denote these rays by $\tilde\tau_i$ and $\tilde\delta_i$, for $i=1,2$ (see Figure \ref{fig:annular cover}).

Denote by $\tilde \alpha_i,\tilde\beta_j$ the lifts of $\alpha_i,\beta_j$ contained in $\tilde \tau\cap \tilde A_0,\tilde\delta\cap\tilde A_0$.

\par\medskip

\begin{claim}
\label{claim:middle_arc}
Up to swapping the indices of the $\tilde\delta_i$, we have
\[
|i(\tilde \alpha,\tilde \beta)-i(\tilde \alpha_i,\tilde\beta_j)|\leq i(\tilde \tau_1,\tilde \delta_1)+i(\tilde \tau_2,\tilde \delta_2)+2.
\]
\end{claim}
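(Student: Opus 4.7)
The plan is to localize the intersection points of $\tilde\alpha\cap\tilde\beta$ with respect to the strip $\tilde A_0$, which partitions the total count into a middle-versus-middle contribution (equal to $i(\tilde\alpha_i,\tilde\beta_j)$), two matched ray-versus-ray contributions $i(\tilde\tau_1,\tilde\delta_1)$ and $i(\tilde\tau_2,\tilde\delta_2)$, and cross ray-versus-ray contributions that I will show are uniformly bounded.

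First I would put $\tilde\alpha$ and $\tilde\beta$ in minimal position, so that intersections are transverse and cannot be removed by a bigon. Since $\tilde\alpha\cap\tilde A_0=\tilde\alpha_i$ and $\tilde\beta\cap\tilde A_0=\tilde\beta_j$, every intersection point in $\operatorname{int}(\tilde A_0)$ is one of $\tilde\alpha_i\cap\tilde\beta_j$, and vice versa. All remaining intersection points lie in $\tilde\Sigma-\tilde A_0$, where the four rays $\tilde\tau_k$ and $\tilde\delta_\ell$ ($k,\ell\in\{1,2\}$) live. This gives the decomposition
\[
i(\tilde\alpha,\tilde\beta)=i(\tilde\alpha_i,\tilde\beta_j)+\sum_{k,\ell\in\{1,2\}} i(\tilde\tau_k,\tilde\delta_\ell)+O(1),
\]
where the $O(1)$ absorbs a bounded number of possible intersections in a tubular neighborhood of $\partial\tilde A_0$.

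Next, since $\tilde\alpha_i$ crosses $\tilde A_0$ essentially, its two endpoints lie on opposite boundary components of the strip; call these the top and the bottom. Hence one ray of $\tilde\alpha$ exits the top and one the bottom, and similarly for the rays of $\tilde\beta$. After swapping the labels of $\tilde\delta_1$ and $\tilde\delta_2$ if necessary, I may assume that $\tilde\tau_1,\tilde\delta_1$ both exit the top and that $\tilde\tau_2,\tilde\delta_2$ both exit the bottom. The crux is then to bound $i(\tilde\tau_1,\tilde\delta_2)+i(\tilde\tau_2,\tilde\delta_1)\leq 2$. Here I would use that both $\tilde\tau_1$ and $\tilde\delta_2$ are properly embedded rays in $\tilde\Sigma-\operatorname{int}(\tilde A_0)$ starting on opposite sides of the strip, that each is a portion of an embedded simple bi-infinite arc ($\tilde\alpha$, respectively $\tilde\beta$), and that their asymptotic ends are forced into different ends of $\tilde\Sigma-\operatorname{int}(\tilde A_0)$. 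A proper isotopy (keeping endpoints on $\partial\tilde A_0$ fixed) then realizes at most one transverse intersection for each of the two cross pairs.

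The main obstacle is making rigorous the asymptotic separation of opposite-side rays, especially in the case where $\gamma$ is non-separating and $\tilde\Sigma-\operatorname{int}(\tilde A_0)$ is connected, where a naive Jordan-curve argument does not immediately give disjointness. I would handle this by exploiting the $\pi_1(A)=\langle\gamma\rangle$-action on $\tilde\Sigma$ together with the asymptotic behavior of the lifts of geodesic rays limiting to the cusp, showing that the relevant endpoints at infinity of $\tilde\tau_1$ and $\tilde\delta_2$ (and symmetrically of $\tilde\tau_2$ and $\tilde\delta_1$) lie in disjoint arcs of the circle at infinity of the relevant half of $\tilde\Sigma$. Combining this with the decomposition above and absorbing all uniformly bounded constants into the final $O(1)$ term then yields the claim.
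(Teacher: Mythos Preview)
Your proposal has a genuine gap at the first step. You write
\[
i(\tilde\alpha,\tilde\beta)=i(\tilde\alpha_i,\tilde\beta_j)+\sum_{k,\ell} i(\tilde\tau_k,\tilde\delta_\ell)+O(1),
\]
but geometric intersection numbers are \emph{not} additive under concatenation. If you put $\tilde\alpha$ and $\tilde\beta$ themselves in minimal position, you lose control of how the pieces sit relative to $\tilde A_0$; if instead you put each pair of pieces in minimal position separately, the concatenated arcs can have many bigons crossing the concatenation points, and $i(\tilde\alpha,\tilde\beta)$ can be much smaller than the right-hand side. Concretely, if $i(\tilde\alpha_i,\tilde\beta_j)=N$ and $i(\tilde\tau_1,\tilde\delta_1)=N$ with opposite signs and all other terms zero, then $i(\tilde\alpha,\tilde\beta)$ is near $0$, not near $2N$. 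The claim still holds in this example, but not via your equality. The absolute value in the statement is there precisely to allow this cancellation, and your decomposition erases it.

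The paper's proof fixes this by passing to \emph{signed} intersection numbers, which are genuinely additive: after putting each pair of sub\-arcs in minimal position, it observes that within each pair all crossings carry the same sign (this is where working in the compactified annulus $\overline{U}$ matters), so for each pair the signed count equals $\pm$ the geometric count. Then the signed count for $\tilde\alpha,\tilde\beta$ is the sum of the signed counts of the pieces (up to an error of at most $2$ from possible shared endpoints at infinity), and a reverse triangle inequality gives the claim. As a secondary point, your worry about the non-separating case is unfounded: $\tilde\Sigma$ is the $\pi_1(\gamma)$-cover, hence an open annulus regardless, so $\tilde\Sigma-\tilde A_0$ always has two components; the paper uses this to show the cross terms $i(\tilde\tau_1,\tilde\delta_2)$ and $i(\tilde\tau_2,\tilde\delta_1)$ are actually zero, not just bounded by $1$.
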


\begin{proof}
We argue below that the $\tilde\tau_1$ and $\tilde\tau_2$ are each homotopic relative to the endpoint to arcs contained in a connected component of the closure of $\tilde\Sigma-\tilde A_0$. The same will hold for the $\tilde\delta_i$, and we will then arrange the indices in a way that the components for $\tilde\tau_1$ and $\tilde \delta_1$ coincide.

We have that $\tilde\alpha$ is a concatenation of lifts of the $\tau_k$ and $\alpha_k$, and in order to conclude we need to show that the only lift that intersects the core curve of $\tilde A_0$ essentially is $\tilde\alpha_i$.
Note that each lift of a $\tau_k$ connects distinct lifts of $A$ as $\tau_k$ cannot be homotoped into the annulus. In fact, since $\tau_k$ is disjoint from $\gamma$, the endpoints of any lift of $\tau_k$ connect boundary components of a lift of $A$ which are in the closure of the same connected component of $\tilde \Sigma$ minus all lifts of ${\rm int(}A)$. Lifts of the $\alpha_j$, instead, connect distinct boundary components of a lift of $A$. From these facts we see that each lift of $\gamma$ can be intersected essentially by $\tau$ at most once.

As a consequence of the argument above, up to replacing all relevant subarcs up to homotopy, we have that, for example, $\tilde\tau_1$ and $\tilde\delta_2$ do not intersect.

We can further replace all relevant subarcs of $\tilde \alpha,\tilde \beta$ with representative minimizing the number of intersections, and replace $\tilde \alpha,\tilde \beta$ with the respective concatenations. Each intersection comes with a sign given by the orientation of the surface, and for each pair of arcs all intersections have the same sign since the arcs intersect minimally. The intersection number of $\tilde \alpha,\tilde \beta$ is the absolute value of the sum of the signed intersection numbers of the various pairs of arcs, with an error of at most 2 which can arise in case that $\tilde \tau_i,\tilde \delta_i$ share an endpoint for $i=1$ and/or $i=2$. This concludes the proof.    
\end{proof}

In view of Claim \ref{claim:middle_arc}, we are left to prove the following.

\par\medskip

\begin{claim}
\label{claim:outer_arcs}
There exists a constant $B'=B'(B,\epsilon)$ such that the intersection numbers of $\tilde \tau_1$ and $\tilde \delta_1$ and those of $\tilde \tau_2$ and $\tilde \delta_2$ are bounded by $B'$.
\end{claim}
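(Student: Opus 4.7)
The plan is to bound the minimal geometric intersection of the two properly embedded arcs $\tilde\tau_1$ and $\tilde\delta_1$ in the half-open annulus $C$, both starting at $\partial\tilde A_0\cap\partial C$ and going out to the end at infinity of $C$. My strategy is to use the $(B,\epsilon)$-related condition to show that these two rays, viewed as quasi-geodesic representatives of their respective proper homotopy classes in $C$, can be homotoped to lie within uniformly bounded Hausdorff distance, so that they intersect a uniformly bounded number of times.

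First I would describe the combinatorial structure of both rays in parallel. Each is a broken geodesic ray: $\tilde\tau_1$ is a concatenation of lifts of the cusp-to-cusp $\sigma$-geodesic arcs $\tau_k$ and lifts of the essential arcs $\alpha_l$ crossing strip lifts of $A$, and analogously for $\tilde\delta_1$ with $\sigma'$-geodesics and $\beta$-arcs. Each piece has uniformly bounded geometry away from the cusps, and the rays travel monotonically outward from $\partial\tilde A_0$. I would then invoke the $(B,\epsilon)$-related condition to compare the two. In case (ii) of Definition~\ref{dfn:related_metrics}, the metrics literally agree on a subsurface $Y$ with totally geodesic boundary, so the portions of both rays lying in the lift of $Y$ to $C$ are honest broken geodesics in the common metric and can be directly aligned. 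In case (i), the metrics are $B$-bilipschitz outside the collars of a pants decomposition and the cuspidal parts; a bounded-twist-per-step estimate, given by the good-homotopy machinery of Lemma~\ref{homotopy bound}, controls the relative behavior within each collar. In both cases one obtains that $\tilde\tau_1$ and $\tilde\delta_1$ are properly homotopic in $C$ through a family of representatives whose widths are controlled by a function of $B$ and $\epsilon$, and the intersection bound follows.

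The main obstacle is case (i): a priori Dehn twists in the collars could cause the two rays to pass through combinatorially different sequences of strip lifts of $A$ on their way out to infinity, potentially accumulating unboundedly many crossings. To overcome this, I would combine the bounded-twist-per-step property with the fact that the two rays share their starting point on $\partial\tilde A_0$ and asymptote to the same funnel end of $C$, so only a uniformly bounded discrepancy in the combinatorial structure can accumulate. Summing intersections inside strip lifts (bounded by the local geometry of essential arcs in $A$) and in the complementary regions in $C$ (bounded by the bilipschitz/isometric comparison) then yields the desired constant $B'=B'(B,\epsilon,\Sigma)$.
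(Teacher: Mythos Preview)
Your proposal has a genuine gap at the key step. Showing that $\tilde\tau_1$ and $\tilde\delta_1$ lie within bounded Hausdorff distance of each other does not by itself bound their geometric intersection number: two quasi-geodesic rays in a half-annulus that are Hausdorff-close can still wind around each other arbitrarily many times. What you actually need is a bound on their relative winding in $C$, and your ``bounded-twist-per-step'' sketch does not supply one --- the rays have infinitely many pieces and you give no mechanism preventing accumulation. Your citation of Lemma~\ref{homotopy bound} is also misplaced: that lemma concerns good homotopies of pleated surfaces in $3$-manifolds and says nothing about $2$-dimensional twist parameters inside collars; moreover, Definition~\ref{dfn:related_metrics} imposes no constraint on the metrics inside ${\bf collar}(P)$, so there is no ``bounded twist per collar'' available to invoke.

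The paper's argument proceeds quite differently and avoids this issue. First it makes the reduction you are missing: once either ray enters a strip lift $\tilde A_k\neq\tilde A_0$, the remainder of both rays lies in a topological disk, so up to an additive $1$ the intersection number of $\tilde\tau_1,\tilde\delta_1$ equals that of their \emph{initial} segments $\tilde\tau'_1,\tilde\delta'_1$ (from $\partial\tilde A_0$ to the first such $\tilde A_k$). These initial segments lie in $\Sigma-\gamma$, so one further lifts to the universal cover $\mb{U}\cong\mb{H}^2$ with the $\sigma$-metric. There the lifts of $\tilde\tau'_1$ form a family of $\sigma$-geodesics all asymptotic to a common horoball $H_0$, with nearest-point projections to $H_0$ that are pairwise $\epsilon$-separated. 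The $(B,\epsilon)$-relatedness (together with convexity of the subsurface $Z$ from the first paragraph of the proof) makes any lift of $\tilde\delta'_1$ a $B$-bilipschitz path disjoint from $H_0$, and Lemma~\ref{lem:bilip_hor_proj} then bounds the diameter of its projection to $H_0$ by some $D=D(B)$. Hence a single lift of $\tilde\delta'_1$ can meet at most about $D/\epsilon$ lifts of $\tilde\tau'_1$, which is the desired bound. The horoball-projection step is precisely what converts ``quasi-geodesic near a geodesic'' into an actual intersection count, and it is the idea your outline lacks.
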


\begin{proof}
We focus on $\tilde \tau_1$ and $\tilde \delta_1$. We can change the indices of the decompositions of $\alpha$ and $\beta$ to arrange, for convenience, that $\tilde \tau_1$ contains a lift $\tilde\tau_1$ of $\tau_1$ and similarly for $\tilde\delta_1$. We denote by $\tilde \tau'_1$ the minimal initial subarc of $\tilde\tau_1$ with endpoint on some $\tilde A_k\neq \tilde A_0$, and similarly for $\tilde \delta'_1$.

Suppose first that $\tau'_1$ is not homotopic to $\delta'_1$ via paths with endpoints on the lift of $\partial A$. Then the lifts $\tilde\tau_1$ and $\tilde\delta_1$ (start at the same component of $\partial\tilde A_0$ and) end at distinct $\tilde A_k$. In view of the fact shown in Claim \ref{claim:middle_arc} that $\tilde\tau_1$ and $\tilde\delta_1$ intersect essentially each lift of $\gamma$ at most once, we see that there are two disjoint topological disks where the endpoints of $\tilde \tau_1-\tau'_1$ and $\tilde \delta_1-\delta'_1$ lie, respectively.
        
Suppose instead that $\tau'_1$ is homotopic to $\delta'_1$ via paths with endpoints on the lift of $\partial A$, so that the lifts $\tilde\tau'_1$ and $\tilde\delta'_1$ end at the same $\tilde A_k$. In this case $\tilde \tau^1-\tau'_1$ and $\tilde \delta_1-\delta'_1$ have both endpoints in a topological disk.

In either case, the intersection number of $\tilde \tau_1$ and $\tilde \delta_1$ can differ from the intersection number of $\tilde \tau'_1$ and $\tilde \delta'_1$ by at most 1, because of an argument similar to that for Claim \ref{claim:middle_arc}, plus the fact that arcs in a disk or a disjoint union of two disks can have intersection number only at most 1.

In either case, we only need to bound the intersection number rel endpoints of $\tilde\tau'_1$ and $\tilde\delta'_1$. Both arcs are contained in a component $\tilde \Sigma'$ of the complement in $\tilde \Sigma$ of the lift of $\gamma$ which is the core curve of $\tilde A_0$. Hence, we can further lift to the universal cover $\mb{U}$ of $\Sigma-\gamma$, and we wish to count the number of lifts of $\tilde\tau'_1$ that intersect a fixed lift of $\tilde\delta'_1$. We can regard $\mb{U}$ as endowed with the pullback metric of $\sigma$, which makes it isometric to $\mathbb H^2$. 
         
We have that $\mb{U}$ contains a distinguished family of horoballs $\mathcal H$, which are lifts of the $\ep$-cuspidal part of $(\Sigma-\gamma, \sigma)$ (which, by Definition \ref{dfn:related_metrics}, coincides with the $\ep$-cuspidal part of $(\Sigma-\gamma,\sigma')$). The lifts of $\tilde \tau'_1$ are contained in geodesic lines with each endpoint the limit point of a distinguished horoball. Moreover, one endpoint is in common among all lifts, say that it corresponds to the horoball $H_0$, so that all lifts are disjoint from $H_0$. The lifts of $\tilde\delta'_1$ have similar properties, except that they are $B$-bilipschitz paths rather than geodesics. The closest-point projection of any lift of $\tilde\tau'_1$ or $\tilde \delta'_i$ to the horoball $H_0$ has diameter bounded by a universal constant, say $D$, by (the well-known) Lemma \ref{lem:bilip_hor_proj} below for the given $B$. Moreover, projections of distinct lifts of $\tilde\tau'_1$ are at least $\ep$-apart (as the deck transformation mapping one to the other has translation length at least $\ep$ outside of $H_0$), so that a fixed lift of $\tilde\delta'_1$ cannot intersect more than $(D+2D_0)/\ep$ lifts of $\tilde\tau'_1$. This concludes the proof of the claim.
\end{proof}

We can conclude the proof by setting $K=2B'+2$.
\end{proof}

\begin{lem}
\label{lem:bilip_hor_proj}
For all $B$ there exists $D$ such that the following holds. Let $\eta:I\to\mb{H}^2$ be a globally $B$-bilipschitz path in $\mathbb H^2$ which lies outside the interior of a closed horoball $\mathcal H$. Then the endpoints of $\eta$ have closest-point projection to $\mc{H}$ which is at most $D$ apart.
\end{lem}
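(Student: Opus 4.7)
The plan is to combine three standard tools for $\mathbb{H}^2$: the Morse lemma for quasi-geodesics, convexity of horoballs, and an explicit computation in the upper half plane model.

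First I would observe that since $\eta$ is a $B$-bilipschitz map from an interval, it is a $(B,0)$-quasi-isometric embedding, hence in particular a quasi-geodesic. By the Morse lemma in $\mathbb H^2$, there exists a constant $M=M(B)$ such that the image of $\eta$ lies within Hausdorff distance $M$ of the geodesic segment $\sigma$ between the endpoints $p=\eta(a)$ and $q=\eta(b)$.

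Next, since $\eta$ avoids the interior of $\mathcal H$, every point of $\sigma$ is within distance $M$ of a point outside $\mathcal H$. In particular $\sigma$ enters $\mathcal H$ to depth at most $M$, in the sense that every point of $\sigma\cap\mathcal H$ lies within (hyperbolic) distance $M$ of $\partial\mathcal H$.

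Finally I would convert this depth bound into the required bound on $d(\pi(p),\pi(q))$ by a direct computation in the upper half plane model. Normalize so that $\mathcal H=\{y\ge 1\}$; then the closest-point projections are $\pi(p)=(x_p,1)$ and $\pi(q)=(x_q,1)$, and $\sigma$ is a Euclidean half-circle (or a vertical segment if $x_p=x_q$, in which case $\pi(p)=\pi(q)$ and we are done). Since $p,q$ lie outside $\mathcal H$, the Euclidean radius $r$ of $\sigma$ satisfies $r\ge |x_p-x_q|/2$, and the topmost point of $\sigma$ has $y$-coordinate equal to $r$. The depth bound then forces $r\le e^M$, so $|x_p-x_q|\le 2e^M$. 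The hyperbolic distance between $(x_p,1)$ and $(x_q,1)$ is a monotone function of $|x_p-x_q|$ alone (it equals $\operatorname{arccosh}(1+(x_p-x_q)^2/2)$), so it is bounded in terms of $M$, hence of $B$. The resulting constant gives the desired $D=D(B)$. The argument is routine and uses only standard facts about $\mathbb H^2$; there is no substantive obstacle.
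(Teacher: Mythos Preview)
Your proof is correct and follows the same approach as the paper: use the Morse lemma to replace $\eta$ by the geodesic $\sigma$ between its endpoints, then argue that a geodesic whose endpoints have far-apart horoball projections must penetrate deeply into the horoball; the paper states this last step qualitatively (for large $D$ the geodesic contains a point whose $C$-ball lies inside $\mathcal H$), whereas you make it explicit via an upper-half-plane computation. One minor imprecision: the topmost point of the half-circle containing $\sigma$ need not lie on the \emph{segment} $\sigma$, but when it does not, both endpoints lie on the same quarter-circle at height $\le 1$ and one checks directly that $|x_p-x_q|$ is bounded, so the argument goes through.
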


\begin{proof}
By Gromov-hyperbolicity, $\eta$ lies $C$-Hausdorff close to a geodesic connecting the same endpoints (where $C$ only depends on $B$). For $D$ sufficiently large, any geodesic connecting points that project $D$-away onto $\mathcal H$ contains a point lying deep in the horoball, meaning that the $C$-ball around said point is contained in the horoball. As $\eta$ lies outside $\mc{H}$ and the geodesic with the same endpoints stays $C$-close to it we get the desired upper bound on $D$.
\end{proof}

\subsection{Proof of Theorem~\ref{thm:annular2}}
Before launching the proof we need one last ingredient.
We use the fact that there is a constant such that a path in $\mb{H}^3$ given as a concatenation of arcs that are alternatingly flat geodesics on horospheres and geodesic arcs that are orthogonal that start and end on horospheres can only close up, if at least one of the flat segments has length at most said constant. In fact, in~\cite{FSVDehn}, we gave such a constant explicitly.
\begin{lem}[{see \cite[Lemma~3.1]{FSVDehn}}]\label{lem:2.5}
Let $\eta$ be path in $\mathbb{H}^3$ consisting of a concatenation
$\alpha_0*\beta_0*\dots*\alpha_n*\beta_n$ and horospheres $H_0,\cdots,H_{n+1}$,
where $\alpha_i$ are geodesic arcs on the horosphere $H_i$ of length 2.5, $H_i$ and $H_{i+1}$ are disjoint,
and $\beta_i$ are geodesic arcs that meet $H_i$ and $H_{i+1}$ orthogonally.
Then $\eta$ is not a loop.\qed
\end{lem}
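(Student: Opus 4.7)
The plan is to work in the upper half-space model $\HH^3=\C\times\R_{>0}$ and derive a contradiction from the loop hypothesis. After an isometric normalization, place $H_0=\{t=1\}$ with basepoint $p_0=\infty$. For each $i$ write $p_i\in \C\cup\{\infty\}$ for the basepoint of $H_i$ and (when finite) $r_i$ for its Euclidean radius. Since $\beta_i$ is the common perpendicular of the disjoint horospheres $H_i$ and $H_{i+1}$, it lies on the geodesic from $p_i$ to $p_{i+1}$, so $p_i\neq p_{i+1}$ for all $i$.

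The geometric heart of the argument is the following identity. Renormalize (by an auxiliary isometry of $\HH^3$) so that $p_i=\infty$ and $H_i=\{t=1\}$. In this chart the feet of $\beta_{i-1}$ and $\beta_i$ on $H_i$ are $(p_{i-1},1)$ and $(p_{i+1},1)$, and the intrinsic metric on $H_i$ agrees with the Euclidean metric on $\C$ at height one; hence $\alpha_i$ is a straight segment of length $|p_{i-1}-p_{i+1}|$, so the length hypothesis gives $|p_{i-1}-p_{i+1}|=2.5$. Pulled back to the original normalization through the corresponding Möbius change of coordinates, this produces the relation
\[
\frac{2r_i\,|p_{i-1}-p_{i+1}|}{|p_{i-1}-p_i|\cdot|p_{i+1}-p_i|}=2.5
\]
whenever $p_i$ is finite (with the analogous limiting expression at $p_i=\infty$). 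The disjointness $H_j\cap H_{j+1}=\emptyset$ translates to $|p_j-p_{j+1}|>2\sqrt{r_jr_{j+1}}$ at sphere--sphere interfaces and to $r_1<1/2$ at the plane $H_0$ (and symmetrically $r_n<1/2$ under the loop hypothesis). At the two boundary indices the relation simplifies, yielding $|p_1-p_2|=0.8\,r_1<0.4$ and $|p_{n-1}-p_n|=0.8\,r_n<0.4$.

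If $\eta$ is a loop then $H_{n+1}=H_0$, so $p_{n+1}=\infty$. Tracking the endpoints of $\alpha_0$ and $\beta_n$ (both of which lie in $\C\times\{1\}$) one finds that the endpoint of $\beta_n$ is $(p_n,1)$ and the starting point of $\alpha_0$ is a point $(s,1)$ with $|s-p_1|=2.5$; the loop condition $s=p_n$ thus produces the key identity $|p_1-p_n|=2.5$. For small $n$ the triangle inequality against the bounds above already gives the contradiction: $n=2$ gives $|p_1-p_2|<0.4$, and $n=3$ gives $|p_1-p_3|\leq |p_1-p_2|+|p_2-p_3|<0.8$.

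For general $n$ the strategy is to apply the displayed identity iteratively at each intermediate index $i$, using disjointness to control intermediate radii $r_i$ in terms of adjacent distances, and telescope inward from the two boundary estimates. The main obstacle is carrying out this telescoping estimate cleanly; the constant $2.5$ (in fact any constant strictly larger than $2$) is precisely calibrated so that the resulting chain of inequalities forces $|p_1-p_n|$ to be strictly less than $2.5$, yielding the desired contradiction.
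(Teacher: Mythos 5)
Your coordinate setup is correct as far as it goes, and for the record the paper itself contains no proof of this lemma: it is imported wholesale from \cite{FSVDehn} (Lemma 3.1 there), so the only thing to assess is your argument on its own terms. I checked your key identity
\[
\frac{2r_i\,|p_{i-1}-p_{i+1}|}{|p_{i-1}-p_i|\,|p_{i+1}-p_i|}=2.5,
\]
the disjointness inequalities $|p_j-p_{j+1}|>2\sqrt{r_jr_{j+1}}$ and $r_1,r_n<1/2$, the boundary simplifications $|p_1-p_2|=0.8\,r_1<0.4$ and $|p_{n-1}-p_n|=0.8\,r_n<0.4$, and the loop identity $|p_1-p_n|=2.5$; these are all right (with the caveat that the lemma is meant, and later used, with ``length at least $2.5$'', so the equalities should really be inequalities, which your scheme tolerates).

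The genuine gap is that this constitutes a proof only for $n\leq 3$. The ``telescoping estimate'' you defer is not a routine verification --- it is the entire content of the lemma. To force $|p_1-p_n|<2.5$ you must control the gaps $d_i=|p_i-p_{i+1}|$ and the radii $r_i$ simultaneously along the whole chain: your interior identity plus the triangle inequality gives only $d_i\leq 0.8\,r_i\bigl(1+d_i/d_{i-1}\bigr)$, which yields an upper bound on $d_i$ only if one already knows $d_{i-1}>0.8\,r_i$, and that in turn requires an inductive hypothesis on the decay of the radii (e.g.\ something like $r_{i+1}\leq 0.16\,r_i$, extracted from $d_i>2\sqrt{r_ir_{i+1}}$ together with a bound $d_i\leq C r_i$). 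So the induction must carry both quantities at once, and the constants have to be checked to close up with total drift $\sum_i d_i<2.5$; nothing rules out a priori that the radii or gaps grow along the chain unless this is proved. Your parenthetical claim that any constant strictly larger than $2$ is ``precisely calibrated'' to make the chain of inequalities work is likewise asserted without proof. As written, the proposal is a correct reduction to a quantitative statement about the sequence $(p_i,r_i)$, verified for $n\leq 3$, with the general case --- i.e.\ the actual lemma --- left open; to complete it you would need to carry out exactly the kind of inductive control that \cite{FSVDehn} supplies.
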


\begin{proof}[Proof of Theorem~\ref{thm:annular2}]
We proceed by small steps.

In the entire proof we aim to estimate the normalized length 
\[
L(\mu)=\frac{{\rm Length}(\mu)}{\sqrt{{\rm Area}(T)}}
\]
of a the simple closed curve $\mu\subset T$ corresponding to the standard meridian determined by $M$ (here $T$ denotes the boundary of a standard Margulis neighborhood of the cusp of $M-\gamma$).

\subsubsection{Setup}
We fix Margulis constants $\ep_0,\ep_1$ as in Subsection \ref{subsec:margulis}, so that we can use Proposition \ref{prop:homotopies}.

We write $\mb{T}_{\ep_1}(\gamma)$ for the standard $\ep_1$-Margulis neighborhood of the cusp of $M-\gamma$ and denote by $T$ its boundary (recall that it is a flat torus).
 
For the entire proof we fix $\delta^\pm\in\mc{D}^\pm$ that realize $d_\gamma(\mathcal{D}^+,\mathcal{D}^-)$, in other words, 
\[
d_\gamma(\mathcal{D}^+,\mathcal{D}^-)=d_\gamma(\delta^+,\delta^-).
\]

Choose an annulus $A$ (tubular neighborhood of $\gamma$ in $\Sigma$), maps $f^{\pm}:\Sigma\to M-\gamma$, and pleated surfaces $g^{\pm}:(\Sigma-\gamma,\sigma^{\pm})\to M-\gamma$ as in Proposition~\ref{prop:homotopies}. 

We can parametrize $\delta^\pm$ as a concatenation of segments 
\[
\delta^\pm=a_1^\pm*b_1^\pm*\cdots* a_{n^{\pm}}^\pm*b_{n^{\pm}}^\pm,
\]
where the $b_i^-$ (resp. $b_i^+$) are as the $\tau_i$ (resp. $\delta_i$) in Lemma \ref{lem:annularporjecviaanuularintersecs} with $\sigma$ (resp. $\sigma'$) chosen as $\sigma^{-}$ (resp. $\sigma^{+}$) and the $a_i^{\pm}$ are essential segments in $A$.

Note that, since $g^{\pm}$ map the lamination $\lambda^\pm$ (made of finitely many bi-infinite geodesics) associated with subsurface projection of $\delta^{\pm}$ geodesically and the $b_i^\pm$ are subgeodesics of geodesic lines of $\lambda^\pm$ that limit to the cusp, we have that $f^\pm(b_i^\pm)$ are geodesic segments that intersect $T$ orthogonally.

We choose a representative of $f^{\pm}(\delta^{\pm})$ in $M-\gamma$ as follows. We set $\beta^{\pm}_i=f^\pm(b_i^\pm)$ and $\alpha^{\pm}_i$ are geodesic segments in $T$ obtained by homotoping $f^\pm(a_i^\pm)$ in $T$ while fixing endpoints (we straighten $f^\pm(a_i^\pm)$ in the flat metric of $T$).

Lifting this representative to $\mathbb{H}^3$, we find a closed path in $\mathbb{H}^3$. By Lemma~\ref{lem:2.5}, there exist $1\leq i\leq n_-$ and $1\leq j\leq n_+$ such that $\alpha^+_i$ and $\alpha^-$ have length at most $2.5$.
Up to cyclic permutation of the parametrizations of $\delta^{\pm}$, we may and do assume $i=j=1$.

\subsubsection{Length of the meridian vs length of  $\beta$} 

We set $t\coloneqq a_1^-$ and consider
the arc $f^+(t)\subset T$ and let $\beta$ be a geodesic representative in $T$ obtained by straightening while fixing the endpoints.

By Proposition~\ref{prop:homotopies} property (4), there are arcs $\gamma_1,\gamma_2$ in $T$ of length at most $LS_\gamma$ such that the concatenation 
\[
\beta*\gamma_1*\alpha^{-}_1*\gamma_2
\]
forms a meridian $\mu$. Hence the length of a geodesic representative in $T$ of a meridian is bounded below by
\begin{equation}\label{eq:meridianboundbyb}
    {\rm Length}(\mu)\ge\ell(\beta)-\ell(\gamma_1)-\ell(\gamma_2)-\ell(\alpha^{-}_1)\geq \ell(\beta)-2LS_\gamma-2.5.
\end{equation}

\subsubsection{Length of $\beta$ vs intersection $i(a_1^-,a_1^+)$.}

We may modify the arc $a_1^-$ slightly (by concatenating with subsegments on $\partial A$) to find a proper arc $a_1$ in $A$ with the same endpoints as $a_1^+$ such that $i(a_1,a_1^+)=i(a_1^-,a_1^+)$. Let $\beta'$ be a geodesic representative in $T$ of $f^+(a_1)$ obtained by pulling tight while fixing the endpoints.
We have
\[
|\ell(\beta')-\ell(\beta)|\leq l( f^+(\partial A))\leq l_{\sigma^+}(\partial A)\leq E,
\]
where $E:=4\sinh(\ep_0/2)$, since $\beta'$ arises from $\beta$ by changing it by arcs in $f^+(\partial A)\subset T$ (which has length bounded by $E$) and then pulling tight relative endpoints.

Consider the concatenation $\eta$ in $T$ of $\beta'$ with $\alpha^+_1$, the straightening on $T$ of $f^+(a_1^+)$. Notice that, by construction, $\eta$ is homotopic to $f^+(a_1^{-1}*a_1^+)$ and $a_1^{-1}*a_1^+$ is homotopic in $A$ to the $i$-th power of the core curve of $A$ for some $i$ with $|i-i(a_1,a_1^+)|\leq 1$. By Proposition \ref{prop:homotopies} property (3), at the level of fundamental groups, $f^+$ maps the homotopy class of the core curve of $A$ to the homotopy class of a component $\theta$ of $\Sigma-\gamma\cap T$. In particular, the length $\ell$ of the geodesic representative of $\eta$ is $i$ times the length $\ell_\theta$ of a geodesic representative of $\theta$. Since $\theta\subset T$ and it is homotopic to $f^+(\partial A)$, we have that $\ell_\theta\leq E$. As ${\rm inj}(T)\ge\ep_1$, we also have $\ell_\theta\ge 2\ep_1$. Moreover, $|\ell-\ell(\eta)|\leq \ell(f^+(a_1^+))\leq 2.5$, and $|\ell(\eta)-\ell(b')|\leq 2.5$. Putting all these together with $|\ell(b')-\ell(b)|\leq E$ we finally get
\begin{equation}\label{eq:l(b)bound}
(i(a^-_1,a_1^+)-1)2\epsilon_1-E-5\leq \ell(\beta)\leq (i(a^-_1,a_1^+)+1)E+E+5.
\end{equation}
In fact, in the rest of the proof, we only use the lower bound from \eqref{eq:l(b)bound}. The upper bound is used in the proof of Theorem~\ref{thm:annular}.

\subsubsection{Intersection $i(a_1^-,a_1^+)$ vs annular projection $d_\gamma(\delta^+,\delta^-)$}

Next we bound the intersection number $i(a_1^-,a_1^+)$ in terms of the annular projection $d_\gamma(\delta^+,\delta^-)$ by employing Proposition~\ref{prop:homotopies} property (5) and Lemma~\ref{lem:annularporjecviaanuularintersecs}.

Let $\sigma_0,\cdots,\sigma_{\lfloor LS_\gamma\rfloor}$ be metrics on $\Sigma-\gamma$ witnessing the fact that $\sigma^{\pm}$ are $(L,\epsilon_1,LS_\gamma)$ related outside $A$. In particular, $\sigma_0=\sigma^-$, $\sigma_{\lfloor LS_\gamma\rfloor}=\sigma^+$, and $\sigma_i$ and $\sigma_{i+1}$ are
$(L,\epsilon_1)$-related outside $A$.

Let $a_1^{i}*b_1^{i}\dots a^{i}_{n^{-}}*b_{n^{-}}^{i}$
be a concatenation of arcs that yields the homotopy class of $\delta^-$,
where the $b_1^{i}$ are proper arcs in $A$ and the $a_i$
are subgeodesics of geodesic lines of $\sigma_i$ that limit to the cusp in both directions. For $i=0$ we set $a_k^{0}=a_k^-$ and $b_k^{0}=b_k^-$. 

Let $K$ be the constant from Lemma~\ref{lem:annularporjecviaanuularintersecs} (applied to $\epsilon=\epsilon_1$ and $B=L$, where $L$ is the constant from Proposition~\ref{prop:homotopies}). By Lemma~\ref{lem:annularporjecviaanuularintersecs},
for all $1\leq i\leq \lfloor LS_\gamma\rfloor$,
we have
$|i(a_{1}^{i-1},a_{1}^{i})|=|d_\gamma(\delta^-,\delta^-)-i(a_{1}^{i-1},a_{1}^{i})|\leq K$.
One further application (applied for the special case of the metrics being the same), yields $|d_{\gamma}(\delta^-,\delta^+)-i(a_1^-,a_1^+)|\leq K$. Hence, we have \begin{equation}\label{eq:d-i}
|d_\gamma(\delta^-,\delta^+)-i(a_1^-,a_1^+)|\leq K(\lfloor LS_\gamma\rfloor+1).\end{equation}

Recalling that we are working under the assumption $d_\gamma(\delta^-,\delta^+)/c\geq S_\gamma$ for a $c>0$ that we may still fix as needed, we choose $c\geq 2KL+1$.
Therefore~\eqref{eq:d-i} yields $i(a_1^-,a_1^+)\geq d_\gamma(\delta^-,\delta^+)/2$. Combining with~\eqref{eq:l(b)bound}, we find \[d_\gamma(\delta^-,\delta^+)/K_1-K_1\leq \ell(\beta),\]
for some uniform $K_1$.
Combined with~\eqref{eq:meridianboundbyb}, and choosing $\epsilon$ smaller if need be,
we find 
\[
\frac{d_\gamma}{K_2}\leq \frac{d_\gamma(\delta^-,\delta^+)}{K_1}-K_1-2LS_\gamma-2.5\leq{\rm Length}(\mu),
\]
for some uniform $K_2$. Dividing by $\sqrt{\mathrm{Area}(T)}$, we find
\begin{equation}\label{eq:d/sqrt(area)<nl}\frac{d_\gamma(\delta^-,\delta^+)}{K_2\sqrt{\mathrm{Area}(T)}}\leq L(\mu).\end{equation}

\subsubsection{Area $\mathrm{Area}(T)$ vs non-annular projections $S_\gamma$}

We claim that there exists a constant $C$ such that
$\mathrm{Area}(T)\leq C S_\gamma$. For this, we consider a curve $\mu'$ that essentially consists of the same pieces as $\mu$, but replacing $f^+(a_1^-)$ by the (short) segment $f^+(a_1^+)$, which has length at most 2.5. 
Pick shortest arcs $s_1$ and $s_2$ on $f^+(\partial A)$ that connect the endpoints of $\gamma_1$ and $\gamma_2$ on $f^+(\partial A)$ with the endpoints of $\alpha_1^+$. Note that the total length of $s_1$ and $s_2$ is bounded by $E$.
With this setup, we take $\mu'$ to be the concatenation $\gamma_1*\alpha_1^-*\gamma_2*s_2*\alpha_1^+*s_1$, where all segments are oriented as needed for the concatenation to be defined.
We have 
\begin{align*}
{\rm Length}(\mu') &\leq \ell(\gamma_1)+\ell(\alpha_1^-)+\ell(\gamma_2)+\ell(\alpha_1^+)+\ell(s_1)+\ell(s_2)\\
&\leq LS_\gamma+2.5+LS_\gamma+2.5+E.
\end{align*}

Note that a geodesic representative of $\mu'$ intersects a geodesic representative of a component $\partial_1$ of $f^+(\partial A)$ once. Indeed this was the case for the meridian $\mu$ and, arguing in $\pi_1(T)$, $[\mu']$ is obtained from $[\mu]$ by composition with an element in $\pi_1(A)\subset\pi_1(T)$. Hence, we have 
\[
\mathrm{Area}(T)\leq {\rm Length}((\Sigma-\gamma)\cap T){\rm Length}(\mu')\leq E{\rm Length}(\mu')
\]
and, therefore, we find $\mathrm{Area}(T)\leq E(2LS_\gamma+5+2E)\leq CS_\gamma$ for $C\coloneqq E(2L+5+E)$.

\subsubsection{Normalized length.}
Combining the last paragraph with~\eqref{eq:d/sqrt(area)<nl}, we have 
\[
\frac{d_\gamma}{K_2\sqrt{CS_\gamma}}\leq L(\mu).
\]
This finishes the proof of Theorem~\ref{thm:annular2}.
\end{proof}

\subsection{Proof of of Theorem~\ref{thm:annular}}
The proof above also contains the last missing ingredients to prove Theorem \ref{thm:annular}.

The setup from the proof of Theorem~\ref{thm:annular2} and similar (but simpler) arguments allow to bound the length of the meridian of $M-\gamma$, which, in combination with Proposition~\ref{prop:aSgamma<Area}, yields an upper bound on the normalized length as needed to prove the lower bound. Here are the details, using the notations introduced in the above proof.
\begin{proof}[Proof of Theorem~\ref{thm:annular}]
We provide an upper bound for ${\rm Length}(m)$ in terms of $S_\gamma$ and $d_\gamma=d_\gamma(\delta^+,\delta^-)$, where $\mu\subset T$ is a meridian of $\gamma$. For this we use the setup from the proof of Theorem~\ref{thm:annular2}, in particular, recall that $f^-(a_1^-)\subseteq T$ has length at most $2.5$, $\gamma_1$ and $\gamma_2$ are arcs of length at most $LS_\gamma$, and $\beta$ is a geodesic representative of $f^+(a_1^-)$ such that their concatenation forms a meridian. In particular, we have
${\rm Length}(\mu)\leq \ell(\beta)+2.5+2LS_\gamma$.

Combined with~\eqref{eq:d-i} and~\eqref{eq:l(b)bound}, we find
\begin{align*}
{\rm Length}(\mu) &\leq (d_\gamma+K(\lfloor LS_\gamma\rfloor+1)+1) (C_1+2.5)+C_1\\
&\leq
C(d_\gamma+S_\gamma),
\end{align*}
for a sufficiently large constant $C$.
Together with Proposition \ref{prop:aSgamma<Area}, we get the following for the normalized length of the meridian.
\[
L(\mu)^2=\frac{{\rm Length}(\mu)^2}{\mathrm{Area(T)}}
\leq \frac{C^2(d_\gamma+S_\gamma)^2}{a\epsilon_1^2S_\gamma}\leq \frac{2C^2}{a\epsilon_1^2}\frac{d_\gamma^2+S_\gamma^2}{S_\gamma}.\qedhere\]

This finishes the proof of Theorem \ref{thm:annular}.
\end{proof}

\appendix

\section{Uniform injectivity and connectedness}
\label{appendixA}

Here we give a proof of Proposition \ref{prop:connected component} which we recall.

\connectedcomp*

\begin{proof}
As in the proof of Theorem \ref{uniforminj}, one proceeds by contradiction.

Consider a sequence of counterexamples, that is:
\begin{itemize}
    \item{A sequence $\gamma_n\subset\Sigma$ of essential multicurves such that $(H,\gamma_n)$ is pared acylindrical. We denote by $W_n:=\Sigma-\gamma_n$ the complementary surfaces.}
    \item{A sequence of hyperbolic structures $N_n$ on $H$ which have rank one cusps precisely at $\gamma_n$.}
    \item{A sequence of pleated surfaces $g_n:(W_n,\sigma_n)\to N_n$ properly homotopic to the inclusions $W_n\subset H-\gamma_n$ and mapping geodesically laminations $\lambda_n$.}
    \item{A sequence of pairs of points $x_n,y_n\in\lambda_n$ lying in the $\ep_0$-thick part of different connected components $X_n,Y_n$ of $W_n$ such that $d_{{\bf P}(N_n)}({\bf p}_{g_n}(x_n),{\bf p}_{g_n}(y_n))\to 0$ as $n\to\infty$.}
\end{itemize}

Observe that as $x_n\in X_n$ (resp. $y_n\in Y_n$) lies in the $\ep_0$-thick part of $(X_n,\sigma_n)$ (resp. $(Y_n,\sigma_n)$) we can find a pair of geodesic loops based at $x_n$ (resp. $y_n$) of length uniformly bounded only in terms of $\Sigma$ and $\ep_0$ that generate a non-abelian free subgroup of $\pi_1(X_n,x_n)$ (resp. $\pi_1(Y_n,y_n)$). Since the inclusions $X_n,Y_n\to H$ are $\pi_1$-injective, the images of those loops generate a non-abelian free subgroup of $\pi_1(N_n,x_n)$ (resp. $\pi_1(N_n,y_n)$), and since $g_n$ is 1-Lipschitz the free subgroups are generated by homotopy classes of loops of bounded length. In particular, by well-known consequences of the structure of the thin part of hyperbolic 3-manifolds, the points $g_n(x_n)$ (resp. $g_n(y_n)$) cannot lie too deep into the thin part, that is, there exists $\eta>0$ (only depending on the length bounds on the generators of the free non-abelian subgroup) such that ${\rm inj}_{g_n(x_n)}(N_n)\ge\eta$ (resp. ${\rm inj}_{g_n(y_n)}(N_n)\ge\eta$). Lastly, before moving on to the next step, also observe that $d_{{\bf P}(N_n)}({\bf p}_{g_n}(x_n),{\bf p}_{g_n}(y_n))\to 0$ implies that $d_{N_n}(g_n(x_n),g_n(y_n))\to 0$.

The injectivity radius bounds discussed above together with the fact that $g_n$ is 1-Lipschitz allow us to take geometric limits (up to passing to subsequences). More precisely:
\begin{itemize}
    \item{The sequences of pointed hyperbolic surfaces $(X_n,x_n),(Y_n,y_n)$ converge in the geometric topology to pointed hyperbolic surfaces $(X,x),(Y,y)$. In particular this means that fixed an exhaustion of $X,Y$ by metric balls of increasing radius $B(x,2n),B(y,2n)$ there are pointed smooth $(1+\xi_n)$-bilipschitz embeddings $\phi_n:(B(x,n),x)\to (X_n,x_n)$ and $\psi_n:(B(y,n),y)\to(Y_n,y_n)$ with $\xi_n\to0$.} 
    \item{The sequence of pointed hyperbolic 3-manifolds $(N_n,g_n(x_n)\simeq g_n(y_n))$ converges in the geometric topology to a pointed hyperbolic 3-manifold $(N,z)$. In particular this means that fixed an exhaustion of $N$ by metric balls $B(z,2n)$ there are pointed smooth $(1+\xi_n)$-bilipschitz embeddings $\tau_n:(B(z,n),z)\to (N_n,g_n(x_n)\simeq g_n(y_n))$ with $\xi_n\to0$.}
    \item{The sequences of (base point preserving) pleated maps $g_n:(X_n,x_n),(Y_n,y_n)\to (N_n,g_n(x_n)\simeq g_n(y_n))$ mapping geodesically $\lambda_n\cap X_n,\lambda_n\cap Y_n$ converge to (base point preserving) pleated maps $g:(X,x),(Y,y)\to (N,z)$ mapping geodesically laminations $\lambda_X,\lambda_Y$ of $X,Y$ respectively containing the basepoints $x\in\lambda_X,y\in\lambda_Y$. These maps have the property that the following diagram commutes up to (proper) homotopies
    \[
    \xymatrix{
    X,Y\ar[r]^g\ar[d]_{\phi_n,\psi_n} &N\ar[d]^{\tau_n}\\
    X_n,Y_n\ar[r]_{g_n} &N_n.
    }
    \]
    }
\end{itemize}

(See \cite[Chapter E]{BP92} for a comprehensive description of the geometric topology and \cite[Chapter I.5]{CEG:notes_on_notes} for the necessary convergence statements used above.)

By \cite[Lemma 5.10]{Thu86}, the limit pleated surfaces $g:X,Y\to N$ being limits of doubly incompressible pleated surfaces are weakly doubly incompressible. In particular, by Theorem \ref{thm:weakly embeds}, the canonical lifts ${\bf p}_g:\lambda_X,\lambda_Y\to{\bf P}(N)$ are embeddings.

By continuity, as $d_{{\bf P}(N_n)}({\bf p}_{g_n}(x_n),{\bf p}_{g_n}(y_n))\to 0$, in the limit we have ${\bf p}_g(x)={\bf p}_g(y)$ in ${\bf P}(N)$, that is, the leaf $\ell_y$ of $\lambda_Y$ containing $y$ and the leaf $\ell_x$ of $\lambda_X$ containing $x$ are mapped by $g$ onto the same complete geodesic $g(\ell_y)=g(\ell_x)$ of $N$.

Let us look at the behavior of $\ell=g(\ell_x)=g(\ell_y)$. There are two possibilities:
\begin{enumerate}
    \item{The geodesic $\ell$ intersects the $\ep_1$-thick part of $N$ infinitely many times (here $\ep_1$ is as in Lemma \ref{lem:thick to thick}).}
    \item{There is an interval $I\subset\ell$ such that $\ell-I$ is contained in the $\ep_1$-cuspidal part of $N$.}
\end{enumerate}

We treat the two cases separately.

{\bf Case (1)}. In this situation, recall that ${\bf p}_g:\lambda_X,\lambda_Y\to{\bf P}(N)$ is an embedding. In particular it is a uniform embedding on $\lambda_X^0=\lambda_X\cap X_0$ and $\lambda_Y^0=\lambda_Y\cap Y_0$ where $X_0,Y_0$ are the $\ep_0$-non-cuspidal parts of $X,Y$. More precisely, this means the following: For every $\delta>0$ there exists $\ep>0$ such that if $a,b\in \lambda_X^0$ (resp. $a,b\in\lambda_Y^0$) and $d_{{\bf P}(N)}({\bf p}_g(a),{\bf p}_g(b))<\delta$ then $d_X(a,b)<\ep$ (resp. $d_Y(a,b)<\ep$). In particular, there exists $\delta_1<\ep_1$ such that $d_{{\bf P}(N)}({\bf p}_g(a),{\bf p}_g(b))<\delta_1$ implies $d_X(a,b)<\ep_1$ or $d_Y(a,b)<\ep_1$.

As $\ell$ comes back to a fixed compact subset of the $\ep_1$-thick part of $N$, we can find an interval $I=[a,b]\subset\ell$ with endpoints in the $\ep_1$-thick part and such that the distance between the directions of $\ell$ at $a,b$ in ${\bf P}(N)$ are smaller than $\delta_0$. Let $I_x=[a_x,b_x]\subset\ell_x$ and $I_y=[a_y,b_y]\subset\ell_y$ be the intervals such that $g(I_x)=g(I_y)=I$. By our choice of $\delta_1$ and the fact that $d_{{\bf P}(N)}({\bf p}_g(a_x),{\bf p}_g(b_x)),d_{{\bf P}(N)}({\bf p}_g(a_y),{\bf p}_g(b_y))<\delta_1$, we have that $d_X(a_x,b_x),d_Y(a_y,b_y)<\ep_1$.

As $a,b$ lie in the $\ep_1$-thick part of $N$ we conclude that the shortest geodesic segment $\kappa$ joining them in $N$ (which has length smaller than $\delta_1<\ep_1$) is contained in an embedded geodesic ball of radius $\ep_1$. Similarly, also the images $g(\kappa_x),g(\kappa_y)$ of the shortest geodesic segments $\kappa_x\subset X,\kappa_y\subset Y$ joining $a_x,b_x$ and $a_y,b_y$ in $X,Y$ (respectively), being shorter than $\ep_1$ are contained in the same embedded geodesic ball of radius $\ep_1$. In particular $g(\kappa_x),g(\kappa_y)$ and $\kappa$ are homotopic relative to the endpoints within the ball. Hence, we found two essential closed curves $\alpha_x=I_x\cup\kappa_x\subset X$ and $\alpha_y=I_y\cup\kappa_y\subset Y$ such that $g(\alpha_x)$ and $g(\alpha_y)$ are homotopic in $N$.

Observe that both $g(\alpha_x),g(\alpha_y)$ are homotopic to $I\cup\kappa$ which is almost a closed geodesic in the sense that if we take the arc-length parametrization $\theta:\mb{R}\to I\cup\kappa$ and we lift it to a map to the universal cover ${\hat \theta}:\mb{R}\to\mb{H}^3$, then the map ${\hat \theta}$ is a local $(A,B)$-quasi-geodesic with constants which are as close as we want to $(1,0)$ provided that $\delta_1$ is sufficiently small. By the classical stability of quasi-geodesics in the hyperbolic space, we can assume that ${\hat \theta}$ is as close as we want to a bi-infinite geodesic in the Hausdorff topology. In particular $I\cup\kappa$ represents a non-trivial hyperbolic element of $\pi_1(N)$.

With reference to the diagram above provided by geometric convergence, we conclude that $g_n(\phi_n(\alpha_x))$ and $g_n(\psi_n(\alpha_y))$ are homotopic in $N_n$ to $\tau_n(I\cup\kappa)$ and, hence, homotopic to each other. However, recall that $g_n:X_n,Y_n\to N_n$ are properly homotopic to the inclusions $\iota_n:X_n,Y_n\to H-\gamma_n$. Therefore the essential closed curves $\phi_n(\alpha_x)\subset X_n$ and $\psi_n(\alpha_y)\subset Y_n$ are homotopic in $H-\gamma_n$ contradicting the pared acylindrical assumption on $(H,\gamma_n)$. This finishes the proof of Case (1).

{\bf Case (2)}. In this case, we can decompose $\ell$ as $\ell=J^-\cup I\cup J^+$ where $J^-,J^+$ are half-lines contained in the $\ep_1$-cuspidal part of $N$ and $I$ is a compact interval. 

As the maps $g:X,Y\to N$ are proper, we can assume that $I$ is the image under $g$ of intervals $I_x\subset\ell_x$ and $I_y\subset\ell_y$ whose endpoints are contained in the $\ep_0$-cuspidal part of $X$ and $Y$ respectively.

Again, referring to the diagram provided by geometric convergence, we conclude that $g_n(\phi_n(I_x))$ and $g_n(\psi_n(I_y))$ are homotopic in $N_n$ to $\tau_n(I)$ via a homotopy supported on a neighborhood of $\tau_n(I)$ which is as small as we want (provided that $n$ is large enough). Note that directly from geometric convergence, we can assume that $\phi_n(I_x),\psi_n(I_y)$ have their endpoints in the $\ep_0$-thin part of $X_n,Y_n$ respectively (but not necessarily in the $\ep_0$-cuspidal part). Let us analyze further the possible configurations. Let us denote by $a,b$ the endpoints of $I$ and by $a_x,b_x$ and $a_y,b_y$ the corresponding endpoints of $I_x$ and $I_y$ respectively.

If $\tau_n(a)$ or $\tau_n(b)$, say $\tau_n(a)$, lie in a $\ep_1/10$-Margulis neighborhood of a closed geodesic of $N_n$ where $\ep_1$ is the constant of Lemma \ref{lem:thick to thick} (only depending on $\Sigma$ and $\ep_0$), then both $g_n(\phi_n(a_x)),g_n(\psi_n(a_y))$ lie in the $\ep_1$-Margulis neighborhood $\mb{T}_n$ of the same geodesic for $n$ large enough. As only the $\ep_0$-thin parts of $X_n,Y_n$ can be mapped to the $\ep_1$-thin part of $N_n$, we conclude that there are two essential simple geodesic loops $\alpha_x,\alpha_y$ based at $\phi_n(a_x),\psi_n(a_y)$ of length smaller than $2\ep_0$ that are mapped to $\mb{T}_n$. Since maximal cyclic subgroups of $\pi_1(X_n),\pi_1(Y_n)$ (such as $\langle\alpha_x\rangle,\langle\alpha_y\rangle$) are mapped to maximal cyclic subgroups of $\pi_1(N_n)$ by the pared acylindrical assumption on $(H,\gamma_n)$ (that implies that the inclusions $\iota_n:X_n,Y_n\to H-\gamma_n$ are doubly incompressible) and the fact that $\pi_1(\mb{T}_n)$ is a maximal cyclic subgroup of $\pi_1(N_n)$ (by the structure of the thin part of a hyperbolic 3-manifold), we deduce that $g_n(\alpha_x)$ and $g_n(\alpha_y)$ are homotopic up to passing to the inverses. As $g_n:X_n,Y_n\to N_n$ is properly homotopic to the inclusions $\iota_n:X_n,Y_n\to H-\gamma_n$, we obtained again a contradiction with the assumption that $(H,\gamma_n)$ is pared acylindrical.  

Hence we can assume that both $\tau_n(a)$ and $\tau_n(b)$ are contained in the $\ep_1/10$-Margulis neighborhoods of the cusps of $N_n$ (possibly the same cusp) which, by assumption, correspond precisely to the components of $\gamma_n$. As only the cusps of $X_n,Y_n$ are mapped to the cusps of $N_n$ by $g_n$ we must have that $\phi_n(a_x)$ and $\psi_n(a_y)$ are contained in an annulus $A_n(a)\subset\Sigma$ around the same component of $\gamma_n$ (the component is the one corresponding to the cusp containing $\tau_n(a)$ and the annulus is the union of the two $\ep_0$-cuspidal neighborhoods of such cusp in $X_n,Y_n$ respectively). Similarly, also $\phi_n(b_x)$ and $\psi_n(b_y)$ are contained in an annulus $A_n(b)$ around the same component of $\gamma_n$. 

Recall that $g_n(\phi_n(I_x))$ and $g_n(\psi_n(I_y))$ are homotopic in $N_n$ via a homotopy supported on a (very small) neighborhood of $\tau_n(I)$. As $g_n:X_n,Y_n\to N_n$ are properly homotopic to the inclusions $\iota_n:X_n,Y_n\to H-\gamma_n$, the arcs $\phi_n(I_x),\psi_n(I_y)\subset X_n,Y_n$ are homotopic in $H-\gamma_n$ via a homotopy that keeps the endpoints in collar neighborhoods of $A_n(a),A_n(b)$ in $H$ respectively. This again contradicts the pared acylindrical assumption on $(H,\gamma_n)$. 

This finishes the discussion of Case (2).

Together, Cases (1) and (2) complete the proof of Proposition \ref{prop:connected component}.
\end{proof}

\bibliographystyle{alpha}
\bibliography{bibliography}

\def\cprime{$'$}
\begin{thebibliography}{BKMM12}

\bibitem[BBCM13]{BBCM}
J.~Brock, K.~Bromberg, D.~Canary, and Y.~Minsky.
\newblock Convergence properties of end invariants.
\newblock {\em Geom. Topol.}, {\bf 17}(5):2877--2922, 2013.

\bibitem[BCM12]{BrockCanaryMinsky:ELC2}
J.~Brock, R.~Canary, and Y.~Minsky.
\newblock The classification of {K}leinian surface groups, {II}: {T}he ending lamination conjecture.
\newblock {\em Ann. of Math.}, {\bf 176}(1):1--149, 2012.

\bibitem[Beh06]{Beh06}
J.~Behrstock.
\newblock Asymptotic geometry of the mapping class group and {T}eichm\"{u}ller space.
\newblock {\em Geom. Topol.}, {\bf 10}:1523--1578, 2006.

\bibitem[BHS17]{HHS1}
J.~Behrstock, M.~Hagen, and A.~Sisto.
\newblock Hierarchically hyperbolic spaces, {I}: {C}urve complexes for cubical groups.
\newblock {\em Geom. Topol.}, 21(3):1731--1804, 2017.

\bibitem[BHS19]{HHS2}
J.~Behrstock, M.~Hagen, and A.~Sisto.
\newblock Hierarchically hyperbolic spaces {II}: {C}ombination theorems and the distance formula.
\newblock {\em Pacific J. Math.}, 299(2):257--338, 2019.

\bibitem[BHS21]{HHS:quasiflats}
J.~Behrstock, M.~Hagen, and A.~Sisto.
\newblock Quasiflats in hierarchically hyperbolic spaces.
\newblock {\em Duke Math. J.}, 170(5):909--996, 2021.

\bibitem[BKMM12]{BKMM:qi_rigid}
J.~Behrstock, B.~Kleiner, Y.~Minsky, and L.~Mosher.
\newblock Geometry and rigidity of mapping class groups.
\newblock {\em Geom. Topol.}, 16(2):781--888, 2012.

\bibitem[Bon86]{Bo86}
F.~Bonahon.
\newblock Bouts des vari\'{e}t\'{e}s hyperboliques de dimension 3.
\newblock {\em Ann. of Math.}, {\bf 124}(1):71--158, 1986.

\bibitem[BP92]{BP92}
R.~Benedetti and C.~Petronio.
\newblock {\em Lectures on hyperbolic geometry}.
\newblock Universitext. Springer-Verlag, Berlin, 1992.

\bibitem[Can96]{C96}
R.~Canary.
\newblock A covering theorem for hyperbolic 3-manifolds and its applications.
\newblock {\em Topology}, {\bf 35}(3):751--778, 1996.

\bibitem[CEG06]{CEG:notes_on_notes}
R.~Canary, D.~Epstein, and P.~Green.
\newblock Notes on notes of {T}hurston.
\newblock In {\em Fundamentals of hyperbolic geometry: selected expositions}, volume~{\bf 328} of {\em London Math. Soc. Lecture Note Ser.}, pages 1--115. Cambridge Univ. Press, Cambridge, 2006.
\newblock With a new foreword by Canary.

\bibitem[DT06]{DT06}
N.~Dunfield and W.~Thurston.
\newblock Finite covers of random 3-manifolds.
\newblock {\em Invent. Math.}, {\bf 166}(3):457--521, 2006.

\bibitem[FPS22]{FPS22}
D.~Futer, J.~Purcell, and S.~Schleimer.
\newblock Effective bilipschitz bounds on drilling and filling.
\newblock {\em Geom. Topol.}, {\bf 26}(3):1077--1188, 2022.

\bibitem[FSVa]{FSVDehn}
P.~Feller, A.~Sisto, and G.~Viaggi.
\newblock Hyperbolic heegaard splittings and dehn twists.
\newblock {\em ArXiv e-print}.
\newblock ArXiv:2304.05990 [math.GT].

\bibitem[FSVb]{FSVa}
P.~Feller, A.~Sisto, and G.~Viaggi.
\newblock Uniform models and short curves for random 3-manifolds.
\newblock {\em Compos.~Math.}, accepted for publication.
\newblock ArXiv:1910.09486v3 [math.GT].

\bibitem[Hem01]{He01}
J.~Hempel.
\newblock 3-manifolds as viewed from the curve complex.
\newblock {\em Topology}, 40(3):631--657, 2001.

\bibitem[HJ22]{HJ}
U.~Hamenstädt and F.~Jäckel.
\newblock Stability of einstein metrics and effective hyperbolization in large hempel distance.
\newblock 2022.

\bibitem[HK05]{HK:universal}
C.~Hodgson and S.~Kerckhoff.
\newblock Universal bounds for hyperbolic {D}ehn surgery.
\newblock {\em Ann. of Math.}, {\bf 162}(1):367--421, 2005.

\bibitem[HV19]{HV}
U.~Hamenst\"adt and G.~Viaggi.
\newblock Small eigenvalues of random 3-manifolds.
\newblock {\em ArXiv e-print}, 2019.
\newblock ArXiv:1903.08031 [math.GT].

\bibitem[Joh79]{Joh79}
K.~Johansson.
\newblock Homotopy equivalences of 3-manifolds with boundaries.
\newblock {\em Lecture Notes in Mathematics}, 761, 1979.

\bibitem[JS78]{JSJ78}
W.~Jaco and P.~Shalen.
\newblock A new decomposition theorem for irreducible sufficiently-large 3-manifolds.
\newblock In {\em Algebraic and geometric topology (Proc. Sympos. Pure Math., Stanford Univ., Stanford, Calif., 1976), Part}, volume~2, pages 71--84, 1978.

\bibitem[Kne29]{Kneser}
H.~Kneser.
\newblock Geschlossene flächen in dreidimensionalen mannigfaltigkeiten.
\newblock {\em Jahresbericht der Deutschen Mathematiker-Vereinigung}, 38:248--259, 1929.

\bibitem[Mah10]{Maher:Heegaard}
J.~Maher.
\newblock Random {H}eegaard splittings.
\newblock {\em J. Topol.}, {\bf 3}(4):997--1025, 2010.

\bibitem[Mil62]{Milnor}
J.~Milnor.
\newblock A unique decomposition theorem for 3-manifolds.
\newblock {\em American Journal of Mathematics}, 84(1):1--7, 1962.

\bibitem[Min00]{M00}
Y.~Minsky.
\newblock Kleinian groups and the complex of curves.
\newblock {\em Geom. Topol.}, {\bf 4}:117--148, 2000.

\bibitem[Min01]{M01}
Y.~Minsky.
\newblock Bounded geometry for {K}leinian groups.
\newblock {\em Invent. Math.}, {\bf 146}(1):143--192, 2001.

\bibitem[Min06]{Minsky_ICM_06}
Yair~N. Minsky.
\newblock Curve complexes, surfaces and 3-manifolds.
\newblock In {\em International {C}ongress of {M}athematicians. {V}ol. {II}}, pages 1001--1033. Eur. Math. Soc., Z\"{u}rich, 2006.

\bibitem[Min10]{M10}
Y.~Minsky.
\newblock The classification of {K}leinian surface groups {I}: {M}odels and bounds.
\newblock {\em Ann. of Math.}, {\bf 171}(1):1--107, 2010.

\bibitem[MM99]{MasurMinsky:I}
H.~Masur and Y.~Minsky.
\newblock Geometry of the complex of curves {I}: {H}yperbolicity.
\newblock {\em Invent. Math.}, {\bf 138}(1):103--149, 1999.

\bibitem[MM00]{MasurMinsky:II}
H.~Masur and Y.~Minsky.
\newblock Geometry of the complex of curves {II}: {H}ierarchical structure.
\newblock {\em Geom. Funct. Anal.}, {\bf 10}(4):902--974, 2000.

\bibitem[Sta99]{stallings:handlebody}
J.~Stallings.
\newblock Whitehead graphs on handlebodies.
\newblock In {\em Geometric group theory down under ({C}anberra, 1996)}, pages 317--330. de Gruyter, Berlin, 1999.

\bibitem[Thu82]{Thu82}
W.~Thurston.
\newblock Three-dimensional manifolds, {K}leinian groups and hyperbolic geometry.
\newblock {\em Bull. Amer. Math. Soc. (N.S.)}, {\bf 6}(3):357--381, 1982.

\bibitem[Thu86a]{Thu86}
W.~Thurston.
\newblock Hyperbolic structures on 3-manifolds, {I}: {D}eformation of acylindrical manifolds.
\newblock {\em Ann. of Math.}, {\bf 124}(2):203--246, 1986.

\bibitem[Thu86b]{ThurstonII}
W.~Thurston.
\newblock Hyperbolic structures on 3-manifolds, {II}: {S}urface groups and 3-manifolds which fiber over the circle.
\newblock {\em arXiv:math/9801045}, 1986.

\bibitem[Thu02]{ThuNotes}
W.~Thurston.
\newblock Geometry and topology of three-manifolds.
\newblock 2002.
\newblock Electronic version 1.1 - March 2002.

\end{thebibliography}

\end{document}